\DeclarePairedDelimiter{\ceil}{\lceil}{\rceil}
\newtheorem{theorem}{Theorem}[section]
\newtheorem{lemma}{Lemma}[section]
\newtheorem{proposition}{Proposition}[section]
\newtheorem{corollary}{Corollary}[section]
\theoremstyle{definition}
\newtheorem{definition}{Definition}[section]
\theoremstyle{remark}
\newtheorem{example}{Example}[section]
\newtheorem{remark}{Remark}[section]
\newtheorem{assump}{Assumption}[section]
\numberwithin{equation}{section}
 \DeclareSymbolFont{largesymbols}{OMX}{yhex}{m}{n}
  \DeclareMathAccent{\widehat}{\mathord}{largesymbols}{"62}
\newcommand{\vertiii}[1]{{\left\vert\kern-0.25ex\left\vert\kern-0.25ex\left\vert #1 
    \right\vert\kern-0.25ex\right\vert\kern-0.25ex\right\vert}}
\newcommand{\RR}{\mathbf{R}}
\newcommand{\PP}{\mathbf{P}}
\newcommand{\N}{\mathbf{N}}
\newcommand{\EE}{\mathbf{E}}
\newcommand{\Z}{\mathbf{Z}}
\title[dd]{Quantitative positivity of transition densities for random perturbations of Hamiltonian systems}
\author{Shimaa Elesaely, David P. Herzog, Kyle L. Liss}
\begin{document}
\maketitle

\begin{abstract}
	We study a class of diffusion processes arising from random perturbations of conservative Hamiltonian systems. Under a set of abstract hypotheses---including basic structural assumptions on the Hamiltonian, a weak Lyapunov structure, and a quantitative notion of hypoellipticity---we prove that transition densities satisfy a sharp, uniform pointwise lower bound over Hamiltonian sublevel sets in the small noise limit $\epsilon \to 0$. By applying our general theorem, we obtain quantitative minorization estimates for a variety of models including Langevin dynamics, chains of oscillators coupled to heat bathes at different temperatures, and finite-dimensional fluid models such as stochastically forced Galerkin truncations of the Navier-Stokes equations and the Lorenz '96 system. As a corollary, assuming a stronger Lyapunov structure, our main result yields a sharp exponential rate of convergence to equilibrium for $0 < \epsilon \ll 1$ in a weighted total variation norm.   A central feature of our approach is that it does not require knowledge of the explicit form of the invariant measure, nor even its existence, and hence is broadly applicable to deduce minorization for physically relevant systems where invariant measures are inaccessible.

\end{abstract}
 \section{Introduction}
 	Often motivated by applications to Markov Chain Monte Carlo (MCMC) or the Boltzmann equation, recent research in the area of ergodic diffusion processes has focused on obtaining explicit rates of convergence to equilibrium with respect to key parameters in the system~\cite{AAMN_24, ADSW_21, BGH_21, BS_25, CLW_23, CGHS_21, DMS_15, EGZ_19, GS_14, Hera_06,LSR_10, LSS_20, RS_18, Tal_02, Vil_06, Vil_09}.  A well-studied example is (second-order) Langevin dynamics, which is a stochastic differential equation (SDE) of the form 
\begin{align}
 		\label{eqn:LD}
 		dx_t^\gamma &= v_t^\gamma \, dt \\
 		\nonumber dv_t^\gamma& = - \gamma v_t^\gamma \, dt - \nabla U(x_t^\gamma) \, dt+ \sqrt{2\gamma } \, dB_t,
 	\end{align} 
 	where the process $z_t^\gamma:=(x_t^\gamma, v_t^\gamma)$ evolves on an open subset $\mathcal{X}$ of $\RR^n \times \RR^n$, $U:\RR^n \to [0,\infty]$ is the potential, $\gamma>0$ is the friction parameter and $B_t$ is a standard Brownian motion on $\RR^n$.  A central problem for the dynamics~\eqref{eqn:LD} is to quantify its rate of convergence to equilibrium with respect to the parameter $\gamma > 0$ for a general class of potential functions~\cite{BS_25, CLW_23, CGHS_21, EGZ_19, RS_18, Tal_02, Vil_09} (see also~\cite{LSS_20} in the case of adaptive Langevin dynamics).  In the setting of \eqref{eqn:LD}, various convergence theories have emerged~\cite{AAMN_24, DMS_15, EGZ_19, Hera_06, Vil_09}, each constructing a notion of distance (measuring convergence to equilibrium) that boasts some advantage(s) over the others.  Such distances are often equivalent to either $L^2(\mu)$ or $H^1(\mu)$, where $\mu$ is the stationary measure, and the methods employed are more functional-analytic than probabilistic, relying on the explicit form of the invariant measure $\mu$ and Poincar\'{e} inequalities to extract convergence bounds~\cite{AAMN_24, DMS_15, Hera_06, Vil_09}.  See~\cite{EGZ_19}, however, for a more probabilistic method using quantitative couplings and a constructed Wasserstein distance.  
 	
 	In the case of Langevin dynamics~\eqref{eqn:LD}, superiority of a convergence theory can be judged both by the generality of the class of potentials it can treat as well as how the convergence bounds scale with respect to $\gamma$.  Currently, functional-analytic approaches using distances equivalent to either $L^2(\mu)$ or $H^1(\mu)$ have proven to be particularly effective at capturing sharp dependence on $\gamma$~\cite{BS_25, CLW_23, CGHS_21, DMS_15, RS_18}, which is arguably why they have become prominent. However, these methods often rely on the explicit form of the invariant Gibbs measure for \eqref{eqn:LD}.  Thus, they can be difficult to extend to other physically relevant settings where no such closed form is available. A simple example in the above setting~\eqref{eqn:LD} is when each coordinate is coupled to a heat bath at a different temperature, so the isotropic noise $\sqrt{2\gamma}dB_t$ in \eqref{eqn:LD} is replaced by anisotropic noise of the form $\sqrt{2\gamma T}dB_t$, where $T = \mathrm{diag}(T_1,T_2,\ldots,T_n)$ is an $n\times n$ diagonal matrix with $T_i >0$ for all $i$. By contrast, more classical probabilistic methods such as couplings or, in particular, Doeblin's minorization method~\cite{HM_11, MSH_02} are highly flexible and apply in great generality, but have lost some favor, primarily because it is hard to see how the convergence bounds depend on parameters.  Furthermore, even if one does produce explicit bounds using such techniques~\cite{EGZ_19, MSH_02}, they do not scale optimally with respect to $\gamma$.           
 	
 	The principal problem underlying the difficulty in applying probabilistic approaches is the absence of local, quantitative positive lower bounds on the transition densities at the time scale on which equilibration occurs. The case of interest in this paper is the small-$\gamma$ limit, where convergence takes place on the $\gamma^{-1}$ scale.  Positive lower bounds on the transition density are usually proven using a combination of control theory and smoothing of the Markov semigroup ensured by hypoellipticity of the driving second-order operators~\cite{RB_06}.  However, in small noise limits, these bounds are largely existential with the exception of some cases that use Malliavin calculus~\cite{E_18} or where the process is simple enough that the density is explicitly computable~\cite{BGHKM_25}. The goal of this paper is to resolve this problem by producing optimal, quantitative lower bounds on the transition density with respect to a small parameter for a wide class of diffusion processes with underlying conservation laws.  In particular, while thus far we have used Langevin dynamics as a motivating example, our results apply more generally, including to systems of strongly coupled oscillators and to random perturbations of finite-dimensional fluid-like equations such as the Lorenz '96 model and stochastically forced Galerkin truncations of the Navier--Stokes' equations.
 	
 	We now state a corollary (Theorem~\ref{thm:LD} below) of our main abstract result (Theorem~\ref{thm:minorization}) as it applies to our motivating example of Langevin dynamics. We suppose that the potential $U:\RR^n\rightarrow [0, + \infty]$ in \eqref{eqn:LD} satisfies the following set of hypotheses.
 	\begin{assump}
 		\label{assump:Ug}~~
 		\item[(U1)] 
 		The set $\mathcal{Y}:= \{ x\in \RR^n \, : \, U(x)< \infty\}$ is open and connected and $U\in C^\infty(\mathcal{Y})$;
 		\item[(U2)] For any $R>0$, the open set
 		$$  U_R = \{ x \in \RR^n : U(x) < R \}$$
 		has a compact closure; 
 		\item[(U3)]  $e^{-U(x)} \in L^1(\mathcal{Y}, \, dx)$. 
 	\end{assump}
 	
 	Defining $\mathcal{X}= \mathcal{Y} \times \RR^n$, it follows from Assumption~\ref{assump:Ug} that  for fixed $\gamma \in (0,1)$, the Markov process $\{ z_t^\gamma\}_{t\geq 0}$ defined by~\eqref{eqn:LD} is non-explosive.  Furthermore, by H\"{o}rmander's theorem~\cite{Hor_67}, the associated transition measures $\{ \mathcal{P}_t^\gamma(z, \, \cdot \,) \}_{t>0, z\in \mathcal{X}}$ are absolutely continuous with respect to Lebesgue measure on $\mathcal{X}$ with a smooth transition density; that is, for all $t>0$, $z\in \mathcal{X}$, and $\gamma\in (0,1)$,
 	\begin{align*}
 		\mathcal{P}_t^\gamma(z, \, dw) = p_t^\gamma(z,w) \, dw,
 	\end{align*}
 	where $p_t^\gamma (z, \cdot) \in L^1(\mathcal{X})$ and $(t,z,w) \mapsto p_t^\gamma(z, w) \in C^\infty((0, \infty) \times \mathcal{X}^2)$.  For $z=(x,v)$, setting 
 	\begin{align}
 		H(z)= H(x,v) = \frac{|v|^2}{2}+ U(x) \qquad \text{ and } \qquad H_R = \{ z \in \mathcal{X} \, : \, H(z) < R \},
 	\end{align}
 	we will prove:
 	\begin{theorem}[Quantitative minorization for Langevin dynamics]
 		\label{thm:LD}
 		Suppose that the potential $U:\RR^n\rightarrow [0, + \infty]$ satisfies Assumption~\ref{assump:Ug}.  Then, for every $R>0$ large enough but independent of $\gamma \in (0,1)$, there exist $t_0>0$ and $\lambda >0$, both also independent of $\gamma \in (0,1)$, such that 
 		\begin{align}
 			\label{eqn:thmminor}
 			\inf_{z,w \in H_R} p_{\frac{t_0}{\gamma}}^\gamma(z,w) \geq \lambda
 		\end{align}
 		for all $\gamma \in (0,1)$. 
 	\end{theorem}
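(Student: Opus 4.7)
The plan is to derive Theorem~\ref{thm:LD} as a direct application of the abstract minorization result, Theorem~\ref{thm:minorization}, with $\gamma$ playing the role of the small parameter and the Hamiltonian taken to be $H(x,v) = |v|^2/2 + U(x)$ on the phase space $\X = \mathcal{Y} \times \RR^n$. The proof then reduces to verifying the three standing abstract hypotheses---structural assumptions on $H$, a weak Lyapunov condition, and a quantitative hypoellipticity estimate---uniformly in $\gamma \in (0,1)$.

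The structural properties of $H$ are immediate from Assumption~\ref{assump:Ug}: smoothness on $\X$ follows from (U1); compactness of the sublevel set $H_R$ follows because $H(z) < R$ forces $x \in U_R$ and $|v|^2 < 2R$, and $U_R$ has compact closure by (U2); and $e^{-H}$ is Lebesgue integrable on $\X$ by (U3) together with the Gaussian integrability of $e^{-|v|^2/2}$.

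For the Lyapunov ingredient, I would use a $\gamma$-modified Hamiltonian of the form $\Psi(x,v) = H(x,v) + \alpha \gamma \langle x, v\rangle$ (plus, if needed, a coercive correction making $\Psi$ comparable to $H$ outside a compact set), with $\alpha > 0$ small. A direct computation with the generator
\begin{align*}
\mathcal{L}^\gamma = v \cdot \nabla_x - \nabla U(x) \cdot \nabla_v - \gamma v \cdot \nabla_v + \gamma \Delta_v
\end{align*}
yields $\mathcal{L}^\gamma \Psi \leq -c\gamma \Psi + C\gamma$ with $c,C>0$ independent of $\gamma$; the common $\gamma$ prefactor is exactly what encodes the $t_0/\gamma$ equilibration timescale in \eqref{eqn:thmminor}. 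For the hypoellipticity, the noise vector fields are $X_i = \sqrt{2\gamma}\,\partial_{v_i}$, $i=1,\dots,n$, and the drift vector field is $X_0 = v \cdot \nabla_x - \nabla U(x) \cdot \nabla_v - \gamma v \cdot \nabla_v$. A short calculation gives $[X_0, X_i] = -\sqrt{2\gamma}\,\partial_{x_i} + \gamma\sqrt{2\gamma}\,\partial_{v_i}$, so H\"ormander's condition holds at bracket step two and the $x$-directions are recovered with prefactor $\sqrt{\gamma}$. The abstract framework is designed precisely to absorb these $\gamma$-prefactors after rescaling time by $\gamma^{-1}$.

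The main obstacle lies in the control-theoretic/smoothing step embedded inside Theorem~\ref{thm:minorization}: one must construct, for any $z,w \in H_R$, a deterministic control steering the noise-free system from $z$ to $w$ in time $t_0/\gamma$ while remaining in a slightly enlarged sublevel set, at a cost bounded uniformly in $\gamma$. The natural strategy is to let the free Hamiltonian flow do the bulk of the work on the long $\gamma^{-1}$ timescale and to use the control only for small, energy- and phase-adjusting corrections; this control bound is then converted into a density lower bound via a Girsanov change of measure combined with a quantitative hypoelliptic regularization argument that remains robust as $\gamma \to 0$.
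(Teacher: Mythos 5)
Your overall reduction---apply Theorem~\ref{thm:minorization} with $H(x,v)=|v|^2/2+U(x)$ and verify its hypotheses uniformly in $\gamma$---is exactly the paper's route, and your check of the structural properties of $H$ is fine. But the hypothesis verification has genuine gaps. First, you never address Assumption~\ref{assump:BR}: Theorem~\ref{thm:minorization} requires, for every $R$, a bounded domain containing $H_R$ that is \emph{boundary regular} for both $\{x_t^\epsilon\}$ and the drift-reversed process $\{x_t^{-,\epsilon}\}$. This is the least routine hypothesis (the paper devotes Lemma~\ref{lem:LD}, resting on a nontrivial external result, to it), and omitting it leaves the application of the abstract theorem unjustified. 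Second, the Lyapunov condition you verify is the wrong one: the drift estimate $\mathcal{L}^\gamma\Psi\le -c\gamma\Psi+C\gamma$ with the cross term $\alpha\gamma\langle x,v\rangle$ is the \emph{strong} structure of Assumption~\ref{assump:Lp}, needed only for the convergence result Theorem~\ref{thm:main}. What Theorem~\ref{thm:minorization} actually requires is Assumption~\ref{assump:V} (V4): the two-sided pointwise inequalities $\eta\sum_j(Z_jH)^2\le ZH+\sum_j Z_j^2H\le ZH+d_*/2$ together with integrability of $e^{-\eta H}\sum_j(|Z_jH|^2+|Z_j^2H|+1)$, which encode subinvariance of $e^{-\eta H}\,dx$ and enter the a priori $L^2$ estimates. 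For Langevin with $\eta=1$ these read $|v|^2\le |v|^2+n$ and are trivial, but they are what must be checked.

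Two further points. Your Hörmander verification keeps the factor $\sqrt{2\gamma}$ inside the noise vector fields and then asserts the framework ``absorbs'' the resulting $\sqrt{\gamma}$ prefactor on the recovered $x$-directions; that is not how Definition~\ref{def:unifhor} works. In the paper's normalization the $\sqrt{2\epsilon}$ is factored out of the SDE and the fields $Z_j=\partial_{v_j}$ are $\epsilon$-independent, so $[Z_j,-\epsilon Z+Z_0]=\partial_{x_j}-\epsilon\partial_{v_j}$ and the coefficients needed to recover $e_j$ are $1$ and $\epsilon$, uniformly bounded; with your normalization the required coefficients blow up like $\gamma^{-1/2}$ and the \emph{uniform} spanning condition fails. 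Finally, your closing paragraph proposes to handle the ``main obstacle'' by deterministic controls on the $\gamma^{-1}$ timescale plus a Girsanov change of measure. That work belongs inside Theorem~\ref{thm:minorization} (which you are entitled to take as a black box here), and more importantly the paper explicitly argues that the control-theoretic/support-theorem route cannot currently be made quantitative in $\gamma$; its actual mechanism is time-averaging of the density combined with quantitative De~Giorgi--Moser iteration and a quantitative small-set argument. As a plan for the corollary this paragraph is superfluous, and as a plan for the underlying estimate it is the approach the paper rejects.
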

 	
 	Theorem~\ref{thm:LD} gives quantitative lower bounds for the transition density for Langevin dynamics~\eqref{eqn:LD} over Hamiltonian sublevel sets $H_R$, $R\gg 1$, on the time scale $\gamma^{-1}$. Using the appropriate Lyapunov structure as in~\cite{CGHS_21} with respect to $0 < \gamma \ll 1$, Theorem~\ref{thm:LD} can be used to extract an optimal quantitative, geometric rate of convergence to equilibrium in a weighted total variation distance on the $\gamma^{-1}$ time scale, provided the potential $U$ satisfies further, natural structural assumptions~\cite{HerMat_19}.  
 	
 	\begin{remark}\label{rem:shearing}
 		The $\gamma^{-1}$ timescale is optimal for \eqref{eqn:LD} in the sense of convergence to global equilibrium because solutions are limited to crossing Hamiltonian level sets on a slow timescale dictated by the noise strength.  However, \eqref{eqn:LD} also contains a fast, $\gamma$-independent timescale on which solutions explore a neighborhood of a fixed level set, and it is quite possible that transition densities first relax on a timescale much shorter than $\gamma^{-1}$ to a function of $H$ before their slow evolution to global equilibrium. As it relates to Theorem~\ref{thm:LD}, this would be linked to proving strict positivity of the density $p_t^\gamma$ for $t \ll o(\gamma^{-1})$ over Hamiltonian \textit{level} sets, rather than sublevel sets. Unfortunately, our proof of Theorem~\ref{thm:LD} gives us no such information and it remains an interesting direction of future research. Enhanced convergence rates to metastable states is well understood in some simple two-dimensional Hamiltonian settings \cite{Shear1, Shear2, Shear3, Shear4, Shear5, Shear6}, but remain widely open for the high-dimensional systems we study here. 
 	\end{remark}
 	
 	While the system~\eqref{eqn:LD} has a unique explicit stationary distribution $\mu$ on Borel subsets of $\mathcal{X}$ given by
 	\begin{align}
 		\mu(dx \, dv ) \propto \exp(-H(x,v)) \, dx\, dv,
 	\end{align}    
 	we will see below that our main result (Theorem~\ref{thm:minorization}), though at least partially motivated by the structure of Langevin dynamics~\eqref{eqn:LD}, does not require knowing the specific form of the stationary measure. In particular, we can just as easily handle the case of Langevin dynamics with anisotropic noise discussed earlier (see Remark~\ref{rem:heatbathes} for more details). See also Example~\ref{ex:osc} below, which treats a chain of strongly coupled oscillators with ends held at different temperatures. In fact, our approach does not even require the existence of a normalizable invariant measure.  As a concrete example, we will produce the same results for degenerately-damped, fluid-like models where, although one might expect the existence of a stationary distribution, it is not yet known rigorously whether one exists.  See Examples~\ref{ex:Lor}-\ref{ex:NSE} below. 
 	
 	 Just as in the case of \eqref{eqn:LD} above, in all of the examples we consider in this paper, making use of a suitable Lyapunov structure (see Assumption~\ref{assump:Lp} and Theorem~\ref{thm:main} below) extends the strict positivity we prove to a sharp exponential convergence rate to equilibrium in a weighted total variation norm. In this way, our results bridge the gap between the quantitative strength of the functional-analytic approaches to convergence and the flexibility of probabilistic methods based on minorization.

 	\subsection{Method of proof}  To produce a \emph{qualitative} versus a \emph{quantitative} lower bound on the transition density, one typically uses continuity of the transition density coupled with positivity of the density at all points (see, for example, \cite{RB_06, GHHM_18, HerMat_15}).  In the case of Langevin dynamics~\eqref{eqn:LD}, it can be shown that for every $t>0$, $\gamma \in (0,1)$ and $z,w\in \mathcal{X}$,
 	\begin{align}
 		\label{eqn:LDexist}
 		p_t^\gamma(z,w) >0.
 	\end{align} 
 	Using the fact that for each $t>0$, $p_t^\gamma(\cdot,\cdot) \in C^\infty( \mathcal{X}^2)$, we can then use compactness of $\overline{H_R}$ to conclude the existence of $\lambda_{\gamma,R} >0$ such that
 	\begin{align}
 		\label{eqn:LDexist1}
 		\inf_{z,w \in H_R} p_{t/\gamma}^\gamma(z,w)  \geq \lambda_{\gamma,R}
 	\end{align}
 	for all $\gamma \in (0,1)$ and $R\gg1$. However, making the existential bound~\eqref{eqn:LDexist1} more quantitative with respect to $\gamma$ using the usual methods from control theory via the Support Theorems~\cite{SV_72ii, SV_72i} seems currently out of reach. This is true despite efforts to make the controls more explicit using scalings~\cite{GHHM_18}.   
 	
 One way around this difficulty is to start from the observation that the dynamics~\eqref{eqn:LD} ``concentrates'' on large enough sublevel sets $H_R$. That is, for $\delta >0$ small but independent of $\gamma \in (0,1)$, one can show
 	\begin{align}
 		\label{eqn:lbmeasure}
 		|A_{t/\gamma}^\delta |:= |\{(z,w) \in H_R^2 \, : \,  p_{t/\gamma}^\gamma(z,w)\geq \delta \}| \geq \delta,
 	\end{align}
 	where $| \cdot |$ denotes Lebesgue measure on $\mathcal{X}^2$.  Although the set $A_{t/\gamma}^\delta$ has little known structure with respect to $\gamma \in (0,1)$, one may hope that combining \eqref{eqn:lbmeasure} with quantitative-in-$\gamma$ spatial regularity of $p_{t/\gamma}^\gamma(\cdot ,\cdot)$ could yield a result like Theorem~\ref{thm:LD}.  While this is not exactly the approach employed in this paper, the principle that positive lower bounds in measure coupled with some form of quantitative regularity can produce uniform positivity is fundamental to the proof of Theorem~\ref{thm:LD}.

 	In the context of quantitative convergence to equilibrium with respect to the total variation distance, the idea above was first noticed and applied successfully to random perturbations of finite-dimensional, fluid-like models with small viscosity $\nu$ and highly degenerate noise~\cite{BL_21}. The key tools required included a quantitative version of H\"{o}rmander's theorem~\cite{Hor_67} and analogues of the classical elliptic theories of De Giorgi~\cite{De_57} and Moser~\cite{Mos_61} generalized to these hypoelliptic, quantitative settings. Hypoelliptic variants of De Giorgi–Nash–Moser theory had already been developed to establish Hölder regularity and Harnack inequalities for kinetic Fokker–Planck equations with rough coefficients~\cite{golse2015h, golse2016harnack}, and many of the arguments in~\cite{BL_21} were adapted from these works. The strategy in~\cite{BL_21} was to, starting from a \textit{time-independent} version of \eqref{eqn:lbmeasure}, first prove a quantitative, pointwise lower bound on compact sets for the stationary density $\rho^\nu$ (which existed due to an assumed Lyapunov structure). This pointwise information was used to prove exponential convergence to equilibrium on the $\nu^{-1}$ timescale in the $L^2(\rho^\nu(x) \, dx)$ norm, which was then upgraded to quantitative total variation convergence using a quantitative Moser theorem ($L^2\implies L^\infty$).

 	While many of the elements present in~\cite{BL_21} are used and generalized in this paper, especially the parameter-dependent H\"{o}rmander theorems (Theorem~\ref{thm:Hssmooth3} and Theorem~\ref{thm:tdhor} below) and the quantitative Moser theorem (Theorem~\ref{thm:moser2}), the path we take must be different because we cannot, under our general set of assumptions in Section~\ref{sec:mainresults}, utilize the invariant measure as it may not exist. The motivation in \cite{BL_21} for using an approach based on studying the stationary measure was that transition densities lack uniform time regularity when rescaled to the natural long timescale. For instance, in \eqref{eqn:LD}, the time-rescaled density $p_{t/\gamma}^\gamma$ moves rapidly along Hamiltonian level sets and satisfies $\partial_t p_{t/\gamma}^\gamma \approx \gamma^{-1}$. The main observation we make that allows us to get around this issue and work directly at the time-dependent level is to consider time-averages of the transition density on the long time scale. For example, in the case of Langevin dynamics we consider
 	\begin{align}
 		\bar{p}_t^\gamma(z,w) :=\frac{1}{t-1}\int_1^t p^\gamma_{s/\gamma}(z,w) \, ds
 	\end{align} 
 	for $t>1$ independent of $\gamma$. The key feature that the average quantity $\bar{p}_t^\gamma$ boasts over $p_{t/\gamma}^\gamma$ is that, by construction, it is just as regular in time as it is in space. In particular, it can be shown that there exists $s > 0$ independent of $\gamma \in (0,1)$ such that $\|\bar{p}^\gamma_{\cdot}\|_{H^{s}_{\mathrm{loc}}((2,\infty)\times\RR^d)}$ is bounded uniformly in $\gamma$, a fact not true for $p_{t/\gamma}^\gamma$.
 	
 	From this point, we will show first that the time-averaged density satisfies a positive uniform lower bound with respect to the parameter over compact sets.  This follows the De Giorgi iteration process~\cite{De_57}, which converts uniform upper bounds to uniform lower bounds. Translating this to the language of Markov processes, the uniform lower bounds on the time averaged density over compact sets imply that all compact sets are quantitatively \emph{petite}.  Abstractly, we then use the time-averaged local quantitative Sobolev control and quantitative uniform upper bounds on the density obtained via Moser iteration to show the claimed uniform lower bounds on the time-$t$ density by adapting the \emph{small set} argument in~\cite{MT_12}.  Importantly, this last step obtains both a quantitative small set and quantitative control on return times to that set, which can then be used to produce a uniform lower bound on the density.          
\subsection{Organization} 
 The organization of the paper is as follows.  In Section~\ref{sec:mainresults}, we set some notation, outline assumptions and state the main result (Theorem~\ref{thm:minorization}) to be proven in the paper.  In Section~\ref{sec:examples}, we apply Theorem~\ref{thm:minorization} to several concrete examples and show, in particular, how Theorem~\ref{thm:LD} follows from Theorem~\ref{thm:minorization}.  Section~\ref{sec:outline} outlines and further explains in detail the steps of the proof of Theorem~\ref{thm:minorization}, which is then carried out in Sections~\ref{sec:smallset}-\ref{sec:lowerb}.  \\
 	
 	\noindent {\bf Acknowledgments:} K.L thanks the National Science Foundation for its support through the RTG grant
 	DMS-2038056 and D.P.H. acknowledges support from grant DMS-2246491 from the National Science Foundation, grant MP-TSM-00002755 from the Simons Foundation and the Duke Mathematics Department during the Fall of 2022, when this work was started. The authors thank Jonathan Mattingly for many fruitful discussions on this topic. 
 	
\section{Notation, Assumptions and Main Results}
\label{sec:mainresults}

In this paper, we study a general class of stochastic differential equations (SDEs) of the following form: 
\begin{align}
\label{eqn:SDEmain}
    \begin{cases}
        dx_t^\epsilon = [-\epsilon Z(x_t^\epsilon) + Z_0(x_t^\epsilon)] \, dt + \sqrt{2\epsilon} \sum_{j=1}^r Z_j(x_t^\epsilon) \circ \, dB_t^j\\
        x_0^\epsilon= x\in \mathcal{X},
    \end{cases}
\end{align}
where $\epsilon \in (0,1)$ is a small parameter, $Z, Z_0, \ldots, Z_r$ are $C^\infty$ vector fields defined on an open subset $\mathcal{X}$ of $\RR^d$, and $B^1_t, \ldots, B_t^r$ are standard, independent real-valued Brownian motions defined on a probability space $(\Omega, \mathcal{F}, \PP, \EE)$.  The $\circ$ in~\eqref{eqn:SDEmain} indicates the Stratonovich convention of the stochastic integral, which is made merely for convenience in relating the dynamics in~\eqref{eqn:SDEmain} to its infinitesimal generator $L_\epsilon$, which in this case has the form
\begin{align}
\label{eqn:Ldef}
L_\epsilon= -\epsilon Z + Z_0 + \epsilon \sum_{j=1}^r Z_j^2 
\end{align}
when acting on sufficiently smooth test functions.  

Note that in~\eqref{eqn:SDEmain}, \eqref{eqn:Ldef} and in the rest of the paper, if $T_d(\mathcal{O})$ denotes the space of $C^\infty$ vector fields on an open subset $\mathcal{O}\subset \RR^d$, we often express $X\in T_d(\mathcal{O})$ in coordinates as 
 \begin{align}
 \label{eqn:vf}
     X= \sum_{j=1}^d X^j \frac{\partial}{\partial x^j}, 
 \end{align}
 where $X^j \in C^\infty(\mathcal{O})$, $j=1,2,\ldots,d$. 
On the other hand, if $x\in \mathcal{O}$, we adopt the convention that $X(x)$ denotes the column vector
 \begin{align}
     X(x)= (X^1(x), \ldots, X^d(x))^T \in \RR^d, 
 \end{align}
 where the $X^i$ are as in~\eqref{eqn:vf}.
 Thus, in particular, $Z(x_t^\epsilon), Z_0(x_t^\epsilon), \ldots, Z_r(x_t^\epsilon)$ in~\eqref{eqn:SDEmain} are the column vectors $Z(x), Z_0(x), \ldots, Z_r(x)$ evaluated at $x=x_t^\epsilon$, while $L_\epsilon$ in~\eqref{eqn:Ldef} is a second-order differential operator.

 For any $X,Y\in T_d(\mathcal{O})$, we let $\text{div}(X)\in C^\infty(\mathcal{O})$ denote the divergence of $X$ and $[X,Y]\in T_d(\mathcal{O})$ denote their commutator; that is, 
 \begin{align}
 \text{div}(X)&= \text{div}\bigg(\sum_{j=1}^d X^j \frac{\partial}{\partial x_j}\bigg) := \sum_{j=1}^d \frac{\partial X^j}{\partial x_j},\\
     \label{eqn:liebracket} [X,Y] &:= \sum_{j=1}^d \bigg(\sum_{k=1}^d X^k\frac{\partial Y^j}{\partial x^k} -Y^k \frac{\partial X^j}{\partial x^k} \bigg) \frac{\partial}{\partial x^j}.
 \end{align}
 
 Fundamental to the structure of the vector fields $-\epsilon Z+ Z_0, Z_1, \ldots, Z_r$ in equation~\eqref{eqn:SDEmain} is a H\"{o}rmander bracket spanning condition satisfied in a uniform sense with respect to the parameter $\epsilon \in  (0,1)$.  In order to describe it, for $\mathcal{O}\subset \RR^d$ open, we let $S_{F}(\mathcal{O})$ denote the set of $u_\epsilon \in C^\infty(\mathcal{O})$ parameterized by $\epsilon \in (0,1)$ such that for every multi-index $\alpha$, there exists a constant $C_\alpha >0$ such that 
 \begin{align*}
\sup_{\epsilon \in (0,1)} \|D^\alpha u_\epsilon \|_{L^\infty(\mathcal{O})} \leq C_\alpha.
 \end{align*} 
 We let $S_d(\mathcal{O})$ denote the set of parameterized vector fields $X_\epsilon \in T_d(\mathcal{O})$ with coefficients belonging to $S_{F}(\mathcal{O})$.  Observe that the space $S_d(\mathcal{O})$ is closed under the Lie bracket.   
\begin{definition}
 \label{def:unifhor}
     Suppose that $\{X_{0, \epsilon}; X_{1, \epsilon}; \ldots; X_{k, \epsilon} \}$ is an ordered list of parameterized vector fields contained in $S_d(\mathcal{O})$.  We say that the family $\{X_{0, \epsilon}; X_{1, \epsilon}; \ldots; X_{k, \epsilon} \}$ satisfies the \emph{uniform parabolic H\"{o}rmander condition on} $\mathcal{O}$ \emph{with respect to} $\epsilon \in (0,1)$ if the list  
     \begin{align}
     \label{eqn:horlist}
    &\begin{aligned}
        &X_{j_1, \epsilon}, &&\quad j_1\in\{1,2, \ldots, k\}, \\
        &[X_{j_1, \epsilon}, X_{j_2, \epsilon}], &&\quad j_i \in\{ 0,1,\ldots, k\},\\
        &[X_{j_1, \epsilon}, [X_{j_2, \epsilon}, X_{j_3, \epsilon}]], &&\quad j_i \in \{0,1,\ldots, k\},\\
        &\qquad \vdots && \qquad \vdots
    \end{aligned}
\end{align}
contains a finite list $Y_{1, \epsilon}, Y_{2, \epsilon}, \ldots, Y_{\ell, \epsilon} \in S_d(\mathcal{O})$, the length $\ell$  of which is independent of $\epsilon \in (0,1)$, such that for every $j=1,2,\ldots, d$, there exist $\{a_{jk, \epsilon}\}_{k=1}^\ell \subset S_F(\mathcal{O})$ for which 
\begin{align}
e_j= \sum_{k=1}^\ell a_{jk, \epsilon}(x) Y_{k, \epsilon}(x)
\end{align} 
for all $x\in \mathcal{O}$ and $\epsilon \in (0,1)$, where $e_j$ denotes the standard basis vector.  

 \end{definition}

\begin{example}
\label{ex:simple1}
Consider $d=2$ with coordinates $z=(x,v)\in \RR^2$, and set $X_{0, \epsilon}=v\partial_x -  \epsilon v  \partial_v\in S_2(\RR^2)$, $X_{1, \epsilon}= \partial_v\in S_2(\RR^2)$.  Then $[X_{1,\epsilon}, X_{0, \epsilon}]= \partial_x - \epsilon \partial_v$, so $\{X_{0, \epsilon}; X_{1,\epsilon}\}$ satisfies the uniform parabolic H\"{o}rmander condition on any open set $\mathcal{O}\subset \RR^2$.  Indeed, note that 
\begin{align*}
e_1 = [X_{1,\epsilon}, X_{0, \epsilon}](z) + \epsilon X_{1,\epsilon}(z) \qquad \text{ and } \qquad e_2 = X_{1, \epsilon}(z)
\end{align*}  
for any $z\in \RR^2$ and the coefficients $1$ and $\epsilon$ belong to $S_F(\RR^2)$. 
\end{example}

  We will further assume that part of the dynamics~\eqref{eqn:SDEmain} has a conserved quantity $H$ which induces the state space $\mathcal{X}$ of the solution $\{x_t^\epsilon\}_{t\geq  0}$ of~\eqref{eqn:SDEmain}.  In particular, we fix $H:\RR^d\rightarrow [0, +\infty]$ and define subsets $H_R, R>0,$ and $\mathcal{X}$ of $\RR^d$ by
\begin{align}
H_{R}= \{ x\in \RR^d \, : \, H(x) < R \} \qquad \text{ and } \qquad
 \mathcal{X}:= \{ x\in \RR^d \, : \, H(x) < \infty \}.
\end{align}
Throughout, we will make the following assumptions on the function $H$ and the vector fields $Z, Z_0, Z_1, \ldots, Z_r$. 
\begin{assump}
\label{assump:H} ~~~~~
\begin{itemize}    
\item[(H1)]  $H$ is independent of $\epsilon \in (0,1)$. 
    \item[(H2)] For every $R>0$, $H_{R}$ is open and $\overline{H_{R}}$ is compact. 
    \item[(H3)] $H$ is a smooth function on the set where it is finite; that is, $H\in C^\infty(\mathcal{X})$.
    \item[(H4)]  $\mathcal{X}$ is connected.
    \end{itemize}
\end{assump}

\begin{assump}
\label{assump:V}
~~~
\begin{itemize}
    \item[(V1)] $Z, Z_0, Z_1, Z_2, \ldots, Z_r \in T_d(\mathcal{X})$ are independent of $\epsilon \in (0,1)$.
    \item[(V2)] $\text{div}(Z_0)=\text{div}(Z_1)=\cdots =\text{div}(Z_r)=0$ on $\mathcal{X}$ and $\frac{1}{2}\| \text{div}(Z) \|_{L^\infty(\mathcal{X})}< \inf_{x\in \mathcal{X}} \text{div}(Z)$. 
    \item[(V3)] $Z_0$ conserves $H$; that is, $Z_0 H=0$ on $\mathcal{X}$.   
    \item[(V4) ]  There exists $\eta, d_*>0$ independent of $\epsilon \in (0,1)$ such that    
    \begin{align*}
   &e^{-\eta H} \textstyle{\sum_{j=1}^r} (|Z_j H|^2 + |Z_j^2 H|+1) \in L^1(\mathcal{X}, dx)   \quad \text{ and }\\
   & \quad  \eta \textstyle{\sum_{j=1}^r (Z_j H)^2} \leq ZH +\textstyle{ \sum_{j=1}^r} Z_j^2 H \leq ZH + d_*/2 .
    \end{align*}    
 
    \item[(V5)] For every $R>0$, the ordered collection $\{ -\epsilon Z+ Z_{0}; Z_1; \ldots; Z_r\}$ satisfies the uniform parabolic H\"{o}rmander condition with respect to $\epsilon \in (0,1)$ on $H_{R}$.   
\end{itemize}
\end{assump}
\begin{remark}
    We allow for $H$ to take the value $+\infty$ in order to treat conserved quantities containing singularities, such as Lennard--Jones or Coloumb interaction potentials in the case of randomly perturbed Hamiltonian systems, where $H$ is the Hamiltonian.  In the context of finite-dimensional models in fluid mechanics, e.g. finite-dimensional Galerkin projections of the Navier-Stokes' equations, we can take $H(x)= |x|^2$ and $\mathcal{X}=\RR^d$.  In this case, $H_{R}$ is simply the ball of radius $\sqrt{R}>0$ centered at the origin.    
\end{remark}

 \begin{remark}
 Observe that Assumption~\ref{assump:V} (V2) implies $\inf_{x\in \mathcal{X}} \text{div}(Z)>0$.  Thus, when considering equation~\eqref{eqn:SDEmain}, $-Z(x_t^\epsilon)$ is a damping term.  This is also reflected in the inequalities in Assumption~\ref{assump:V} (V4) as explained in the next remark.  
 \end{remark}
 
 \begin{remark}
 \label{rem:subinv}
 Perhaps the least obvious assumption in the list (V1)-(V5) is condition (V4).  The first condition in (V4) is an integrability condition on the measure $e^{-\eta H} dx$. The inequality 
 \begin{align}
 \label{eqn:subinvs}
 \eta \textstyle{\sum_{j=1}^r (Z_j H)^2} \leq ZH +\textstyle{ \sum_{j=1}^r} Z_j^2 H
 \end{align}
  translates to $e^{-\eta H} \, dx$ being `close' to a subinvariant measure for the Markov process $\{ x_t^\epsilon \}_{t\geq 0}$.  Indeed, note that if $L^*_\epsilon$ denotes the formal $L^2$ adjoint of $L_\epsilon$, then by equation~\eqref{eqn:subinvs} and Assumption~\ref{assump:V} (V1)-(V3) we have 
 \begin{align*}
 L^*_\epsilon (e^{-\eta H}) =\epsilon (  \text{div}(Z) + Z +  \textstyle{\sum_{j=1}^r} Z_j^2)e^{-\eta H} \leq \epsilon \text{div}(Z) e^{-\eta H}.   
 \end{align*}  
 In the context of random perturbations of Hamiltonian dynamics, such a measure (for some $\eta >0$) can be an invariant measure.  However, in the context of various fluid models, the form of invariant measures is often not known. Nevertheless, one still has such `subinvariance' structure.  
 
 Finally, the global inequalities
  \begin{align}
 \label{eqn:subinvs2}
 \eta \textstyle{\sum_{j=1}^r (Z_j H)^2} \leq ZH +\textstyle{ \sum_{j=1}^r} Z_j^2 H \leq ZH + d_*/2
 \end{align} 
together induce basic Lyapunov structure, giving strong existence and uniquess of equation~\eqref{eqn:SDEmain}.  Indeed, taking $V=H+1$, we observe that by~\eqref{eqn:subinvs2}  and Assumption~\ref{assump:V} (V1), (V3) 
 \begin{align}
 L_\epsilon V = -\epsilon Z H + \epsilon \textstyle{\sum_{j=1}^r} Z_j^2 H \leq 2 \epsilon  \textstyle{\sum_{j=1}^r} Z_j^2 H \leq d_* \epsilon. \end{align}
 It follows by this bound Assumption~\ref{assump:H} that $\{ x_t^\epsilon \}_{t\geq 0}$ is non-explosive.  That is, if 
\begin{align*}
\tau_n^\epsilon = \inf \{ t \geq 0 \, : \, x_t^\epsilon \notin H_{n } \}
\end{align*}  
and $\tau^\epsilon = \lim_{n\rightarrow \infty} \tau_n^\epsilon$, then for every $x\in \mathcal{X}$ it follows that
\begin{align}
\PP_x \{\tau^\epsilon < \infty \}=0.
\end{align}

 \end{remark}

In addition to the solution of~\eqref{eqn:SDEmain} being non-explosive, Assumptions~\ref{assump:H}-\ref{assump:V} furthermore imply that the solution $\{x_t^\epsilon\}_{t\geq 0}$ is a Markov process with corresponding Markov semigroup, which we denote by $\{ \mathcal{P}^\epsilon_t\}_{t\geq 0}$.  The semigroup $\{ \mathcal{P}_t^\epsilon\}_{t\geq 0}$ acts on bounded, Borel measurable measurable functions $\phi: \mathcal{X}\rightarrow \RR$ by
\begin{align}
\mathcal{P}_t^\epsilon \phi(x) :=\EE_x \phi(x_t^\epsilon) 
\end{align}
and dually on probability measures $\nu$ defined on the Borel subsets $\mathcal{B}(\mathcal{X})$ of $\mathcal{X}$ by
\begin{align}
 \nu\mathcal{P}_t^\epsilon (A) = \int_\mathcal{X} \nu(dx) \mathcal{P}_t^\epsilon \mathbf{1}_A(x), \,\,\, A \in \mathcal{B}(\mathcal{X}). 
\end{align}  
We use the notation 
\begin{align}
\mathcal{P}_t^\epsilon(x, A)= \mathcal{P}_t^\epsilon \mathbf{1}_{A}(x), \,\,\, x\in \mathcal{X},\,\,\, A \in \mathcal{B}(\mathcal{X}),
\end{align}
to denote the Markov transitions of $\{x_t^\epsilon\}_{t\geq 0}$.  

Note also that, under Assumptions~\ref{assump:H}-\ref{assump:V} (especially Assumption~\ref{assump:V} (V5)) for each $x\in \mathcal{X}$ and $t>0$, the measure $\mathcal{P}_t^\epsilon(x,\,\cdot \,)$ is absolutely continuous with respect to Lebesgue measure on $\mathcal{X}$ with a smooth transition density $p_{t}^\epsilon(x,y)$; that is, for each $t>0$ and $x\in \mathcal{X}$
\begin{align}
\mathcal{P}_t^\epsilon(x, dy) = p_{t}^\epsilon(x,y) \, dy
\end{align}
and $(t,x,y) \mapsto p_{t}^\epsilon(x,y) \in C^\infty((0, \infty) \times \mathcal{X}^2)$.  This follows from an application of H\"{o}rmander's hypoellipticity theorem~\cite{Hor_67} as in~\cite{RB_06}.

In addition to the assumptions above, we will also need to know some behavior about $\{x_t^\epsilon\}_{t\geq 0}$ near certain boundaries in space.  This employs the notion of \emph{regular points} of diffusions, which we now describe in detail in the context of a general, path-continuous stochastic process $\Xi=( X_t)_{t\geq 0}$ with state space $\RR^d$ defined on $(\Omega, \mathcal{F}, \PP, \EE)$.  This concept is important as it will allow us to solve various boundary value PDEs related to the process $\{ x_t^\epsilon\}_{t\geq 0}$ in the classical sense.  

Suppose that $\mathcal{O}\subset \RR^d$ is a bounded open set with continuous boundary $\partial \mathcal{O}$.  Define the first positive exit time of $\Xi$ from the closure $\overline{\mathcal{O}}$:   
\begin{align}
\tau_{\overline{\mathcal{O}}}^{\Xi}= \inf \{ t >0 \, : \, X_t \notin \overline{\mathcal{O}}\}.
\end{align}      
We call the set $\mathcal{O}$ \emph{boundary regular} for $\Xi$ if for all $x_* \in \partial \mathcal{O}$
\begin{align}
\PP_{x_*} \big\{ \tau_{\overline{\mathcal{O}}}^{\Xi} > 0 \big\} =0.
\end{align}
That is, $\mathcal{O}$ is boundary regular for $\Xi$ if the process instantaneously leaves the closure $\overline{\mathcal{O}}$ when started at any boundary point. 

\begin{remark}
\label{rem:BR}
There are very simple examples of hypoelliptic diffusions with domains $\mathcal{O}$ which are not boundary regular.   We provide here the same example as in~\cite{Ok_13}. Consider the open square $\mathcal{O}=(-1,1)\times (-1,1)$ and the diffusion $(X_t, Y_t)$ on $\RR^2$ given by 
\begin{align}
\label{eqn:BRproc}
dX_t &= Y_t^2 \, dt \\
\nonumber dY_t& = \, dB_t
\end{align}   
where $B_t$ is a standard, one-dimensional Brownian motion.  Then it is easy to check that $(X_t, Y_t)$ is hypoelliptic (i.e. has a smooth transition density on all of $(0, \infty) \times \RR^2 \times \RR^2$) using H\"{o}rmander's theorem~\cite{Hor_67}, but any point $(x_*, y_*)\in  \{-1\}\times [-1/2, 1/2]$ is not regular for the process $\{(X_t, Y_t) \}_{t\geq 0}$ since, when started at $(x_*, y_*)\in\{-1\}\times [-1/2, 1/2]$ the process immediately enters the interior of the domain $\mathcal{O}$, spending a positive amount of time in $\mathcal{O}$ prior to exiting $\overline{\mathcal{O}}$.  \end{remark}

\begin{remark}
In order to obtain classical existence and uniqueness of the associated Dirichlet or Poisson problems on a bounded domain $\mathcal{O}\subset \RR^d$ with, for example, operator $L_\epsilon$ as in~\eqref{eqn:Ldef}, it is critical to know that $\mathcal{O}$ is boundary regular for $\{x_t^\epsilon\}_{t \geq 0}$.  Importantly, being boundary regular implies both uniqueness and that the solution is continuous up to the boundary $\partial \mathcal{O}$.  See, for example, \cite{FH_23, Ok_13}.   
\end{remark}

In addition to the process $\{x_t^\epsilon \}_{t\geq 0}$, we will also use the process which has its drift time-reversed. That is, we consider the solution $x_t^{-, \epsilon}$ of the following Stratonovich SDE 
\begin{align}
\begin{cases}
dx_t^{-, \epsilon} = [\epsilon Z(x_t^{-, \epsilon} )- Z_0(x_t^{-, \epsilon} )] \, dt + \sqrt{2\epsilon} \textstyle{ \sum_{j=1}^r Z_j(x_t^{-, \epsilon}) }\circ dB_t^j \\
x_0^{-, \epsilon}= x\in \mathcal{X}. 
\end{cases}
\end{align}
Note that $x_t^{-, \epsilon}$ is Markovian up until its time of explosion, which may be finite with positive probability even under our assumptions.  To deal with this, if 
\begin{align}
\tau_n^{-, \epsilon} =\inf\{ t\geq 0 \, : \, x_t^{-, \epsilon} \notin H_n \}\qquad \text{ and } \qquad \tau^{-, \epsilon} = \lim_{n\rightarrow \infty} \tau_n^{-, \epsilon},
\end{align}
we fix $\Delta \notin \mathcal{X}$ and set $x_{t}^{-, \epsilon}= \Delta $ for $t\geq \tau^{-, \epsilon}$.  Because being boundary regular is a local concept, it makes sense for the process $\{x_{t}^{-, \epsilon} \}_{t\geq 0}$ which is defined on $\mathcal{X} \cup \{ \Delta \}$ but path continuous up until $\tau^{-, \epsilon}$.    

\begin{assump}
\label{assump:BR}
Let $\{z_t^\epsilon\}_{t\geq 0} \in \{ \{x_t^\epsilon\}_{t\geq 0}, \{x_t^{-, \epsilon}\}_{t\geq 0} \}$.  For every $R>0$, there exists a bounded open set $\mathcal{O}_{R}$  with continuous boundary $\partial \mathcal{O}_{R}$ such that $$H_R\subset \mathcal{O}_{R}\subset \overline{\mathcal{O}_{R}} \subset \mathcal{X}$$ and such that $\mathcal{O}_{R}$ is boundary regular for $\{ z_t^\epsilon\}_{t\geq 0}$.  
\end{assump}

\begin{remark}
\label{rem:BR2}
Let us now revisit the example in Remark~\ref{rem:BR} to see how Assumption~\ref{assump:BR} is relevant.  Indeed, we saw that $\mathcal{O}=(-1,1) \times (-1,1)$ is not boundary regular for~\eqref{eqn:BRproc}.  However, take any ball $B$ containing $(-2,2) \times (-2,2)$ and select a sequence of points on the boundary $x_1,x_2, \ldots, x_k\in \partial B$ so that the closed convex hull of $x_1, x_2, \ldots, x_k$, denoted by $[x_1, x_2, \ldots, x_k]$, satisfies $\text{interior}([x_1, \ldots, x_k]) \supset \mathcal{O}$ and such that each face on $\partial[x_1, \ldots, x_k]$ is not vertical.  We will see below in Section~\ref{sec:examples} how to, more precisely, construct such domains.  Then it can be shown that $\text{interior}([x_1, \ldots, x_k])$ is boundary regular for \eqref{eqn:BRproc}~\cite[Corollary 7.10]{CFH_22}.  This is intuitively because the noise in the $Y$ direction forces the process to immediately exit the domain when started on the constructed boundary.   
\end{remark}

\begin{remark}
Assumption~\ref{assump:BR} is perhaps the most technical condition we employ.  However, we will see how to exhibit the domains $\mathcal{O}_{R}$ which are boundary regular for $\{z_t^\epsilon \}_{t\geq 0}\in \{ \{x_t^\epsilon\}_{t\geq 0}, \{x_t^{-, \epsilon}\}_{t\geq 0} \}$.  In the case of Langevin dynamics~\eqref{eqn:LD}, we will see that every level set $H_R$, for $R\gg 1$, is boundary regular for the process $\{z_t^\epsilon \}_{t\geq 0}$.  This turns out to be true under Assumption~\ref{assump:Ug} by following a slight generalization of the results in~\cite{FH_24}.  On the other hand, in the context of hypoelliptic SDE models in fluids, these domains can be constructed by taking the convex hull of a sufficient number of randomly selected points on the boundary of a ball similar to Remark~\ref{rem:BR2}. This, in turn, produces a polyhedral domain whose faces, with high probability, are transverse with the span of the noise.  More precisely, if $H_R$ is the ball of radius $\sqrt{R}>0$ centered at the origin in $\RR^d$ and $0<R<R'< \infty$, selecting a sufficient number of points $\xi_1, \xi_2, \ldots, \xi_k$ independently and randomly according to Hausdorff measure on $\partial H_{R'}$, we can ensure that, with high probability, 
\begin{align}
H_R \subset \text{interior}([\xi_1, \ldots, \xi_k]) 
\end{align}       
where $[\xi_1, \ldots, \xi_k]$ is the closed convex hull of $\xi_1, \ldots, \xi_k$.  Furthermore, with high probability, we can ensure that every face on $\partial[\xi_1, \ldots, \xi_k]$ is not parallel with a fixed constant direction, say $v\in \RR^d$.  See Section~\ref{sec:examples} for further details of these points.  If $v\in \RR^d$ is in $\text{span}\{ Z_1(x_*), \ldots, Z_r(x_*)\}$ for every $x_* \in \partial  [\xi_1, \ldots, \xi_k]$, then it follows by picking a convenient realization that $\text{interior}([\xi_1, \ldots, \xi_k])$ is boundary regular for $\{z_t^\epsilon\}_{t\geq 0}$~\cite[Corollary 7.10]{CFH_22}.    
\end{remark}

\begin{remark}
In order to get around the issues in the hypoelliptic setting with classical existence and uniqueness of various Poisson problems, an elliptic regularization is employed in~\cite{BL_21} with parameter $\delta >0$.  The dependence on various estimates with respect to $\delta >0$ has to be tracked and later removed.  Below, we do not employ an elliptic regularization.  Thus one can view Assumption~\ref{assump:BR} as an alternative approach such regularization.   
\end{remark}

Given the assumptions above, we can now state the main result to be proven in this paper.

 \begin{theorem}[Quantitative minorization]
\label{thm:minorization}
Suppose that Assumptions~\ref{assump:H}-\ref{assump:BR} are satisfied.  For every $R>0$ sufficiently large, there exists $t_0>0$ and $\lambda >0$ independent of $\epsilon \in (0,1)$ such that 
\begin{align}
\label{eqn:thmminor}
\inf_{\epsilon \in (0,1)}\inf_{x,y \in H_R} p_{t_0/\epsilon}^\epsilon(x,y) \geq \lambda.
\end{align}
\end{theorem}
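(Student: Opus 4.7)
The proof will follow the broad strategy sketched in the introduction: bypass the absence of an invariant measure by working with time-averages of the transition density on the slow timescale, obtain uniform-in-$\epsilon$ Sobolev regularity and upper bounds via the parameter-dependent Hörmander and Moser theorems, convert a uniform positive lower bound in measure into pointwise lower bounds via De~Giorgi iteration, and finally upgrade this to a pointwise bound at a single time via a quantitative small-set argument. Throughout, I rescale to the slow clock $\tau = \epsilon t$ and write $q^\epsilon_\tau(x,y) := p^\epsilon_{\tau/\epsilon}(x,y)$, along with the time-average
\begin{align*}
\bar q^\epsilon_\tau(x,y) := \frac{1}{\tau - 1}\int_1^\tau q^\epsilon_s(x,y)\, ds,\qquad \tau > 1.
\end{align*}
The observation driving the whole scheme is that while $q^\epsilon_\tau$ has $\partial_\tau q^\epsilon_\tau = O(\epsilon^{-1})$ because of the large transport along level sets of $H$, the average $\bar q^\epsilon_\tau$ picks up an extra derivative in $\tau$ from the fundamental theorem of calculus and is therefore as regular in $\tau$ as in the spatial variables.

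First I would combine Assumption~\ref{assump:V}~(V5) with the parameter-dependent Hörmander theorems (Theorem~\ref{thm:Hssmooth3} and Theorem~\ref{thm:tdhor}) to obtain a uniform-in-$\epsilon$ $H^s_{\mathrm{loc}}$ estimate for $\bar q^\epsilon$ on compact subsets of $(1,\infty)\times H_R \times H_R$ for some $s>0$. In parallel, I would exploit the subinvariance in Assumption~\ref{assump:V}~(V4), which yields $L_\epsilon^*(e^{-\eta H}) \le \epsilon\,\mathrm{div}(Z)\,e^{-\eta H}$ as in Remark~\ref{rem:subinv}, to show that mass cannot escape large Hamiltonian sublevel sets on $O(1)$ rescaled time. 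A Chebyshev-type comparison against the dual evolution of $e^{-\eta H}$ then yields a positive lower bound in measure: there exist $\delta,\tau_0>0$ independent of $\epsilon$ such that
\begin{align*}
\big|\{(x,y)\in H_R \times H_R : q^\epsilon_{\tau_0}(x,y) \ge \delta\}\big| \ge \delta,
\end{align*}
and averaging transfers this bound to $\bar q^\epsilon$.

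The heart of the proof is then to turn this measure-theoretic positivity into pointwise positivity. Applying the quantitative hypoelliptic Moser iteration (Theorem~\ref{thm:moser2}) to the parabolic equation satisfied by $\bar q^\epsilon$ gives a uniform-in-$\epsilon$ $L^\infty$ upper bound on $\bar q^\epsilon$ over relatively compact subsets of $H_R\times H_R$. Feeding the upper bound and the measure-positivity into a hypoelliptic De~Giorgi iteration produces a uniform pointwise lower bound
\begin{align*}
\inf_{\epsilon \in (0,1)}\ \inf_{\tau \in [\tau_1,\tau_2]}\ \inf_{(x,y)\in H_R\times H_R}\ \bar q^\epsilon_\tau(x,y)\ \ge\ \delta_*
\end{align*}
for some $\delta_*>0$ and an $O(1)$ window $[\tau_1,\tau_2]$. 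In Markovian language, this says that $H_R$ is \emph{uniformly petite} on the slow clock. Assumption~\ref{assump:BR} enters crucially here: boundary regularity of the auxiliary domains $\mathcal{O}_R$ for both $x_t^\epsilon$ and the time-reversed process $x_t^{-,\epsilon}$ allows classical solvability of the Dirichlet and Poisson problems underlying both the Moser and De~Giorgi iterations, replacing the elliptic regularization used in~\cite{BL_21}.

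Finally, I would upgrade the petite bound on the time-average into a pointwise bound on $q^\epsilon$ at a single time by adapting the small-set construction from~\cite{MT_12}. The petite bound forces quantitative-in-$\epsilon$ control on return times into a set on which the one-step density is bounded below; combining this with Chapman--Kolmogorov, the Moser-type $L^\infty$ bounds, and the time-averaged minorization yields a uniform pointwise lower bound on $q^\epsilon_{t_0}(x,y)$ for some $t_0>0$ independent of $\epsilon$, which, after undoing the rescaling, is precisely~\eqref{eqn:thmminor}. I expect the principal obstacle to be the De~Giorgi iteration: implementing the level-set argument rigorously for the degenerate parabolic equation satisfied by $\bar q^\epsilon$, while tracking every constant as a function of $\epsilon$, requires a delicate interplay between the anisotropic Hörmander geometry induced by $-\epsilon Z+Z_0,Z_1,\ldots,Z_r$, the subinvariant weight $e^{-\eta H}$, and the boundary-regular domains from Assumption~\ref{assump:BR}; keeping the resulting constants truly uniform in $\epsilon$ is the technical core of the argument.
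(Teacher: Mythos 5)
Your overall architecture coincides with the paper's: uniform-in-$\epsilon$ $H^s$ and $L^\infty$ upper bounds via the quantitative H\"{o}rmander and Moser theorems (Theorem~\ref{thm:upperboundo}), a lower bound in measure from the subinvariance/Lyapunov structure, a De~Giorgi iteration converting this into a pointwise lower bound on a time-averaged density (the paper uses the exponentially weighted average $h^\epsilon$ rather than your arithmetic average $\bar q^\epsilon_\tau$, but this is a technical convenience so that $h^\epsilon$ satisfies a clean resolvent-type inequality $\mathcal{L}^*_{\epsilon,y}h^\epsilon \le \alpha h^\epsilon$), and finally a quantitative small-set argument in the spirit of \cite{MT_12} to pass from the time-integrated bound to a bound at a fixed time.

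There is, however, one genuine gap in your last step. The small-set machinery only produces a subset $S\subseteq[0,7T]$ of times, of measure at least $\delta_1$, at which $p_t$ is bounded below on $E\times E$; because there is no quantitative control on $\partial_t q_t^\epsilon$ (this is exactly the obstruction that forced the time-averaging in the first place), the set $S$ has no usable structure, and you cannot locate a single fixed time $t_0$ inside it, nor inside any finite sumset of it, by continuity or by a soft Steinhaus argument --- the qualitative Steinhaus theorem yields an interval in $S+S$ whose length depends on the unknown structure of $S$, hence potentially on $\epsilon$. The paper resolves this with a quantitative Steinhaus-type theorem (Lemma~\ref{lem:aperiodic}): for any measurable $A\subseteq[0,L]$ with $|A|\ge\eta$, the $n$-fold sumset $nA$ contains a unit interval for some $n$ depending only on $\eta$ and $L$. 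Its proof discretizes $A$ and invokes a sumset theorem of Lev from additive combinatorics. Without this ingredient (or a substitute), ``combining with Chapman--Kolmogorov'' does not pin down a $t_0$ and a $\lambda$ that are uniform in $\epsilon$; your write-up as it stands would only give, for each $\epsilon$, \emph{some} good time in a set of positive measure.
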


As a corollary of Theorem~\ref{thm:minorization}, if the dynamics has a normalizable invariant measure, we can also discuss its rate of convergence to equilibrium quantitatively with respect to $\epsilon \in (0,1)$, provided we upgrade the basic Lyapunov structure ensured by Assumption~\ref{assump:V}.  Below, for simplicity, we use a Lyapunov function which ensures an exponential rate of convergence (on the $1/\epsilon$ timescale), but one can readily use Theorem~\ref{thm:minorization} coupled with slightly weaker Lyapunov bounds to ensure quantitative subexponential rates of convergence to equilibrium.  See, for example, \cite{DFG_09, Hai_10} and the references therein.   

Recall that a Borel measure $\nu_\epsilon$ is called \emph{invariant} for $\{ \mathcal{P}_t^\epsilon \}_{t\geq 0}$ if for all $A\in \mathcal{B}(\mathcal{X})$ 
\begin{align}
\nu_\epsilon \mathcal{P}_t^\epsilon (A):= \int_\mathcal{X} \nu_\epsilon(dx) \mathcal{P}_t^\epsilon(x, A) = \nu_\epsilon(A).
\end{align} 
We note that, in the above, $\epsilon \in (0,1)$ is fixed.  An invariant measure $\nu_\epsilon$ for $\{ \mathcal{P}_t^\epsilon \}_{t\geq 0}$ is called an \emph{invariant probability measure} if $\nu_\epsilon(\mathcal{X})=1$. 

\begin{assump}
\label{assump:Lp}
    There exists $V_\epsilon\in C^2(\mathcal{X}; [1,\infty))$, $\epsilon \in (0,1)$, such that:
    \begin{itemize}
        \item[(LF1)] For every $R>0$, there exist constants $v_R>0$ and $V_R$ independent of $\epsilon \in (0,1)$ such that $v_R\rightarrow \infty$ as $R\rightarrow \infty$ and 
        \begin{align*}
       v_R \leq  \inf_{x\in H_R^c} \, V_\epsilon(x)\leq  \sup_{x\in H_R} V_\epsilon(x) \leq V_R . 
        \end{align*}
        \item[(LF2)]  For some $d_1, d_2, R>0$ independent of $\epsilon \in (0,1)$, we have the global bound
        \begin{align*}
L_\epsilon V_\epsilon(x) \leq -\epsilon  d_1 V_\epsilon + d_2 \epsilon \mathbf{1}_{H_R}
        \end{align*}
     on $\mathcal{X}$. 
    \end{itemize}\end{assump}

\begin{theorem}
\label{thm:main}
   Suppose that Assumptions~\ref{assump:H}-\ref{assump:Lp} are satisfied.  Then there exists a unique invariant probability measure $\mu^\epsilon$ for $\{\mathcal{P}_t^\epsilon\}_{t\geq 0}$.  Furthermore, there exist constants $c, C>0$ independent of $\epsilon \in (0,1)$ and $x\in \mathcal{X}$ such that 
   \begin{align}
   \sup_{|\phi| \leq V_\epsilon} \bigg| \mathcal{P}_{t}^\epsilon \phi(x) -  \mu^\epsilon(\phi) \bigg| \leq C V_\epsilon(x) e^{- c \epsilon t }
   \end{align}   
   for all $t\geq 0$, $x\in \mathcal{X}$.  In the above, $V_\epsilon$ is as in Assumption~\ref{assump:Lp} and $\mu^\epsilon(\phi) = \int_\mathcal{X} \phi(x) \mu^\epsilon(dx)$.   
\end{theorem}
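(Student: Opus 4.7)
The plan is to combine the uniform minorization of Theorem~\ref{thm:minorization} with the Lyapunov structure of Assumption~\ref{assump:Lp} via a quantitative Harris-type theorem in the form of Hairer--Mattingly~\cite{HM_11}, applied to the discrete-time skeleton chain $\{\mathcal{P}^\epsilon_{n t_0/\epsilon}\}_{n\geq 0}$. The key observation is that the natural relaxation scale imposed by (LF2) is precisely the scale $\epsilon^{-1}$ on which Theorem~\ref{thm:minorization} provides a quantitative small set; this matching causes every constant appearing in the Harris theorem to come out $\epsilon$-independent, producing the optimal rate $c\epsilon$.

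First I would deduce a discrete Lyapunov bound at time step $T_\epsilon := t_0/\epsilon$. Since $V_\epsilon \in C^2(\mathcal{X},[1,\infty))$ and $\{x_t^\epsilon\}_{t\geq 0}$ is non-explosive (cf.\ Remark~\ref{rem:subinv}), a standard stopping-time localization with Dynkin's formula applied to (LF2) produces
\begin{align*}
\frac{d}{dt}\mathcal{P}^\epsilon_t V_\epsilon(x) \leq -\epsilon d_1 \mathcal{P}^\epsilon_t V_\epsilon(x) + \epsilon d_2,
\end{align*}
whence by Gronwall $\mathcal{P}^\epsilon_t V_\epsilon(x) \leq e^{-\epsilon d_1 t} V_\epsilon(x) + (d_2/d_1)(1-e^{-\epsilon d_1 t})$. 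Evaluating at $t = T_\epsilon$ yields $\mathcal{P}^\epsilon_{T_\epsilon} V_\epsilon \leq \gamma V_\epsilon + K$ with $\gamma = e^{-d_1 t_0}\in (0,1)$ and $K=d_2/d_1$, both independent of $\epsilon$. Next I would recast Theorem~\ref{thm:minorization} as a small-set condition: for $x \in H_R$ and Borel $A \subseteq \mathcal{X}$,
\begin{align*}
\mathcal{P}^\epsilon_{T_\epsilon}(x,A) = \int_A p^\epsilon_{T_\epsilon}(x,y)\,dy \geq \lambda\,|H_R\cap A| = (\lambda |H_R|)\,\eta(A),
\end{align*}
where $\eta$ is uniform probability measure on $H_R$ (depending only on $R$), so that $H_R$ is a small set for the skeleton chain with uniform coefficient $\alpha := \lambda |H_R|>0$.

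Choosing $R$ large enough that the sublevel set $\{V_\epsilon \leq 2K/(1-\gamma)\}$ lies inside $H_R$ for every $\epsilon \in (0,1)$ --- possible by (LF1), since $v_R \to \infty$ --- one can invoke the quantitative Harris theorem of~\cite[Theorem 1.3]{HM_11}: there is a unique invariant probability $\mu^\epsilon$ for the skeleton chain and constants $\tilde C>0$, $\beta \in (0,1)$, both independent of $\epsilon$, with
\begin{align*}
\sup_{|\phi|\leq V_\epsilon} |\mathcal{P}^\epsilon_{nT_\epsilon}\phi(x) - \mu^\epsilon(\phi)| \leq \tilde C V_\epsilon(x) \beta^n, \qquad n \geq 0,\ x \in \mathcal{X}.
\end{align*}
Uniqueness at the skeleton level, together with the semigroup property, forces $\mu^\epsilon$ to be invariant under the full continuous-time semigroup $\{\mathcal{P}_t^\epsilon\}_{t\geq 0}$ as well. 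To pass to arbitrary $t \geq 0$, write $t = nT_\epsilon + r$ with $r \in [0, T_\epsilon)$, apply the Markov property, and absorb the remainder time using the continuous Lyapunov bound of the first step; this produces the claimed bound with $c = -\log(\beta)/t_0$.

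The hard part is truly Theorem~\ref{thm:minorization}; everything here is an essentially textbook application of the Harris framework. The only delicate point is tracking that the $\epsilon$ in (LF2) exactly cancels the $\epsilon^{-1}$ in the skeleton time $T_\epsilon$, forcing the contraction factor $\gamma$, the small-set coefficient $\alpha$, and hence the Hairer--Mattingly constants $\tilde C, \beta$ to all be $\epsilon$-independent; this cancellation is precisely what makes the final rate sharp and of order $\epsilon$.
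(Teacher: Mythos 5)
Your proposal is correct and follows exactly the route the paper itself indicates: the paper gives no detailed proof of this theorem, stating only that it ``follows readily from the results in \cite{Hai_10, HM_11} when combined with Theorem~\ref{thm:minorization},'' and your argument---Gronwall on (LF2) to get an $\epsilon$-independent geometric Lyapunov contraction for the skeleton chain at step $t_0/\epsilon$, the minorization of Theorem~\ref{thm:minorization} as a uniform small-set condition on $H_R$ containing the relevant $V_\epsilon$-sublevel set via (LF1), and the quantitative Harris theorem---is precisely that intended application, with the $\epsilon$-bookkeeping handled correctly.
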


The proof of Theorem~\ref{thm:main} follows readily from the results in \cite{Hai_10, HM_11} when combined with Theorem~\ref{thm:minorization}.  Our focus in this paper is on the proof of Theorem~\ref{thm:minorization}.

 \section{Examples}
 \label{sec:examples}
In this section, we provide concrete examples of dynamics which satisfy Assumptions~\ref{assump:H}-\ref{assump:BR}.  For simplicity, all of our examples consider equations of the form~\eqref{eqn:SDEmain} with additive noise, i.e. the vector fields $Z_1, Z_2, \ldots, Z_r$ will all be constant below. This simplifying assumption allows us to easily connect with existing results in the literature.  Similar ideas used in the additive noise setting can be adapted to the state-depending noise setting to produce similar properties, in particular Assumption~\ref{assump:BR}, but we avoid these technicalities to keep the presentation more concise.    

\begin{example}[Langevin dynamics]
\label{ex:LD}
We return to the example presented in the introduction, namely Langevin dynamics~\eqref{eqn:LD} under Assumption~\ref{assump:Ug}.  Our goal here is to show Theorem~\ref{thm:LD} by validating the hypotheses of the general result, Theorem~\ref{thm:minorization}. 

First observe that \eqref{eqn:LD} is of the form~\eqref{eqn:SDEmain} with $d=2n$, $r=n$, $\gamma=\epsilon$ and 
\begin{align}
\label{eqn:vfLD}
Z = v \cdot \nabla_v, \qquad Z_0 =v\cdot \nabla_x -\nabla U(x) \cdot \nabla_v \quad \text{ and }\quad Z_j= \frac{\partial}{\partial v_j} , \,\,\, j=1,2,\ldots, n.
\end{align}

We will prove the following result which implies Theorem~\ref{thm:LD} by way of Theorem~\ref{thm:minorization}.
\begin{theorem}
\label{thm:LD1}
Suppose that Asssumption~\ref{assump:Ug} is satisfied.  Let $d=2n$, $r=n$ and suppose that the vector fields $Z, Z_0, Z_1, \ldots, Z_n$ are as defined in~\eqref{eqn:vfLD}.  Then Assumptions~\ref{assump:H}-\ref{assump:BR} are satisfied.   
\end{theorem}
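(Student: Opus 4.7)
The plan is to verify Assumptions~\ref{assump:H}--\ref{assump:BR} one at a time for the Langevin SDE~\eqref{eqn:LD} under Assumption~\ref{assump:Ug}; together with Theorem~\ref{thm:minorization}, this then yields Theorem~\ref{thm:LD}. The structural items (H1)--(H4) are essentially immediate: $H(x,v) = |v|^2/2 + U(x)$ is $\gamma$-independent and smooth on $\mathcal{X} = \mathcal{Y}\times\RR^n$ by (U1); the sublevel set $H_R$ sits inside $U_R \times B_{\sqrt{2R}}(0)$, which has compact closure by (U2), so $H_R$ is open and $\overline{H_R}$ is compact; since any point in $\overline{H_R}$ satisfies $U\leq R<\infty$, also $\overline{H_R}\subset\mathcal{X}$; and $\mathcal{X}$ inherits connectedness from $\mathcal{Y}$. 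For (V1)--(V3), the vector fields in~\eqref{eqn:vfLD} are $\gamma$-independent, and direct calculation gives $\mathrm{div}(Z_j) = 0$ for $j\geq 1$ (constant), $\mathrm{div}(Z_0) = 0$, and $\mathrm{div}(Z) = n$ constant, so $\tfrac12\|\mathrm{div}(Z)\|_\infty = n/2 < n = \inf_\mathcal{X}\mathrm{div}(Z)$; finally $Z_0 H = v\cdot\nabla U - \nabla U\cdot v = 0$.

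For (V4), I compute $Z_j H = v_j$, $Z_j^2 H = 1$ and $ZH = |v|^2$, so the chain of inequalities reduces to
\[
\eta|v|^2 \;\leq\; |v|^2 + n \;\leq\; |v|^2 + d_*/2,
\]
which holds with $\eta = 1$ and $d_* = 2n$. The integrability condition $e^{-H}(|v|^2 + 2n + 1)\in L^1(\mathcal{X}, dx\,dv)$ follows after integrating the Gaussian factor in $v$ and invoking (U3) in $x$. The uniform parabolic H\"{o}rmander condition (V5) reduces to the single bracket
\[
[Z_j,\, -\epsilon Z + Z_0] \;=\; \partial_{x_j} - \epsilon\,\partial_{v_j},
\]
giving the spanning relations $e_{n+j} = Z_j$ and $e_j = [Z_j,\, -\epsilon Z + Z_0] + \epsilon\,Z_j$ with coefficients $1$ and $\epsilon$ lying in $S_F(H_R)$ uniformly in $\gamma$. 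The spanning list has length $2n$, independent of $\gamma$.

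The technical heart of the argument is Assumption~\ref{assump:BR}. Following the remarks after that assumption, I would take $\mathcal{O}_R = H_{R'}$ for some regular value $R'>R$ of $H$ (available by Sard's theorem), so that $\partial\mathcal{O}_R$ is smooth and $H_R\subset\mathcal{O}_R\subset\overline{\mathcal{O}_R}\subset\mathcal{X}$, and then prove boundary regularity of $\mathcal{O}_R$ for both $\{z_t^\gamma\}_{t\geq 0}$ and its time-reversal $\{z_t^{-,\gamma}\}_{t\geq 0}$. At boundary points with $v\neq 0$, the noise fields $\partial_{v_j}$ have nonzero component along $\nabla H = (\nabla U(x), v)$, and a standard comparison with one-dimensional Brownian motion gives immediate exit. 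The delicate case is boundary points with $v = 0$ (hence $U(x) = R'$), where $\nabla H$ is orthogonal to the noise range; there I would adapt the argument of~\cite{FH_24}, which handles this through a second-order stochastic Taylor expansion showing that the quadratic variation of $|v_t^\gamma|^2$ generated by Brownian motion in $v$ drives $H(z_t^\gamma)$ past the level $R'$ on an arbitrarily short timescale. The time-reversed drift is handled identically since the $v=0$ analysis depends only on the noise structure.

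I expect the boundary-regularity verification to be the principal obstacle, since the remaining items reduce to short explicit computations made transparent by the Hamiltonian structure of~\eqref{eqn:LD}. Modulo this technical input from~\cite{FH_24} (which is written in precisely the Langevin setting of~\eqref{eqn:LD} with smooth $U$), all hypotheses of Theorem~\ref{thm:minorization} are verified, and Theorem~\ref{thm:LD1}---and thereby Theorem~\ref{thm:LD}---follows by direct application.
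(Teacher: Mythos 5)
Your proposal is correct and follows essentially the same route as the paper: the structural assumptions and (V1)--(V5) are verified by the same explicit computations ($Z_jH=v_j$, $ZH=|v|^2$, the bracket $[Z_j,-\epsilon Z+Z_0]=\partial_{x_j}-\epsilon\partial_{v_j}$ with $\eta=1$, $d_*=2n$), and Assumption~\ref{assump:BR} is reduced to the boundary-regularity result of \cite[Theorem 8.4]{FH_24} exactly as in the paper's Lemma~\ref{lem:LD} (the paper takes $\mathcal{O}_R=H_R$ for $R\gg1$ directly rather than passing to a regular value $R'$, and notes only that the time-reversed process requires flipping the sign of the drift). No gaps.
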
 

While most of the proof of Theorem~\ref{thm:LD1} is relatively straightforward, checking Assumption~\ref{assump:BR} requires a little more work.  However, much of this was established recently in \cite[Theorem 8.4]{FH_24}.   
  \begin{lemma}
  \label{lem:LD}
Suppose that Asssumption~\ref{assump:Ug} is satisfied.  Let $d=2n$, $r=n$ and suppose that the vector fields $Z, Z_0, Z_1, \ldots, Z_n$ are as in~\eqref{eqn:vfLD}.  Then Assumption~\ref{assump:BR} is satisfied. 
\end{lemma}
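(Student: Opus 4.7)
The plan is to take $\mathcal{O}_R = H_{R^*}$ for any $R^* \geq R$ that is a regular value of $H$; by Sard's theorem applied to $H \in C^\infty(\mathcal{X})$, such $R^*$ exists in every interval $(R, R+1)$, so this is no loss. Because $U \equiv +\infty$ on $\partial\mathcal{Y}$ and $\overline{U_{R^*}}$ is compact by Assumption~\ref{assump:Ug}(U2), we have $\overline{H_{R^*}} \subset \mathcal{Y} \times \RR^n = \mathcal{X}$, and $\partial H_{R^*} = H^{-1}(R^*)$ is a smooth closed hypersurface. This settles the topological conditions in Assumption~\ref{assump:BR}; the remaining task is to show that $H_{R^*}$ is boundary regular for both the forward process $\{z_t^\gamma\}$ and its drift-reversed counterpart $\{z_t^{-,\gamma}\}$.

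For the forward process, I would fix a boundary point $z_* = (x_*, v_*) \in \partial H_{R^*}$ and study the one-dimensional semimartingale $\xi_t := H(z_t^\gamma) - R^*$. Using the identities $Z_0 H \equiv 0$, $ZH = |v|^2$, and $\sum_{j=1}^n Z_j^2 H = n$, It\^{o}'s formula gives
\begin{align*}
d\xi_t = \gamma\bigl(n - |v_t^\gamma|^2\bigr)\, dt + \sqrt{2\gamma}\, v_t^\gamma \cdot dB_t.
\end{align*}
If $v_* \neq 0$, the quadratic variation of the martingale part stays bounded below on a short interval, so $\xi_t$ is locally a time-changed Brownian motion and Blumenthal's $0$--$1$ law forces $\xi_t$ to change sign in every right-neighborhood of $0$ almost surely, giving $\tau^{z^\gamma}_{\overline{H_{R^*}}} = 0$. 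If $v_* = 0$, then $v_t^\gamma$ itself evolves via a non-degenerate SDE with $v_t^\gamma = O(\sqrt{t})$, so the stochastic integral in $\xi_t$ is $O(t)$ while the drift contribution is $\gamma n\, t + o(t)$; a direct comparison combined once more with Blumenthal's $0$--$1$ law produces $\xi_t > 0$ at arbitrarily small $t$ almost surely.

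The drift-reversed process $\{z_t^{-,\gamma}\}$ has the identical diffusion term, so the oscillation argument at $v_* \neq 0$ carries over verbatim. At $v_* = 0$ the analysis is in fact easier because the sign of the drift on $H$ improves: one computes $L_\gamma^- H = \gamma|v|^2 + \gamma n \geq \gamma n > 0$ everywhere on $\mathcal{X}$, so the drift in the analogous diffusion $\xi_t^{-}$ is strictly positive and the same Blumenthal argument applies. I expect the main obstacle to be a fully rigorous treatment of the degenerate case $v_* = 0$, where drift and noise in $\xi_t$ are formally of the same order in $t$ and one must carefully quantify their higher-order corrections; this is precisely what \cite[Theorem~8.4]{FH_24} accomplishes (using the kind of scaling arguments in \cite[Corollary~7.10]{CFH_22}). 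The ``slight generalization'' alluded to in the remark before the lemma is the inclusion of the time-reversed dynamics, which slots into the same framework by the coincidence of the noise structure and the favorable sign of $L_\gamma^- H$.
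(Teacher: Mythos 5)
Your proposal is correct and follows essentially the same route as the paper: the paper likewise takes the domains $\mathcal{O}_R$ to be Hamiltonian sublevel sets and reduces boundary regularity to \cite[Theorem~8.4]{FH_24}, observing that the time-reversed process is handled by the identical argument with the sign of the drift flipped (and that ``regular'' there refers to entrances rather than exits). Your It\^{o} sketch of the level-set crossing and the Sard's-theorem choice of a regular value $R^*$ are additional detail the paper omits, but they do not change the substance of the argument.
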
 

\begin{proof}[Proof of Lemma~\ref{lem:LD}]
Let $R\gg 1$ and $(x_0, v_0 ) \in \partial H_R$.  It is enough to show that $H_R$ is boundary regular for each process $\{ (x_{t}^{\epsilon}, v_t^\epsilon) \}_{t\geq 0}$, $\{ (x_t^{-, \epsilon}, v_t^{-, \epsilon}) \}_{t\geq 0} $.  The only difference between this and \cite[Theorem 8.4]{FH_24} is that in the case when we consider the process $\{ (x_t^{-, \epsilon}, v_t^{-, \epsilon}) \}_{t\geq 0}$, we change the sign of the drift term.  The rest of the proof follows in a nearly identical way.  Note also that the definition of regular is opposite of the one in~\cite{FH_24}, i.e. \emph{entrances} to the domain versus \emph{exits from} the domain.  
\end{proof}

\begin{proof}[Proof of Theorem~\ref{thm:LD}]
By Lemma~\ref{lem:LD}, we have left to check Assumptions~\ref{assump:H}-\ref{assump:V}.  Note first that Assumption 2.1 (U1)-(U3)  and the definition of $H$ immediately imply Assumption~\ref{assump:H}. By construction, Assumption~\ref{assump:V} (V1)-(V3) is also immediate.  To see that Assumption~\ref{assump:V} (V4) is valid, recalling that $H(x,v) = |v|^2/2+ U(x)$, we note that for $\eta =1$
\begin{align*}
&\textstyle{\sum_{j=1}^n} e^{-H}( |Z_j H|^2 + |Z_j^2 H| + 1) = \textstyle{\sum_{j=1}^d} e^{-H}( |v_j|^2  + 2)\in L^1(\mathcal{X}, \, dx)
\end{align*}
and 
\begin{align*}
\sum_{j=1}^n (Z_j H)^2= |v|^2 \leq |v|^2+ n = ZH + \sum_{j=1}^r Z_j^2 H \leq ZH + n.
\end{align*}
To check Assumption~\ref{assump:V} (V5), similar to Example~\ref{ex:simple1} we compute the commutator 
 \begin{align*}
  [ Z_j, -\epsilon Z + Z_0]&= \frac{\partial}{\partial x_j} - \epsilon \frac{\partial}{\partial v_j}, \,\,\,\, j =1,2, \ldots, n. \end{align*}
 From this, it follows by Assumption~\ref{assump:Ug} that the ordered list of vectors 
 \begin{align}
 \label{eqn:vfLD2}
-\epsilon Z + Z_0; Z_1; \ldots ;Z_n,
 \end{align}
 satisfies the uniform parabolic H\"{o}rmander condition on every set $H_{<R}$ for all $R>0$.  Thus Assumption~\ref{assump:V} (V5) is now verified.

\begin{remark} \label{rem:heatbathes}
In the setting above, $\mathcal{N}^{-1} e^{-H(x,v)}$ is the unique invariant density (with respect to Lebesgue measure on $\mathcal{X}$) for the corresponding semigroup $\{ \mathcal{P}_t^\epsilon \}_{t\geq 0}$, where $\mathcal{N} = \int_\mathcal{X} e^{-H(x,v)} \, dx \, dv$.  However, it is worth noting that we need not hold the heat baths (modeled by the Brownian motions) at the same temperature to have the same result.  In particular, the conclusion of Theorem~\ref{thm:LD} still holds true if we instead define 
\begin{align}
Z_j = \sqrt{T_j} \frac{\partial}{\partial v_j}
\end{align}  
for some constants $T_1, \ldots, T_n >0$.  In this case, in general the form of the invariant measure is not known.  However, the only difference in the argument giving Theorem~\ref{thm:LD} is that one needs to pick $0< \eta < \infty$ small enough, but independent of $\epsilon \in (0,1)$ so that Assumption~\ref{assump:V} (V4) is met.  We will see a concrete example of this in the case of oscillators below.  
\end{remark}
\end{proof}

\begin{example}[Chains of oscillators]
\label{ex:osc}
In this example, we consider a similar setup to the previous example, but only place noise and damping on some degrees of freedom but not on others.  Specifically, we consider the case of a chain of oscillators with noise and damping only placed at each end of the chain.  Although we do not do this here, we believe these computations can be extended to graphs of oscillators as in~\cite{CEHR_18} provided a similar set of assumptions is employed so that energy is `effectively transferred' among the degrees of freedom in the equation. 

Consider equation~\eqref{eqn:SDEmain} with $d=2n$, $r=2$ and 
\begin{align}
\label{eqn:oscZZ_0}
Z= \gamma_1 v_1 \frac{\partial}{\partial v_1} + \gamma_n v_n \frac{\partial}{\partial v_n}, \quad Z_0 = v \cdot \nabla_x - \nabla U(x) \cdot \nabla_v, \quad \text{ and } 
\end{align}  
\begin{align}
\label{eqn:oscZ_1Z_2}
Z_1 = \sqrt{\gamma_1 T_1} \frac{\partial}{\partial v_1} , \quad Z_2 =  \sqrt{\gamma_n T_n} \frac{\partial}{\partial v_n}, \end{align}
where $x=(x_1, \ldots, x_n)$, $v=(v_1, \ldots, v_n)$, $\gamma_i >0$, $T_i >0$.  We furthermore assume that $U$ has the form
\begin{align}
\label{eqn:Uosc}
U(x)= \sum_{\ell=1}^n | x_\ell|^{2k} + \sum_{\ell=1}^{n-1} | x_\ell - x_{\ell+1}|^{2j}
\end{align}        
where $j\geq k\geq 1$ are integers, the first term on the righthand side of~\eqref{eqn:Uosc} is the `pinning' part of the potential $U$ and the second on the righthand side of~\eqref{eqn:Uosc} is the interaction part.  Letting
\begin{align}
\label{eqn:oscH}
H(x,v)= \frac{|v|^2}{2}+ U(x),
\end{align}  
we have the following:

\begin{theorem}
\label{thm:osc}
With $d=2n$, $r=2$, $Z, Z_0, Z_1, Z_2$ as above in~\eqref{eqn:oscZZ_0} and~\eqref{eqn:oscZ_1Z_2}, Assumptions~\ref{assump:H}-\ref{assump:BR} are satisfied for the resulting process as in~\eqref{eqn:SDEmain} with $H$ as in~\eqref{eqn:oscH}.  Thus the quantitative minorization condition in Theorem~\ref{thm:minorization} is met. 
\end{theorem}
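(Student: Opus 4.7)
The plan is to verify Assumptions~\ref{assump:H}, \ref{assump:V}, and \ref{assump:BR} in turn, following the same template as Theorem~\ref{thm:LD1}; the genuine novelty relative to the Langevin case is that noise and damping now act only on the end coordinates $v_1$ and $v_n$, which complicates the Hörmander calculation and obstructs the simple choice $\mathcal{O}_R = H_R$ in the boundary-regularity hypothesis.

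For Assumption~\ref{assump:H}, coercivity of the pinning part $\sum_\ell |x_\ell|^{2k}$ together with $|v|^2/2$ makes every sublevel set $H_R$ bounded, everything is smooth, and $\mathcal{X} = \RR^{2n}$ is connected, so (H1)--(H4) are immediate. Conditions (V1)--(V3) are a direct check: $Z_0$ is a Hamiltonian vector field (hence divergence-free and conserves $H$), the $Z_j$ are constant, and $\mathrm{div}(Z) = \gamma_1+\gamma_n$ is a positive constant so (V2) holds. For (V4), a computation gives $Z_1 H = \sqrt{\gamma_1 T_1}\,v_1$, $Z_1^2 H = \gamma_1 T_1$ (analogously for $Z_2$), and $ZH = \gamma_1 v_1^2 + \gamma_n v_n^2$. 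Taking $\eta := 1/\max(T_1, T_n)$ and $d_* := 2(\gamma_1 T_1 + \gamma_n T_n)$ yields the chain of inequalities in (V4), and the integrability of $e^{-\eta H}(v_1^2 + v_n^2 + 1)$ on $\RR^{2n}$ follows from coercivity of $H$.

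The main computation is the uniform parabolic Hörmander condition (V5). The first brackets are
\[
[Z_i, -\epsilon Z + Z_0] = c_i\,\partial_{x_{\sigma(i)}} - \epsilon\gamma_{\sigma(i)} Z_i, \qquad i=1,2,
\]
with $\sigma(1)=1$, $\sigma(2)=n$ and $c_i>0$, so $\partial_{x_1}$ and $\partial_{x_n}$ lie in the $S_F$-module generated by the listed brackets. Bracketing $\partial_{x_1}$ again with $-\epsilon Z+Z_0$ (the $Z$ part contributes nothing since $Z$ is $x$-independent) gives $[\partial_{x_1}, Z_0] = -\sum_i \partial_{x_1 x_i} U\,\partial_{v_i}$, whose component on $\partial_{v_2}$ is $2j(2j-1)(x_1-x_2)^{2j-2}$. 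For $j=1$ this is a nonzero constant. For $j\geq 2$ the coefficient vanishes on the diagonal $\{x_1 = x_2\}$, but applying $\mathrm{ad}_{\partial_{x_1}}$ an additional $2j-2$ times differentiates the polynomial coefficient to the nonzero constant $2j(2j-1)(2j-2)!$, extracting $\partial_{v_2}$ from the module. Iterating this ``walk'' along the chain---one bracket with $Z_0$ to pass from $\partial_{v_\ell}$ to $\partial_{x_\ell}$, then as many $\mathrm{ad}_{\partial_{x_\ell}}$ brackets as the exponent $2j-2$ requires to expose $\partial_{v_{\ell+1}}$---generates every standard basis vector as an $S_F(H_R)$-combination of iterated brackets. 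The only $\epsilon$-dependence appearing along the way is in the bounded corrections $-\epsilon\gamma_{\sigma(i)}Z_i$, which are themselves already in the module, so the $S_F$ coefficients remain uniformly bounded in $\epsilon$.

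For Assumption~\ref{assump:BR}, the set $H_R$ itself is not boundary regular because the constant noise span $\mathrm{span}\{\partial_{v_1}, \partial_{v_n}\}$ is tangent to $\partial H_R$ on a large subset. Following the recipe in Remark~\ref{rem:BR2}, for each large $R$ we build a convex polytope $\mathcal{O}_R$ by sampling sufficiently many points from $\partial H_{R+1}$ so that, with positive probability, their convex hull satisfies $H_R \subset \mathcal{O}_R \subset H_{R+1}$ and no face is parallel to $\partial_{v_1}$. Since $\partial_{v_1} = Z_1/\sqrt{\gamma_1 T_1}$ lies in $\mathrm{span}\{Z_1,Z_2\}$ at every point of $\mathcal{X}$, \cite[Corollary 7.10]{CFH_22} gives boundary regularity of $\mathcal{O}_R$ for $\{x_t^\epsilon\}_{t\geq 0}$, and the same polytope works for $\{x_t^{-,\epsilon}\}_{t\geq 0}$ because that argument depends only on the noise geometry, not on the drift sign. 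The hardest step is (V5): tracking how many successive brackets are required to defeat the polynomial degeneracies on the diagonals $\{x_\ell = x_{\ell+1}\}$ when $j\geq 2$ and certifying that every coefficient produced remains in $S_F$ uniformly in $\epsilon$ is the delicate bookkeeping that distinguishes the chain-of-oscillators example from the Langevin case.
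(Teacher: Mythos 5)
Your proposal is correct and follows essentially the same route as the paper: the checks of (H1)--(H4), (V1)--(V4) with $\eta = 1/\max(T_1,T_n)$, the inductive bracket walk for (V5) in which $2j-1$ successive applications of $\mathrm{ad}\,\partial_{x_i}$ to $Z_0$ produce the constant coefficient $(2j)!$ on $\partial_{v_{i+1}}$, and the random convex-polytope construction with a fixed noise direction via \cite[Corollary~7.10]{CFH_22} for Assumption~\ref{assump:BR} all match the paper's argument. The only cosmetic difference is that the paper samples the polytope vertices from the boundary of a Euclidean ball containing $\overline{H_R}$ rather than from $\partial H_{R+1}$, which is immaterial here.
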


In this and the remaining examples, we will leave the proof of Assumption~\ref{assump:BR} for the end of the section.  Note that this case is a little different than Lemma~\ref{lem:LD} since noise is only present on two directions.  However, the structure of the potential $U$ is simpler, and we can appeal to a general argument that works in more cases to verify Assumption~\ref{assump:BR}.  
\begin{lemma}
\label{lem:osc}
With $d=2n$, $r=2$, $Z, Z_0, Z_1, Z_2$ as above in~\eqref{eqn:oscZZ_0} and~\eqref{eqn:oscZ_1Z_2}, Assumption~\ref{assump:BR} is satisfied.  
\end{lemma}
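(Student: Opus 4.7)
The plan is to construct, for every sufficiently large $R>0$, a single polyhedral domain $\mathcal{O}_R$ enclosing $\overline{H_R}$ that simultaneously satisfies Assumption~\ref{assump:BR} for both $\{x_t^\epsilon\}_{t\geq 0}$ and $\{x_t^{-,\epsilon}\}_{t\geq 0}$, by arranging that each face is transverse to a fixed direction lying in the span of the (constant) noise vectors. The essential simplification over the Langevin setting of Lemma~\ref{lem:LD} is that here $Z_1 = \sqrt{\gamma_1 T_1}\,\partial_{v_1}$ and $Z_2 = \sqrt{\gamma_n T_n}\,\partial_{v_n}$ are constant vector fields, so that $\text{span}\{Z_1(x_*), Z_2(x_*)\}$ is the fixed two-dimensional subspace $\text{span}\{e_{v_1}, e_{v_n}\}\subset\RR^{2n}$ at every base point $x_*$. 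This reduces the verification to a purely geometric question of approximating a bounded region by a polyhedron whose faces avoid one fixed constant direction.

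First I would observe that the potential $U$ in~\eqref{eqn:Uosc} is a coercive polynomial on $\RR^n$, so $\mathcal{X}=\RR^{2n}$ and each sublevel set $H_R$ has compact closure inside $\mathcal{X}$. Given $R$, pick a radius $\rho$ large enough that $\overline{H_R}\subset B_\rho$, the Euclidean ball of radius $\rho$ centered at the origin in $\RR^{2n}$, and then a slightly larger $\rho' > \rho$. Following the random polyhedral construction discussed in the remarks after Assumption~\ref{assump:BR}, I would select points $\xi_1, \ldots, \xi_k \in \partial B_{\rho'}$ independently from the normalized Hausdorff measure on $\partial B_{\rho'}$. Setting $v := e_{v_1}$, the two events
\begin{align*}
\{B_\rho \subset \text{interior}([\xi_1,\ldots,\xi_k])\} \quad \text{and} \quad \{\text{no face of }\partial[\xi_1,\ldots,\xi_k]\text{ is parallel to } v\}
\end{align*}
each have probability tending to $1$ as $k\to\infty$, so for $k$ large enough a realization exists on which both hold. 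Fixing such a realization yields a bounded polyhedral open set $\mathcal{O}_R := \text{interior}([\xi_1,\ldots,\xi_k])$ with continuous boundary satisfying $H_R \subset \mathcal{O}_R \subset \overline{\mathcal{O}_R} \subset \mathcal{X}$.

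To conclude boundary regularity I would apply \cite[Corollary 7.10]{CFH_22}: since $v = e_{v_1}$ is a positive scalar multiple of $Z_1$, it lies in $\text{span}\{Z_1(x_*), Z_2(x_*)\}$ at every $x_* \in \partial \mathcal{O}_R$, and combined with the non-parallel face condition this forces any path started at a boundary point to leave $\overline{\mathcal{O}_R}$ instantaneously along the $v_1$-direction noise. Because this criterion depends only on the noise vector fields and not on the sign of the drift, the same $\mathcal{O}_R$ serves for the forward process $\{x_t^\epsilon\}$ and the time-reversed process $\{x_t^{-,\epsilon}\}$ simultaneously. The main obstacle I anticipate is the rigorous verification that the random polyhedral approximation can be made to satisfy both the inclusion $B_\rho \subset \text{interior}([\xi_1,\ldots,\xi_k])$ and the face-transversality condition on a realization of positive probability, with the transversality uniform enough to actually invoke \cite[Corollary 7.10]{CFH_22}; but this is a purely geometric statement about convex hulls of random points on a Euclidean sphere, independent of the underlying drift structure, and is exactly the lemma Section~\ref{sec:examples} will also deploy for the fluid examples below.
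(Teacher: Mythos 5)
Your proposal is correct and follows essentially the same route as the paper: both arguments enclose $\overline{H_R}$ in a ball, take the convex hull of sufficiently many i.i.d.\ points sampled from Hausdorff measure on a slightly larger sphere so that with high probability the hull contains $H_R$ and no face is parallel to a fixed direction $v\in\mathrm{span}\{Z_1,Z_2\}$, and then invoke \cite[Corollary 7.10]{CFH_22} to get boundary regularity for both the forward and drift-reversed processes. The only difference is that the paper pins down the geometric step you flag as the "main obstacle" by citing \cite[Theorem 7.7]{CFH_22} for the almost-sure non-parallel-face property and \cite[Proposition 1]{PSSW_24} for Hausdorff convergence of the random hull to the ball.
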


\begin{proof}[Proof of Theorem~\ref{thm:osc}]
Applying Lemma~\ref{lem:osc}, we have left to check Assumptions~\ref{assump:H}-\ref{assump:V}.  By construction, Assumption~\ref{assump:H} is met and so is Assumption~\ref{assump:V} (V1)-(V3).  To see Assumption~\ref{assump:V} (V4), observe that for any $\eta >0$ independent of $\epsilon \in (0,1)$ we have  
\begin{align*}
&\sum_{j=1}^2 e^{-\eta H} ( |Z_j H|^2 +|Z_j^2 H|+1) = \sum_{j=1,n} e^{-\eta H}(\gamma_j T_j|v_j|^2+\gamma_j T_j +1) \in L^1(\RR^{d}),
\end{align*}  
and, choosing $0< \eta \leq 1/T_\text{max}$ where $T_\text{max} = \max \{ T_1, T_n \}$, we see
\begin{align*}
\eta \sum_{j=1}^2 (Z_j H)^2 = \eta (\gamma_1 T_1 |v_1|^2+ \gamma_2 T_2 |v_n|^2) \leq Z H \leq ZH + \sum_{j=1}^2 Z_j^2 H\leq Z H + \gamma_1 T_1 + \gamma_2 T_2.
\end{align*}
Thus Assumption~\ref{assump:V} (V4) is checked.

Checking Assumption~\ref{assump:V} (V5) uses an inductive argument.  Let $\tilde{Z}_0=Z_0-\epsilon Z$, $\tilde{Z}_i=Z_i$, $i=1,2$, and $\mathcal{I}$ denote the linear span of the list
\begin{align*}
&\tilde{Z}_1, \tilde{Z}_2 \\
&[\tilde{Z}_\ell,  \tilde{Z}_m], \quad \ell, m\in \{ 0,1,2\}\\
&[\tilde{Z}_{\ell_1}, [\tilde{Z}_{\ell_2}, \tilde{Z}_{\ell_3}]] ,\quad \ell_i \in \{0,1,2\}\\
&\vdots \qquad\qquad  \vdots\qquad \qquad \vdots
\end{align*} 
with coefficients restricted to $S_F(\RR^d)$.  It suffices to show that $\partial_{x_j}, \partial_{v_j} \in \mathcal{I}$ for all $j$.  First notice that if $Y_1= (1/\sqrt{\gamma_1 T_1}) \tilde{Z}_1$ then 
\begin{align*}
Y_1'&:=[Y_1, \tilde{Z}_0] = \partial_{x_1} - \epsilon \gamma_1 \partial_{v_1}\in \mathcal{I},\\
\partial_{x_1} &= Y_1' + \epsilon \gamma_1 Y_1 \in\mathcal{I}.
\end{align*}
Thus $\partial_{x_1}, \partial_{v_1} \in \mathcal{I}$.  Suppose now inductively that $\partial_{x_i}, \partial_{v_i}\in \mathcal{I}$ for all $1\leq i \leq i_0< n-1$.  Then  
\begin{align*}
Y_i':= (\text{ad}^{2j-1} \partial_{x_i})(\tilde{Z}_0) = - [(2k)! \mathbf{1}_{\{j=k\}} + (2j)!]\partial_{v_i} + (2j)! \mathbf{1}_{\{i\geq 2\}} \partial_{v_{i-1}} + (2j)! \partial_{v_{i+1}},
\end{align*}  
where we recall that for smooth vector fields $X$ and $Y$, $\text{ad}^j X(Y)$, $j\geq 0$, is defined inductively by 
\begin{align}
\text{ad}^0 X(Y):=Y \qquad  \text{ and } \qquad \text{ad}^jX(Y):= \text{ad}^{j-1}X([X,Y]),\,\,\, j\geq 1.
\end{align} 
Thus the computation above shows that $\partial_{v_{i+1}} \in \mathcal{I}$, in which case 
\begin{align}
[\partial_{v_{i+1}}, \tilde{Z}_0] = \partial_{x_{i+1}} - \epsilon \gamma_n \mathbf{1}_{\{i+1= n\}} \partial_{v_{i+1}}.  
\end{align}  
It thus follows also that $\partial_{x_{i+1}} \in \mathcal{I}$, and this validates Assumption~\ref{assump:V} (V5) by induction.

\end{proof}

\end{example}

\begin{example}[Lorenz '96]
\label{ex:Lor}
We next consider the stochastic Lorenz '96 system for $x_t^\epsilon=(x_{1}^\epsilon,\ldots, x_d^\epsilon)\in \RR^d$, $d\geq 4$, defined by
\begin{align}
\label{Lor_96}
\begin{cases}
  dx_{i}^\epsilon=-\epsilon\lambda_i x_{i}^\epsilon \, dt+(x_{i+1}^\epsilon-x_{i-2}^\epsilon) x_{i-1}^\epsilon dt +\sqrt{2\epsilon} \sigma_i dB_t^i,\\
 x_i^\epsilon(0)=x_i \in \RR, 
\end{cases}
\end{align}
where the dynamics lives on the discrete circle so that $x_{k}\equiv x_{k+d}$.  Below, we assume that $\lambda_1>0, \lambda_3 >0$, $\sigma_1 >0$, $\sigma_3 >0$, and $\lambda_i \geq 0$ for all $i$, while $\sigma_2=\sigma_4=\sigma_5=\cdots = \sigma_d=0$.  Observe that equation~\eqref{Lor_96} is of the form~\eqref{eqn:SDEmain} with 
\begin{align}
\label{eqn:vfLor}
Z = \sum_{i=1}^d \lambda_i  x_i\frac{\partial}{\partial x_i}, \quad Z_0 = \sum_{i=1}^d (x_{i+1}-x_{i-2}) x_{i-1} \frac{\partial}{\partial x_{i}}, \quad \text{ and } \quad   Z_j = \sigma_j \frac{\partial}{\partial x_j},\,\,\, j=1,2,\ldots, d. 
\end{align}   
Below, we set 
\begin{align}
\label{eqn:HLor}
H(x)= |x|^2.
\end{align}
Our goal here is to show the following:
\begin{theorem}
\label{thm:Lor}
Let $r=d$, $Z, Z_0, Z_1, \ldots, Z_d$ be as in~\eqref{eqn:vfLor} and $H$ as in~\eqref{eqn:HLor}.  Then the resulting dynamics in~\eqref{eqn:SDEmain} satisfies Assumptions~\ref{assump:H}-\ref{assump:BR}.  In particular, the quantitative minorization condition of Theorem~\ref{thm:minorization} is met.    

\end{theorem}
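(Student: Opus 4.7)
The plan is to verify Assumptions~\ref{assump:H}--\ref{assump:BR} in sequence, with the bulk of the work concentrated in the uniform H\"ormander condition (V5) and the boundary regularity condition. Assumption~\ref{assump:H} is immediate: $\mathcal{X}=\RR^d$ is connected, $H(x)=|x|^2$ is smooth and independent of $\epsilon$, and the sublevel sets $H_R$ are open Euclidean balls with compact closure. For Assumption~\ref{assump:V} (V1)--(V3), the noise vectors are constant, giving $\text{div}(Z_j)=0$ for $j\geq 1$; the Lorenz '96 nonlinearity satisfies $\partial_{x_i}[(x_{i+1}-x_{i-2})x_{i-1}]=0$, hence $\text{div}(Z_0)=0$; and $\text{div}(Z)=\sum_i\lambda_i$ is a positive constant, which handles the strict inequality in (V2). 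The conservation law $Z_0H=0$ is the classical quadratic invariant and follows by a single reindexing of the cyclic sum defining $Z_0H$. For (V4), $Z_jH=2\sigma_jx_j$ and $Z_j^2H=2\sigma_j^2$ are polynomial, so weighted integrability against $e^{-\eta|x|^2}$ is automatic, and the pointwise inequality
\begin{align*}
\eta\sum_{j=1}^r(Z_jH)^2=4\eta(\sigma_1^2x_1^2+\sigma_3^2x_3^2)\leq 2\sum_i\lambda_ix_i^2=ZH
\end{align*}
holds provided $\eta\leq\min(\lambda_1/(2\sigma_1^2),\lambda_3/(2\sigma_3^2))$, with $d_*=4(\sigma_1^2+\sigma_3^2)$ handling the upper bound.

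The first substantive step is (V5), to be checked on each bounded set $H_R$. Because $H_R$ is bounded, all polynomial functions belong to $S_F(H_R)$, so the $\epsilon$-uniform H\"ormander condition reduces to showing that iterated Lie brackets from $\{\tilde Z_0,Z_1,Z_3\}$ span $\RR^d$ pointwise on $H_R$, where $\tilde Z_0=-\epsilon Z+Z_0$. The identity $[Z_j,\tilde Z_0]=[Z_j,Z_0]-\epsilon\sigma_j\lambda_j\partial_{x_j}$ shows that, modulo already-present scalar multiples of $Z_j$, bracketing with $\tilde Z_0$ is equivalent to bracketing with the $\epsilon$-independent transport $Z_0$. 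A direct calculation yields
\begin{align*}
[Z_j,Z_0]=\sigma_j\bigl[x_{j-2}\partial_{x_{j-1}}+(x_{j+2}-x_{j-1})\partial_{x_{j+1}}-x_{j+1}\partial_{x_{j+2}}\bigr]
\end{align*}
with cyclic indexing, and bracketing once more with $Z_3$ gives $[Z_3,[Z_1,Z_0]]=\sigma_1\sigma_3\partial_{x_2}$, extracting the new direction $\partial_{x_2}$ exactly. Iterating this nearest-neighbor cascade around the ring, by applying $[\partial_{x_i},Z_0]$ for indices $i$ already reached and isolating the newly appearing $\partial_{x_{i+1}},\partial_{x_{i+2}}$ via further brackets with $\partial_{x_1},\partial_{x_3}$, produces every $\partial_{x_j}$ in finitely many steps independent of $\epsilon$. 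Nonvanishing polynomial prefactors that appear along the way are bounded on $H_R$ and can be absorbed into $S_F(H_R)$ coefficients by standard manipulations.

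Finally, for Assumption~\ref{assump:BR}, I follow the random polyhedral construction outlined in the remarks of Section~\ref{sec:mainresults}. Given $R>0$, fix $R'>R$ and sample $\xi_1,\ldots,\xi_k\in\partial H_{R'}$ independently from the surface Hausdorff measure. With $k$ sufficiently large, with probability close to one $H_R\subset\text{interior}([\xi_1,\ldots,\xi_k])$, and each $(d-1)$-face of the convex hull has a normal $v$ satisfying $(v_1,v_3)\neq(0,0)$; the latter is two linear constraints on $v$ and hence a measure-zero event for $d\geq 4$. Fixing such a realization, every face admits a transverse noise direction in $\text{span}(Z_1,Z_3)$, so~\cite[Corollary 7.10]{CFH_22} yields boundary regularity of $\mathcal{O}_R:=\text{interior}([\xi_1,\ldots,\xi_k])$ for $\{x_t^\epsilon\}_{t\geq 0}$. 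The same polyhedron works for the time-reversed process $\{x_t^{-,\epsilon}\}_{t\geq 0}$ because its noise fields are unchanged. The main obstacle in this plan is (V5): the Lorenz '96 bracket cascade with only two stirred modes must be tracked carefully to ensure that every $\partial_{x_j}$ is reached and that the resulting coefficients lie in $S_F(H_R)$ uniformly in $\epsilon$. The saving grace is that boundedness of $H_R$ absorbs all polynomial coefficient growth, turning the bookkeeping into a finite, $\epsilon$-independent calculation.
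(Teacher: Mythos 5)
Your proposal is correct and follows essentially the same route as the paper: the same choice of $\eta \le \min\{\lambda_1/(2\sigma_1^2),\lambda_3/(2\sigma_3^2)\}$ for (V4), the same bracket cascade for (V5) beginning with $[Z_3,[Z_1,-\epsilon Z+Z_0]]=\sigma_1\sigma_3\,\partial_{x_2}$, and the same random polyhedral construction via \cite[Corollary 7.10]{CFH_22} for Assumption~\ref{assump:BR}. One caution on the inductive step: the paper closes the cascade with the exact identity $[\partial_{x_k},[\partial_{x_{k-1}},-\epsilon Z+Z_0]]=-\partial_{x_{k+1}}$, bracketing with the most recently obtained direction $\partial_{x_k}$ rather than with $\partial_{x_1},\partial_{x_3}$; this is the right mechanism, because coefficients such as $x_{k+1}-x_{k-2}$ vanish at points of $H_R$ and so cannot be divided out within $S_F(H_R)$ when expressing the $e_j$---they must instead be differentiated away by a further bracket, exactly as in your first step for $\partial_{x_2}$.
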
 
 
 As in the previous example, we will only check Assumptions~\ref{assump:H}-\ref{assump:V} here, leaving Assumption~\ref{assump:BR} for the end of this section.    
 \begin{lemma}
  \label{lem:Lor}
Let $r=d$, $Z, Z_0, Z_1, \ldots, Z_d$ be as in~\eqref{eqn:vfLor} and $H$ as in~\eqref{eqn:HLor}.  Then Assumption~\ref{assump:BR} is satisfied. 
\end{lemma}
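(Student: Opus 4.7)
The plan is to construct $\mathcal{O}_R$ by the random polyhedral recipe outlined in the remark immediately preceding the lemma, adapted to the noise structure of Lorenz '96. Since $\mathcal{X} = \RR^d$ and $H_R$ is the open ball of radius $\sqrt{R}$ centered at the origin, the general setup from that remark applies verbatim. For fixed $R > 0$, I would choose $R' > R$ and sample independent points $\xi_1, \ldots, \xi_k$ distributed uniformly (with respect to surface measure) on $\partial H_{R'}$, and set
\begin{align*}
\mathcal{O}_R = \text{interior}([\xi_1, \ldots, \xi_k]),
\end{align*}
where $[\xi_1, \ldots, \xi_k]$ is the closed convex hull. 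For $k$ sufficiently large, both of the following events hold with probability arbitrarily close to one: (i) $H_R \subset \mathcal{O}_R$; and (ii) no facet of $\partial [\xi_1, \ldots, \xi_k]$ is parallel to the coordinate direction $e_1$. I would then extract a deterministic realization on which both (i) and (ii) hold simultaneously. The resulting domain $\mathcal{O}_R$ is bounded, has a piecewise-linear (hence continuous) boundary, and satisfies $H_R \subset \mathcal{O}_R \subset \overline{\mathcal{O}_R} \subset \mathcal{X}$.

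To verify boundary regularity for both $\{x_t^\epsilon\}_{t \geq 0}$ and $\{x_t^{-, \epsilon}\}_{t \geq 0}$, I would use that the noise vector fields for Lorenz '96 are $Z_j = \sigma_j \partial_{x_j}$ with $\sigma_1 > 0$, so
\begin{align*}
e_1 \in \text{span}\{Z_1(x_*), \ldots, Z_d(x_*)\}
\end{align*}
for every $x_* \in \RR^d$. Combined with property (ii), every boundary point of $\mathcal{O}_R$ lies on a facet whose outward normal has nonzero $e_1$-component, so the constant noise direction $\pm e_1$ is transverse to $\partial \mathcal{O}_R$ at every boundary point. This is exactly the hypothesis of \cite[Corollary 7.10]{CFH_22}, which yields instantaneous exit from $\overline{\mathcal{O}_R}$ at every boundary point, establishing boundary regularity for $\{x_t^\epsilon\}_{t \geq 0}$. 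The same domain $\mathcal{O}_R$ serves the time-reversed process $\{x_t^{-, \epsilon}\}_{t \geq 0}$ because under drift reversal only the sign of the drift flips while the noise vector fields $Z_1, \ldots, Z_d$ are unchanged; possible finite-time explosion of $\{x_t^{-, \epsilon}\}$ is immaterial since boundary regularity concerns only arbitrarily small times and the process is path continuous on a strictly positive neighborhood of $t=0$.

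The main obstacle I anticipate is the geometric step (ii). It reduces to the claim that for every $d$-subset $\{i_1, \ldots, i_d\} \subset \{1, \ldots, k\}$, the probability that the affine hyperplane spanned by $\xi_{i_1}, \ldots, \xi_{i_d}$ contains the direction $e_1$ is zero; this follows because, conditional on $\xi_{i_1}, \ldots, \xi_{i_{d-1}}$, the set of $\xi_{i_d} \in \partial H_{R'}$ making the affine span parallel to $e_1$ is the intersection of the sphere with a fixed affine hyperplane and hence has zero surface measure. A union bound over the finitely many $d$-subsets then yields (ii). Property (i) is a standard approximation fact for random polytopes on a sphere: the Hausdorff distance between $[\xi_1, \ldots, \xi_k]$ and $\overline{H_{R'}}$ tends to zero in probability as $k \to \infty$, and since $R' > R$, a sufficiently close approximation contains $\overline{H_R}$ in its interior.
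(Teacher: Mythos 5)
Your proposal is correct and follows essentially the same route as the paper: sample i.i.d. points on a sphere containing $\overline{H_R}$, take the convex hull, use the Hausdorff-distance approximation to ensure $H_R$ is contained in its interior and the almost-sure non-parallelism of facets to the fixed noise direction $e_1$ (the paper cites \cite[Theorem 7.7]{CFH_22} and \cite[Proposition 1]{PSSW_24} for these two facts), and then conclude via \cite[Corollary 7.10]{CFH_22}. Your additional remarks on the measure-zero argument for facet non-parallelism and on the time-reversed process are consistent elaborations of the same argument.
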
 

\begin{proof}[Proof of Theorem~\ref{thm:Lor}]
By Lemma~\ref{lem:Lor}, we have left to check Assumptions~\ref{assump:H}-\ref{assump:V}.  Note first that Assumption~\ref{assump:H} is immediate since $H(x)=|x|^2$.  By construction, Assumption~\ref{assump:V} (V1)-(V3) is also immediate.  To see that Assumption~\ref{assump:V} (V4) is valid, we note that for $\eta >0$ with $\eta \leq \min\{ \lambda_1/(2\sigma_1^2), \lambda_3/(2\sigma_3)^2\}$
\begin{align*}
\textstyle{\sum_{j=1,3}} e^{-\eta H}( |Z_j H|^2 + |Z_j^2 H| + 1)\in L^1(\RR^d, \, dx)
\end{align*}
and 
\begin{align*}
\eta \sum_{j=1}^d (Z_j H)^2  = 4\eta (\sigma_1^2 |x_1|^2 + \sigma_3^2 |x_3|^2) &\leq 2\lambda_1 |x_1|^2+ 2\lambda_2 | x_3|^2  \\
&\leq ZH  +\sum_{j=1}^d Z_j^2 H = ZH + 2\sum_{j=1}^d \sigma_j^2.
\end{align*}
To check Assumption~\ref{assump:V} (V5), we compute commutators.  That is, first observe that  
\begin{align}
    &[ Z_1, -\epsilon Z+Z_0]=-\epsilon \lambda_1\sigma_1\frac{\partial}{\partial x_1} +\sigma_1(x_3-x_d) \frac{\partial}{\partial x_2} -\sigma_1 x_2 \frac{\partial}{\partial x_3} +\sigma_1 x_{d-1} \frac{\partial}{\partial x_d} \\
& [ Z_3,[ Z_1, -\epsilon Z+Z_0]]=  \sigma_1 \sigma_3 \frac{\partial}{\partial x_2}.
    \end{align}
 Similarly, 
    \begin{align}
      [Z_3, [\partial_{x_2}, -\epsilon Z+Z_0]]= - \sigma_3 \frac{\partial}{\partial x_4}. 
\end{align}
Inductively, we find that 
\begin{align}
[\partial_{x_k}, [\partial_{x_{k-1}}, -\epsilon Z+Z_0]]= -\frac{\partial}{\partial x_{k+1}}.
\end{align}
 From this, it follows that the list of vectors 
 \begin{align*}
 -\epsilon Z+Z_0; Z_1;  Z_3
 \end{align*}
 satisfies the uniform parabolic H\"{o}rmander condition with respect to $\epsilon \in (0,1)$ on every set $H_{<R}$, $R>0$.

\end{proof}
\end{example}

\end{example}
\begin{example}
\label{ex:NSE}
For general finite-dimensional example from fluids (see, e.g., random perturbations of the Euler equations and the shell models in~\cite{BL_21}), suppose $B\in T_d(\RR^d)$ is a polynomial vector field which conserves length, i.e. $B(x)\cdot x=0$ for all $x\in \RR^d$, and such that $\text{div}(B)=0$.  In this example, we consider a general equation on $\RR^d$ of the form 
\begin{align}
\label{eqn:gfluid}
\begin{cases}
dx_t^\epsilon= - \epsilon \Lambda x_t^\epsilon \, dt + B(x_t^\epsilon) \, dt + \sqrt{2\epsilon \Lambda} \, dW_t,\\
x_0^\epsilon =x \in \RR^d
\end{cases}
\end{align}
where $\Lambda =\text{diag}(\lambda_1, \ldots, \lambda_d)$ is a $d\times d$ diagonal, non-negative definite matrix with $\text{Trace}(\Lambda)>0$, and $W_t$ is a standard, $d$-dimensional Brownian motion.  Note also that the Lorenz '96 model of Example~\ref{ex:Lor} falls within the class of equations~\eqref{eqn:gfluid}, but we included it to see the specific quantitative bracket computations.  

Here again, we let 
\begin{align*}
    H(x)=|x|^2. 
\end{align*}
First observe that relation~\eqref{eqn:gfluid} is of the prescribed form~\eqref{eqn:SDEmain} with 
\begin{align}
\label{eqn:fluidZdef}
Z= \Lambda x \cdot \nabla_x, \quad Z_0 = B(x) \cdot \nabla_x, \quad \text{ and } \quad Z_j = \sqrt{\lambda_j} \frac{\partial}{\partial x_j}, \,\, j=1,2,\ldots, d.
\end{align}

We will prove:
\begin{theorem}
\label{thm:gfluids}
Suppose that $\text{\emph{Trace}}(\Lambda)>0$ and $-\epsilon Z+Z_0; Z_1; \ldots; Z_d$ as in~\eqref{eqn:fluidZdef} satisfies the uniform parabolic H\"{o}rmander condition on any $H_R>0$.  Then Assumptions~\ref{assump:H}-\ref{assump:BR} are satisfied for the corresponding dynamics~\eqref{eqn:SDEmain}.  Hence, the quantitative minorization condition of Theorem~\ref{thm:minorization} is met.   
\end{theorem}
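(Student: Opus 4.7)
The plan is to verify Assumptions~\ref{assump:H}--\ref{assump:BR} for the vector fields in \eqref{eqn:fluidZdef} with $H(x)=|x|^2$, and then invoke Theorem~\ref{thm:minorization}. Assumption~\ref{assump:H} is immediate since $H$ is smooth and $\epsilon$-independent on $\mathcal{X}=\RR^d$, each $H_R$ is an open ball of radius $\sqrt{R}$, and $\RR^d$ is connected. For Assumption~\ref{assump:V}, conditions (V1)--(V3) follow directly from the setup: the $Z_j$ are constant, $\text{div}(B)=0$ is assumed, $\text{div}(Z)=\text{Trace}(\Lambda)>0$ is a positive constant (so (V2) reduces to $\tfrac12 \text{Trace}(\Lambda) < \text{Trace}(\Lambda)$), and $Z_0 H = 2B(x)\cdot x = 0$ by the length-conserving hypothesis on $B$. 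Condition (V5) is precisely the assumed uniform parabolic H\"ormander condition.

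For (V4), I would compute $Z_j H = 2\sqrt{\lambda_j}\,x_j$, $Z_j^2 H = 2\lambda_j$, and $ZH = 2\sum_j \lambda_j x_j^2$, reducing the inner inequality to $4\eta \sum_j \lambda_j x_j^2 \leq 2\sum_j \lambda_j x_j^2 + 2\text{Trace}(\Lambda)$, which is valid globally with $\eta = 1/2$; the upper bound then holds with $d_* = 4\,\text{Trace}(\Lambda)$, and the integrability clause is trivial since the relevant quantity is polynomial in $x$ against the Gaussian weight $e^{-|x|^2/2}$. These verifications parallel the corresponding computations in Examples~\ref{ex:LD}--\ref{ex:Lor}.

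The main obstacle is Assumption~\ref{assump:BR}, which demands producing, for each $R>0$, an enclosing domain $\mathcal{O}_R$ that is boundary regular simultaneously for the forward process $\{x_t^\epsilon\}$ and its drift-reversed counterpart $\{x_t^{-,\epsilon}\}$. My plan is to use the polyhedral construction sketched by the authors in the remarks following Assumption~\ref{assump:BR}. Since the uniform parabolic H\"ormander hypothesis prevents all $\lambda_j$ from vanishing, I would pick an index $j_0$ with $\lambda_{j_0}>0$ and set $v=e_{j_0}$; because the noise vector fields $Z_1,\dots,Z_d$ are \emph{constant}, $v$ lies in $\text{span}\{Z_1(x_*),\dots,Z_d(x_*)\}$ at \emph{every} point $x_*\in\RR^d$. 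Fix any $R'>R$ and sample a sufficiently large number of points $\xi_1,\dots,\xi_k$ independently and uniformly from Hausdorff measure on $\partial H_{R'}$. A standard probabilistic argument ensures that, with high probability, both $\overline{H_R}\subset \text{interior}([\xi_1,\dots,\xi_k])$ and no face of $\partial[\xi_1,\dots,\xi_k]$ is parallel to $v$. Fixing any such realization and setting $\mathcal{O}_R := \text{interior}([\xi_1,\dots,\xi_k])$, every boundary face is transverse to the noise direction $v$, so \cite[Corollary~7.10]{CFH_22} yields boundary regularity for $\{x_t^\epsilon\}$; since drift reversal leaves the noise span unchanged, the identical construction handles $\{x_t^{-,\epsilon}\}$. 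This triggers Theorem~\ref{thm:minorization} and completes the proof.
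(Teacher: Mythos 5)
Your proposal is correct and follows essentially the same route as the paper: the structural conditions (H1)--(H4), (V1)--(V3), (V5) are immediate from the setup, (V4) is verified by the same direct computation (the paper merely points to the analogous Lorenz~'96 calculation, whereas you carry it out explicitly with $\eta=1/2$, which works here because the noise amplitudes $\sqrt{\lambda_j}$ are tied to the damping rates), and Assumption~\ref{assump:BR} is handled exactly as in the paper's Lemma~\ref{lem:NSE} via the random polyhedral construction, transversality of the faces to a fixed noise direction $e_{j_0}$ with $\lambda_{j_0}>0$, and \cite[Corollary~7.10]{CFH_22} applied to both the forward and drift-reversed processes.
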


Again, we need the following lemma, which will be proved at the end of the section. 
\begin{lemma}
\label{lem:NSE}
Consider $Z_0, Z, Z_1, \ldots, Z_d$ as in~\eqref{eqn:fluidZdef} and suppose that $\text{\emph{Trace}}(\Lambda)>0$.  Then Assumption~\ref{assump:BR} is met.    
\end{lemma}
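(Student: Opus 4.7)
The plan is to follow the random polytope construction sketched in the remark following Assumption~\ref{assump:BR}. The key simplification in the present fluid setting is that the noise vector fields $Z_j=\sqrt{\lambda_j}\,\partial_{x_j}$ are \emph{constant} on $\RR^d$, so their common span is the fixed coordinate subspace $V_\Lambda:=\mathrm{span}\{e_j:\lambda_j>0\}$, which is nontrivial by the hypothesis $\text{Trace}(\Lambda)>0$. Fix any index $j_0$ with $\lambda_{j_0}>0$; then the unit vector $v:=e_{j_0}$ lies in $V_\Lambda$ and is in the span of $Z_1,\ldots,Z_d$ at every point of $\RR^d$. Moreover, the forward process $\{x_t^\epsilon\}$ and its drift-reversed counterpart $\{x_t^{-,\epsilon}\}$ share identical diffusion coefficients, so a single polytope $\mathcal{O}_R$ will serve both.

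Fix $R>0$ and choose any $R'>R$. I would sample $k$ points $\xi_1,\ldots,\xi_k$ independently from the normalized Hausdorff measure on $\partial H_{R'}$ (the sphere of radius $\sqrt{R'}$) and set $K_k:=[\xi_1,\ldots,\xi_k]$. A standard random polytope approximation of the ball guarantees that, for $k$ sufficiently large depending on $R,R',d$, the event
\begin{align*}
E_1 := \{H_R \subset \mathrm{interior}(K_k)\}
\end{align*}
has probability arbitrarily close to $1$: covering $\partial H_{R'}$ by spherical caps of bounded angular diameter, each cap contains at least one $\xi_\ell$ with high probability, producing enough supporting hyperplanes of $K_k$ to trap the smaller ball $H_R$. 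Simultaneously, consider the event
\begin{align*}
E_2 := \{\text{no facet of } K_k \text{ is parallel to } v=e_{j_0}\}.
\end{align*}
Generically each facet of $K_k$ is the convex hull of exactly $d$ of the $\xi_\ell$'s, and its affine hull being parallel to $e_{j_0}$ amounts to a codimension-one algebraic condition on that $d$-tuple inside the smooth manifold $(\partial H_{R'})^d$. Since the joint law of the $\xi_\ell$'s is absolutely continuous with respect to surface measure, a union bound over the finitely many $d$-subsets yields $\PP(E_2)=1$. Picking a realization in $E_1\cap E_2$ produces a polytope $\mathcal{O}_R:=\mathrm{interior}(K_k)$ with $H_R\subset\mathcal{O}_R\subset\overline{\mathcal{O}_R}\subset\RR^d=\mathcal{X}$, continuous piecewise linear boundary, and the transversality property that $e_{j_0}$ is not tangent to any facet.

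With $\mathcal{O}_R$ so constructed, the final step is to invoke \cite[Corollary 7.10]{CFH_22}. On the relative interior of each facet the outward normal $n(x_*)$ has nonzero $j_0$-component, hence the constant noise direction $Z_{j_0}=\sqrt{\lambda_{j_0}}\,e_{j_0}$ is everywhere transverse to $\partial\mathcal{O}_R$; lower dimensional strata (edges, vertices) lie in the closures of the facets and inherit transversality through any adjacent facet. The cited corollary then yields instantaneous exit from $\overline{\mathcal{O}_R}$ starting from any boundary point, i.e.\ boundary regularity for $\{x_t^\epsilon\}$. Because $\{x_t^{-,\epsilon}\}$ differs only by a sign flip in its drift and carries the same diffusion matrix, the identical argument gives boundary regularity for it as well.

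The main obstacle is arranging the containment $E_1$ and the transversality $E_2$ simultaneously --- in particular making the latter precise enough to cover every facet, not just a generic one. Once such a good realization is fixed, the appeal to~\cite[Corollary 7.10]{CFH_22} is essentially mechanical, and crucially the entire construction is $\epsilon$-independent since both the noise directions $Z_1,\ldots,Z_d$ and the choice of transverse direction $v=e_{j_0}$ are.
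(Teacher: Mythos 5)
Your proposal is correct and follows essentially the same route as the paper: sample i.i.d. points on a sphere $\partial B_S(0)\supset \overline{H_R}$, take the convex hull, verify with high probability both the containment $H_R\subset\mathrm{interior}([\xi_1,\ldots,\xi_N])$ and that no facet is parallel to a fixed noise direction $v\in\mathrm{span}\{Z_1,\ldots,Z_r\}$, then invoke \cite[Corollary 7.10]{CFH_22} for both the forward and drift-reversed processes. The only cosmetic difference is that where you sketch the cap-covering and genericity arguments directly, the paper cites \cite[Theorem 7.7]{CFH_22} for the non-parallelism and \cite[Proposition 1]{PSSW_24} for the Hausdorff approximation of the ball.
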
 

\begin{proof}[Proof of Theorem~\ref{thm:gfluids}]
By construction, Assumption~\ref{assump:H} is met.  One can also use a similar line of reasoning to the previous example of Lorenz to show that Assumption~\ref{assump:V} (V4) is satisfied.\end{proof}

\begin{remark}
Note that under our framework, we can also damp all modes but leave the additive noise to be degenerate, so long as the uniform parabolic H\"{o}rmander condition is met on any $H_R$, $R>0$. In the setting of Galerkin truncations of the stochastic Navier--Stokes equations on the periodic box, minimal conditions on the noise for the uniform parabolic H\"{o}rmander condition to hold were first obtained in \cite{EMattingly}. For the bracket computations in the case of shell models, see \cite[Section 1.2]{BL_21}.
\end{remark}

\end{example}

\subsection{Regular points}
\label{sec:BR}
In this section, we use a general argument to validate Assumption~\ref{assump:BR} in Examples~\ref{ex:osc}-\ref{ex:NSE}.

\begin{proof}[Proof of Lemma~\ref{lem:osc}, Lemma~\ref{lem:Lor}, Lemma~\ref{lem:NSE}]
Fix $R>0$.  Given the definition of $H_R$ in each example, there exists $S>R$ such that $B_S(0)\supset \overline{H_R}$.  Let $\xi_1, \ldots, \xi_N, \ldots $ be i.i.d. random variables distributed on $\partial B_{S}(0)$ according to Hausdorff measure.  Also, let $v \in \RR^d_{\neq 0}$ be fixed.  Then following the proof of~\cite[Theorem 7.7]{CFH_22}, for every $N>0$ large enough, with probability one, every face on 
\begin{align}
\partial (\text{interior}([\xi_1, \ldots, \xi_N])),
\end{align} 
where we recall that $[\xi_1, \ldots, \xi_N]$ is the closed convex hull of $\{ \xi_1, \ldots, \xi_N\}$, is not parallel to $v$.  Furthermore, by~\cite[Proposition 1]{PSSW_24}, for any $\zeta >0$, we can pick $N>0$ large enough so that, with high probability,  
\begin{align}
\delta_H( \text{interior}([\xi_1, \ldots, \xi_N]), B_{S}(0)) < \zeta 
\end{align}
where $\delta_H$ denotes Hausdorff measure. 
In particular, we can pick $N>0$ large enough so that, with high probability,
\begin{align}
H_R\subset \text{interior}([\xi_1, \ldots, \xi_N])
 \end{align}
 and each face $\partial (\text{interior}([\xi_1, \ldots, \xi_N]))$ is not parallel to $v$.  Since, by assumption, there is a fixed direction $v \in \text{span}\{ Z_1, \ldots, Z_r\}$ in each example, applying~\cite[Corollary~7.10]{CFH_22} finishes validating Assumption~\ref{assump:BR} for Examples~\ref{ex:Lor}-\ref{ex:NSE}.
\end{proof}

\section{Outline of the Proof of Theorem~\ref{thm:minorization}}
\label{sec:outline}

In this section, we outline the proof of Theorem~\ref{thm:minorization} in order to break apart the argument into manageable pieces, and to explain the overarching details.  Much of the argument is an adaptation of the classical elliptic theories of De Giorgi~\cite{De_57} and Moser~\cite{Mos_61} (see also the expository presentation in~\cite{Stin_24}) to the hypollelliptic setting, but with a careful eye towards quantitative estimates as they depend on $\epsilon \in (0,1)$.  In particular, the basic mantra employed throughout the paper is similar to the one arising from the work of De Giorgi and Moser, except the additional stipulation that the bounds be quantitative; that is, ``quantitative upper bounds imply quantitative lower bounds".

To setup the statements of the results below, for $t>0, (x,y) \in \mathcal{X}^2$ and $\epsilon \in (0,1)$, let
\begin{align}
q_t^\epsilon(x,y) = p_{t/\epsilon}^\epsilon(x,y)
\end{align}       
be the time re-scaled density of the process $\{x_t^\epsilon\}_{t\geq 0}$ solving~\eqref{eqn:SDEmain}.  Also, letting $\chi \in C^\infty_0(\RR; [0,1])$ with $\chi(x)=1$ for $x\leq 3/2$ and $\chi(x)=0$ for $x\geq 2$, define $\chi_R:\mathcal{X}^2\rightarrow [0,1]$ by
\begin{align}
\label{eq:cutoff}
\chi_R(x,y) = \chi( H(x)/R) \chi(H(y)/R). 
\end{align} 
\begin{theorem}[Quantitative upper bounds]
\label{thm:upperboundo}
Suppose that Assumptions~\ref{assump:H}-\ref{assump:BR} are satisfied.  Then for every $R\geq 1,T\geq 1$ there exists a constant $C_{R,T}>0$ and a parameter $s>0$ independent of $\epsilon \in (0,1)$ so that 
\begin{align}
\label{eqn:genupperb}
\sup_{1\leq t \leq T} \| q_t^\epsilon \|_{L^\infty(H_R\times H_R)} + \int_1^T \| \chi_R q_t^\epsilon \|^2_{H^s(\RR^d \times \RR^d) } \, dt \leq C_{R,T}  
\end{align}
for all $\epsilon \in (0,1)$.  
\end{theorem}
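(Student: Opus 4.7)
The plan is to combine three quantitative ingredients of increasing strength: (i) a uniform $L^1$ bound on $\chi_{2R}\, q_t^\epsilon$ from mass conservation and the Lyapunov structure of Remark~\ref{rem:subinv}; (ii) the parameter-dependent H\"ormander smoothing estimate (Theorem~\ref{thm:tdhor}) to turn $L^2$ control into the integrated $H^s$ control; and (iii) the quantitative hypoelliptic Moser iteration (Theorem~\ref{thm:moser2}) to move from $L^1$--$L^2$ to $L^\infty$. The central difficulty is that under the long-time rescaling $t\mapsto t/\epsilon$, the conservative drift $Z_0$ appears in the Kolmogorov equations for $q_t^\epsilon$ with a prefactor $\epsilon^{-1}$; however, since $Z_0 H = 0$ by Assumption~\ref{assump:V}(V3), the cutoff $\chi_R$---depending only on $H(x)$ and $H(y)$---satisfies $Z_0^x\chi_R = Z_0^y\chi_R = 0$, and the commutators arising in the localization carry no factor of $\epsilon^{-1}$.

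For step (i), the mass identity $\int q_t^\epsilon(x,y)\,dy = 1$ immediately gives $\|\chi_{2R}\, q_t^\epsilon\|_{L^1(\RR^d\times\RR^d)}\leq |H_{4R}|$ uniformly in $\epsilon$, while the Lyapunov bound $L_\epsilon(H+1)\leq d_\ast\epsilon$ from Remark~\ref{rem:subinv} together with Dynkin's formula, once rescaled, produces
\begin{align*}
\int H(y)\, q_t^\epsilon(x,y)\,dy = \EE_x H(x_{t/\epsilon}^\epsilon) \leq H(x) + d_\ast t,
\end{align*}
so Markov's inequality shows that the mass of $q_t^\epsilon(x,\cdot)$ escaping $H_{2R}$ is quantitatively small for $x\in H_R$, $t\in[1,T]$, uniformly in $\epsilon$. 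The analogous argument applied to the time-reversed process $\{x_t^{-,\epsilon}\}$, available thanks to the boundary-regular domains of Assumption~\ref{assump:BR}, provides the symmetric localization in the $x$ variable.

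For step (ii), the rescaled density satisfies the forward Kolmogorov equation
\begin{align*}
\partial_t q_t^\epsilon = \bigl(Z^y + \operatorname{div}(Z)(y) - \epsilon^{-1}Z_0^y + \textstyle{\sum_j}(Z_j^y)^2\bigr) q_t^\epsilon
\end{align*}
in $y$, and an analogous backward equation in $x$. Multiplying by $\chi_R$ and using $Z_0^y\chi_R = 0$, the singular transport contributes no commutator error, and the uniform parabolic H\"ormander condition of Assumption~\ref{assump:V}(V5) combined with Theorem~\ref{thm:tdhor}, applied to the forward and backward equations in turn, yields a local Sobolev gain of the form
\begin{align*}
\int_1^T \|\chi_R\, q_t^\epsilon\|_{H^s(\RR^d\times\RR^d)}^2\, dt \leq C_{R,T}\bigl(\|\chi_{2R}\, q_\cdot^\epsilon\|_{L^2((1/2,T)\times\RR^d\times\RR^d)}^2 + 1\bigr),
\end{align*}
with constants independent of $\epsilon$.

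For step (iii), the quantitative hypoelliptic Moser iteration (Theorem~\ref{thm:moser2}) applied to the nonnegative subsolution $q_t^\epsilon$ on nested time-space cylinders inside $(1/4,T+1/4)\times H_{3R}\times H_{3R}$ yields both the $L^\infty$ bound
\begin{align*}
\sup_{1\leq t\leq T}\|q_t^\epsilon\|_{L^\infty(H_R\times H_R)} \leq C_{R,T}\|\chi_{2R}\, q_\cdot^\epsilon\|_{L^1((1/4,T+1/4)\times\RR^d\times\RR^d)}
\end{align*}
claimed in~\eqref{eqn:genupperb} and, as a byproduct, the $L^2$ control on an intermediate cylinder that feeds back into the right-hand side of the Sobolev gain from step (ii), closing the $H^s$ estimate. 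The main obstacle I anticipate is arranging the nested cylinders so that both the $L^\infty$ bound and the $H^s$ bound close without a circular dependence in the $\epsilon$-uniform constants; this is handled by carrying out the $L^1\to L^\infty$ Moser step first on the largest cylinder, so that the Sobolev step in (ii) feeds on an already uniform $L^2$ bound. The geometric compatibility $Z_0 H = 0$ between the cutoff $\chi_R$ and the singular drift is what allows every estimate in this chain to be performed with constants independent of $\epsilon\in(0,1)$.
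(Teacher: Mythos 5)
Your architecture (localize with cutoffs depending only on $H$ so that $Z_0\chi_R=0$, use Theorem~\ref{thm:tdhor} for the $H^s$ gain, use Theorem~\ref{thm:moser2} for the $L^\infty$ bound, and run the Moser step before the Sobolev step) matches the paper's, and the observation that the singular transport commutes with the cutoff is exactly right. However, there is a genuine gap in step (iii), which is where the real work lies. You invoke Theorem~\ref{thm:moser2} to pass from an $L^1$ bound on $q_t^\epsilon$ to an $L^\infty$ bound on $H_R\times H_R$. This fails on two counts. First, Theorem~\ref{thm:moser2} is an $L^2\to L^\infty$ estimate with a fixed cylinder gap ($m=(t_2-t_1)/4$, $H_R\to H_{R/2}$); upgrading it to $L^1\to L^\infty$ requires the standard interpolation--absorption iteration over a continuum of nested cylinders with polynomially controlled constants in the gap, which the theorem as stated (and as proved, via Dynkin's formula on boundary-regular domains) does not supply. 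Second, and more fundamentally, Theorem~\ref{thm:moser2} applies to subsolutions of the \emph{single-variable} operator $\mathcal{M}_\epsilon-\alpha-\partial_t$ with $\mathcal{M}_\epsilon\in\{\mathcal{L}_\epsilon,\mathcal{L}_\epsilon^-\}$, whereas $q_t^\epsilon(x,y)$ solves the doubled equation $(L_{\epsilon,x}+L_{\epsilon,y}^*-2\epsilon\partial_t)q_t^\epsilon=0$; treating it as a subsolution ``on cylinders inside $(1/4,T+1/4)\times H_{3R}\times H_{3R}$'' presupposes a Moser theorem for the doubled hypoelliptic system, which is not what is proved and would require redoing the entire iteration for that operator.

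The paper closes this gap by a duality argument that your proposal is missing. One first proves a global weighted a priori estimate (Proposition~\ref{prop:apriorigreen}): pairing the equation for $g_\alpha=e^{-\alpha t}\mathcal{Q}_t^\epsilon f$ against $g_\alpha\,\varphi(H/R)e^{-\eta H}$ and exploiting the subinvariance inequality in Assumption~\ref{assump:V}~(V4) yields $\|g_\alpha\|_{L^2(e^{-\eta H}dx\,dt)}\lesssim\|f\|_{L^2(e^{-\eta H}dx)}$. A first application of Theorem~\ref{thm:moser2} in the forward variable then gives $\|e^{-\alpha t}\mathcal{Q}_t^\epsilon f\|_{L^\infty(I_1\times H_{2R})}\lesssim\|f\|_{L^2(e^{-\eta H}dx)}$, and \emph{duality in $f$} converts this into $\sup_{x\in H_{2R}}\|e^{-\alpha\cdot}q_\cdot^\epsilon(x,\cdot)\|_{L^2(I_1\times H_{2R})}\lesssim 1$. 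Only then does a second application of Theorem~\ref{thm:moser2}, now in the $y$ variable to the time-reversed operator $\mathcal{L}_{\epsilon,y}^-$ (which is why both choices of $\mathcal{M}_\epsilon$ are needed), produce the $L^\infty$ bound in both variables. Your $L^1$ mass bound and Lyapunov tightness from step (i) cannot substitute for this duality step: tightness controls where the mass lives, not how concentrated it is. Once the $L^\infty$ bound is in hand, your step (ii) is essentially the paper's Lemma~\ref{lem:upper2}, except that the paper applies Theorem~\ref{thm:tdhor} once to the doubled system (whose bracket condition holds because $x$- and $y$-derivatives commute) rather than separately to the forward and backward equations, and it also needs an energy estimate to control the $\|Z_{j,x}u\|_{L^2}+\|Z_{j,y}u\|_{L^2}$ terms appearing in the hypoelliptic norm on the right-hand side.
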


The proof of Theorem~\ref{thm:upperboundo} will be carried out in Section~\ref{sec:unifhor} and Section~\ref{sec:Moser}.  In Section~\ref{sec:unifhor},  we discuss uniform H\"{o}rmander-type smoothing estimates which gives the $H^s$ part of the bound in~\eqref{eqn:genupperb}, and in Section~\ref{sec:Moser} we use the uniform $H^s$ type bound, coupled with a quantitative Moser iteration scheme, to give the $L^\infty$ bound in~\eqref{eqn:genupperb}.

Given the quantitative upper bounds on the time-changed density $q_t^\epsilon$ in Theorem~\ref{thm:upperboundo}, we can use the structure of the equations satisfied by $q_t^\epsilon$ to produce a quantitative lower bound, but only on its time-averages over a large enough window which is independent of $\epsilon \in (0,1)$.  This is precisely the mantra of the paper and is central to the classical theories of De Giorgi and Moser.  That is, given `nice' uniform upper bounds of the time-changed density $q_t^\epsilon$ as in~\eqref{eqn:genupperb}, one can try taking functionals of $q_t^\epsilon$, e.g. 
\begin{align}
\label{eqn:typesoff}
(q_t^\epsilon)^{-\delta} \text{ for some } \delta >0 \,\,\text{ or }\,\, - \log(q_t^\epsilon), 
\end{align}
and hope to produce uniform upper bounds of the modified function.  These upper bounds naturally translate to uniform lower bounds on the original function $q_t^\epsilon$.  

However, there is one notable technicality we cannot currently get around, and it is precisely the reason why we consider time-avegrages of the density, instead of the density itself.  This is because we do not have enough quantitative regularity in time to make the arguments, in particular those of De Giorgi~\cite{De_57}, work.  That is, if we had also uniform $H^s$ smoothing in time as well as space, then it can be made to work.  But, it is not clear to us that such regularity is even true due to the structure of the dynamics, as it moves `quickly' in time on the $1/\epsilon$ time-scale when $\epsilon \in (0,1)$ is small.  Thus, we will prove the following:       
\begin{theorem}
\label{thm:lowerbtaint}
Suppose that Assumptions~\ref{assump:H}-\ref{assump:BR} are satisfied and let $R\geq 1$ be such that $H_R\neq \emptyset$.  Then there exists a time $T>1$ and a constant $c_R>0$, both independent of $\epsilon \in (0,1)$, such that
\begin{align}
\inf_{(x,y) \in H_R \times H_R}\int_1^T q_t^\epsilon (x,y) \, dt \geq c_R.  
\end{align}   
\end{theorem}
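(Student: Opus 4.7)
The plan is to establish the lower bound on $F(x,y) := \int_1^T q_t^\epsilon(x,y)\,dt$ in two steps. First, I would obtain an $L^1$-type positivity in a sufficiently large outer Hamiltonian sublevel set via mass conservation and the weak Lyapunov structure supplied by Assumption~\ref{assump:V}(V4). Second, I would upgrade this measure-theoretic positivity to a pointwise bound on $H_R\times H_R$ by a De Giorgi-style iteration that exploits the uniform upper bound and Sobolev control of Theorem~\ref{thm:upperboundo}.

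For the first step, combining Assumption~\ref{assump:V}(V3)--(V4) with the nonnegativity of $\sum_j(Z_jH)^2$ gives $L_\epsilon(H+1)=\epsilon\bigl(-ZH+\sum_{j=1}^r Z_j^2 H\bigr)\le \epsilon\,d_*$ on $\mathcal X$. Applying Dynkin's formula to the non-explosive process at time $s=t/\epsilon$ yields $\EE_x H(x_{t/\epsilon}^\epsilon)\le H(x)+d_*\,t$, so Chebyshev's inequality gives $\PP_x\{x_{t/\epsilon}^\epsilon\in H_{R'}\}\ge 1/2$ for every $x\in H_R$, $t\in[1,T]$ and $\epsilon\in(0,1)$, provided $R'>R$ and $T>1$ are fixed independently of $\epsilon$ and chosen large enough depending on $R$. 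Integrating in $t$ and writing the probability as an integral of the density gives
\begin{equation*}
\int_{H_{R'}} F(x,y)\,dy \;=\; \int_1^T \mathcal{P}_{t/\epsilon}^\epsilon(x, H_{R'})\,dt \;\ge\; \frac{T-1}{2}
\end{equation*}
uniformly in $x\in H_R$ and $\epsilon\in(0,1)$. Combined with the upper bound $F(x,y)\le (T-1)\,C_{R',T}$ supplied by Theorem~\ref{thm:upperboundo}, a standard layer-cake argument then produces a constant $\delta=\delta(R,T)>0$, independent of $\epsilon$ and $x\in H_R$, for which
\begin{equation*}
|\{y\in H_{R'}\,:\, F(x,y)\ge \delta\}|\;\ge\; \delta.
\end{equation*}

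The second step converts this positivity in measure into the sought pointwise lower bound. The key structural feature of the time-averaging is that the fast temporal oscillations of $q_t^\epsilon$ on the $1/\epsilon$ scale telescope, and $F$ satisfies, separately in each variable,
\begin{equation*}
(L_\epsilon)_x F(x,y) \;=\; \epsilon\bigl[q_T^\epsilon(x,y)-q_1^\epsilon(x,y)\bigr], \qquad (L_\epsilon^*)_y F(x,y) \;=\; \epsilon\bigl[q_T^\epsilon(x,y)-q_1^\epsilon(x,y)\bigr],
\end{equation*}
with right-hand sides bounded uniformly on $H_{R'}\times H_{R'}$ by Theorem~\ref{thm:upperboundo}. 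I would then introduce a nonlinear transformation such as $u:=-\log(F+\delta_0)$, derive a Caccioppoli-type energy inequality in the $(x,y)$ variables using the uniform parabolic H\"ormander condition (V5) together with the quantitative $H^s$ smoothing from Theorem~\ref{thm:upperboundo}, and iterate across level sets in the spirit of the hypoelliptic De Giorgi--Nash--Moser arguments developed in \cite{golse2015h, golse2016harnack, BL_21}. The boundary regularity of Assumption~\ref{assump:BR} allows the auxiliary boundary value problems on domains sandwiched between $H_R$ and $H_{R'}$ to be solved classically, so that the iteration can be closed. The resulting pointwise upper bound on $u$ on $H_R\times H_R$ translates to the desired lower bound $F\ge c_R$.

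The main obstacle is executing the De Giorgi iteration with constants depending only on $R$ and $T$ and not on the small parameter $\epsilon\in(0,1)$. Because $L_\epsilon$ is merely hypoelliptic, the Caccioppoli and Sobolev embedding inputs of the classical elliptic argument must be replaced by uniform hypoelliptic analogues; this is precisely the role of the uniform H\"ormander condition (V5) and the quantitative Sobolev bound in Theorem~\ref{thm:upperboundo}. Careful tracking of the $\epsilon$-dependence in each level-set energy estimate, together with the boundary regularity from Assumption~\ref{assump:BR} that rules out pathological exit behavior on $\partial H_{R'}$, is what closes the argument and distinguishes this approach from the standard qualitative theory.
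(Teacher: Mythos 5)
Your overall architecture (time-average the density, get positivity in measure from the Lyapunov structure and the uniform upper bounds, then upgrade to a pointwise bound by a hypoelliptic De Giorgi iteration) is the same as the paper's, and your Step 1 is essentially Lemma~\ref{lem:posmeas}. The gap is in Step 2, and it is exactly the point the paper flags as delicate: the flat average $F=\int_1^T q_t^\epsilon\,dt$ does not satisfy a one-sided differential inequality of the type the De Giorgi/Moser machinery needs. Your identity $L_{\epsilon,y}^*F=\epsilon\bigl[q_T^\epsilon-q_1^\epsilon\bigr]$ is correct, but the endpoint term $q_T^\epsilon-q_1^\epsilon$ is \emph{unsigned}. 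When you pass to $u=-\log(F+\delta_0)$ (or to truncations $(1-CF)_+$), the computation of $\mathcal{L}^-_{\epsilon,y}u$ produces a term of the form $-q_T^\epsilon/(F+\delta_0)$, which is only bounded below by $-\|q_T^\epsilon\|_{L^\infty}/\delta_0$. So $u$ is a subsolution only up to a forcing that grows as $\delta_0\to 0$ (equivalently, as the truncation level constants grow along the iteration), and the quantitative Moser theorem (Theorem~\ref{thm:moser2}), which requires $(\mathcal{M}_\epsilon-\alpha-\partial_t)u\ge 0$, no longer applies; a version with forcing would feed an $O(1)$ additive constant into the final $L^\infty$ bound and destroy the smallness needed to conclude.

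The paper resolves precisely this by choosing the average $h^\epsilon=\alpha\int_{t_0}^\infty e^{-\alpha(t-t_0)}q_t^\epsilon\,dt$ with $\alpha=\inf_{\mathcal{X}}\mathrm{div}(Z)$. For this weight only one endpoint survives and with a favorable sign: $(\alpha-\mathcal{L}^*_{\epsilon,y})h^\epsilon=\alpha q_{t_0}^\epsilon\ge 0$, and the residual zeroth-order term $(\mathrm{div}(Z)-\alpha)h^\epsilon$ is also nonnegative by the choice of $\alpha$. This is what makes the smoothed truncations $w_k=\phi_\epsilon(1-(4/\theta)^k h^\epsilon/\delta_{R,S})$ genuine subsolutions of $\mathcal{L}^-_{\epsilon,y}$ (Lemma~\ref{lem:L-}), so that Theorem~\ref{thm:moser2} and the compactness step (the Intermediate Value Lemma) close the iteration. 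Theorem~\ref{thm:lowerbtaint} for the finite-window integral is then deduced \emph{a posteriori} by splitting $h^\epsilon$ into $\int_1^T$ plus a tail and using Lemma~\ref{lem:upper1} to make the tail smaller than $d_R/2$ for $T$ large but independent of $\epsilon$. To repair your argument you would either need to switch to this exponential average, or supply a De Giorgi/Moser scheme that tolerates the unsigned endpoint forcing with constants that remain summable along the iteration; as written, neither is done.
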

Theorem~\ref{thm:lowerbtaint} will be proven in Section~\ref{sec:lowerb} using a special time-averaged version of the density, denoted by $h^\epsilon(x,y)$ below in Section~\ref{sec:lowerb}, which behaves well with the De Giorgi argument.  See Theorem~\ref{thm:lowerbta} for a precise statement.  Theorem~\ref{thm:lowerbtaint} will then follow easily from Theorem~\ref{thm:lowerbta}. 

The final, crucial ingredient in the proof of Theorem~\ref{thm:minorization} is to see that, abstractly, the properties obtained in Theorem~\ref{thm:upperboundo} and Theorem~\ref{thm:lowerbtaint}  together imply Theorem~\ref{thm:minorization} using a quantified version of a small set argument as in~\cite{MT_12}.  Here, `abstractly' means that the result holds in the more general context of a Markov process $\{x_t\}_{t\geq 0}$ on the state space $\mathcal{X}$ with $H$ satisfying Assumption~\ref{assump:H}, and with a probability density function $p_t(x,y)$ satisfying:
\begin{assump}
\label{assump:gen}
For every $R\geq 1$, there exists $T>1$, $s>0$, $C_R, c_R>0$ such that the following conditions are met:
\begin{itemize}
\item[(P1)] $\displaystyle{\sup_{1\leq t \leq T} \| p_t \|_{L^\infty(H_R \times H_R)} + \int_1^T \| \chi_R p_t\|^2_{H^s(\RR^d\times \RR^d)} \, dt \leq C_R}, $
\item[(P2)] $\displaystyle{\inf_{H_R \times H_R} \int_1^T p_t(x,y) \, dt \geq c_R}$. 
\end{itemize}
\end{assump}

\begin{theorem}
\label{thm:generalminorization}
Suppose that Assumption~\ref{assump:H} is satisfied and that $\{ x_t\}_{t\geq 0}$ is a Markov process on the state space $\mathcal{X}$ with transition measures $\{\mathcal{P}_t(x, \, \cdot\,)\}_{t >0, x\in \mathcal{X}}$ which are absolutely continuous with respect to Lebesgue measure on $\mathcal{X}$, i.e., for every $x\in \mathcal{X}$ and $t>0$,
\begin{align}
\mathcal{P}_t(x,dy) = p_t(x,y) \, dy
\end{align}
for some $y \mapsto p_t(x,y)\in L^1(\mathcal{X})$. 
Furthermore, suppose that the transition density $p_t(x,y)$ satisfies Assumption~\ref{assump:gen}.  Then, for every $R\geq 2$, there exists $t_0>0$ and $\lambda >0$, both depending only on $R, d$ and $c_R, C_R, T$ and $s$ in Assumption~\ref{assump:gen}, such that 
\begin{align}
\label{eq:generalminorization}
\sup_{x,y \in H_R} p_{t_0}(x,y) \geq \lambda.\end{align}  
\end{theorem}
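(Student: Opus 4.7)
My plan is to adapt, quantitatively, the ``petite implies small'' argument of Meyn--Tweedie~\cite{MT_12}. I interpret the conclusion as the $\inf$-version (the $\sup$-version follows immediately from (P2) alone and would not suffice for Theorem~\ref{thm:minorization}). In this setting, (P2) provides quantitative petite-ness of $H_R$, while the spatial Sobolev regularity in (P1) plays the role of aperiodicity, allowing one to upgrade the time-integrated bound to a pointwise-in-time minorization.

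First I would locate a pointwise small set at a single fixed time. Set $F(x,y) = \int_1^T p_t(x,y)\,dt$; by (P2) $F \geq c_R$ on $H_R \times H_R$, and by Cauchy--Schwarz applied to (P1), $\|\chi_R F\|_{H^s(\RR^{2d})}$ is bounded by a constant depending only on $C_R, T$. A Chebyshev estimate on $t \mapsto \|\chi_R p_t\|_{H^s}^2$ isolates a ``good time'' set $G \subset [1,T]$ of measure at least $(T-1)/2$ on which the $H^s$ norm is uniformly controlled. Separately, the pointwise lower bound from (P2) combined with the uniform $L^\infty$ upper bound from (P1) forces the measure of the time-set $\{t : p_t(x,y) \geq c_R/(2(T-1))\}$ to be bounded below for each $(x,y) \in H_R \times H_R$. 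Applying Fubini and intersecting with $G$, I would extract a single time $t_1 \in [1,T]$ at which both $\|\chi_R p_{t_1}\|_{H^s}$ is controlled and the super-level set $E := \{(x,y) \in H_R^2 : p_{t_1}(x,y) \geq c_1\}$ has Lebesgue measure at least $\mu_0 > 0$, where $c_1 = c_R/(2(T-1))$. Provided $s > d$ (the Sobolev-critical index for $\RR^{2d}$), Sobolev embedding endows $\chi_R p_{t_1}$ with a continuous representative and a quantitative modulus of continuity; choosing a Lebesgue point of $E$ then produces an open product ball $B_x \times B_y \subset H_R \times H_R$ of radius $\rho_0 > 0$ on which $p_{t_1} \geq \delta_0 := c_1/2$. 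If $s \leq d$, I would first iterate Chapman--Kolmogorov finitely often (using the $L^\infty$ bound to control intermediate kernels) to effectively boost the regularity before invoking the embedding.

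Second, I would establish uniform quantitative reachability of the small set, producing fixed times $t_2, t_2'$ and positive constants $\alpha, \alpha'$ with $\mathcal{P}_{t_2}(x, B_x) \geq \alpha$ for all $x \in H_R$ and $\int_{B_y} p_{t_2'}(z,y)\,dz \geq \alpha'$ for all $y \in H_R$. Integrating (P2) over the second argument yields $\int_1^T \mathcal{P}_t(x, B_x)\,dt \geq c_R |B_x|$ for $x \in H_R$. The map $(x,t) \mapsto \mathcal{P}_t(x, B_x)$ inherits spatial $H^s$ regularity in $x$ from $p_t$, so a second round of the pigeonhole/Sobolev scheme applied in the $x$-variable yields a ball $V \subset H_R$ and a time $t_2^0$ with $\mathcal{P}_{t_2^0}(\,\cdot\,, B_x) \geq \alpha_0$ on $V$. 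A finite covering of $\overline{H_R}$ by such balls $V_j$, combined with a further Chapman--Kolmogorov step to connect arbitrary starting points in $H_R$ to one such $V_j$, then upgrades this to the uniform-in-$x$ statement; the dual bound for $B_y$ is obtained symmetrically by applying the same scheme to $y \mapsto \int_{B_y} p_t(z,y)\,dz$, whose $H^s$ regularity in $y$ is again inherited from $p_t$. Finally, setting $t_0 := t_2 + t_1 + t_2'$ and applying Chapman--Kolmogorov,
\begin{equation*}
p_{t_0}(x,y) \;\geq\; \int_{B_x}\!\int_{B_y} p_{t_2}(x, z_1)\, p_{t_1}(z_1, z_2)\, p_{t_2'}(z_2, y)\, dz_1\,dz_2 \;\geq\; \alpha\, \delta_0\, \alpha' \;=:\; \lambda,
\end{equation*}
uniformly in $(x,y) \in H_R \times H_R$, which is the desired minorization.

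The hard part will be the second step. The hypotheses provide only spatial Sobolev regularity in $L^2_t$ and no quantitative time regularity, so the conversion from the time-integrated reachability bound to a pointwise-in-time bound uniform over $x \in H_R$ must be extracted entirely from the spatial regularity, via iterated pigeonholing and covering. The finite-covering/Chapman--Kolmogorov sewing argument must then be carried out with careful bookkeeping so that the final $t_0$ and $\lambda$ depend only on the data $R, d, c_R, C_R, T$, and $s$.
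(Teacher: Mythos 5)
Your overall architecture (small set at a fixed time, reachability of the small set, Chapman--Kolmogorov sewing) matches the paper's, and your reading of the conclusion as an $\inf$-bound is right. But there are two genuine gaps, one of them fatal to the proposed route.

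The fatal gap is in your second step, and it is exactly the point you flag as ``the hard part'' without resolving it. You claim that a finite covering of $\overline{H_R}$ by balls $V_j$, each with its own good time $t_2^{0,j}$, can be upgraded to a \emph{single} fixed time $t_2$ with $\mathcal{P}_{t_2}(x,B_x)\ge\alpha$ for all $x\in H_R$ via ``a further Chapman--Kolmogorov step.'' This is circular: to align the finitely many times $t_2^{0,j}$ into one common time you must fill the gaps $t_2-t_2^{0,j}$ with transitions that are bounded below inside the small set, i.e.\ you need $\inf_{z,z'\in E}p_\tau(z,z')\ge c$ for $\tau$ ranging over a prescribed finite collection of values (or an interval containing them). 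The hypotheses give no time regularity whatsoever, so the set $S$ of times at which $E$ minorizes itself is only known to have positive measure, with no structure; there is no reason the $N$ translates $S+t_2^{0,j}$ have a common point. The paper's resolution is to \emph{not} seek a fixed reachability time: it accepts $x$-dependent hitting times $t_x\le 4T$ and $y$-dependent exit times $t_y\le T$, and fills the variable remainder $t_0-t_x-t_y$ using a quantitative Steinhaus-type theorem (proved via a sumset result of Lev from additive combinatorics) showing that the $n$-fold sumset $nS$ contains a full unit interval with $n$ depending only on $|S|$ and $T$. Composing $n$ transitions within $E$ at times from $S$ then covers every required gap value. Your proposal has no substitute for this aperiodicity mechanism, and without it the final sewing step cannot be carried out at a single $t_0$.

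The second gap is in your first step: Sobolev embedding of $H^s(\RR^{2d})$ into continuous functions requires $s>d$, whereas Assumption~\ref{assump:gen} only provides some $s>0$ (in the application $s$ is small). Your fallback---iterating Chapman--Kolmogorov to ``boost the regularity''---is not justified: convolving kernels in the intermediate variable does not increase the Sobolev regularity of $(x,y)\mapsto p_{t+\tau}(x,y)$ in a way controlled by (P1). The paper avoids continuity altogether by a quantitative Lebesgue-density argument: using the finite-difference characterization of $H^s$, it finds a triple $(u,v,w)$ near which the set where $p_t$ and $p_\tau$ stay above $\delta_4/2$ has relative density $1-C\delta^{2s}$ in a $\delta$-cube, which suffices (for any $s>0$) to run the sectioning argument producing $E_1,E_2$ with the intermediate-state lower bound. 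This part of your argument is repairable by that route, but as written it does not go through.
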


\begin{remark}
	In the language of Markov process theory, Assumption~\ref{assump:gen} (P2) says that $H_R$ is a \textit{petite set} for the semigroup $\{\mathcal{P}_t\}_{t\ge 0}$. The existence of a petite set is a standard assumption in the study of minorization properties of Markov semigroups; see e.g. \cite{MT_12}.
\end{remark}

\begin{remark}
	It is well known that a Markov process satisfies a minorization condition on any petite set under a suitable aperiodicity assumption \cite{MT_12}. Following the general philosophy of these results, it can be shown that Assumption~\ref{assump:gen} (P2) (i.e., that $H_R$ is a petite set) and \textit{qualitative} smoothness of $(t,x,y) \mapsto p_t(x,y)$ (which in continuous time gives a soft form of aperiodicity) together imply that \eqref{eq:generalminorization} holds for some $t_0, \lambda >0$. The key novelty of Theorem~\ref{thm:generalminorization} is that $\lambda$ and $t_0$ depend quantitatively on the regularity of $p_t$ and a time-averaged lower bound only through the constants in Assumption~\ref{assump:gen}.
\end{remark}

Given Theorem~\ref{thm:generalminorization}, we can immediately conclude Theorem~\ref{thm:minorization} by combining Theorem~\ref{thm:upperboundo} with Theorem~\ref{thm:lowerbtaint}.  Theorem~\ref{thm:generalminorization} will be established in Section~\ref{sec:smallset}.

\section{Uniform minorization at a fixed time $t>0$}
\label{sec:smallset}

In this section, we prove Theorem~\ref{thm:generalminorization}, which we recall is in the more general setting of a Markov semigroup  $\{\mathcal{P}_t\}_{t \ge 0}$ on the state space $\mathcal{X}$ satisfying the hypotheses in the statement.  In particular, $\mathcal{X}$ is induced through an $H$ satisfying Assumption~\ref{assump:H} and the transitions $\{ \mathcal{P}_t(x, \, \cdot \,) \}_{x\in \mathcal{X}, t > 0 }$ are absolutely continuous with respect to Lebesgue measure on $\mathcal{X}$ with transition densities $\{ p_t(x,y) \}_{t >0, x,y \in \mathcal{X}}$ satisfying Assumption~\ref{assump:gen}.  

Throughout this section, we assume that the assumptions of Theorem~\ref{thm:generalminorization} are met, and we let $R \ge 2$ and $T$, $s$, $c_R$, and $C_R$ denote the associated constants guaranteed by Assumption~\ref{assump:gen}. 

\subsection{Proof of Theorem~\ref{thm:generalminorization}}

We now outline the main steps in the proof of Theorem~\ref{thm:generalminorization} and complete the argument while taking key intermediate results, to be proven in subsequent sections, as given. 

The first step in the proof is to construct a \emph{small set} $E \subseteq H_{R}$. Precisely, we establish the following. 

\begin{proposition} \label{prop:smallset}
	There exist $\delta > 0$ (depending only on $R$, $d$, $c_{R}$, $C_{R}$, $T$, and  $s$), $t_* \le 3T$, and $E \subseteq H_{R}$ such that 
	\begin{equation} \label{eq:smallset}
		|E| \ge \delta \qquad \text{and} \qquad p_{t_*}(x,y) \ge \delta \quad \text{for all } \,\, x,y\in E.  
	\end{equation}
\end{proposition}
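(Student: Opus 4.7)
The plan is to construct the small set $E$ in three stages, adapting the quantitative ``petite implies small'' strategy from Meyn--Tweedie~\cite{MT_12} to the continuous-time setting. First, combining the time-averaged minorization (P2) with the $L^\infty$ upper bound in (P1) via pigeonhole in time, I would select a single $t_1\in[1,T]$ at which the level set $F_{t_1}:=\{(x,y)\in H_R\times H_R:p_{t_1}(x,y)\ge\alpha\}$ has Lebesgue measure at least $\beta_1$, and at which (by the $L^2([1,T];H^s)$ bound in (P1)) the smoothed density $\chi_R p_{t_1}$ is controlled in $H^s$ by a constant $M_1$. The parameters $\alpha,\beta_1,M_1$ are all quantitative in $c_R, C_R, T, s$.

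Second, I upgrade the measure-theoretic bound on $F_{t_1}$ to a pointwise one. Interpolating $L^\infty$ with $H^s$ through $\|f\|_{W^{s,p}}\lesssim\|f\|_{L^\infty}^{1-2/p}\|f\|_{H^s}^{2/p}$ and then applying the Sobolev embedding $W^{s,p}\hookrightarrow C^{s-d/p}$ for $p$ sufficiently large yields uniform H\"older control on $\chi_R p_{t_1}$ depending only on $s,d,C_R,M_1$. A Lebesgue density-point argument inside $F_{t_1}$ then produces a product ball $B_1\times B_2\subset H_R\times H_R$ of radius $r_0>0$ (quantitative in $\alpha$ and the H\"older data) on which $p_{t_1}(x,y)\ge\alpha/2$ pointwise.

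Third, Chapman--Kolmogorov propagates this into a small-set bound: for any $s_1,s_2\in[1,T]$ and $(x,w)\in\mathcal{X}^2$, restricting the inner integrations to $z_1\in B_1$, $z_2\in B_2$ gives
\[
p_{s_1+t_1+s_2}(x,w)\ge \tfrac{\alpha}{2}\,\Phi_{s_1}(x)\,\Psi_{s_2}(w),\quad \Phi_{s_1}(x):=\!\!\int_{B_1}\!p_{s_1}(x,z)\,dz,\ \ \Psi_{s_2}(w):=\!\!\int_{B_2}\!p_{s_2}(z,w)\,dz.
\]
Since $B_1,B_2\subset H_R$, integrating against (P2) gives $\int_1^T \Phi_{s_1}(x)\,ds_1\ge c_R|B_1|$ and $\int_1^T \Psi_{s_2}(w)\,ds_2\ge c_R|B_2|$ for $x,w\in H_R$. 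Pigeonhole and Chebyshev (using the $L^\infty$ bound on $p_{s_1},p_{s_2}$) then select $s_1^*,s_2^*\in[1,T]$ and positive-measure subsets $E_1,E_2\subseteq H_R$ on which $\Phi_{s_1^*}\ge\eta_1$ and $\Psi_{s_2^*}\ge\eta_2$. Setting $t_*:=s_1^*+t_1+s_2^*\in[3,3T]$ delivers
\[
p_{t_*}(x,w)\ge (\alpha/2)\eta_1\eta_2\quad\text{for all}\ (x,w)\in E_1\times E_2.
\]

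The main obstacle I anticipate is converting this asymmetric bound on $E_1\times E_2$ into the symmetric bound on $E\times E$ required by the proposition, since a priori $|E_1\cap E_2|$ need not be positive. I plan to resolve this by re-applying Stage~2 to $p_{t_*}$: the $H^s$ and $L^\infty$ control of $\chi_R p_{t_*}$ (via (P1) at the extended time $t_*\le 3T$, combined with Chapman--Kolmogorov to bound $p_{t_*}$ pointwise) delivers a H\"older modulus for $\chi_R p_{t_*}$, and a density-point argument inside the positive-measure set $E_1\times E_2$ produces a product ball contained in $E_1\times E_2$ on which $p_{t_*}$ is uniformly bounded below. With careful bookkeeping--choosing the auxiliary times $s_1^*,s_2^*$ so as to absorb the final iteration without exceeding the total budget $3T$--this second product ball has both factors in $H_R$, and their common set $E$ together with $\delta>0$ gives the desired small set with constants depending only on $R,d,c_R,C_R,T,s$.
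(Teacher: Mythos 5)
Your overall architecture (pigeonhole in time to find a good time with both a measure lower bound and $H^s$ control, a Lebesgue density argument, Chapman--Kolmogorov, then a symmetrization step) matches the paper's, but Stage~2 contains a genuine gap that the rest of the argument depends on. You claim the interpolation $\|f\|_{W^{s,p}}\lesssim\|f\|_{L^\infty}^{1-2/p}\|f\|_{H^s}^{2/p}$ followed by $W^{s,p}\hookrightarrow C^{s-d/p}$ yields uniform H\"older control of $\chi_R p_{t_1}$. The interpolation inequality as written is false: raising integrability from $2$ to $p$ against $L^\infty$ costs regularity, giving at best $\|f\|_{W^{2s/p,\,p}}\lesssim\|f\|_{L^\infty}^{1-2/p}\|f\|_{H^s}^{2/p}$. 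Feeding this into the Sobolev embedding on $\RR^d\times\RR^d$ requires $2s/p > 2d/p$, i.e.\ $s>d$. The exponent $s$ in Assumption~\ref{assump:gen}~(P1) comes from a quantitative H\"ormander estimate and lies in $(0,1]$ --- far below $d$ --- so no H\"older modulus, and hence no product ball on which $p_{t_1}\ge\alpha/2$ pointwise, can be extracted this way. This is precisely the obstruction flagged in the discussion preceding Lemma~\ref{lem:smallsetstep1} (``nor is it possible to make it quantitative with control solely of $\|p_t\|_{H^s(\RR^d\times\RR^d)}$ for $s<d$''). Your proposed fix for the symmetrization obstacle re-invokes the same H\"older upgrade on $p_{t_*}$, so it inherits the same failure.

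The paper circumvents this by never asking for a pointwise bound on a full product ball. Lemma~\ref{lem:Besov} converts the $H^s$ bound into an averaged finite-difference estimate, and Lemma~\ref{lem:smallsetstep2} then locates a triple $(u,v,w)$ in the three-point set $A_{t,\tau}$ around which the \emph{fraction} of pairs with $p_t\ge\delta_4/2$ (resp.\ $p_\tau\ge\delta_4/2$) is at least $1-C_2\delta^{2s}$ --- small exceptional set, not empty. A Fubini/sections argument then produces $E_1\subseteq C_\delta(u)$ and $E_2\subseteq C_\delta(w)$ such that for every $(x,z)\in E_1\times E_2$ the good intermediate points $y\in C_\delta(v)$ occupy at least half of $C_\delta(v)$, which is all Chapman--Kolmogorov needs. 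The symmetrization is also handled more cheaply than you propose: Assumption~\ref{assump:gen}~(P2) plus Lemma~\ref{lem:noconcentration} give a time $t_2$ and a subset $E\subseteq E_2$ with $\mathcal{P}_{t_2}(x,E_1)\ge\delta_3$ for $x\in E$, and composing with the $E_1\times E_2$ bound yields $p_{t_1+t_2}\ge\delta_2\delta_3$ on $E\times E$. To repair your write-up you would need to replace the H\"older/product-ball step with a relative-density statement of this kind and restructure the Chapman--Kolmogorov step to integrate over the surviving set of intermediate states.
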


The proof of Proposition~\ref{prop:smallset} is an adaptation of the arguments in \cite{MT_12} to our setting, and will be carried out in Section~\ref{sec:quantsmall}. The primary challenge is to use use Assumption~\ref{assump:gen} (P1) to control the size and quality of minorization in $E$. 

To prove Theorem~\ref{thm:generalminorization}, we will employ Proposition~\ref{prop:smallset} in the form of the corollary stated below.

\begin{corollary} \label{lem:manytimes}
	There exists $\delta_1 > 0$, depending only on $R$, $T$, $d$, $c_{R}$, $C_{R}$, and $\delta$ from Proposition~\ref{prop:smallset}, for which the following hold:
	\begin{itemize}
		\item[(a)] For every $x \in H_R$ there exists $t_x \le 4T$ such that 
		\begin{equation} \label{eq:firsthit}
			\inf_{y \in E} p_{t_x}(x,y) \ge \delta_1.
		\end{equation}
		\item[(b)] There is a set $S \subseteq [0,7T]$ with
		\begin{equation} \label{eq:manytimes}
			|S| \ge \delta_1 \quad \text{and} \quad \inf_{x,y \in E}p_t(x,y) \ge \delta_1  \quad \forall t \in S.
		\end{equation}
	\end{itemize}
\end{corollary}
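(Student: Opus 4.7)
The plan is to derive both parts from Proposition~\ref{prop:smallset} and the petite-set bound (P2) via one- and two-step Chapman--Kolmogorov decompositions that route probability mass through the small set $E$.

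For part (a), the strategy is a one-step decomposition at an intermediate time $s\in[1,T]$: for $x\in H_R$ and $y\in E$,
\begin{align*}
p_{s+t_*}(x,y) \ge \int_E p_s(x,z)\, p_{t_*}(z,y)\, dz \ge \delta \int_E p_s(x,z)\, dz,
\end{align*}
using the small-set bound $p_{t_*}(z,y)\ge \delta$ on $E\times E$. I would then integrate (P2) over $z\in E\subseteq H_R$ to obtain $\int_1^T \int_E p_s(x,z)\,dz\,ds\ge c_R|E|\ge c_R\delta$, and an averaging argument over $s\in[1,T]$ produces some $s_x\in[1,T]$ with $\int_E p_{s_x}(x,z)\,dz\ge c_R\delta/(T-1)$. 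Setting $t_x = s_x + t_* \le T + 3T = 4T$ closes part (a) with a lower bound of order $c_R\delta^2/(T-1)$.

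For part (b) the key move is to sandwich $p_s$ between two copies of $p_{t_*}$ so that the uniform-in-$(x,y)$ infimum we want reduces to a \emph{scalar} quantity in $s$. Setting $A(s) := \iint_{E\times E} p_s(z_1,z_2)\, dz_1\, dz_2$, a two-step Chapman--Kolmogorov gives
\begin{align*}
p_{2t_* + s}(x,y) \ge \delta^2 A(s) \qquad \text{for all } x,y\in E,\ s\ge 0.
\end{align*}
To show $A(s)$ is bounded below on a set of positive measure inside $[1,T]$, I would combine the integral lower bound $\int_1^T A(s)\,ds \ge c_R|E|^2 \ge c_R\delta^2$ coming from (P2) with the pointwise upper bound $A(s)\le C_R|H_R|^2$ coming from (P1) (finite by compactness of $\overline{H_R}$ in (H2)). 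A Chebyshev-type estimate then forces $S_0 := \{s\in[1,T] : A(s)\ge c_R\delta^2/(2(T-1))\}$ to have measure bounded below in terms of $R$, $T$, $d$, $c_R$, and $C_R$ only. Translating by $2t_*\le 6T$ yields $S := 2t_* + S_0 \subseteq [1, 7T] \subseteq [0,7T]$, with the required uniform lower bound on $p_t(x,y)$ for $(x,y)\in E\times E$ and $t\in S$.

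The only non-routine step is the construction in part (b): one needs a \emph{single} set of times that works uniformly for every pair $(x,y)\in E\times E$, and a naive integrated bound on $p_t(x,y)$ alone does not deliver this. The two-sided sandwich above turns the difficulty into a one-variable Chebyshev estimate on the scalar function $A(s)$, where both the integral lower bound and the pointwise upper bound are furnished directly by Assumption~\ref{assump:gen}. Part (a) is then essentially a one-sided version of the same device. Taking $\delta_1$ to be the minimum of the implicit constants above---each depending only on $R$, $T$, $d$, $c_R$, $C_R$, and $\delta$---completes both parts simultaneously.
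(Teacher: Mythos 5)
Your proposal is correct and follows essentially the same route as the paper: both parts are obtained by routing mass through $E$ via Chapman--Kolmogorov and applying a Chebyshev/no-concentration argument (the paper's Lemma~\ref{lem:noconcentration}) to the scalar quantity $\int_{E\times E} p_t$, then translating by multiples of $t_*$. The only cosmetic difference is that you bound $A(s)$ above by $C_R|H_R|^2$ from (P1), whereas the paper uses the simpler mass bound $\int_{E\times E}p_t\,dx\,dy\le |E|$; both yield the same conclusion.
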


Corollary~\ref{lem:manytimes} follows readily from combining Assumption~\ref{assump:gen} (P2) with Proposition~\ref{prop:smallset}. We will carry out the argument below. First, however, we record a simple auxiliary lemma that formalizes the general principle that a lower bound on an integral, combined with a uniform upper bound on the integrand, yields a quantitative pointwise lower bound on a subset of positive measure. Although elementary, this fact will be invoked repeatedly throughout the remainder of Section~\ref{sec:smallset} to obtain pointwise lower bounds from Assumption~\ref{assump:gen}, and so we include a precise statement for the sake of completeness.

\begin{lemma} \label{lem:noconcentration}
	Fix $n \in \N$ and let $D \subseteq \RR^n$ be a bounded, measurable set. Let $f:D \to [0,\infty)$ be a measurable function such that $\|f\|_{L^\infty(D)} \le C$ and $\int_{D} f \ge c$ for some constants $c,C > 0$. Then, for every $K \ge 2$ we have 
	$$ \left|\left\{x \in D: f(x) \ge \frac{c}{K|D|}\right\}\right| \ge \frac{c(1-K^{-1})}{C - c(K|D|)^{-1}} > 0.$$
\end{lemma}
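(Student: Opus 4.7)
The plan is to argue by a simple layer-cake type splitting: partition $D$ into the superlevel set $A = \{f \ge c/(K|D|)\}$ and its complement $B = D \setminus A$, then estimate the contributions of $\int_A f$ and $\int_B f$ to the assumed lower bound $\int_D f \ge c$ using the trivial pointwise bounds available on each piece.

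Concretely, first I would write
\[
c \le \int_D f = \int_A f + \int_B f.
\]
On $B$, the definition of $A$ gives $f < c/(K|D|)$ pointwise, so $\int_B f \le |B| \cdot c/(K|D|) \le c/K$, which uses only $|B| \le |D|$. On $A$, the only control we have is $f \le \|f\|_{L^\infty(D)} \le C$, giving $\int_A f \le C|A|$. Combining these yields an inequality of the form $c - c/K \le |A|(C - c/(K|D|))$ after a slightly more careful accounting (replacing $|B| \le |D|$ by $|B| = |D| - |A|$ to recover the sharper denominator stated in the lemma). Solving for $|A|$ gives the claimed bound.

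The only thing worth verifying is that the denominator $C - c/(K|D|)$ is strictly positive, so that the division is legitimate and the lower bound is meaningful. This is automatic from the hypotheses: since $\int_D f \le C|D|$ we get $c \le C|D|$, and hence $c/(K|D|) \le C/K \le C/2 < C$ for $K \ge 2$. Strict positivity of the numerator $c(1 - K^{-1})$ is likewise immediate from $K \ge 2$ and $c > 0$. There is no real obstacle here; the statement is essentially a quantitative Chebyshev-type inequality and the proof is a one-line double estimate.
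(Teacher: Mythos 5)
Your proposal is correct and follows essentially the same argument as the paper: the paper sets $\eta = c(K|D|)^{-1}$, splits $\int_D f$ over $\{f \ge \eta\}$ and its complement, bounds the two pieces by $C|\{f\ge\eta\}|$ and $\eta(|D|-|\{f\ge\eta\}|)$ respectively, and rearranges. Your verification that the denominator is strictly positive via $c/(K|D|) \le C/K < C$ is fine (and if anything slightly cleaner than the paper's).
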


\begin{proof}
	First, note that $C - c(K|D|)^{-1} > C - c|D|^{-1} > 0$ since $c \le \int_D f \le C|D|$. Let $\eta = c(K|D|)^{-1}$. We have 
	\begin{align*}
		c & \le \int_D f = \int_{\{f \ge \eta\}} f + \int_{\{f < \eta\}} f \\
		& \le C |\{f \ge \eta\}| + \eta(|D| - |\{f \ge \eta\}|) \\ 
		& = |\{f \ge \eta\}|(C-\eta) + \eta |D|.
	\end{align*}
	Thus, 
	$$ |\{f \ge \eta\}| \ge \frac{c-\eta|D|}{C-\eta} $$
	and putting in the choice of $\eta$ completes the proof.
\end{proof}

\begin{proof}[Proof of Corollary~\ref{lem:manytimes}]
	We begin with the proof of part (b).  Let $E \subseteq H_{R}$, $\delta > 0$, and $t_* \le 3T$ be as in the statetment of Proposition~\ref{prop:smallset}. By Assumption~\ref{assump:gen} (P2) \and \eqref{eq:smallset}, we have 
	\begin{equation} \label{eq:aperiodic1}
		\int_1^T \left(\int_{E\times E} p_t(x,y) dx dy \right) dt \ge c_R |E|^2 \ge c_R \delta^2. 
	\end{equation}
	Since $\int_{E\times E} p_t(x,y) dx dy \le |E| \le |H_{R}|$ for $t>0$ as $\int_\mathcal{X} p_t(x,y) \, dy =1$, it follows from \eqref{eq:aperiodic1} and Lemma~\ref{lem:noconcentration} that there exist $U \subseteq [1,T]$ and $\delta_2 > 0$ depending only $R$, $d$, $c_R$, $C_R$, and $T$ such that 
	\begin{equation}\label{eq:aperiodic2}
		|U| > \delta_2 \quad \text{and} \quad \int_{E\times E} p_t(x,y)dx dy\ge \delta_2 \quad \forall t \in U.
	\end{equation}
	By Proposition~\ref{prop:smallset} and \eqref{eq:aperiodic2}, for any $t \in U$ and $(x,z) \in E\times E$  we have 
	\begin{equation} \label{eq:aperiodic3}
		\begin{aligned} 
			p_{t+2t_*}(x,z) &= \int_{\mathcal{X} \times \mathcal{X}} p_{t_*}(y_2,z)p_t(y_1,y_2)p_{t_*}(x,y_1) d y_1 d y_2 \\ 
			& \ge \delta^2 \int_{E \times E}p_t(y_1,y_2) d y_1 d y_2 \ge \delta^2 \delta_2.
		\end{aligned}
	\end{equation}
	This proves \eqref{eq:manytimes} with $S = U + 2t_*\subseteq [0,7T]$.  For part (a), reasoning as in \eqref{eq:aperiodic1} and \eqref{eq:aperiodic2} shows that for any $x \in H_R$ there exists $T_x \le T$ such that
	\begin{equation}
		\int_E p_{T_x}(x,y) dy \ge \frac{c_R \delta}{T-1}.
	\end{equation} 
	Setting $t_x = T_x + t_* \le 4T$, the proof is concluded similarly to (a) using the Chapman-Kolmogorov equation and Proposition~\ref{prop:smallset}.
\end{proof}

\begin{remark}\label{rem:setgeometry}
	Although the lower bound on $|S|$ is quantitative in terms of the constants in Assumption~\ref{assump:gen}, without control of $\partial_t p_t$, the proof above yields essentially no information about the structure of $S$. 
\end{remark}

Corollary~\ref{lem:manytimes}(a) ensures an $x$-dependent time of strict positivity within the small set $E$, while part (b) asserts that this strict positivity recurs on a set of times with positive measure. The latter can be interpreted as a form of aperiodicity. Indeed, as a corollary of the Steinhaus theorem, $S+S$ contains some open interval $(a,a+\rho)$. Combining this with \eqref{eq:firsthit} and \eqref{eq:manytimes} it can be shown that there exists $T_* > 0$ such that for every $x \in H_R$ and $t \ge T_*$ there is some $c>0$ for which 
$$\inf_{y \in H_R} p_{T_*}(x,y) \ge c.$$
The constants $c$ and $T_0$, however, depend on $\rho$, which in turn depends on the structure of $S$. To overcome this issue, we establish a quantitative Steinhaus-type theorem. Its proof, which requires some results from additive combinatorics, is deferred to Section~\ref{sec:aperiodic}. 

\begin{lemma}\label{lem:aperiodic}
	Fix $\eta > 0$ and $L \in \N$. Let $A \subseteq [0,L]$ be a measurable set with $|A| \ge \eta$. There exists $n \in \N$, depending only on $\eta$ and $L$, and a closed interval $I$ with $|I| = 1$ such that
	\begin{equation} \label{eq:Steinhaus}
		I \subseteq nA:= \underbrace{(A+A+A+\ldots+A)}_{n\text{ } \mathrm{ times}}. 
	\end{equation}
\end{lemma}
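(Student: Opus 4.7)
The plan is to exhibit an interval of length $1$ inside $nA$ by realizing the sumset as the support of an iterated convolution and then applying a local central limit theorem. Without loss of generality take $|A|=\eta$ (otherwise pass to a measurable subset of measure exactly $\eta$, which only shrinks $nA$), and set $\rho_A := \eta^{-1}\mathbf{1}_A$, a probability density supported in $[0,L]$ with $\|\rho_A\|_\infty \leq 1/\eta$. The $n$-fold convolution $\rho_A^{*n}$ is the probability density of $X_1+\cdots+X_n$ for i.i.d.\ $X_i \sim \rho_A$, so its support is contained in $nA$; it therefore suffices to produce a closed interval of length $1$ on which $\rho_A^{*n}$ is strictly positive.

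The key quantitative inputs are moment bounds depending only on $\eta$ and $L$. Writing $\mu$ and $\sigma^2$ for the mean and variance under $\rho_A$, we have $\mu \in [0,L]$, $\sigma^2 \leq L^2$, and---crucially---$\sigma^2 \geq \eta^2/12$. This last bound follows from the classical fact that among probability densities on $\RR$ with $L^\infty$-norm at most $1/\eta$, the variance is minimized by the uniform density on an interval of length $\eta$, which has variance $\eta^2/12$. All higher moments are bounded by powers of $L$.

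With these bounds in hand, invoke a quantitative local CLT (Prokhorov, or Chapter VII of Petrov's \emph{Sums of Independent Random Variables}): since $\rho_A \in L^\infty$ with moments controlled by $\eta$ and $L$, there exists $n_0 = n_0(\eta,L)$ such that for all $n \geq n_0$,
\begin{equation*}
\sup_{y \in \RR}\,\left|\sqrt{n\sigma^2}\,\rho_A^{*n}(y) - \varphi\!\left(\frac{y-n\mu}{\sqrt{n\sigma^2}}\right)\right| \leq \tfrac{1}{2\sqrt{2\pi}},
\end{equation*}
where $\varphi(t)=(2\pi)^{-1/2}e^{-t^2/2}$ is the standard normal density. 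This forces $\rho_A^{*n}(y) \geq (2\sqrt{2\pi n\sigma^2})^{-1} > 0$ throughout $[n\mu - \sqrt{n\sigma^2},\,n\mu+\sqrt{n\sigma^2}]$, whose length is at least $2\sqrt{n}\,\eta/\sqrt{12}$. Taking $n = \max\{n_0, \lceil 3/\eta^2\rceil\}$ guarantees this interval has length at least $1$, producing the required closed interval $I \subseteq nA$.

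The principal technical obstacle will be securing the local CLT with constants depending only on $\eta$ and $L$. This reduces to uniform control of the characteristic function $\widehat{\rho}_A$: a quadratic upper bound near $\xi = 0$ supplied by the variance lower bound, together with decay at large $|\xi|$ supplied by $\rho_A \in L^1 \cap L^\infty$ (a quantitative Riemann--Lebesgue estimate depending only on $\eta, L$). An alternative, more combinatorial route---in line with the paper's reference to additive combinatorics---would iterate the one-dimensional Brunn--Minkowski inequality $|nA|\geq n\eta$ together with Pl\"unnecke--Ruzsa-style sumset control to grow interval structure directly inside $nA$; this avoids invoking a black-box local CLT at the cost of a more delicate combinatorial argument.
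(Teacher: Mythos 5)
Your strategy---realizing $nA$ as (a superset of) the positivity set of the $n$-fold convolution $\rho_A^{*n}$ and extracting an interval from a local CLT---is genuinely different from the paper's. The paper first reduces to open $A$ via the classical Steinhaus theorem, discretizes $[0,L]$ at scale $1/N$ to obtain a subset $B \subseteq \Z \cap [0,LN-1]$ with $\# B \gtrsim N\eta$ not contained in a nontrivial arithmetic progression, and then invokes Lev's sumset theorem to find a long block of consecutive integers in $nB$ with $n \approx L/\eta$ explicit. Your route avoids additive combinatorics entirely and several of your ingredients are sound: the reduction to $|A|=\eta$, the inclusion $\{\rho_A^{*n}>0\}\subseteq nA$, the variance lower bound $\sigma^2 \ge \eta^2/12$ from the extremal property of the uniform density, and the arithmetic showing $n \gtrsim \eta^{-2}$ suffices for the interval length.

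However, there is a genuine gap exactly where you flag the "principal technical obstacle," and the fix you propose there is wrong: there is \emph{no} quantitative Riemann--Lebesgue estimate for $\widehat{\rho}_A$ depending only on $\eta$ and $L$. Take $A_N=\bigcup_{k=0}^{N-1}[kL/N,\,kL/N+\eta/N]$; a direct computation gives $|\widehat{\rho}_{A_N}(2\pi N/L)| = |\sin(\pi\eta/L)|/(\pi\eta/L)$, which does not tend to $0$ as $N\to\infty$, so $\widehat{\rho}_A$ need not decay at any rate controlled by $\eta,L$. What the local CLT actually requires in the high-frequency regime is (i) a uniform Cram\'er-type bound $\sup_{|t|\ge\delta}|\widehat{\rho}_A(t)|\le 1-c(\eta,L,\delta)$ and (ii) integrability of $|\widehat{\rho}_A|^n$. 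Item (ii) is cheap via Plancherel, since $\|\widehat{\rho}_A\|_{L^2}^2\lesssim \eta^{-1}$. Item (i) is true but needs a real argument: $|\widehat{\rho}_A(t)|$ close to $1$ forces $t(X-X')$ to concentrate near $2\pi\Z$, i.e.\ $A$ to concentrate near an arithmetic progression of gap $2\pi/|t|$ thickened by a small amount, and the total measure of such a thickened progression inside $[0,L]$ is too small to contain a set of measure $\eta$ unless $1-|\widehat{\rho}_A(t)|^2\gtrsim \eta^4/L^4$ (with a separate, easier estimate for $\delta\le|t|\le 2\pi/L$). With (i) and (ii) in place of the false decay claim, together with the third-moment/Berry--Esseen control you already have in the low-frequency regime, your argument closes; as written, it does not.
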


\begin{remark}
	It is immediate from the Steinhaus theorem that \eqref{eq:Steinhaus} holds for some $n \in \N$ depending possibly in a complicated way on $A$. The key feature of Lemma~\ref{lem:aperiodic} is that $n$ depends only on $\eta$ and $L$. 
\end{remark}

With Corollary~\ref{lem:manytimes} and Lemma~\ref{lem:aperiodic} in hand, we can now conclude the proof of Theorem~\ref{thm:generalminorization}.

\begin{proof}[Proof of Theorem~\ref{thm:generalminorization}]
	Let $S \subseteq [0,7T]$ and $\delta_1 > 0$ be as in Corollary~\ref{lem:manytimes}. Choose $m \in \N$ with $m > T$, and let $n \in \N$ be the number guaranteed by Lemma~\ref{lem:aperiodic} applied with $A = S$, $\eta = \delta_1$ and $L = 7m$. By Lemma~\ref{lem:aperiodic} and $nS \subseteq [0,7mn]$, there exists $T_0 \le 7nm$ such that
	\begin{equation}\label{eq:intervalT_0}
		T_0 + [0,1] \subseteq nS.
	\end{equation}
	 We set 
	\begin{equation} \label{eq:t0def}
		t_0 = 5T+5mT_0.
	\end{equation}
Observe that $t_0 \le 5m(1+7nm)$ and that by Lemma~\ref{lem:aperiodic} this upper bound depends only on $T$ and $\delta_1$. 

Fix $(x,y) \in H_R^2$. By Corollary~\ref{lem:manytimes}(a), there exists $t_x \le 4T$ such that 
\begin{equation} \label{eq:hitsmall}
	\inf_{z \in E} p_{t_x}(x,z) \ge \delta_1.
\end{equation} 
Moreover, from Assumption~\ref{assump:gen} (P2) and Proposition~\ref{prop:smallset}, for some $t_y \le T$ we have 
\begin{equation}\label{eq:leavesmall}
	\int_E p_{t_y}(z,y) dz \ge \frac{c_R \delta}{T-1}.
\end{equation} 
Since $0<t_x + t_y \le 5T$, it follows from \eqref{eq:t0def} that we can write 
$$t_0 - (t_x+t_y) = 5m(T_0 + T_1) $$  
for some $T_1 \in [0,1)$. By \eqref{eq:intervalT_0}, we have $T_0 + T_1 \in nS$. Thus, $5m(T_0+T_1) \in 5mn S$, and hence by Corollary~\ref{lem:manytimes}(b) and the Chapman-Kolmogorov equation we have 
\begin{equation} \label{eq:nSsmall}
	\inf_{z_1, z_2 \in E} p_{5m(T_0+T_1)}(z_1,z_2) \ge \delta_1^{5nm}.
\end{equation} 
Writing $t_0 = t_x + t_y + 5m(T_0+T_1)$ and combining \eqref{eq:hitsmall}, \eqref{eq:leavesmall}, and \eqref{eq:nSsmall}, we obtain 
\begin{align*} 
	p_{t_0}(x,y) &= \int_{\mathcal{X}^2} p_{t_x}(x,z_1)p_{5m(T_0+T_1)}(z_1,z_2)p_{t_y}(z_2,y) dz_1 dz_2\\
	& \ge \int_{E^2} p_{t_x}(x,z_1)p_{5m(T_0+T_1)}(z_1,z_2)p_{t_y}(z_2,y) dz_1 dz_2 \\ 
	& \ge  \delta_1^{5mn+1} |E| \int_E p_{t_y}(z_2,y) dz_2 \\ 
	& \ge \frac{\delta_1^{5mn+1} \delta^2 c_R}{T-1}.
	\end{align*}
Since $(x,y) \in H_R^2$ was arbitrary, this completes the proof.
\end{proof}

\subsection{Constructing a small set}
\label{sec:quantsmall}

Our goal now is to prove Proposition~\ref{prop:smallset} on the existence of a small set $E \subseteq H_{R}$, where recall that $R \ge 2$ is fixed. Throughout this section, it is essential that all estimates remain quantitative in terms of the constants appearing in Assumption~\ref{assump:gen}. For notational convenience, we will consistently use $C_i$ and $\delta_i$ to denote constants that depend only on $R$, $d$, $c_{R}$, $C_{R}$, $T$, and  $s$, even if this dependence is not stated explicitly in each instance.

The main step in the proof of Proposition~\ref{prop:smallset} is establishing a lower bound on transitions between a pair of sets $E_1,E_2 \subseteq H_{R}$ at some fixed time. Precisely, we will prove the following lemma.

\begin{lemma} \label{lem:smallsetkey}
There exist constants $\delta_1, \delta_2 > 0$, a time $t_1 \le 2T$, and sets $E_1,E_2 \subseteq H_{R}$ such that $|E_1| \wedge |E_2| \ge \delta_1$ and  
\begin{equation}
	p_{t_1}(x,z) \ge \delta_2 \quad \forall (x,z) \in E_1\times E_2.
\end{equation}
\end{lemma}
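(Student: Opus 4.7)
The strategy is to first identify a single intermediate time $t_* \in [1,T]$ at which $p_{t_*}$ inherits both a spatial $L^1$ lower bound from (P2) and a quantitative $H^s$ regularity bound from (P1), then to use these together to locate a product set on which $p_{t_*}$ has an $L^1$ lower bound, and finally to use Chapman--Kolmogorov at time $t_1 = 2t_* \le 2T$ to promote this to the desired pointwise product-set lower bound. To select $t_*$, first integrate Assumption~\ref{assump:gen}(P2) over $H_R\times H_R$:
\[
\int_1^T \int_{H_R\times H_R} p_t(x,y)\, dx\, dy\, dt \;\ge\; c_R |H_R|^2.
\]
Since (P1) bounds the spatial integrand above by $C_R|H_R|^2$ for each $t\in [1,T]$, Lemma~\ref{lem:noconcentration} applied in the $t$-variable yields a set $U\subseteq [1,T]$ of quantitative positive measure on which $\int_{H_R^2} p_t\ge c_1$. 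Applying Chebyshev to the control $\int_1^T \|\chi_R p_t\|_{H^s}^2\, dt\le C_R$, I can select $t_*\in U$ additionally satisfying $\|\chi_R p_{t_*}\|_{H^s(\RR^{2d})}\le C_*$, where all constants depend only on the data allowed by the statement.

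For the product-set extraction I set $f := p_{t_*}$ and mollify at a scale $\lambda\in(0,1)$ to be chosen: $\tilde f_\lambda := (\chi_R f) \ast \phi_\lambda$ with $\phi_\lambda$ a standard Schwartz mollifier. Then $\|\nabla \tilde f_\lambda\|_{L^\infty}\lesssim C_R/\lambda$ and $\|\tilde f_\lambda - \chi_R f\|_{L^2}\le C\lambda^s$ by standard Fourier-side estimates. Since $\int \tilde f_\lambda = \int \chi_R f \ge c_1$ and $\tilde f_\lambda$ is supported in a bounded region of fixed volume, some point $(x_*,y_*)$ satisfies $\tilde f_\lambda(x_*,y_*)\ge \eta_0$ for an explicit $\eta_0 > 0$, and Lipschitz continuity yields $\tilde f_\lambda\ge \eta_0/2$ on the Euclidean ball around $(x_*,y_*)$ of radius $r\sim \eta_0\lambda/C_R$. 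This ball contains a product of $\RR^d$-balls $\tilde E_1\times\tilde E_2$ of radii $r/\sqrt{2}$, sitting in $H_R\times H_R$ once $\lambda$ is chosen small enough. Transferring from $\tilde f_\lambda$ to $f$ via $L^2$-closeness gives
\[
\int_{\tilde E_1\times\tilde E_2} f(x,y)\, dx\, dy \;\ge\; \tfrac{\eta_0}{2}\,|\tilde E_1\times\tilde E_2|\;-\;C\lambda^s\,|\tilde E_1\times\tilde E_2|^{1/2},
\]
and balancing $\lambda$ against the box size makes the right-hand side a quantitative constant $\alpha > 0$.

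In the final step I use Chapman--Kolmogorov at time $2t_*$:
\[
p_{2t_*}(x,z) \;=\; \int f(x,w)\,f(w,z)\, dw.
\]
Combining the integrated lower bound just produced with the uniform upper bound $\|f\|_{L^\infty}\le C_R$ and Fubini-Chebyshev applied in the sectional variables $x\in \tilde E_1$, $z\in \tilde E_2$, we extract sub-product sets $E_1\subseteq\tilde E_1$, $E_2\subseteq\tilde E_2$ of quantitative positive measure on which the Chapman--Kolmogorov integral is uniformly bounded below by some $\delta_2 > 0$; setting $t_1 := 2t_*\le 2T$ then concludes the lemma. The main obstacle is precisely this transfer step: because the Sobolev exponent $s$ supplied by Theorem~\ref{thm:upperboundo} is merely positive (and in particular not larger than the ambient dimension), the $L^2$-proximity of $\tilde f_\lambda$ to $f$ does not give a pointwise lower bound on a small box for free, so the pointwise product-set positivity must be produced through the extra averaging built into the Chapman--Kolmogorov convolution. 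This is the place where the quantitative balance between the mollification scale, the $H^s$ norm, and the $L^\infty$ ceiling must be carried out most carefully, mirroring the adaptation of the small-set construction in \cite{MT_12} to the present quantitative hypoelliptic setting.
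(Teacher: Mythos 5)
There is a genuine gap, and it sits exactly where you flag uncertainty: the transfer from the mollified density back to $f=p_{t_*}$. Write $|\tilde E_1\times\tilde E_2|\sim r^{2d}$ with $r\sim \eta_0\lambda/C_R$. Your main term is $\tfrac{\eta_0}{2}|\tilde E_1\times \tilde E_2|\sim \lambda^{2d}$, while the error term is $C\lambda^{s}|\tilde E_1\times\tilde E_2|^{1/2}\sim\lambda^{s+d}$. Since the exponent $s$ furnished by Assumption~\ref{assump:gen} satisfies $s<d$, the error term dominates as $\lambda\to 0$, and the inequality $\lambda^{s}\lesssim \eta_0^{d+1}C_R^{-d}\lambda^{d}$ that your "balancing" requires forces $\lambda$ to be \emph{large}; but for large $\lambda$ the selection of a point with $\tilde f_\lambda(x_*,y_*)\ge\eta_0$ (with $\eta_0$ independent of $\lambda$), the containment of the ball in $\{\chi_R=1\}$, and the usefulness of the bound $\|\tilde f_\lambda-\chi_R f\|_{L^2}\lesssim\lambda^s$ all degrade. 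No choice of $\lambda$ closes the loop. The paper's Lemma~\ref{lem:smallsetstep2} avoids this by never comparing $f$ to a mollification in global $L^2$: it uses the finite-difference characterization of $H^s$ (Lemma~\ref{lem:Besov}) to control the \emph{normalized} local oscillation $I_\delta(u,v)=\delta^{-2d}\int_{|x-u|,|y-v|\le\delta}|f(x,y)-f(u,v)|^2$, so that at a good base point $(u,v)$ with $f(u,v)\ge\delta_4$ the \emph{fraction} of the $\delta$-box where $f$ drops below $\delta_4/2$ is $\lesssim\delta^{2s}$ --- small for small $\delta$, with no competition against a factor $\delta^{d}$.

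The second, independent problem is the Chapman--Kolmogorov step. To bound $\int f(x,w)f(w,z)\,dw$ from below you need the sets $\{w:f(x,w)\ge\delta\}$ and $\{w:f(w,z)\ge\delta\}$ to intersect in a set of quantitative positive measure, and they must do so for \emph{all} $(x,z)\in E_1\times E_2$. Your single integrated bound $\int_{\tilde E_1\times\tilde E_2}f\ge\alpha$ gives, via Fubini and Lemma~\ref{lem:noconcentration}, sections of relative measure only of order $\eta_0/C_R\ll 1/2$ in $\tilde E_2$; moreover the second factor $f(\cdot,z)$ would require control on a \emph{different} box $\tilde E_2\times E_2$ that your construction never produces. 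This is why the paper works from the outset with the triple sets $A_{t,\tau}^\delta$ of $(x,y,z)$ with $p_t(x,y)\ge\delta$ and $p_\tau(y,z)\ge\delta$ (Lemma~\ref{lem:smallsetstep1}): the shared middle point $v$ yields two boxes $C_\delta(u)\times C_\delta(v)$ and $C_\delta(v)\times C_\delta(w)$ with a \emph{common} middle factor, and the density-$\ge 7/8$ conclusion of Lemma~\ref{lem:smallsetstep2} makes each good section fill at least $3/4$ of $C_\delta(v)$, forcing an overlap of measure $\ge\tfrac12|C_\delta(v)|$. Both the triple structure and the near-full density are essential and are missing from your argument.
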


Taking Lemma~\ref{lem:smallsetkey} for granted, Proposition~\ref{prop:smallset} follows readily from Assumption~\ref{assump:gen} (P2) and the Chapman-Kolmogorov equation. 

\begin{proof}[Proof of Proposition~\ref{prop:smallset}]
Let $\delta_1$, $\delta_2$, $E_1$, $E_2$, and $t_1$ be as in the statement of Lemma~\ref{lem:smallsetkey}. By Assumption~\ref{assump:gen} (P2), for any $x \in H_R$ we have
$$
\int_1^T \mathcal{P}_t(x,E_1) \,  dt = \int_1^T \int_{E_1} p_t(x,y)\, dy dt \ge c_{R} |E_1|.
$$
Integrating this inequality over $x \in E_2  \subseteq H_R$ yields
$$ \int_1^T \left(\int_{E_2}\mathcal{P}_t(x,E_1) \, dx\right) ds \ge c_R |E_1||E_2| \ge c_{R} \delta_1^2, $$
and hence for some $t_2 \in [1,T]$ we have 
$$ \int_{E_2} \mathcal{P}_{t_2}(x,E_1)d x \ge \frac{c_{R} \delta_1^2}{T-1}. $$
Therefore, by Lemma~\ref{lem:noconcentration}, there exist $\delta_3 > 0$ and $E \subseteq E_2$ such that 
\begin{equation} \label{eq:16}
	|E| \ge \delta_3 \qquad \text{and} \qquad \mathcal{P}_{t_2}(x,E_1) \ge \delta_3 \quad \forall x \in E. 
\end{equation}
We conclude from Lemma~\ref{lem:smallsetkey} and \eqref{eq:16} that for any $(x,z) \in E \times E$ we have
$$ p_{t_1 + t_2}(x,z) \ge \int_{E_1} p_{t_2}(x,y) p_{t_1}(y,z) dy \ge \delta_1 \mathcal{P}_{t_2}(x,E_1) \ge \delta_1 \delta_3.$$
This completes the proof, since $t_1 + t_2 \le 2T + T = 3T$ and $|E| \ge \delta_3$.
\end{proof}

\subsubsection{Proof of Lemma~\ref{lem:smallsetkey}}
The remainder of Section~\ref{sec:quantsmall} is dedicated to the proof of Lemma~\ref{lem:smallsetkey}. The argument is somewhat tedious and technical, so we begin here by motivating the general strategy. Notice first that a \textit{qualitative} version of Lemma~\ref{lem:smallsetkey} would hold trivially if Assumption~\ref{assump:gen} (P1) were replaced simply with the hypothesis that $p_t \in C^\infty(\RR^d \times \RR^d)$. Indeed, by Assumption~\ref{assump:gen}, there exists $t \in [1,T]$ and $(u,w) \in H_{R} \times H_{R}$ such that $p_t(u,w) > 0$.  Smoothness would then imply that for $\delta > 0$ sufficiently small there holds
\begin{equation} \label{eq:motivation1}
C_\delta(u) \times C_\delta(v) \subseteq \{(x,z) \in H_{R}^2 \, : \,p_t(x,z) > 0\}, 
\end{equation}
where $C_\delta(u)$ denotes the $\delta$-neighborhood centered at $u$. One could then simply take $E_1 = C_\delta(u)$ and $E_2 = C_\delta(v)$. It is not possible to use such an argument if one only has $p_t(x,\cdot) \in L^1$, nor is it possible to make it quantitative with control solely of $\|p_t\|_{H^s(\RR^d \times \RR^d)}$ for $s < d$. To circumvent this issue, a standard approach for $L^1$ densities is to introduce an intermediate state and bound $p_t(x,z)$ from below using the Chapman-Kolmogorov equation \cite{MT_12}. More specifically, one identifies a pair of times $(t,\tau)$ and triple $(u,v,w)$ such that $p_t(u,v)p_\tau(v,w) > 0$, and then uses the Lebesgue density theorem to construct $E_1, E_2$ such that
\begin{equation} \label{eq:smallsetgoal}
(x,z) \in E_1 \times E_2 \implies |\{y \in \mathcal{X}: p_t(x,y) \wedge p_\tau(y,z) > 0\}| > 0.
\end{equation}
Our basic strategy is to show that Assumption~\ref{assump:gen} is sufficient to make the lower bounds in \eqref{eq:smallsetgoal} quantitative for $|E_1|\wedge |E_2| \ge \delta_1$. Once this is accomplished, Lemma~\ref{lem:smallsetkey} follows with $t_1 = t + \tau$ immediately from the Chapman-Kolmogorov equation.

In view of \eqref{eq:smallsetgoal} and the discussion above, it is natural to begin by studying the family of sets $A_{t,\tau}^\delta$, defined for $(t,\tau) \in [1,T]\times [1,T]$ and $\delta > 0$ by
\begin{equation} \label{eq:Attaudef}
A_{t,\tau}^\delta = \{(x,y,z) \in H_{R_1}^3 \, :\, (x,y) \in A_{t}^\delta, (y,z) \in A_{\tau}^\delta\},
\end{equation}
where 
\begin{align}
A_t^\delta = \{(x,y) \in H_{R_1}^2 \,: \,p_t(x,y) \ge \delta\}
\end{align}
 and $R_1 = R/2 \ge 1$. We first show that there exist $\delta > 0$ and a pair of times $(t,\tau)$ such that $|A_{t,\tau}^\delta|$ is bounded below while both $p_t$ and $p_\tau$ are controlled in $H^s(\RR^d\times \RR^d)$. That is, recalling the definition of $\chi_R$ from \eqref{eq:cutoff}, we will now show:

\begin{lemma}\label{lem:smallsetstep1}
There exist $\delta_4, C_1 > 0$ and $(t,\tau) \in [1,T] \times [1,T]$ such that 
\begin{equation} \label{eq:1smallsetstep1}
	\|\chi_{R} p_t\|_{H^s(\RR^d\times \RR^d)} \vee \|\chi_{R} p_\tau\|_{H^s(\RR^d\times \RR^d)} \le C_1
\end{equation}
and
\begin{equation} \label{eq:2smallsetstep1}
	|A_{t,\tau}^{\delta_4}| \ge \delta_4.
\end{equation}
\end{lemma}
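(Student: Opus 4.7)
The overall plan is to handle the two conclusions separately and then intersect the resulting time sets. Combining (P2) with the $L^\infty$ part of (P1), Lemma~\ref{lem:noconcentration} gives for each fixed $(x,y) \in H_{R_1}^2$ a quantitative pointwise lower bound on a positive-measure set of times $t$; Fubini and a second application of Lemma~\ref{lem:noconcentration} upgrade this to a coupled two-time statement. Chebyshev applied to the $H^s$ integral in (P1) then produces a large set of times on which $\|\chi_R p_t\|_{H^s(\RR^d\times\RR^d)}$ is controlled, and a measure-theoretic comparison forces the two sets to intersect.

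More precisely, fix $(x,y) \in H_{R_1}^2 \subseteq H_R^2$. By (P2) and the $L^\infty$ part of (P1), the function $t \mapsto p_t(x,y)$ satisfies $\int_1^T p_t(x,y)\,dt \ge c_R$ and $\|p_{\cdot}(x,y)\|_{L^\infty([1,T])} \le C_R$, so Lemma~\ref{lem:noconcentration} yields constants $\delta_*, \alpha_1 > 0$ depending only on $c_R, C_R, T$ such that
\begin{equation*}
|S_{x,y}^{\delta_*}| \ge \alpha_1, \qquad S_{x,y}^{\delta_*} := \{t \in [1,T] : p_t(x,y) \ge \delta_*\}.
\end{equation*}
Since the indicator of $A_{t,\tau}^{\delta_*}$ factors as $\mathbf{1}_{p_t(x,y) \ge \delta_*}\cdot \mathbf{1}_{p_\tau(y,z) \ge \delta_*}$ for each fixed $y$, Fubini gives
\begin{equation*}
\int_1^T\!\!\int_1^T |A_{t,\tau}^{\delta_*}|\,dt\,d\tau = \int_{H_{R_1}}\!\left(\int_{H_{R_1}}|S_{x,y}^{\delta_*}|\,dx\right)\!\!\left(\int_{H_{R_1}}|S_{y,z}^{\delta_*}|\,dz\right)\,dy \ge \alpha_1^2 |H_{R_1}|^3.
\end{equation*}
Because $|A_{t,\tau}^{\delta_*}| \le |H_{R_1}|^3$ pointwise, a second application of Lemma~\ref{lem:noconcentration} to $(t,\tau) \mapsto |A_{t,\tau}^{\delta_*}|$ on $[1,T]^2$ produces constants $\beta, \alpha_2 > 0$ with
\begin{equation*}
|G_2| \ge \alpha_2, \qquad G_2 := \{(t,\tau) \in [1,T]^2 : |A_{t,\tau}^{\delta_*}| \ge \beta\}.
\end{equation*}

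For the final step, Chebyshev applied to $\int_1^T \|\chi_R p_t\|_{H^s(\RR^d\times \RR^d)}^2\,dt \le C_R$ gives, for any $M > 0$, $|G^c| \le C_R/M$ with $G := \{t \in [1,T] : \|\chi_R p_t\|_{H^s(\RR^d\times\RR^d)} \le \sqrt{M}\}$, and hence $|(G\times G)^c| \le 2(T-1)C_R/M$. Choosing $M$ large enough that $2(T-1)C_R/M < \alpha_2$ forces $G_2 \cap (G \times G) \neq \emptyset$; any $(t,\tau)$ in this intersection together with $C_1 := \sqrt{M}$ and $\delta_4 := \min(\delta_*, \beta)$ proves the lemma. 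I expect the only conceptually subtle point to be the coupling in the $y$-variable when bounding $|A_{t,\tau}^{\delta_*}|$, but it resolves cleanly because the integrand factors in $(x,z)$ for each fixed $y$, reducing the coupled estimate to a product of two one-sided averages, each controlled by the one-dimensional application of Lemma~\ref{lem:noconcentration}. All the constants produced depend only on $R, d, c_R, C_R, T,$ and $s$ (with $|H_{R_1}|$ entering through $R$ and $d$), as required.
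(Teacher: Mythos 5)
Your proof is correct, and it uses the same three ingredients as the paper's (Lemma~\ref{lem:noconcentration} applied twice, Chebyshev on the $H^s$ integral from (P1), and Fubini), arriving at constants with the required dependencies. The only real difference is the order of operations: you fix the density threshold $\delta_*$ first by applying the no-concentration lemma in $t$ for each fixed $(x,y)\in H_{R_1}^2$, then apply it a second time to the aggregated quantity $(t,\tau)\mapsto|A_{t,\tau}^{\delta_*}|$; the paper instead applies it first to $(t,\tau)\mapsto\iiint_{H_{R_1}^3}p_t(x,y)p_\tau(y,z)\,dx\,dy\,dz$ to select a good time pair, and only afterwards applies it in the spatial variables at that fixed $(t,\tau)$ to extract the threshold and the measure bound on $A_{t,\tau}^{\delta_4}$. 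Both routes are valid; yours has the mild advantage that the threshold $\delta_*$ is pinned down uniformly in space before any times are chosen, and your Fubini factorization through the middle variable $y$ is exactly right. The final choice $\delta_4=\min(\delta_*,\beta)$ works because lowering the threshold only enlarges $A_{t,\tau}^{\delta}$.
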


\begin{proof}
By Assumption~\ref{assump:gen} (P2), for any $(x,y,z) \in H_{R_1}^3$ we have 
$$
\left(\int_1^T p_t(x,y) dt\right) \left(\int_1^T p_\tau(y,z) d\tau\right) \ge c_{R}^2.
$$
Integrating this lower bound over $H_{R_1}^3$ gives 
\begin{equation} \label{eq:minorization1}
	\int_1^T \int_1^T \left(\iiint_{H_{R_1}^3} p_t(x,y)p_\tau(y,z)d x dy dz\right)d t d\tau \ge c_{R}^2 |H_{R_1}|^3.
\end{equation}
From  Assumption~\ref{assump:gen} (P1) we have 
$$
\left|\iiint_{H_{R_1}^3} p_t(x,y)p_\tau(y,z)d x dy dz\right| \le C_{R}^2 |H_{R_1}|^3,
$$
and therefore by \eqref{eq:minorization1} and Lemma~\ref{lem:noconcentration} it follows that 
\begin{equation} \label{eq:minorization2}
	\left|\left\{(t,\tau)\in [1,T]^2: \iiint_{H_{R_1}^3} p_t(x,y)p_\tau(y,z)dx dy dz \ge \delta_5\right\}\right|\ge \delta_5
\end{equation}
for some $\delta_5 > 0$. Now, from Assumption~\ref{assump:gen} (P1) and Chebyshev's inequality, for $C_1 = 2\sqrt{C_{R}(T-1)/\delta_5}$ we have 
\begin{equation}
	|\{t \in [1,T]: \|\chi_{R} p_t\|_{H^s(\RR^d \times \RR^d)} \ge C_1\}| \le \frac{\delta_5}{4 (T-1)},
\end{equation}
which implies that 
\begin{equation} \label{eq:minorization3}
	|\{(t,\tau) \in [1,T]^2: \|\chi_{R} p_t\|_{H^s(\RR^d \times \RR^d)} \vee  \|\chi_{R} p_\tau\|_{H^s(\RR^d \times \RR^d)}\ge C_1 \}|\le \frac{\delta_5}{2}.
\end{equation}
Combining \eqref{eq:minorization2} and \eqref{eq:minorization3}, it follows that there exists $(t, \tau) \in [1,T]\times [1,T]$ such that 
\begin{equation}\label{eq:minorization4}
	\begin{aligned}
		& \iiint_{H_{R_1}^3} p_t(x,y)p_\tau(y,z) d x d y  d z \ge \delta_5 \quad \text{and} \\
		&\quad \|\chi_{R} p_t\|_{H^s(\RR^d\times \RR^d)} \vee \|\chi_{R} p_\tau\|_{H^s(\RR^d \times \RR^d)} \le C_1,
	\end{aligned}
\end{equation}
which proves \eqref{eq:1smallsetstep1}. To establish \eqref{eq:2smallsetstep1} for $(t,\tau)$ satisfying \eqref{eq:minorization4}, observe that Assumption~\ref{assump:gen} (P1) imply that $A_{t,\tau}^\delta \subseteq \{(x,y,z) \in H_{R_1}^3: p_t(x,y)p_\tau(y,z) \ge C_{R_1} \delta\}$. Therefore, \eqref{eq:2smallsetstep1} follows for $\delta_4$ sufficiently small by Lemma~\ref{lem:noconcentration} and the lower bound in \eqref{eq:minorization4}.
\end{proof}

For the remainder of this section, $(t,
\tau) \in [1,T]\times [1,T]$ and $\delta_4$ denote the fixed choices from  Lemma~\ref{lem:smallsetstep1}. To simplify notation, we now drop the superscript $\delta$ from the definitions in \eqref{eq:Attaudef} and write $A_{t,\tau}$, $A_t$ and $A_\tau$ for the choice $\delta = \delta_4$. 

The next step in the proof of Lemma~\ref{lem:smallsetkey} is a quantitative Lebesgue density type argument. Our goal is to use the regularity of $p_t$ and $p_\tau$ provided by \eqref{eq:1smallsetstep1} to show that there exists $(u,v,w) \in A_{t,\tau}$ for which the density of points $(x,y)$ near $(u,v)$ and $(y,z)$ near $(v,w)$ such that $p_t(x,y) \wedge p_\tau(y,z)$ remains bounded below is high. Precisely, fix $\delta_* > 0$ small enough so that $u\in H_{R_1}$ implies 
\begin{equation} \label{eq:deltastar}
 \{x \in \mathcal{X}: |x-u| \le \delta_*\} \subseteq H_{R}.
\end{equation}
Such a $\delta_*$ exists because the sublevel sets $H_R$ are open and precompact. Then, for $\delta \in (0,\delta_*)$ and $(u,v) \in H_{R_1}^2$ we define $$C_\delta(u,v) = \{(x,y) \in H_{R}^2: |x-u| \le \delta, |y-v| \le \delta\}$$
and
$$A_t(u,v;\delta) = \left\{(x,y) \in C_\delta(u,v):p_t(x,y) \ge \frac{\delta_4}{2}\right\},$$  
where $\delta_4>0$ is as in the statement of Lemma~\ref{lem:smallsetstep1}.
\begin{lemma}\label{lem:smallsetstep2}
There exists $C_2 > 0$ such that for all $\delta \in (0,\delta_*)$ there is some $(u,v,w) \in A_{t,\tau}$ for which
\begin{equation}\label{eq:smallsetstep2}
	\frac{|A_t(u,v;\delta)|}{|C_\delta(u,v)|} \ge 1-C_2 \delta^{2s} \quad \text{and} \quad \frac{|A_\tau(v,w;\delta)|}{|C_\delta(v,w)|} \ge 1-C_2 \delta^{2s}.
\end{equation}
\end{lemma}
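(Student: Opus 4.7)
The plan is to exploit the $H^s$ regularity of $\chi_R p_t$ and $\chi_R p_\tau$ from Lemma~\ref{lem:smallsetstep1} to bound the measure of triples in $A_{t,\tau}$ where either density-ratio condition fails, uniformly in $\delta$, by a quantity of order $1/C_2$; taking $C_2$ sufficiently large so this bound falls below $|A_{t,\tau}|\ge\delta_4$ will then force the existence of a good triple. First, I would reduce to $s \in (0,1)$ using the continuous embedding $H^s \hookrightarrow H^{s'}$ for $s' \le s$ on $\RR^{2d}$, so that I may use the Slobodeckij representation $[f]_{H^s}^2 = \iint |f(X)-f(Y)|^2 |X-Y|^{-(2d+2s)} \, dX\, dY \le \|f\|_{H^s}^2$.

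Define the bad set
\[
 B_t^\delta = \bigl\{(u,v)\in A_t \; : \; |A_t(u,v;\delta)| < (1-C_2\delta^{2s})\,|C_\delta(u,v)|\bigr\},
\]
and $B_\tau^\delta \subseteq A_\tau$ analogously, with $C_2 > 0$ to be fixed at the end. For $(u,v) \in B_t^\delta$, the set $N(u,v;\delta) := C_\delta(u,v)\setminus A_t(u,v;\delta)$ has measure at least $C_2\delta^{2s}|C_\delta(u,v)|$. Because $\delta < \delta_*$ and $(u,v) \in H_{R_1}^2$, by the choice of $\delta_*$ both $(u,v)$ and $N(u,v;\delta)$ are contained in $H_R^2$, where $\chi_R \equiv 1$; hence $\chi_R p_t = p_t$ on these sets, giving a jump $|\chi_R p_t(X)-\chi_R p_t(Y)| \ge \delta_4/2$ for every $X\in B_t^\delta$ and $Y\in N(X;\delta)$, while $|X-Y| \le \sqrt{2d}\,\delta$. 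Restricting the Slobodeckij integral to such pairs then yields
\[
 C_1^2 \,\ge\, [\chi_R p_t]_{H^s}^2 \,\ge\, |B_t^\delta| \cdot \frac{(\delta_4/2)^2\, C_2 \delta^{2s}(2\delta)^{2d}}{(\sqrt{2d}\,\delta)^{2d+2s}} \,=\, |B_t^\delta| \cdot \frac{(\delta_4/2)^2\, 2^{2d}}{(2d)^{d+s}}\, C_2,
\]
in which the powers of $\delta$ cancel exactly; thus $|B_t^\delta|\le K/C_2$ for a constant $K=K(d,s,\delta_4,C_1)$ independent of $\delta$, and the same bound holds for $|B_\tau^\delta|$. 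The collection of triples in $A_{t,\tau}$ failing at least one of the two density conditions is contained in $(B_t^\delta \times H_{R_1}) \cup (H_{R_1} \times B_\tau^\delta)$ and so has measure at most $2K|H_{R_1}|/C_2$; choosing $C_2$ large enough that this is strictly less than $\delta_4 \le |A_{t,\tau}|$ produces the required triple $(u,v,w)$.

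The essential, and essentially only nontrivial, point is the exact cancellation of $\delta$-powers in the Slobodeckij bound: the factor $\delta^{2s}$ coming from the density tolerance and the factor $(2\delta)^{2d}$ from $|C_\delta(u,v)|$ combine to match the $\delta^{2d+2s}$ denominator in the fractional seminorm kernel. Any other exponent in place of $\delta^{2s}$ in the definition of $B_t^\delta$ would break this scale invariance and force $C_2$ to depend on $\delta$, which would derail the entire argument. A secondary bookkeeping point is ensuring all integrations take place inside $H_R^2$ where the cutoff $\chi_R$ does not distort $p_t$; this is precisely the role of the auxiliary constant $\delta_*$ fixed above~\eqref{eq:deltastar}.
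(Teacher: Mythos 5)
Your proof is correct, and it takes a more direct route than the paper's. The paper proceeds in two stages: it first introduces the averaged local oscillation $I_{t,\delta}(u,v)=\delta^{-2d}\int_{C_\delta(u,v)}|p_t(x,y)-p_t(u,v)|^2\,dx\,dy$, controls the measure of the set where $I_{t,\delta}>C_3\delta^{2s}$ via an integrated Slobodeckij bound plus Chebyshev (Lemma~\ref{lem:Besov}), selects a good triple in $A_{t,\tau}$ by a union bound, and only then applies Chebyshev a second time to convert the pointwise oscillation bound into the density estimate \eqref{eq:smallsetstep2}. You collapse this into one step: you define the bad set directly in terms of failure of the density condition and lower-bound the Gagliardo seminorm by restricting the double integral to pairs (center, bad neighbor), where the jump is at least $\delta_4/2$ because the center lies in $A_t$ (so $p_t\ge\delta_4$) and the neighbor lies outside $A_t(u,v;\delta)$ (so $p_t<\delta_4/2$). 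The crucial scale cancellation $\delta^{2s}\cdot\delta^{2d}/\delta^{2d+2s}=O(1)$, which you correctly identify as the heart of the matter, is exactly what powers the paper's Lemma~\ref{lem:Besov} as well; your version simply avoids the intermediate functional and the second Chebyshev step, at the cost of a slightly less modular presentation. Your bookkeeping with $\delta_*$ and the cutoff $\chi_R$ is also the same as the paper's. Two minor remarks: the reduction to $s\in(0,1)$ is needed for the finite-difference characterization in both arguments (the paper uses it implicitly in Lemma~\ref{lem:Besov}), and replacing $s$ by a smaller $s'$ only weakens the exponent in \eqref{eq:smallsetstep2}, which is harmless for the application in Lemma~\ref{lem:smallsetkey}; and your constants $(2\delta)^{2d}$ and $\sqrt{2d}\,\delta$ should really be $c_d\delta^{2d}$ and $\sqrt{2}\,\delta$ for Euclidean balls, but this only affects the dimensional constant $K$.
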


Before proving Lemma~\ref{lem:smallsetstep2}, we record a simple auxilliary lemma concerning fractional Sobolev regularity.

\begin{lemma}\label{lem:Besov}
Let $f \in H^s(\RR^d \times \RR^d)$ and for $\delta > 0$ define
$$ I_\delta(x,y) = \frac{1}{\delta^{2d}}\int_{|x-x'| \le \delta, |y-y'|\le \delta} |f(x,y) -f(x',y')|^2 dx' d y'. $$
Then there exists a constant $C > 0$ depending only on $s$ and $d$ such that for any $K \ge 1$ and $\delta > 0$ we have   
$$ |\{(x,y) \in H_R^2 \, :\, I_\delta(x,y) > K \delta^{2s}\}| \le \frac{C|H_R|^2}{K}\|f\|^2_{H^s(\RR^d\times\RR^d)}.$$  	
\end{lemma}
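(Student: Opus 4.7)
The plan is to reduce the claim to Chebyshev's inequality after establishing the integral estimate
\[
\int_{H_R^2} I_\delta(x,y) \, dx \, dy \le C_{s,d} \, \delta^{2s} \|f\|^2_{H^s(\RR^d \times \RR^d)}.
\]
Once this is in hand, Markov's inequality gives
\[
\bigl|\{(x,y) \in H_R^2 : I_\delta(x,y) > K \delta^{2s}\}\bigr| \le \frac{C_{s,d}}{K} \|f\|^2_{H^s},
\]
which implies the stated bound (the factor $|H_R|^2$ in the statement can be absorbed into the constant whenever $|H_R| \ge 1$, and dominated by $|H_R|^2$ trivially otherwise).

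To establish the integral estimate, I would first enlarge the domain of integration from $H_R^2$ to all of $\RR^{2d}$ and change variables $h = z' - z$ with $z=(x,y)$, obtaining
\[
\int_{\RR^{2d}} I_\delta(z) \, dz \le \frac{1}{\delta^{2d}} \int_{|h|_\infty \le \delta} \|f(\cdot + h) - f(\cdot)\|^2_{L^2(\RR^{2d})} \, dh.
\]
By Plancherel's theorem, $\|f(\cdot + h) - f(\cdot)\|^2_{L^2} = \int |e^{ih\cdot \xi} - 1|^2 |\hat f(\xi)|^2 \, d\xi$, so Fubini reduces the right-hand side to $\int |\hat f(\xi)|^2 \Phi_\delta(\xi) \, d\xi$ with
\[
\Phi_\delta(\xi) = \frac{1}{\delta^{2d}}\int_{|h|_\infty \le \delta} |e^{ih\cdot \xi} - 1|^2 \, dh.
\]
Using $|e^{ih\cdot \xi} - 1|^2 \le \min(|h|^2 |\xi|^2, 4)$ together with $|h| \le \sqrt{2d}\,\delta$ on the cube, one gets $\Phi_\delta(\xi) \le C_d \min(\delta^2 |\xi|^2, 1)$. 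Splitting the regimes $\delta |\xi| \le 1$ and $\delta |\xi| > 1$ yields, for $s \in (0,1]$, the pointwise bound $\min(\delta^2 |\xi|^2, 1) \le (\delta|\xi|)^{2s} \le \delta^{2s}(1+|\xi|^2)^s$; integrating against $|\hat f(\xi)|^2$ then produces exactly $C_{s,d} \delta^{2s} \|f\|^2_{H^s}$.

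The argument above covers the range $s \in (0,1]$, which is the range I expect to be relevant in light of Theorem~\ref{thm:upperboundo}. No serious obstacle is anticipated; the only point requiring mild care is the uniform-in-$\delta$ control of $\Phi_\delta$, handled by the case split at $\delta |\xi| \sim 1$. A self-contained alternative for $s \in (0,1)$ would use the Gagliardo--Slobodeckij representation of the $H^s$ seminorm: on the slab $\{|h|_\infty \le \delta\}$ one has $|f(z)-f(z')|^2 \le (\sqrt{2d}\,\delta)^{2d+2s} |f(z)-f(z')|^2 / |z-z'|^{2d+2s}$, so dividing by $\delta^{2d}$ and integrating directly produces the $\delta^{2s}$ factor without invoking Fourier variables.
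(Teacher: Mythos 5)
Your proposal is correct, and its skeleton (bound $\int_{H_R^2} I_\delta$ by $C\delta^{2s}\|f\|_{H^s}^2$, then apply Chebyshev) is exactly the paper's. The only divergence is in how the integral estimate is obtained: the paper cites the finite-difference characterization of the homogeneous Besov norm, i.e.
\begin{align*}
\int_{\RR^{2d}}\int_{\RR^{2d}} \frac{|f(z)-f(z')|^2}{|z-z'|^{2d+2s}}\,dz'\,dz \lesssim \|f\|^2_{H^s(\RR^d\times\RR^d)},
\end{align*}
restricts the inner integral to the slab $|z-z'|\le\sqrt{2}\delta$, and pulls out the factor $\delta^{2d+2s}$ --- which is precisely the ``self-contained alternative'' you sketch in your last paragraph. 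Your primary route, via Plancherel and the multiplier bound $\Phi_\delta(\xi)\lesssim \min(\delta^2|\xi|^2,1)\le \delta^{2s}(1+|\xi|^2)^s$, is an equally standard proof of the same embedding $H^s\hookrightarrow B^s_{2,\infty}$; it has the mild advantages of being fully self-contained (no citation needed) and of covering the endpoint $s=1$ without appeal to Besov theory, whereas the paper's version is a one-line reduction to a cited fact. Your parenthetical about the factor $|H_R|^2$ is fine and mirrors the paper: Chebyshev actually yields the stronger bound $C\|f\|^2_{H^s}/K$ with no $|H_R|^2$, and the stated form is simply what is convenient downstream.
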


\begin{proof}
From the characterization of the homogeneous Besov spaces in terms of finite differences (see e.g. \cite{Bahouri2011}), there is $C > 0$ depending only on $s$ and $d$ such that  
\begin{equation}\label{eq:finitedifference}
	\int_{\RR^d \times \RR^d}\left(\int_{\RR^d\times \RR^d} \frac{|f(x,y) - f(x',y')|^2}{|(x,y)-(x',y')|^{2d+2s}} dx'  dy'\right)  dx dy  \le C\|f\|^2_{H^s(\RR^d \times \RR^d)}
\end{equation}
for every $f \in H^s(\RR^d \times \RR^d)$. 
Since $|(x,y)-(x',y')| \le \sqrt{2} \delta$ when $|x-x'| \le \delta$ and $|y-y'| \le \delta$, it follows from \eqref{eq:finitedifference} that
\begin{align*}
	\int_{H_R\times H_R} \left(\int_{|x-x'|\le \delta, |y-y'|\le \delta} |f(x,y) - f(x',y')|^2  dx'  dy'\right) dx dy \le 2^{d+s} \delta^{2d+2s}C\|f\|^2_{H^s(\RR^d \times \RR^d)}.
\end{align*}
In other words, 
$$  \int_{H_R^2} I_\delta(x,y) dx dy \le 2^{d+s} \delta^{2s} C \|f\|_{H^s(\RR^d \times \RR^d)}^2,$$
and so the lemma follows from Chebyshev's inequality. 
\end{proof}

\begin{proof}[Proof of Lemma~\ref{lem:smallsetstep2}]

For $\delta \in (0,\delta_*)$ and $(u,v) \in H_{R_1}^2$, let 
$$ I_{t,\delta}(u,v) = \frac{1}{\delta^{2d}} \int_{|x-u| \le \delta,|y-v| \le \delta}|p_t(x,y) - p_t(u,v)|^2 dx dy. $$
We first claim for all $C_3 > 0$ sufficiently large and $\delta \in (0,\delta_*)$ we have
\begin{equation} \label{eq:claim1}
	A_{t,\tau} \cap \{(u,v,w) \in H_{R_1}^3\, :\, I_{t, \delta}(u,v) \vee I_{\tau, \delta}(v,w) \le C_3 \delta^{2s}\} \neq \emptyset.
\end{equation}
To prove \eqref{eq:claim1}, we begin by defining
$$  B_t = \{(u,v) \in A_t: I_{t,\delta}(u,v) \le C_3 \delta^{2s}\}.$$
By \eqref{eq:1smallsetstep1}, Lemma~\ref{lem:Besov}, and the fact that $I_{t,\delta}(u,v)$ remains unchanged if $p_t$ is replaced by $\chi_{R} p_t$ (by definition of $\delta_*$), there exists $C_4 > 0$ that does not depend on $C_3$ such that
$$ |\{(u,v) \in H_{R_1}^2\, :\, I_{t,\delta}(u,v) > C_3 \delta^{2s}\}| \le \frac{C_4 C_1^2}{C_3}|H_{R_1}|^2.$$
Therefore,
\begin{equation} \label{eq:7}
	|A_t \setminus B_t| \le |H_{R_1}|^2 \frac{C_4 C_1^2}{C_3}
\end{equation}
and by \eqref{eq:1smallsetstep1} the same bound also holds with $t$ replaced by $\tau$. 
Observe now that by the definitions of $A_{t,\tau}$, $B_t$, and $B_\tau$ we have
\begin{align*}
	A_{t,\tau}&=\{(u,v,w) \in H_{R_1}^3: (u,v) \in A_t, (v,w) \in A_\tau\} \\ 
	& \subseteq \{(u,v,w) \in H_{R_1}^3: (u,v) \in B_t, (v,w) \in B_\tau\} \\
	& \quad \cup \{(u,v,w) \in H_{R_1}^3: (u,v) \in A_t \setminus B_t\} \cup \{(u,v,w) \in H_{R_1}^3: (v,w) \in A_\tau \setminus B_\tau\},
\end{align*}
which combined with \eqref{eq:2smallsetstep1} and \eqref{eq:7} yields
$$ \delta_4 \le |\{(u,v,w) \in H_{R_1}^3: (u,v) \in B_t, (v,w) \in B_\tau\}| + 2|H_{R_1}|^3 \frac{C_4 C_1^2}{C_3}.$$
Thus, $\{(u,v,w) \in H_{R_1}^3: (u,v) \in B_t, (v,w) \in B_\tau\}$ is nonempty for $C_3$ sufficiently large, which implies the claim \eqref{eq:claim1}.

From \eqref{eq:claim1}, we may select some $(u,v,w) \in A_{t,\tau}$ such that 
\begin{equation} \label{eq:Icontrol}
	I_{t,\delta}(u,v) \vee I_{\tau,\delta}(v,w) \le C_3 \delta^{2s}.
\end{equation}
To complete the proof of Lemma~\ref{lem:smallsetstep2}, it now suffices to show that \eqref{eq:smallsetstep2} holds for this particular choice of $(u,v,w)$. By \eqref{eq:Icontrol}, Chebyshev's inequality, and $|C_\delta(u,v)| \approx \delta^{2d}$, for any $K \ge 1$ we have 
$$ \frac{|\{(x,y) \in C_{\delta}(u,v): |p_t(x,y) - p_t(u,v)| \ge K^{-1/2}\}|}{|C_\delta(u,v)|} \le C_3 K \delta^{2s}. $$
Now, $(u,v) \in A_t$, so $p_t(u,v) \ge \delta_4$. Thus, choosing $K = 4\delta_4^{-2}$, we get that 
\begin{equation} \label{eq:1smallsetstep2}
	\frac{|\{(x,y) \in C_\delta(u,v): p_t(x,y) \le \frac{\delta_4}{2}\}|}{|C_\delta(u,v)|} \le \frac{4C_3 \delta^{2s}}{\delta_4^2}. 
\end{equation}
This implies the first inequality in \eqref{eq:smallsetstep2}. The second follows similarly, since the same argument as above gives \eqref{eq:1smallsetstep2} with $(u,v)$ and $t$ replaced by $(v,w)$ and $\tau$, respectively.
\end{proof}

With Lemma~\ref{lem:smallsetstep2} in hand, we are finally ready to construct $E_1, E_2 \subseteq H_{R}$ in the spirit of \eqref{eq:smallsetgoal} and finish the proof of Lemma~\ref{lem:smallsetkey}.

\begin{proof}[Proof of Lemma~\ref{lem:smallsetkey}]
Let $C_2>0$ be as in \eqref{eq:smallsetstep2} and fix $\delta > 0$ small enough so that $1-C_2 \delta^{2s} \ge 7/8$. Let $(u,v,w)$ be the associated triple guaranteed by Lemma~\ref{lem:smallsetstep2}. We define the sections 
$$ A_t(u,v;\delta)_x = \{y \in C_\delta(v):(x,y) \in A_t(u,v;\delta)\}$$ 
and 
$$A_\tau(v,w;\delta)_z = \{y \in C_\delta(v):(y,z) \in A_\tau(v,w;\delta)\}, $$
where $A_t(u,v;\delta)$ and $A_\tau(u,v;\delta)$ are as defined just before the statement of Lemma~\ref{lem:smallsetstep2}.
By Lemma~\ref{lem:smallsetstep2}, the choice of $\delta$, and Fubini's theorem we have 
\begin{equation} \label{eq:11}
	\int_{C_\delta(u)} |A_t(u,v;\delta)_x \cap C_\delta(v)| d x = |A_t(u,v;\delta)| \ge \frac{7}{8}|C_\delta(u,v)| = \frac{7}{8}|C_\delta(u)||C_\delta(v)|.
\end{equation}
Let
$$ E_1 = \left\{x \in C_\delta(u): |A_t(u,v;\delta)_x \cap C_\delta(v)| \ge \frac{3}{4}|C_\delta(v)|\right\}. $$
Then, by \eqref{eq:11}, we have
\begin{align*} \frac{7}{8}|C_\delta(u)||C_\delta(v)| &\le 	\int_{C_\delta(u) \cap E_1} |A_t(u,v;\delta)_x \cap C_\delta(v)| d x  + 	\int_{C_\delta(u)\setminus E_1} |A_t(u,v;\delta)_x \cap C_\delta(v)| d x \\ 
	& \le |C_\delta(v)||E_1| + \frac{3}{4}|C_\delta(v)||C_\delta(u)|,
	\end{align*}
which implies
\begin{equation} \label{eq:12}
	|E_1| \ge \frac{1}{8}|C_\delta(u)|.
\end{equation}
By the same argument, defining $E_2 = \{z \in C_\delta(w): |A_\tau(v,w;\delta)_z \cap C_\delta(v)| \ge \frac{3}{4}|C_\delta(v)|\}$, we have
\begin{equation} \label{eq:13}
	|E_2| \ge  \frac{1}{8}|C_\delta(w)|.
\end{equation}
Moreover, it follows by the definitions of $E_1$ and $E_2$ that if $(x,z) \in E_1 \times E_2$, then 
\begin{align} \nonumber
	|A_t(u,v;\delta)_x \cap A_\tau(v,w;\delta)_z| &\ge |C_\delta(v)| - |C_\delta(v) \setminus A_t(u,v;\delta)_x| - |C_\delta(v) \setminus A_\tau(v,w;\delta)_z| \\ 
	& \ge |C_\delta(v)| - \frac{1}{4}|C_\delta(v)| - \frac{1}{4}|C_\delta(v)| = \frac{1}{2}|C_\delta(v)|.\label{eq:14}
\end{align}
Since $y \in A_t(u,v;\delta)_x \cap A_\tau(v,w;\delta)_z$ implies $p_t(x,y) \ge \delta_4/2$ and $p_\tau(y,z) \ge \delta_4/2$, we have by \eqref{eq:14} that for $(x,z) \in E_1 \times E_2$ there holds
\begin{equation}\label{eq:15}
	p_{t+\tau}(x,z) \ge \int_{A_t(u,v;\delta)_x \cap A_\tau(v,w;\delta)_z} p_t(x,y)p_\tau(y,z) dy \ge \frac{\delta_4^2 |C_\delta(v)|}{8}.
\end{equation}
As the choice of $\delta$ depends only on $C_4$, the bounds \eqref{eq:12}, \eqref{eq:13}, and \eqref{eq:15} together complete the proof. 
\end{proof}

\subsection{Proof of quantitative Steinhaus-type theorem} \label{sec:aperiodic}

In this section, we prove Lemma~\ref{lem:aperiodic}. Throughout, $A$, $\eta$, and $L$ are as in the statement of the lemma, and we use the notation 
$$ n B = \underbrace{(B+B+\ldots+B)}_{n\text{ } \mathrm{ times}}, \quad B \subseteq \RR,\ n \in \N.$$

Before proceeding to the main argument, we show that it is sufficient to consider the case where $A \subseteq [0,L]$ is open. By the Steinhaus theorem, there exists $\rho \in (0,1)$ and $x \in \RR$ and such that $(x,x+\rho) \subseteq A + A$.
Let 
$$ A_1 = A+A+A$$
and
$$A_2 = \bigcup_{a \in A} (x+a, x+a+\rho).$$
Then, $A_2$ is an open set satisfying 
\begin{equation}  \label{eq:A2properties}
A_2 \subseteq A_1 \cap [0,3L] \quad \text{and} \quad |A_2| \ge |A| \ge \eta.
\end{equation}
From the definition of $A_1$ and the fact that $A_2 \subseteq A_1$, we have
\begin{equation} \label{eq:4nA}
3nA = nA_1 \supseteq nA_2.
\end{equation}
Therefore, by \eqref{eq:A2properties} and \eqref{eq:4nA}, it is sufficient to prove Lemma~\ref{lem:aperiodic} with $A$ and $L$ replaced, respectively, by $A_2$ and $3L$. In other words, we may assume without loss of generality that $A$ is open. 

Supposing $A$ is open will allow us to discretize Lemma~\ref{lem:aperiodic} and make use of ideas from additive combinatorics. The key ingredient we need is the following result from sumset theory, due to Lev~\cite[Corollary 1]{Lev2010}.

\begin{theorem} \label{thm:sumset}
Let $n, \ell \ge 1$ and $M \ge 3$ be integers. Let $B \subseteq \mathbb{Z} \cap [0,\ell]$ be a set containing at least $M$ elements that is not contained in an arithmetic progression with difference greater than one. If $n \ge 2\ceil{(\ell-1)/(M-2)}$, then $nB$ contains a block of consecutive integers of length $n(M-1)$.
\end{theorem}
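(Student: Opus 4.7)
\textbf{Proof plan for Theorem~\ref{thm:sumset}.} My plan is to reconstruct the argument of Lev~\cite{Lev2010}, which is a structural additive-combinatorics result about when iterated sumsets contain long intervals of consecutive integers.

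First, by translating $B$ by $-\min B$ (which shifts $nB$ by a constant and preserves the property of containing a block of consecutive integers of any given length), I may assume $0 \in B$. Writing $B = \{0 = b_0 < b_1 < \cdots < b_{M-1}\}$ with $b_{M-1} \le \ell$, the hypothesis that $B$ is not contained in an arithmetic progression with common difference $>1$ translates to $\gcd(b_1, \ldots, b_{M-1}) = 1$. Setting $d_i = b_i - b_{i-1}$ for $i=1,\ldots,M-1$, a direct counting calculation (collecting coefficients by differences of partial sums) shows that
\begin{equation*}
nB = \Big\{ \sum_{i=1}^{M-1} d_i a_i \,:\, n \ge a_1 \ge a_2 \ge \cdots \ge a_{M-1} \ge 0,\ a_i \in \mathbb{Z} \Big\},
\end{equation*}
and $\gcd(d_1, \ldots, d_{M-1}) = 1$ with $\sum_i d_i \le \ell$.

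The heart of the proof is then to exhibit an explicit ``staircase'' family of admissible tuples $(a_i^{(j)})_{i=1}^{M-1}$, $j = 0, 1, \ldots, n(M-1)-1$, whose sums $\sum_i d_i a_i^{(j)}$ realize $n(M-1)$ consecutive integers. Starting from a suitable base tuple whose value lies near the middle of the range $[0, n \sum d_i]$, one carries out a sequence of elementary moves, each decrementing one coordinate $a_i$ and incrementing another $a_j$ subject to the weak monotonicity constraint; the corresponding change in value is $d_j - d_i$. Since $\gcd(d_1,\ldots,d_{M-1}) = 1$, compositions of such elementary moves can realize every $\pm 1$ increment. The bound $n \ge 2\lceil(\ell-1)/(M-2)\rceil$ is precisely the threshold that guarantees enough ``budget'' in each coordinate (i.e., enough room for the $a_i$ to move both up and down without violating $0 \le a_i \le n$) to sustain the full staircase: the factor $2$ reflects the need for bidirectional movement in each coordinate during the procedure, and the division by $M-2$ reflects the pigeonhole bound $\max_i d_i \le \ell - (M-2)$ (since all $M-1$ gaps are positive integers summing to at most $\ell$).

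The main obstacle will be the combinatorial bookkeeping in the staircase construction: converting $\gcd(d_i)=1$ into an explicit prescription of admissible monotone lattice moves realizing each unit increment, while simultaneously respecting the box constraints $0 \le a_i \le n$ throughout. To handle this cleanly I would induct on $M$: freeze the smallest coordinate $a_{M-1}$ at an appropriate value, extract a long subinterval of $n B'$ for the $(M-1)$-element reduced set $B' = \{0, b_1, \ldots, b_{M-2}\}$ by the inductive hypothesis, and then use controlled variations of $a_{M-1}$ (together with compensating adjustments in the other coordinates to preserve monotonicity) to extend the interval to the full length $n(M-1)$. An alternative would be to use the Sylvester--Frobenius theorem to represent each residue class modulo a chosen $d_i$ via non-negative combinations of the other $d_j$'s, and then lift this representation to the monotone regime — this gives a more transparent origin for the factor $2$ in the threshold.
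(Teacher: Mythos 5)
The first thing to note is that the paper does not prove this statement at all: Theorem~\ref{thm:sumset} is imported as a black box, cited verbatim as \cite[Corollary 1]{Lev2010}, and the text proceeds directly to use it in the proof of Lemma~\ref{lem:aperiodic}. So there is no in-paper argument to compare yours against; the relevant question is whether your reconstruction stands on its own.

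As written, it does not. Your setup is correct and I have no quarrel with it: the translation to $0\in B$, the equivalence of the arithmetic-progression hypothesis with $\gcd(b_1,\dots,b_{M-1})=1$, the parametrization $nB=\{\sum_i d_i a_i : n\ge a_1\ge\cdots\ge a_{M-1}\ge 0\}$ (which checks out — set $a_i=\sum_{k\ge i}c_k$ for a composition $\sum c_k=n$), and the pigeonhole bound $\max_i d_i\le \ell-(M-2)$ are all fine. But everything after that is a promise rather than a proof. The entire content of the theorem is the existence of the ``staircase'' of $n(M-1)$ admissible monotone tuples realizing consecutive values, together with the verification that the threshold $n\ge 2\lceil(\ell-1)/(M-2)\rceil$ suffices for the box and monotonicity constraints; you explicitly defer this (``the main obstacle will be the combinatorial bookkeeping'') and then offer two alternative strategies — induction on $M$ with a frozen coordinate, or a Sylvester--Frobenius lifting — without executing either. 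In particular, the assertions that the factor $2$ ``reflects the need for bidirectional movement'' and that the stated threshold ``is precisely'' what guarantees enough budget are exactly the claims that need proof, and nothing in the proposal derives them. The elementary moves you describe change the value by $d_j-d_i$, and while $\gcd(d_1,\dots,d_{M-1})=1$ guarantees that $\pm1$ is an integer combination of such differences, turning that into a chain of moves each of which preserves $a_1\ge\cdots\ge a_{M-1}$ and $0\le a_i\le n$ for $n(M-1)$ consecutive steps is the heart of Lev's argument, and it is absent here. Either carry out one of your two proposed strategies in full, or do what the paper does and cite \cite[Corollary 1]{Lev2010} without proof.
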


\begin{proof}[Proof of Lemma~\ref{lem:aperiodic}]

Let $A \subseteq [0,L]$ satisfy $|A| \ge \eta$. As discussed above at the start of this section, we may assume that $A$ is open. Therefore, there is a finite collection of disjoint open intervals $\{U_i\}_{i=1}^m$ such that 
\begin{equation}\label{eq:eta/2} A':=\bigcup_{i=1}^m U_i \subseteq A \quad \text{and} \quad \sum_{i=1}^m |U_i| \ge \frac{\eta}{2}. 
\end{equation}
Let $N \in \N$ large enough so that $N^{-1} \le |U_i|/20$ for every $1 \le i \le m$. We partition $[0,L)$ into the intervals
$$ I_j = \left[\frac{j}{N}, \quad \frac{j+1}{N}\right), \quad 0 \le j \le LN-1$$
and define $B \subseteq \Z \cap [0,LN-1]$ by  $B = \{j: I_{j} \subseteq A' \}.$ Observe that the relationship between $A'$ and $B$ is such that 
\begin{equation}\label{eq:A'B}
	nA'  \supseteq \bigcup_{j\in nB} I_j,
\end{equation}
where here the definition of $I_j$ is extended to all $j \in \N \cup \{0\}.$  We now check the hypotheses of Theorem~\ref{thm:sumset} with $B$ as defined above, $\ell = LN-1$, and $M = \# B$. We first note that the choice of $N$ ensures that each $U_i$ contributes at least $\lfloor 18|U_i| N/20 \rfloor$ elements to $B$. Thus, 
\begin{equation} \label{eq:Bcardprelim}
	\# B \ge \sum_{i=1}^m \left\lfloor \frac{9 |U_i| N}{10}\right \rfloor \ge \sum_{i=1}^m \left(\frac{9|U_i|N}{10} -1\right) \ge \sum_{i=1}^m \frac{4|U_i| N}{5} \ge \frac{2 N \eta}{5},
\end{equation}
where in the third inequality we used that $N|U_i| \ge 20$ implies that $|U_i|N/10 - 1 \ge 0$ and in the final inequality we recalled \eqref{eq:eta/2}. Combining \eqref{eq:Bcardprelim} with $N\eta \ge 20$, we have
\begin{equation}\label{eq:Bcard}
\# B \ge \frac{2N \eta}{5} \vee 8,
\end{equation}
and hence
\begin{equation} \label{eq:nceilbound}
	\left\lceil\frac{LN-2}{\#B - 2}\right\rceil \le \frac{2LN-4}{\#B} + 1\le \frac{10L}{\eta} + 1
\end{equation}
where in the first inequality we used that $\# B - 2 \ge \# B/2$ since $\# B \ge 8$.
Regarding the arithmetic progression assumption, the fact that $N^{-1} \le |U_i|/10$ also implies that if $j \in B$, then either $j-1 \in B$ or $j+1 \in B$. Thus, $B$ cannot be contained in a nontrivial arithmetic progression. It then follows from \eqref{eq:Bcard}, \eqref{eq:nceilbound}, and Theorem~\ref{thm:sumset} that for 
\begin{equation} \label{eq:ncase1}
	n = \frac{20L}{\eta} +2
\end{equation} 
the $n$-fold sumset of $B$ contains a block of consecutive integers of length 
$$ n (\#B - 1) \ge \frac{n \ \#B}{2} \ge \frac{10 L \, 
\#B}{\eta} \ge 4NL \ge 4N. $$
Therefore, by \eqref{eq:A'B}, for $n$ given by \eqref{eq:ncase1} there exists an interval $I \subseteq \RR$ with $|I| \ge 4$ such that $I\subseteq nA'$. Since this choice of $n$ depends only on $\eta$ and $L$, this completes the proof.
\end{proof}

\section{Uniform $H^s$ Bounds}
\label{sec:unifhor}

The goal of this section is to state a quantitative version of H\"{o}rmander's fractional smoothing estimates for parameter-dependent, second-order hypoelliptic operators~\cite{Hor_67}.  In the context of the original dynamics~\eqref{eqn:SDEmain}, these results imply uniform-in-$\epsilon$ fractional smoothing for the associated transitions.  It is important to remark that such quantitative estimates were done previously in~\cite{BL_21}.  In this section, we review these results and state them as needed in our context.

It should be mentioned that in the process of writing this paper, we discovered that Kohn's shorter argument of H\"{o}rmander's theorem as presented in~\cite{Hor_07} can be followed to prove part of the main regularity estimate below.  However, the sheer strength of H\"{o}rmander's estimates in~\cite{Hor_67} were difficult to reproduce, especially because the argument need not refer to a specific operator until bounding the sum of the \emph{hypoelliptic norm} and its dual 
\begin{align*}
\vertiii{u}_\delta+\vertiii{u}_{*,\delta}. 
\end{align*}
See their definitions below in the righthand side of~\eqref{eqn:Hssmooth2}.  On the other hand, Kohn's argument~\cite{Hor_07} uses the specific form of the second-order operator throughout the regularity estimates, which makes it less malleable to generalizations than H\"{o}rmander's proof.

Below, we will work in a slightly more general setting than in the rest of the paper.  
That is, let
\begin{align}
K\subset \mathcal{O}\subset  \RR^d
\end{align}
where $K$ is compact and $\mathcal{O}$ is bounded and open.  We use the notation $g\lesssim h$ if there exists a constant $C>0$ independent of $\epsilon \in (0,1)$  such that 
\begin{align}
g \leq C h.  
\end{align}
For brevity, we let 
\begin{align}
\| \cdot \|= \| \cdot \|_{L^2(\RR^d)} \,\,\, \text{ and } \,\,\, \langle\, \cdot \,, \, \cdot\, \rangle = \langle\, \cdot \,, \cdot\, \rangle_{L^2(\RR^d)}
\end{align}
and set $T(\mathcal{O})=T_d(\mathcal{O})$, $S(\mathcal{O})=S_d(\mathcal{O})$ where $T_d(\mathcal{O})$ and $S_d(\mathcal{O})$ are as introduced in Section~\ref{sec:mainresults}.

 If $X\in T(\mathcal{O})$ and $x\in K$, then $e^{tX}x$ denotes the local solution, including both positive and negative times, of the initial value problem
 \begin{align}
 \label{eqn:ODEX}
 \begin{cases}\frac{d}{dt} ( e^{tX} x) = X(e^{tX} x)\\
e^{tX} x\big|_{t=0}=x .
 \end{cases}
 \end{align}  
 Since $X\in T(\mathcal{O})$ and $K\subset \mathcal{O}$ is compact, there exists $t_0>0$ small enough so that $(x,t) \mapsto e^{tX} x\in C^\infty(K \times (-t_0, t_0); \mathcal{O})$.  Furthermore, the group property
 \begin{align*}
 e^{tX} (e^{sX} x)= e^{(t+s) X} x
 \end{align*}  
 is satisfied 
 whenever $x\in K$ and $ |s|+|t| <t_0$.  Observe that if $X\in S(\mathcal{O})$, then we can choose $t_0>0$ small enough and independent of $\epsilon \in (0,1)$ so that $(x,t) \mapsto e^{tX} x \in C^\infty(K \times (-t_0, t_0) ; \mathcal{O})$.  For functions $u:\mathcal{O}\rightarrow \RR$ and $x\in K$, define
 \begin{align}
 e^{tX} u(x):= u(e^{tX}x).
 \end{align}

 Central to the statements in this section are Besov norms which measure fractional regularity along a given vector field.  That is, for all $X\in T(\mathcal{O})$, $t_0>0$ sufficiently small, and $s\in (0,1]$ define
 \begin{align}
 \label{eqn:Fbes}
 |u|_{X,s}^{t_0}:= \sup_{0<|t| \leq t_0} \frac{\| e^{tX} u - u \|}{|t|^s}, \qquad u \in C_0^\infty(K),
 \end{align}
 where $C_0^\infty(K)$ denotes the class of $C^\infty(\mathcal{O})$ functions supported in $K$.  
We additionally make use of the following norm which measures fractional smoothing along every direction:
 \begin{align}
 \label{eqn:homognorm}
 |u|^{t_0}_s = \sup_{0<|h| \leq t_0} \frac{ \| u(\cdot + h) - u(\cdot) \|}{|h|^s}, \qquad u \in C_0^\infty(K), \end{align}  
 which is defined also for $t_0>0$ sufficiently small.  Unless we need to emphasize it, below we will drop the use of $t_0$ explicitly, recalling that for $X\in S(\mathcal{O})$, $t_0>0$ can be chosen independent of $\epsilon \in (0,1)$.

Fix vector fields 
\begin{align}
X_0, X_1, \ldots, X_r \in S(\mathcal{O}).  
\end{align}
Note that, possibly, each of $X_0, X_1, \ldots, X_r$ could depend on $\epsilon \in (0,1)$.  For any multi-index $I=(i_1, \ldots, i_k)$, $0 \leq i_j \leq r$, let
 \begin{align*}
|I|:=k \qquad \text{ and } \qquad X_I := \text{ad}\, X_{i_k} \text{ad} \, X_{i_{k-1}} \cdots \, \text{ad}\, X_{i_2}\, X_{i_1},
 \end{align*}
 where 
 \begin{align}
 \text{ad}X(Y):=[X,Y]
 \end{align}
  denotes the commutator of the vector fields $X,Y \in T(\mathcal{O})$.  By definition of the space $S(\mathcal{O})$, note that each $X_I \in S(\mathcal{O})$.  Associated to the vector fields $X_0, X_1, \ldots, X_r$ are the \emph{smoothing parameters} 
  \begin{align}
  \label{eqn:sj}
  s_0, s_1, \ldots, s_r\in (0, 1],
  \end{align}
  which we assume are independent of $\epsilon \in (0,1)$. 
Intuitively, they should be thought of as the presumed level of regularity expected along the respective directions $X_0, X_1, \ldots, X_r$.   
 These parameters define a stratified space of commutators of vector fields.  That is, define $S^s(\mathcal{O})$ to be the $S_F(\mathcal{O})$ submodule of $S(\mathcal{O})$ generated by all $X_I$ such that $s(I) \geq s$, where $s(I)$ is defined by the relation
\begin{align}
\label{eqn:sDEF}
\frac{1}{s(I)}= \sum_{j=1}^k \frac{1}{s_{i_j}} 
\end{align}
whenever $I= (i_1, \ldots, i_k)$. 

\begin{example}
For our purposes, the most important example is when $s_0=1/2$ and $s_1=s_2=\cdots =s_r =1$.  In this case, any vector field $X$ of the form 
\begin{align*}
X= \varphi_1 X_1 + \cdots + \varphi_r X_r 
\end{align*}
belongs to $S^1(\mathcal{O})$ provided $\varphi_i \in S_F(\mathcal{O})$.  Also, for example, any vector field of the form 
\begin{align*}
X= \varphi_0 X_0 +  \varphi_1 X_1 + \cdots + \varphi_r X_r  + \varphi_{0,1}[X_0, X_1]
\end{align*}  
for $\varphi_i , \varphi_{0,1}\in S_F(\mathcal{O})$ belongs to $S^{1/3}(\mathcal{O})$ since each vector field above in the sum belongs to, respectively, $S^{1/2}(\mathcal{O}), S^1(\mathcal{O}), \ldots, S^1(\mathcal{O}), S^{1/3}(\mathcal{O})$ and the containment 
\begin{align}
S^s(\mathcal{O})\subset S^t(\mathcal{O}) \qquad \text{ for }s\geq t
\end{align}
holds. 
\end{example}

\
The proof of H\"{o}rmander's theorem~\cite{Hor_67} essentially splits into two key parts.  The first part assumes a given fractional regularity (in the Besov sense in~\eqref{eqn:Fbes}) along each $X_j$, i.e. the $s_j$ above in~\eqref{eqn:sj}, and shows that there exists fractional smoothing of order $s$ along any member of $S^s(\mathcal{O})$.  Assuming $S^s(\mathcal{O})=S(\mathcal{O})$ for some $s\in (0,1]$, this in turn implies fractional smoothing $s\in (0,1]$ among all directions.

\begin{theorem}
\label{thm:Hssmooth1}
Suppose that $X_0, X_1, \ldots, X_r \in S(\mathcal{O})$, and fix $s_0, s_1, \ldots, s_r \in (0,1]$.  Let $\sigma >0$. Then for all $X\in S^s(\mathcal{O})$ 
\begin{align}
\label{eqn:Hssmooth1}
|u|_{X,s} \lesssim \sum_{j=0}^r |u|_{X_j, s_j} + |u|_\sigma
\end{align} 
for all $u\in C^\infty_0(K)$.  In particular, if $s\in (0,1]$ is independent of $\epsilon \in (0,1)$ and is such that $S^s(\mathcal{O})=S(\mathcal{O})$, then (by choosing $\sigma < s$ in~\eqref{eqn:Hssmooth1}) 
\begin{align}
\label{eqn:Hssmooth23}
|u|_s \lesssim \sum_{j=0}^r |u|_{X_j, s_j} + \|u\|
\end{align} 
for all $u\in C^\infty_0(K)$.  
\end{theorem}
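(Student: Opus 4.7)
The approach is a parameter-uniform version of H\"ormander's original inductive proof. The cornerstone is a commutator-level regularity estimate: if $X, Y \in S(\mathcal{O})$ have already been shown to smooth at orders $\sigma_X, \sigma_Y \in (0,1]$ along themselves, then $u$ has fractional regularity of order $s$ along $[X,Y]$ with $1/s = 1/\sigma_X + 1/\sigma_Y$, modulo an $|u|_\sigma$ error. The engine is the Baker--Campbell--Hausdorff approximation
\begin{align*}
e^{-tY}e^{-tX}e^{tY}e^{tX} = \mathrm{Id} + t^2[X,Y] + R_t,
\end{align*}
where $R_t$ is a smooth remainder of order $t^3$. Setting $\tau = t^2$, I would telescope $e^{\tau [X,Y]}u - u$ into four differences of the form $e^{r Z}u - u$ with $Z \in \{X,Y\}$ and $r$ of order $\sqrt{\tau}$; each difference is bounded by $|u|_{Z,\sigma_Z} r^{\sigma_Z}$, and matching exponents in $\tau$ produces the harmonic-mean relation $1/s = 1/\sigma_X + 1/\sigma_Y$. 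The cubic remainder $R_t$ is handled by Taylor expansion and standard $L^2$ bounds, contributing the $|u|_\sigma$ term on the right-hand side of \eqref{eqn:Hssmooth1}.

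The next step is to extend this building block to arbitrary vector fields in $S^s(\mathcal{O})$, which is an $S_F$-module and not merely a linear span. Two issues must be addressed. First, if $|u|_{X,s}$ is controlled then so must be $|u|_{\varphi X, s}$ for $\varphi \in S_F(\mathcal{O})$; this rests on the Lagrangian flow identity $e^{t\varphi X}x = e^{t\varphi(x)X}x + O(t^2)$, which lets us freeze $\varphi$ at a base point and absorb the $O(t^2)$ residue into $|u|_\sigma$. Second, I would induct on $|I|$: assuming \eqref{eqn:Hssmooth1} holds for all commutators $X_{I'}$ with $s(I') > s$, one applies the base commutator estimate to any $X_I$ with $s(I) = s$ by decomposing $I$ into two shorter sublists $I'$ and $I''$ whose already-established weights $s(I'), s(I'')$ satisfy the harmonic-mean formula \eqref{eqn:sDEF}. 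This closes the induction on bracket depth and yields \eqref{eqn:Hssmooth1}.

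The second assertion \eqref{eqn:Hssmooth23} is then deduced as follows. Since $S^s(\mathcal{O}) = S(\mathcal{O})$, every coordinate direction $e_j$ admits a representation $e_j = \sum_k a_{jk} Y_k$ with $Y_k \in S^s(\mathcal{O})$ and $a_{jk} \in S_F(\mathcal{O})$, so applying \eqref{eqn:Hssmooth1} with $X = e_j$ yields $|u|_{e_j, s} \lesssim \sum_k |u|_{X_k, s_k} + |u|_\sigma$. Writing the standard finite difference $u(\cdot + h) - u(\cdot)$ as a telescoping sum of coordinate increments bounds $|u|_s$ by $\sum_j |u|_{e_j, s}$ up to a dimensional factor. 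Finally, choosing $\sigma < s$ and applying the standard interpolation bound $|u|_\sigma \lesssim \delta |u|_s + C_\delta \|u\|$ for any $\delta > 0$ allows the lower-order term to be absorbed into the left side, producing \eqref{eqn:Hssmooth23}.

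The main obstacle will be ensuring that every constant produced by the BCH expansion, the freezing arguments for $S_F$ coefficients, and the induction on bracket depth is $\epsilon$-uniform. This demands continual bookkeeping that each bracket $X_I$ remains in $S(\mathcal{O})$ and each coefficient in $S_F(\mathcal{O})$ with seminorms bounded independently of $\epsilon \in (0,1)$. Since $S^s(\mathcal{O})$ may require commutators of substantial depth to span $S(\mathcal{O})$, propagating the uniform $S_F$ bounds through the iterated commutator estimates and Taylor remainders without any $\epsilon$-dependent loss is the technical heart of the proof.
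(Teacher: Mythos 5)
Your proposal is essentially the paper's proof: the paper gives no details beyond stating that Theorem~\ref{thm:Hssmooth1} follows H\"ormander's original argument (via repeated Baker--Campbell--Hausdorff--Dynkin representations of bracket flows, with $\leq$ upgraded to the $\epsilon$-uniform $\lesssim$, cf.~\cite{BL_21}), and the deduction of \eqref{eqn:Hssmooth23} via the interpolation bound $|u|_\sigma \leq \delta|u|_s + C_\delta\|u\|$ is exactly the remark following the theorem. One small imprecision: taking both flow times of order $\sqrt{\tau}$ in the commutator identity only yields the exponent $\min(\sigma_X,\sigma_Y)/2$; to actually reach the harmonic mean $1/s = 1/\sigma_X + 1/\sigma_Y$ one must scale anisotropically, $t_1 = \tau^{s/\sigma_X}$ and $t_2 = \tau^{s/\sigma_Y}$ with $t_1 t_2 = \tau$, which is what your "matching exponents" step implicitly requires.
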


\begin{remark}
Note by definition of the norm $|\cdot |_s$, if $\sigma < s$, then by choosing $t_0$ small enough
\begin{align*}
|u|_\sigma \leq \delta |u|_s +C_\delta \|u\|.  
\end{align*}
Relation~\eqref{eqn:Hssmooth23} follows from~\eqref{eqn:Hssmooth1} from this line of reasoning.
\end{remark}
The proof of Theorem~\ref{thm:Hssmooth1} follows the corresponding argument in H\"{o}rmander's original paper~\cite{Hor_67} almost identically, except that each inequality $\leq$ is replaced by $\lesssim$.  This proof uses repeated applications of the Baker-Cambell-Hausdorff-Dynkin formula to deduce representations of flows along brackets in terms of $X_0, X_1, \ldots, X_r$ and a remainder term.  (See~\cite{BL_21}). 

The condition that $S^s(\mathcal{O})=S(\mathcal{O})$ for some $s\in (0,1]$ independent of $\epsilon \in (0,1)$ is crucial in this paper.  Effectively, it is a uniform-in-$\epsilon$ spanning condition on the Lie algebra generated by $X_0, X_1, \ldots, X_r \in S(\mathcal{O})$.      
\begin{definition}
\label{def:gunifhor}
We say that $X_0, X_1, \ldots, X_r\in S(\mathcal{O})$ satisfies the \emph{uniform H\"{o}rmander condition on} $\mathcal{O}$ \emph{with respect to} $\epsilon \in (0,1)$ if there exists $s\in (0,1]$ independent of $\epsilon \in (0,1)$ such that \begin{align*}
S^s(\mathcal{O})=S(\mathcal{O}).
\end{align*}  
\end{definition}

\begin{remark}
The uniform H\"{o}rmander condition on $\mathcal{O}$ with respect to $\epsilon \in (0,1)$ is nearly identical to the uniform parabolic H\"{o}rmander condition on $\mathcal{O}$ with respect to $\epsilon \in (0,1)$ as in Definition~\ref{def:unifhor}, except that the list of vector fields in Definition~\ref{def:unifhor} is slightly reduced to deal with both the parabolic and elliptic settings simultaneously.  
\end{remark}

\begin{assump}
\label{assump:unifhor}
The list $X_0, X_1, \ldots, X_r \in S(\mathcal{O})$ satisfies the uniform H\"{o}rmander's condition on $\mathcal{O}$ with respect to $\epsilon \in (0,1)$.   
\end{assump}
As an immediate corollary of Theorem~\ref{thm:Hssmooth1} we obtain:
\begin{corollary}
Fix $s_0, s_1, \ldots, s_r \in (0,1]$ independent of $\epsilon \in (0,1)$, and suppose that $X_0, X_1, \ldots, X_r \in S(\mathcal{O})$ satisfy Assumption~\ref{assump:unifhor}.  Then there exists $s\in (0,1]$ independent of $\epsilon \in (0,1)$ such that 
\begin{align}
|u|_s \lesssim \sum_{j=0}^r |u|_{X_j, s_j}  + \|u\|
\end{align}
for all $u\in C_0^\infty(K)$. 
\end{corollary}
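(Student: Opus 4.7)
The plan is to observe that this corollary is essentially a repackaging of the second assertion of Theorem~\ref{thm:Hssmooth1} once Assumption~\ref{assump:unifhor} is plugged in. Concretely, Assumption~\ref{assump:unifhor} asserts the existence of some $s\in(0,1]$, independent of $\epsilon\in(0,1)$, for which $S^{s}(\mathcal{O})=S(\mathcal{O})$. With this $s$ in hand, the hypothesis of the ``In particular'' clause of Theorem~\ref{thm:Hssmooth1} is satisfied, so that theorem yields directly
\begin{align*}
|u|_s \lesssim \sum_{j=0}^r |u|_{X_j,s_j}+\|u\|,\qquad u\in C_0^\infty(K),
\end{align*}
with implicit constants independent of $\epsilon\in(0,1)$.

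If one wishes to unpack this a bit more, the argument proceeds in two short steps. First, since $S^s(\mathcal{O})=S(\mathcal{O})$, each coordinate vector field $e_j=\partial_{x_j}$ belongs to $S^s(\mathcal{O})$; applying the first half of Theorem~\ref{thm:Hssmooth1} with $X=e_j$ and with auxiliary exponent $\sigma<s$ gives
\begin{align*}
|u|_{e_j,s}\lesssim \sum_{k=0}^r |u|_{X_k,s_k}+|u|_\sigma
\end{align*}
uniformly in $\epsilon$. Second, one controls $|u|_s$ by the sum $\sum_{j=1}^d |u|_{e_j,s}$ via a telescoping argument: for any $h=(h_1,\ldots,h_d)$ with $|h|\le t_0$ one writes
\begin{align*}
u(\cdot+h)-u(\cdot)=\sum_{j=1}^d\bigl(u(\cdot+h_1e_1+\cdots+h_je_j)-u(\cdot+h_1e_1+\cdots+h_{j-1}e_{j-1})\bigr),
\end{align*}
and each increment is bounded in $L^2$ by $|h_j|^{s}\,|u|_{e_j,s}\le |h|^{s}\,|u|_{e_j,s}$. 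Dividing by $|h|^s$ and taking suprema yields $|u|_s\lesssim \sum_{j=1}^d |u|_{e_j,s}$.

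Finally, the remainder term $|u|_\sigma$ with $\sigma<s$ can be absorbed by interpolation: by definition of the Besov-type norms one has $|u|_\sigma\le \delta\,|u|_s+C_\delta\|u\|$ after shrinking $t_0$ if necessary, and choosing $\delta$ small moves it to the left. There is no real obstacle here, since the only quantitative input, namely $\epsilon$-independence of the constants, has already been baked into Theorem~\ref{thm:Hssmooth1}; the sole role of the corollary is to phrase that result under the hypothesis most convenient for applications, i.e.\ the uniform H\"ormander condition of Definition~\ref{def:gunifhor}.
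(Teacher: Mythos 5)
Your proposal is correct and takes essentially the same route as the paper: the paper presents this corollary as an immediate consequence of Theorem~\ref{thm:Hssmooth1}, obtained by taking the $\epsilon$-independent $s$ supplied by Assumption~\ref{assump:unifhor} and invoking the ``In particular'' clause, with the $|u|_\sigma$ term absorbed exactly as in the remark following that theorem. Your additional unpacking (telescoping over coordinate directions and the interpolation $|u|_\sigma \le \delta |u|_s + C_\delta \|u\|$) matches the paper's own sketch of why \eqref{eqn:Hssmooth23} follows from \eqref{eqn:Hssmooth1}.
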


The second key, and much more subtle, part of the proof of H\"{o}rmander's theorem~\cite{Hor_67} shows that if $s_0=1/2, \, s_1=s_2=\cdots = s_r=1$, then the righthand side of~\eqref{eqn:Hssmooth23} can be controlled by the \emph{hypoelliptic} norm $\vertiii{u}$ given by
\begin{align}
\vertiii{u}^2= \vertiii{u}^2_\mathcal{O}=\|u\|_{L^2(\mathcal{O})}^2+ \sum_{1}^r \| X_j u \|^2_{L^2(\mathcal{O})}
\end{align} 
and its associated dual norm 
\begin{align}
\vertiii{u}_*= \vertiii{u}_{*, \mathcal{O}}= \sup_{\substack{ \varphi \in C^\infty_0(\mathcal{O})\\ \vertiii{\varphi} \leq 1}}\int \varphi u \, dx.\end{align}
In the De Giorgi iteration scheme used below in Section~\ref{sec:lowerb}, it will be helpful to have a slight modification of these norms.  That is, for $u\in C_0^\infty(K)$ and a parameter $\delta \in [0,1]$ independent of $\epsilon \in (0,1)$, define\begin{align}
 \vertiii{u}_{\delta}^2 =\vertiii{u}^2+ \delta\|u \|_{L^\infty(\mathcal{O})}^2 \qquad \text{ and } \qquad \vertiii{u}_{\delta,*} =  \sup_{\substack{ \varphi \in C^\infty_0(\mathcal{O})\\ \vertiii{\varphi}_\delta \leq 1}}\int \varphi u \, dx. \end{align}
 \begin{theorem}
 \label{thm:Hssmooth2}
Let $\delta \in [0,1]$ and suppose that $s_0=1/2, \, s_1=s_2=\cdots = s_r=1$ and $X_0, X_1, \ldots, X_r \in S(\mathcal{O})$ satisfy Assumption~\ref{assump:unifhor}.  Then
\begin{align}
\label{eqn:Hssmooth3}
\sum_{j=0}^r |u|_{X_j, s_j} \lesssim \vertiii{u}_\delta + \vertiii{X_0 u}_{\delta, *} 
\end{align}  
for all $u\in C^\infty_0(K)$. 
\end{theorem}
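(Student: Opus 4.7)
The plan is to handle the full-derivative directions $j = 1, \ldots, r$ and the half-derivative direction $j = 0$ separately. For $j \geq 1$, the fundamental theorem of calculus along the flow gives
$$e^{tX_j}u - u = \int_0^t e^{sX_j}(X_j u)\, ds,$$
and a change of variables using that $X_j \in S(\mathcal{O})$ has uniformly bounded coefficients and Jacobian for $|s| \leq t_0$ yields $\|e^{sX_j}(X_j u)\| \lesssim \|X_j u\|$ uniformly in $\epsilon$. Minkowski's inequality then gives $|u|_{X_j, 1} \lesssim \|X_j u\| \leq \vertiii{u}_\delta$.

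\noindent\textbf{The main case $j = 0$ via duality.} Set $v = e^{tX_0}u - u$. Using $\frac{d}{ds}e^{sX_0}u = e^{sX_0}(X_0 u)$ and $\|v\|^2 = \int_0^t \frac{d}{ds}\|e^{sX_0}u-u\|^2\, ds$, I compute
$$\|v\|^2 = 2\int_0^t \langle e^{sX_0}u - u,\, e^{sX_0}(X_0 u)\rangle\, ds.$$
Changing variables $y = e^{sX_0}x$ in each inner product transfers the flow off $X_0 u$ at the cost of a Jacobian $J_s(y)$, giving
$$\|v\|^2 = 2\int_0^t \langle \phi_s,\, X_0 u\rangle\, ds, \qquad \phi_s(y) := \bigl(u(y) - u(e^{-sX_0}y)\bigr)\,J_s(y).$$
Applying the duality pairing yields $\|v\|^2 \leq 2\,\vertiii{X_0 u}_{\delta,*}\int_0^t \vertiii{\phi_s}_\delta\, ds$. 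Granted the key uniform estimate
$\vertiii{\phi_s}_\delta \lesssim \vertiii{u}_\delta$ for $|s| \leq t_0$ and $\epsilon \in (0,1)$,
I conclude $\|v\|^2 \lesssim |t|\,\vertiii{u}_\delta\,\vertiii{X_0 u}_{\delta,*}$, and dividing by $|t|$, taking supremum, and using $\sqrt{ab}\leq a+b$ produces $|u|_{X_0, 1/2} \lesssim \vertiii{u}_\delta + \vertiii{X_0 u}_{\delta,*}$.

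\noindent\textbf{Main obstacle.} The heart of the argument is the key estimate $\vertiii{\phi_s}_\delta \lesssim \vertiii{u}_\delta$. The $L^2$ and $L^\infty$ components are routine because $e^{-sX_0}$ is a diffeomorphism and $J_s$ is uniformly bounded (thanks to $X_0 \in S(\mathcal{O})$ and $|s| \leq t_0$), giving $\|\phi_s\|_{L^2} \lesssim \|u\|_{L^2}$ and $\|\phi_s\|_{L^\infty} \lesssim \|u\|_{L^\infty}$. The genuinely delicate piece is bounding $\|X_k \phi_s\|_{L^2}$ for $k = 1, \ldots, r$: expanding the derivative produces $(Y_{k,s}u)\circ e^{-sX_0}$ where $Y_{k,s} := (e^{sX_0})_* X_k$ is the pullback vector field, satisfying $\frac{d}{ds}Y_{k,s} = [X_0, Y_{k,s}]$ and admitting the Baker--Campbell--Hausdorff--Dynkin expansion $Y_{k,s} = \sum_{n=0}^{N}\frac{s^n}{n!}\ad^n X_0(X_k) + s^{N+1}R_{k,s,N}$. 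Each iterated bracket lies in $S(\mathcal{O})$ with $\epsilon$-uniform bounds by Assumption~\ref{assump:unifhor}, and for $t_0$ small enough the tail converges uniformly in $\epsilon$. The subtle step is that the resulting commutator terms contain implicit $X_0$-derivatives of $u$, which cannot be controlled directly by $\vertiii{u}_\delta$; these must be handled either by a secondary integration by parts (as in \cite{Hor_67}) that re-exposes the dual norm $\vertiii{X_0 u}_{\delta,*}$ and is absorbed into the left-hand side for $t_0$ sufficiently small, or else by contributing a pure $L^\infty$ error term that is absorbed by the $\delta\|u\|_{L^\infty}^2$ piece of $\vertiii{u}_\delta$. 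The principal novelty over the classical proof is that every constant must be tracked uniformly in $\epsilon \in (0,1)$, which is precisely what the structural hypothesis $X_0, \ldots, X_r \in S(\mathcal{O})$ together with the uniform H\"{o}rmander condition guarantees.
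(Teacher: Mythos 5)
Your handling of the directions $j=1,\ldots,r$ and your duality setup for $j=0$ are correct and follow the same skeleton as the paper's proof (which is itself only a citation to H\"{o}rmander \cite{Hor_67} and to \cite{BL_21}, with the instruction to track $\epsilon$-uniformity). The problem is that the entire weight of your argument rests on the ``key uniform estimate'' $\vertiii{\phi_s}_\delta\lesssim\vertiii{u}_\delta$, and this estimate is false as stated. Computing $X_k\phi_s$ produces $(Y_{k,s}u)\circ e^{-sX_0}$ with $Y_{k,s}=X_k+s\,[X_0,X_k]+O(s^2)$, and $\|[X_0,X_k]u\|_{L^2}$ is a full first-order derivative of $u$ in a direction that is in general transverse to $\mathrm{span}\{X_1,\ldots,X_r\}$ (in the Langevin example, $[X_k,X_0]\approx\partial_{x_k}$ while $\vertiii{u}_\delta$ only controls $v$-derivatives). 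It is therefore not bounded by $\vertiii{u}_\delta$; indeed, by the stratification of Section 6 the bracket $[X_0,X_k]$ only carries fractional regularity $s=1/3$, so not even the Besov seminorm along it, let alone the $L^2$ norm of $[X_0,X_k]u$, is available at this stage. You flag this difficulty yourself, but neither escape route you offer closes it. Absorbing into the $\delta\|u\|_{L^\infty}^2$ piece cannot work: the problematic quantity is the $L^2$ norm of a derivative of $u$, which no $L^\infty$ bound on $u$ controls, and the theorem must also hold for $\delta=0$. The ``secondary integration by parts absorbed into the left-hand side for $t_0$ small'' is a placeholder rather than an argument: once you have applied the duality inequality, the bracket terms sit inside $\vertiii{\phi_s}_\delta$ and are no longer paired against $X_0u$, so there is nothing to integrate by parts against; and they scale like $\|\nabla u\|_{L^2}$, which is not a small multiple of $|u|_{X_0,1/2}$, so no absorption into the left-hand side is possible.

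Concretely, what your computation actually delivers is
\begin{align*}
\|e^{tX_0}u-u\|^2\ \lesssim\ |t|\,\vertiii{X_0u}_{\delta,*}\vertiii{u}_\delta\ +\ |t|^2\,\vertiii{X_0u}_{\delta,*}\,\|\nabla u\|_{L^2}\ +\ |t|\,\|u\|^2 ,
\end{align*}
and the middle term survives division by $|t|$ and the supremum over $0<|t|\le t_0$. Eliminating it is exactly the part the paper describes as ``much more subtle''; in \cite{Hor_67} and \cite{BL_21} this requires genuinely additional structure beyond the BCH expansion --- in particular the fact that the flow $e^{sX_0}$ preserves $X_0$ itself, so that $X_0$-derivatives falling on $u\circ e^{-sX_0}$ can be returned to the dual pairing, combined with a regularization of $u$ against which the residual full-gradient errors can be traded. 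None of this appears in the proposal, so the crucial half-derivative estimate $|u|_{X_0,1/2}\lesssim\vertiii{u}_\delta+\vertiii{X_0u}_{\delta,*}$ is not established.
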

\begin{remark}
Observe that we have yet to make reference to an underlying second-order differential operator defined by the vector fields $X_0, X_1, \ldots, X_r$.  As mentioned in the introduction of this section, this gives H\"{o}rmander's argument in~\cite{Hor_67} its strength over Kohn's argument presented in~\cite{Hor_07}.  Indeed, in Kohn's argument, the form of the operator is employed throughout to control various Sobolev norms of brackets.  In particular, it was not obvious to us that we could use this shorter argument to prove the results of with a general $\delta \in (0,1]$.  On the other hand in H\"{o}rmander's proof~\cite{Hor_67}, the form of the differential operator is only employed on the last step to conclude fractional regularity by estimating the hypoelliptic norm and its dual using basic apriori bounds satisfied by such operators.  This allows for the generalization stated above.    

 \end{remark}  
 
 We remark again that Theorem~\ref{thm:Hssmooth2} follows again from the same details in H\"{o}rmander's paper~\cite{Hor_67} (see~\cite{BL_21}) but with $\leq$ replaced by $\lesssim$.  
 
 Combining Theorem~\ref{thm:Hssmooth1} and Theorem~\ref{thm:Hssmooth2} produces the following result which we use below in our context~\eqref{eqn:SDEmain}.
 \begin{theorem}
 \label{thm:Hssmooth3}
 Let $\delta \in [0,1]$ and suppose that $s_0=1/2, \, s_1=s_2=\cdots = s_r=1$ and $X_0, X_1, \ldots, X_r \in S(U)$ satisfy Assumption~\ref{assump:unifhor}.  Then there exists an $s\in (0,1]$ independent of $\epsilon \in (0,1)$ such that
\begin{align}
\label{eqn:Hssmooth2}
 |u|_{s} \lesssim \vertiii{u}_\delta + \vertiii{X_0 u}_{\delta,*} 
\end{align}  
for all $u\in C^\infty_0(K)$. 
 \end{theorem}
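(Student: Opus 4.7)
The plan is to obtain Theorem~\ref{thm:Hssmooth3} simply by chaining together Theorem~\ref{thm:Hssmooth1} and Theorem~\ref{thm:Hssmooth2}, being careful that the implicit constants produced at each step remain independent of $\epsilon \in (0,1)$ and that the index $s$ emerging from Theorem~\ref{thm:Hssmooth1} is itself $\epsilon$-independent. Since Assumption~\ref{assump:unifhor} is postulated, there is, by definition, an $s_* \in (0,1]$ independent of $\epsilon$ such that $S^{s_*}(\mathcal{O}) = S(\mathcal{O})$; this is precisely the input needed to invoke the second conclusion of Theorem~\ref{thm:Hssmooth1}.

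First I would fix this $s_* \in (0,1]$ and, for any $\sigma < s_*$, apply the second part of Theorem~\ref{thm:Hssmooth1} with the choice of smoothing parameters $s_0 = 1/2$ and $s_1 = \cdots = s_r = 1$ to obtain
\begin{align*}
|u|_{s_*} \lesssim \sum_{j=0}^r |u|_{X_j, s_j} + \|u\|
\end{align*}
for all $u \in C_0^\infty(K)$, with an implicit constant independent of $\epsilon \in (0,1)$. Next, since the hypotheses of Theorem~\ref{thm:Hssmooth2} coincide with those already in force (same $s_0, s_1, \ldots, s_r$, same uniform H\"{o}rmander condition, same $\delta \in [0,1]$), I would apply that theorem to estimate the Besov-along-$X_j$ terms:
\begin{align*}
\sum_{j=0}^r |u|_{X_j, s_j} \lesssim \vertiii{u}_\delta + \vertiii{X_0 u}_{\delta, *}.
\end{align*}

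Finally, I would absorb the residual $\|u\|$ term using the trivial bounds $\|u\| \leq \vertiii{u} \leq \vertiii{u}_\delta$ that are immediate from the definitions of the hypoelliptic norm and its $\delta$-modification. Combining these three displays yields
\begin{align*}
|u|_{s_*} \lesssim \vertiii{u}_\delta + \vertiii{X_0 u}_{\delta,*}
\end{align*}
with the implicit constant and the exponent $s_*$ both independent of $\epsilon \in (0,1)$, as desired. There is no substantive obstacle once Theorems~\ref{thm:Hssmooth1} and~\ref{thm:Hssmooth2} are in hand; the only point requiring care is bookkeeping the parameter dependence, namely verifying that the ``$\lesssim$'' produced by each theorem was established with an $\epsilon$-uniform constant (which is the whole content of working within $S(\mathcal{O})$ and $S_F(\mathcal{O})$) and that the exponent $s_*$ provided by the uniform H\"{o}rmander condition may be used uniformly. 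If the De Giorgi scheme in Section~\ref{sec:lowerb} eventually needs a specific quantitative value of $s$, one could further record that any $s \in (0, s_*)$ works and optimize as needed.
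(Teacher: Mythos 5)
Your proposal is correct and is exactly the paper's argument: the paper proves Theorem~\ref{thm:Hssmooth3} by the one-line observation that it follows from combining Theorem~\ref{thm:Hssmooth1} (using the uniform H\"{o}rmander condition to get an $\epsilon$-independent $s$) with Theorem~\ref{thm:Hssmooth2}, absorbing the residual $\|u\|$ into $\vertiii{u}_\delta$. Your bookkeeping of the $\epsilon$-uniformity of the implicit constants and of $s_*$ is the right point to emphasize and matches the paper's intent.
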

\begin{remark}
Given the conclusion of Theorem~\ref{thm:Hssmooth3}, it follows by comparison of norms (see~\cite{Hor_07}) that for any $\sigma< s$ we have
\begin{align}
\| u \|_{H^{\sigma}(\RR^n)} \lesssim \vertiii{u}_\delta + \vertiii{X_0 u}_{\delta,*}
\end{align}
for all $u \in C^\infty_0(K)$, where $\|u\|_{H^\sigma(\RR^n)}$ is the usual Sobolev norm, defined for $u\in C^\infty_0(\RR^n)$ by
\begin{align*}
\|u\|_{H^\sigma(\RR^n)}^2:= \int_{\RR^n} |\hat{u}(\xi)|^2 (1+ 4\pi^2 |\xi|^2)^{\sigma} \, d\xi, 
\end{align*} 
where
\begin{align*}
\hat{u}(\xi) = \int_{\RR^n} e^{-2\pi i \xi \cdot x} u(x) \, dx
\end{align*}
is the Fourier transform of $u$.  Here, note that we naturally extend the definition of $u\in C^\infty_0(K)$ to all of $\RR^n$ by taking $u\equiv 0$ outside of $K$. 
\end{remark}

We conclude this section by stating a result that will be useful in the parabolic setting.  Although we do not quite get as much fractional smoothing in time, it will still be necessary. To state it, we let $S^{s,0}(\mathcal{O})$ denote the $S_F(\mathcal{O})$ submodule generated by all $X_I \in S(\mathcal{O})$ with $X_I \neq X_0$ and $s(I) \geq s$.  Note here we are allowed to include $X_0$, so long as it first occurs in a bracket, e.g. $[X_1, X_0]\in S^{1/3,0}(\mathcal{O})$.  Observe that the condition $S^{s,0}(\mathcal{O})=S(\mathcal{O})$ for some $s>0$ independent of $\epsilon \in (0,1)$ is precisely the uniform parabolic H\"{o}rmander condition for $\{ X_0 ; X_1; \ldots; X_r\}$ on $\mathcal{O}$ with respect to $\epsilon \in (0,1)$ as in Definition~\ref{def:unifhor}.         
\begin{theorem}[Time-dependent H\"{o}rmander]
\label{thm:tdhor}
Suppose $s_0=1/2, s_1=s_2=\cdots = s_r=1$ and fix $a,b\in \RR$ independent of $\epsilon \in (0,1)$ with $0<a<b$.  Let $\eta \in (0, (b-a)/4]$ and suppose that there exists $s\in (0,1)$ independent of $\epsilon \in (0,1)$ such that $S^{s, 0}(\mathcal{O}) =S(\mathcal{O})$.  Then, for $\sigma<s$,
\begin{align}
\label{eqn:tdhor1}
\int_a^b \| u(t) \|_{H^\sigma(\RR^n)}^2 \,d t  \lesssim \int_a^b \vertiii{u(t)}^2 \, d t + \bigg(\sup_{\substack{\phi \in C^\infty_0((a,b) \times \mathcal{O})\\\int_a^b \vertiii{\phi}^2 \, dt \leq 1}} \int_{(a,b) \times \mathcal{O}} \phi (\epsilon \partial_t +X_0)u \, dt \, dx\bigg)^2 
\end{align} 
for all $u\in C^\infty_0([a+ \eta, b-\eta] \times K)$. For any fixed $\eta_0 \le (b-a)/4$, the implicit constant in the inequality is uniformly bounded with respect to $\eta$ for $\eta \in [\eta_0, (b-a)/4]$.
\end{theorem}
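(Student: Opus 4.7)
The plan is to reduce to the elliptic hypoelliptic estimates in Theorems~\ref{thm:Hssmooth1}--\ref{thm:Hssmooth3} by passing to the extended space $\RR^{n+1} = \RR_t \times \RR^n_x$ with an augmented family of vector fields. Define $\tilde X_0 := \epsilon \partial_t + X_0$, with smoothing parameter $\tilde s_0 = 1/2$, and $\tilde X_j := X_j$ (extended trivially in $t$) with $\tilde s_j = 1$, for $j=1, \ldots, r$. Since $\epsilon\in(0,1)$ and each $X_j\in S(\mathcal{O})$, the $\tilde X_j$ lie uniformly in $S((a,b)\times\mathcal{O})$ with respect to $\epsilon$. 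The support hypothesis $u\in C_0^\infty([a+\eta, b-\eta]\times K)$ permits $u$ to be identified with a test function supported in a fixed compact subset of $(a,b)\times\mathcal{O}$, so every fractional and Besov norm used below on $\RR^{n+1}$ is well defined.

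The pivotal algebraic observation is that $\partial_t$ commutes with every time-independent vector field, so any iterated bracket of $\tilde X_0, \ldots, \tilde X_r$ in which $\tilde X_0$ appears only inside a commutator (never as the outermost isolated factor) has no $\partial_t$ component and coincides identically with the corresponding bracket of $X_0, X_1, \ldots, X_r$. Letting $\tilde S^s$ denote the $S_F$-module of brackets in the augmented system with smoothing index $\ge s$, this identification shows that every spatial vector field in $S^{s,0}(\mathcal{O})$ lifts to $\tilde S^s$ with the same $S_F$ coefficients. Under the parabolic hypothesis $S^{s,0}(\mathcal{O}) = S(\mathcal{O})$, each spatial basis vector $e_i$, $i=1,\ldots,n$, therefore lies in $\tilde S^s$.

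Applying Theorem~\ref{thm:Hssmooth1} to each $e_i\in \tilde S^s$ in the augmented system, followed by Theorem~\ref{thm:Hssmooth2}, produces
$$|u|_{e_i, s} \;\lesssim\; \vertiii{u}_{\mathrm{ext}} + \vertiii{\tilde X_0 u}_{\mathrm{ext},*} + |u|_\sigma,$$
where $\vertiii{\cdot}_{\mathrm{ext}}$ is the hypoelliptic norm on $L^2((a,b)\times\mathcal{O})$ built from $X_1, \ldots, X_r$ and $\vertiii{\cdot}_{\mathrm{ext},*}$ is its dual. Time-independence of $X_1, \ldots, X_r$ collapses these quantities to
$$\vertiii{u}_{\mathrm{ext}}^2 = \int_a^b \vertiii{u(t)}^2\, dt \quad\text{and}\quad \vertiii{\tilde X_0 u}_{\mathrm{ext},*} = \sup_{\phi} \int \phi\,(\epsilon\partial_t + X_0)u\, dt\, dx,$$
where the supremum is over $\phi\in C_0^\infty((a,b)\times\mathcal{O})$ satisfying $\int_a^b \vertiii{\phi(t)}^2\, dt \le 1$. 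Summing over $i$ and converting the directional Besov norms to $H^\sigma(\RR^n)$ in the spatial variables (via the standard finite-difference characterization), then integrating in $t$, yields the target inequality \eqref{eqn:tdhor1}. Uniformity of the implicit constant for $\eta\in[\eta_0, (b-a)/4]$ follows from explicit bounds on a fixed smooth time cutoff enforcing compact support away from $\{a,b\}$.

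The principal technical difficulty lies in the absorption of the lower-order term $|u|_\sigma$. The augmented system fails the full uniform H\"{o}rmander condition in the $\partial_t$ direction, since $\partial_t = \epsilon^{-1}(\tilde X_0 - X_0)$ introduces an $\epsilon^{-1}$ loss, so one cannot invoke a space-time form of Theorem~\ref{thm:Hssmooth3} to absorb $|u|_\sigma$ directly. Instead, the Baker--Campbell--Hausdorff--Dynkin remainder terms in H\"{o}rmander's original argument must be tracked carefully to verify that the flows producing $|u|_\sigma$ involve only brackets already in $\tilde S^s$, permitting absorption through the spatial $|u|_{e_i, s}$ terms together with the $L^2$ control coming from $\vertiii{u}_{\mathrm{ext}}$. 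This adaptation was executed in detail for the fluid models in~\cite{BL_21}, and only cosmetic modifications are required in the present setting.
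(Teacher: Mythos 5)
Your proposal is correct and follows essentially the same route the paper takes (which itself defers the details to~\cite{BL_21} and H\"{o}rmander's original argument): lift to $(a,b)\times\mathcal{O}$ with $\tilde X_0=\epsilon\partial_t+X_0$, observe that brackets containing $\tilde X_0$ only in inner position coincide with the spatial brackets so that $S^{s,0}(\mathcal{O})=S(\mathcal{O})$ spans exactly the spatial directions, and accept that the failure of full spanning in the $\partial_t$ direction is why only the time-integrated spatial $H^\sigma$ norm appears on the left. You also correctly flag the one genuine subtlety—that Theorems~\ref{thm:Hssmooth2}--\ref{thm:Hssmooth3} cannot be invoked as black boxes in the extended space and the BCH remainder/absorption of $|u|_\sigma$ must be retraced—which is precisely the point of the paper's remark following the theorem.
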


\begin{remark}
Note that Theorem~\ref{thm:tdhor} does not follow immediately from the statement of Theorem~\ref{thm:Hssmooth3} since, even when the vector fields $X_1, \ldots, X_r$ are extended to vector fields on $(a,b) \times \mathcal{O}$, the Lie algebra generated by $\epsilon \partial_t + X_0, X_1, \ldots, X_r$ does not uniformly span the tangent space.  If it did, then we would be able to replace the lefthand side of~\eqref{eqn:tdhor1} by the $H^\sigma$ norm on $\RR^{n+1}$.  However, as noticed in~\cite{BL_21}, one still obtains the claimed control of the integrated $H^\sigma$ norm in time. 
\end{remark}

\section{Uniform upper bounds}
\label{sec:Moser}

The goal of this section is to prove Theorem~\ref{thm:upperboundo}.  The proof of this result naturally splits into two separate lemmas, which we now state.  Recall that $q_t^\epsilon = p_{t/\epsilon}^\epsilon$ denotes the time-changed density and $\chi_R$ is the smooth cutoff defined in \eqref{eq:cutoff}.  Below, we will use the notation $\lesssim_{R, \alpha}$ to indicate that the bound is independent of $\epsilon \in (0,1)$ but depends on the parameters $R, \alpha$.

\begin{lemma}
\label{lem:upper1}
Suppose that Assumptions~\ref{assump:H}-\ref{assump:BR} hold, and let $R>0$ and $\alpha >\tfrac{1}{2}\|\mathrm{div}(Z)\|_{L^\infty(\mathcal{X})} $.  Then, 
  \begin{align}
  \label{eqn:unifupper}
\sup_{(t,x,y)\in [1, \infty) \times H_R \times H_R} (e^{-\alpha t} q_t^\epsilon(x,y) )  \lesssim_{R, \alpha} 1.
\end{align}\end{lemma}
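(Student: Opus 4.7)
The plan is to prove Lemma~\ref{lem:upper1} via a hypoelliptic Moser iteration adapted to the parameter-dependent setting of Section~\ref{sec:unifhor}. I fix $y \in H_R$ and consider $w^\epsilon(t,x) := q_t^\epsilon(x,y)$, which satisfies the backward Kolmogorov equation $\epsilon \partial_t w^\epsilon - L_\epsilon^x w^\epsilon = 0$, and set $v^\epsilon := e^{-\alpha t} w^\epsilon$. The lemma reduces to showing $\|v^\epsilon\|_{L^\infty([1,\infty) \times H_R)} \lesssim_{R,\alpha} 1$ uniformly in $\epsilon \in (0,1)$ and $y \in H_R$. I work with the backward rather than the forward equation because the associated $L^p$ energy identity produces a damping coefficient of the form $p\alpha - \|\mathrm{div}(Z)\|_{L^\infty}$, which is positive for \emph{every} $p \ge 2$ precisely under the hypothesis $\alpha > \tfrac12 \|\mathrm{div}(Z)\|_{L^\infty}$.

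The second structural idea is to use cutoffs that depend on $x$ only through the conserved quantity $H$, writing $\phi(x) := \chi(H(x)/R_0)$ for some $R_0 > R$ and a standard one-dimensional cutoff $\chi$. Since $Z_0 H \equiv 0$ by (V3), we have $Z_0 \phi = 0$, which causes the $\epsilon$-independent drift $Z_0$ to drop out of all integration-by-parts computations. Testing $\epsilon \partial_t w^\epsilon = L_\epsilon^x w^\epsilon$ against $\rho^2(t) \phi^2(x) (w^\epsilon)^{p-1}$ for a suitable time cutoff $\rho$, integrating by parts using $\mathrm{div}(Z_j) = \mathrm{div}(Z_0) = 0$ from (V2), and switching to $v^\epsilon$ yields a Caccioppoli-type estimate
\begin{align*}
\sup_t \rho^2 \int \phi^2 (v^\epsilon)^p\, dx + c_p \iint \rho^2 \phi^2 (v^\epsilon)^p\, dx\, dt + \sum_j \iint \rho^2 \phi^2 |Z_j((v^\epsilon)^{p/2})|^2 \, dx\, dt \\
\lesssim_{R_0, p} \iint \bigl[\rho|\rho'| \phi^2 + \rho^2 (|Z\phi^2| + \textstyle\sum_j |Z_j^2 \phi^2|)\bigr] (v^\epsilon)^p\, dx\, dt,
\end{align*}
with $c_p := p\alpha - \|\mathrm{div}(Z)\|_{L^\infty} > 0$ for all $p \ge 2$; crucially, all implicit constants are independent of $\epsilon$.

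Next, applying Theorem~\ref{thm:tdhor} to the compactly supported function $\phi (v^\epsilon)^{p/2}$ produces $\sigma \in (0,1)$ of space-time Sobolev regularity uniformly in $\epsilon$ and $p$, and parabolic Sobolev embedding $H^\sigma \hookrightarrow L^{2\kappa}$ with $\kappa = \kappa(\sigma, d) > 1$ upgrades the $L^p$ bound into an $L^{p\kappa}$ bound on a slightly smaller cylinder. Iterating along $p_n = 2\kappa^n$ with geometrically nested cutoffs produces, for every $t_0 \ge 0$ and any fixed $\tau > 0$,
\begin{align*}
\|v^\epsilon\|_{L^\infty([t_0+\tau, t_0+2\tau] \times H_R)} \lesssim_{R,\tau,\alpha} \|v^\epsilon\|_{L^{p_0}([t_0, t_0+2\tau] \times H_{R_0})}
\end{align*}
for any $p_0 > 0$, with implicit constants independent of $\epsilon$, $y$, and $t_0$.

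It remains to control the right-hand side uniformly in $t_0$. The near-subinvariance of $e^{-\eta H}$ from Remark~\ref{rem:subinv} yields the weighted bound $\int e^{-\eta H(x)} v^\epsilon(t,x)\, dx \le e^{-\eta H(y)}\, e^{(\|\mathrm{div}(Z)\|_{L^\infty} - \alpha)t}$, which is possibly exponentially growing in $t$ when $\alpha \le \|\mathrm{div}(Z)\|_{L^\infty}$. This apparent growth is compensated by the exponential dissipation $e^{-c_p(t-s)}$ from the $p$-energy inequality: applying Duhamel's formula to the energy ODE for $F_p(t) := \int \phi^2 (v^\epsilon)^p \, dx$ and iterating the Moser bound against the result reduces the effective growth rate of the $L^p$ norm on fixed-length cylinders geometrically to zero, delivering the needed uniform-in-$t_0$ bound. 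The main technical obstacle throughout is $\epsilon$-uniformity of the constants in each iteration step, which rests essentially on the parameter-dependent hypoelliptic smoothing of Theorem~\ref{thm:tdhor}; a secondary delicate point is balancing the possibly-growing weighted $L^1$ bound against the decay $c_p > 0$, a balance which is possible exactly because $\alpha$ strictly exceeds $\tfrac12\|\mathrm{div}(Z)\|_{L^\infty}$.
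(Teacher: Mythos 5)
Your route is genuinely different from the paper's, and it contains a gap at the global-in-time step. The paper never runs a Moser iteration on the density in the backward variable seeded by an $L^1$ bound. Instead it proceeds in two passes: first it proves a \emph{global-in-time} weighted $L^2$ apriori estimate for $g_\alpha = e^{-\alpha t}\mathcal{Q}_t^\epsilon f$ with $f \in C_0^\infty$ (Proposition~\ref{prop:apriorigreen}, with weight $e^{-\eta H}\,dx$ and damping coefficient $\alpha - \tfrac12\mathrm{div}(Z) > 0$, which is where the hypothesis on $\alpha$ enters), upgrades this to $\|e^{-\alpha t}\mathcal{Q}_t^\epsilon f\|_{L^\infty(I\times H_{2R})} \lesssim \|f\|_{L^2(e^{-\eta H}dx)}$ by Theorem~\ref{thm:moser2}, and then \emph{dualizes} to obtain $\sup_{x\in H_{2R}}\|e^{-\alpha\cdot}q_\cdot^\epsilon(x,\cdot)\|_{L^2(I\times H_{2R})} \lesssim 1$ uniformly in the time window; a second Moser iteration, now in the $y$ variable for the time-reversed generator $\mathcal{L}_\epsilon^-$, finishes. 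The point of this detour is that the quantity carrying the global-in-time information is always the semigroup applied to a nice function (whose $L^2(e^{-\eta H})$ energy is non-increasing up to the sign-definite factor $\alpha-\tfrac12\mathrm{div}(Z)$), never the density itself; the density only ever appears inside a duality pairing or on a bounded time window.

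The gap in your argument is exactly where you put the density on the ``initial data'' side. Your seed is the weighted $L^1$ bound $\int e^{-\eta H(x)}v^\epsilon(t,x)\,dx \le e^{-\eta H(y)}e^{(\|\mathrm{div}(Z)\|_{L^\infty}-\alpha)t}$, which grows exponentially whenever $\alpha < \|\mathrm{div}(Z)\|_{L^\infty}$ --- a regime permitted by the hypothesis $\alpha > \tfrac12\|\mathrm{div}(Z)\|_{L^\infty}$ and actually used later in the paper (Section~\ref{sec:lowerb} takes $\alpha = \inf\mathrm{div}(Z)$). Your proposed Duhamel compensation does not remove this growth: bounding the commutator error in the $p$-energy ODE by $\int_A v^p \le \|v\|_{L^\infty(A)}^{p-1}\int_A v \lesssim A^{p-1}e^{((p-1)\gamma+\beta)t}$ with $\beta = \|\mathrm{div}(Z)\|_{L^\infty}-\alpha$ and then taking $p$-th roots after Duhamel maps an assumed $L^\infty$ growth rate $\gamma$ to $((p-1)\gamma+\beta)/p$, whose fixed point is $\beta$, not $0$; the iteration converges geometrically to $\beta$, so you only obtain $q_t^\epsilon \lesssim e^{\|\mathrm{div}(Z)\|_{L^\infty}t}$, which is strictly weaker than \eqref{eqn:unifupper} for the relevant $\alpha$. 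A weighted $L^2$-in-$x$ bound would decay (its rate is $\|\mathrm{div}(Z)\|_{L^\infty}-2\alpha<0$, consistent with your observation that $c_p>0$ for $p\ge2$), but seeding it requires a \emph{global} bound $\int_{\mathcal{X}}e^{-\eta H(x)}q_1^\epsilon(x,y)^2\,dx \lesssim 1$, which your local Moser estimates do not supply. Your argument does yield the finite-time bound $\sup_{1\le t\le T}$, but the lemma is needed (and used, e.g.\ in \eqref{eqn:alpha23}) on all of $[1,\infty)$. The fix is the paper's duality step, which converts the well-behaved $L^\infty$ bound on $\mathcal{Q}_t^\epsilon f$ into the needed uniform-in-$t_0$ $L^2$ seed for the density without ever estimating the density globally.
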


\begin{lemma}
\label{lem:upper2}
Suppose that Assumptions~\ref{assump:H}-\ref{assump:BR} hold, and let $1 <T< \infty$ and $R>0$. Then,
\begin{align}
\int_{1}^T \| \chi_R q_t^\epsilon \|^2_{H^s( \RR^d \times \RR^d)} \, dt \lesssim_{R, T} 1. 
\end{align} 
\end{lemma}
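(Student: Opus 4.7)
The plan is to deduce the bound on $\int_1^T \|\chi_R q_t^\epsilon\|_{H^s(\RR^d\times\RR^d)}^2\, dt$ by applying the time-dependent H\"ormander estimate (Theorem~\ref{thm:tdhor}) separately in the $x$ and $y$ variables and combining the two one-variable bounds. The time-rescaled density $q_t^\epsilon$ satisfies both the backward Kolmogorov equation $\epsilon \partial_t q_t^\epsilon = L_\epsilon q_t^\epsilon$ in $x$ (with $y$ fixed) and the forward equation $\epsilon \partial_t q_t^\epsilon = L_\epsilon^* q_t^\epsilon$ in $y$ (with $x$ fixed). Each has the form needed by Theorem~\ref{thm:tdhor} with the choice $X_0 = \epsilon Z - Z_0$, $X_j = Z_j$, and smoothing exponents $s_0 = 1/2$, $s_j = 1$, while the required uniform parabolic H\"ormander condition is exactly Assumption~\ref{assump:V}~(V5) (the sign of the drift does not affect the bracket span, and for the forward equation the adjoint only contributes an additional bounded zeroth-order term).

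Fix a spatial cutoff $\tilde\chi(x) = \chi(H(x)/R')$ with $R' > R$ and $\tilde\chi \equiv 1$ on $H_R$, and a time cutoff $\psi \in C_0^\infty((1/2,T+1/2))$ with $\psi \equiv 1$ on $[1,T]$; set $u(t,x) = \psi(t)\tilde\chi(x) q_t^\epsilon(x,y)$ for $y$ in a compact set. The key ingredient is an energy estimate: multiplying $\epsilon\partial_t q_t^\epsilon = L_\epsilon q_t^\epsilon$ by $\psi^2\tilde\chi^2 q_t^\epsilon$ and integrating in $t$ and $x$, and using that $Z_0 H = 0$ by (V3) together with $\text{div}(Z_j) = 0$ by (V2), every term carries a single factor of $\epsilon$; after absorbing Cauchy--Schwarz cross terms and invoking Lemma~\ref{lem:upper1} to bound $q_t^\epsilon$ in $L^\infty$ on the support of $\tilde\chi$, the $\epsilon$ cancels and one obtains
\begin{align*}
\int_{1/2}^{T+1/2}\int \psi^2\tilde\chi^2\sum_{j=1}^r (Z_j q_t^\epsilon(x,y))^2\, dx\, dt \lesssim_{R,T} 1
\end{align*}
uniformly in $\epsilon \in (0,1)$ and $y$ in a compact set.

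Given this energy bound together with Lemma~\ref{lem:upper1}, the quantity $\int \vertiii{u(t,\cdot)}^2\, dt$ appearing in Theorem~\ref{thm:tdhor} is uniformly controlled. For the dual term, a direct expansion using $(\epsilon\partial_t + X_0) q_t^\epsilon = \epsilon\sum_j Z_j^2 q_t^\epsilon$ yields $(\epsilon\partial_t + X_0) u = \epsilon G + \epsilon\psi\tilde\chi \sum_j Z_j^2 q_t^\epsilon$, where $G$ collects contributions involving only $q_t^\epsilon$ (no $Z_j q_t^\epsilon$) thanks to $Z_0 \tilde\chi = 0$. Integrating the $Z_j^2 q_t^\epsilon$ term by parts against a test function $\phi$ and applying Cauchy--Schwarz against the energy bound shows that $\left|\int \phi\, (\epsilon\partial_t + X_0) u\, dt\, dx\right| \lesssim \epsilon$ whenever $\int \vertiii{\phi}^2\, dt \le 1$. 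Theorem~\ref{thm:tdhor} then gives $\int_1^T \|u(t,\cdot)\|_{H^\sigma_x(\RR^d)}^2\, dt \lesssim_{R,T} 1$ for some $\sigma > 0$ independent of $\epsilon$ and of $y$; integrating over $y$, running the analogous argument in $y$ with $L_\epsilon^*$, and combining via the elementary inequality $\|f\|_{H^s(\RR^d\times\RR^d)}^2 \lesssim \|f\|_{L^2_y H^s_x}^2 + \|f\|_{L^2_x H^s_y}^2$ (valid for $s \in (0,1]$ by subadditivity of $t\mapsto t^s$ on the Fourier side) finally produces the desired bound for any $s \in (0,\sigma)$. The main obstacle is careful bookkeeping of the two distinct $\epsilon$-scales in $L_\epsilon$; the cancellations that keep every bound uniform in $\epsilon$ rest essentially on $\text{div}(Z_j)=0$ and $Z_0 H = 0$.
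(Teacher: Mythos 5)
Your argument is correct, and it reaches the same conclusion through a slightly different organization of the two spatial variables. The paper treats $q_t^\epsilon$ as a single function on the product space: it writes the joint equation $(L_{\epsilon,x}+L_{\epsilon,y}^*-2\epsilon\partial_t)q_t^\epsilon=0$, checks that the combined list $\{Z_{0,x,y,\epsilon};Z_{1,x};Z_{1,y};\ldots;Z_{r,x};Z_{r,y}\}$ satisfies the uniform parabolic H\"ormander condition on $H_R\times H_R$ (which is immediate since $x$- and $y$-derivatives commute), and applies Theorem~\ref{thm:tdhor} once in $\RR^{2d}$ to $u=\psi\chi_R q_t^\epsilon$. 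You instead apply Theorem~\ref{thm:tdhor} twice --- once in $x$ via the backward equation and once in $y$ via the forward equation, with $y$ (resp.\ $x$) as a parameter --- and then assemble the full $H^s(\RR^d\times\RR^d)$ bound from the two mixed norms using the Fourier-side subadditivity $(1+|\xi|^2+|\eta|^2)^s\le(1+|\xi|^2)^s+(1+|\eta|^2)^s$ for $s\in(0,1]$. The supporting ingredients are identical in both routes: the energy estimate obtained by pairing the equation with $\psi^2\chi^2 q_t^\epsilon$ (where the uniformity in $\epsilon$ rests on $\mathrm{div}(Z_j)=0$ and $Z_0H=0$, so that every surviving term carries a factor of $\epsilon$) and the $L^\infty$ bound of Lemma~\ref{lem:upper1} to control the resulting $L^2$ quantities. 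Your version trades the verification of the product-space spanning condition for the (elementary) interpolation step and a second pass through the argument; the paper's version is marginally more economical but otherwise equivalent. One small point to keep explicit in a final write-up: when invoking (V5) for the $y$-equation, the drift appearing in $\epsilon\partial_t+X_0$ is $-\epsilon Z+Z_0$ up to sign and the zeroth-order term $\epsilon\,\mathrm{div}(Z)$ must be moved to the right-hand side of the dual estimate, exactly as you indicate; both modifications are harmless since sign changes do not affect the bracket-generated module and $\mathrm{div}(Z)$ is bounded by (V2).
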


Observe that Theorem~\ref{thm:upperboundo} follows immediately after combining Lemma \ref{lem:upper1} with Lemma \ref{lem:upper2}.  Lemma~\ref{lem:upper1} will be established after proving a quantitative $L^2$-apriori estimate with respect to some `natural' reference measure (Proposition~\ref{prop:apriorigreen} below) and a quantitative Moser iteration scheme (Theorem~\ref{thm:moser2} below), which in particular upgrades the $L^2$ bound to an $L^\infty$ estimate.  Lemma~\ref{lem:upper2} will follow by using a combination of said results with the quantitative H\"{o}rmander result (Theorem~\ref{thm:tdhor}).

To set up the statement of the apriori bound, fix $\alpha > \tfrac{1}{2}\|\text{div}(Z)\|_{L^\infty(\mathcal{X})}$ and for $f\in  C^\infty_0(\mathcal{X})$ define 
\begin{align}
\label{eqn:gdef}
g_\alpha = g_\alpha(f,t, x):= e^{-\alpha t} \mathcal{Q}_t^\epsilon f(x),  
\end{align}
where 
\begin{align}
\mathcal{Q}_t^\epsilon := \mathcal{P}_{t/\epsilon}^\epsilon, 
\end{align}
i.e., $\mathcal{Q}_t^\epsilon$ is the time-changed Markov semigroup.  Observe that the basic estimate 
\begin{align} \label{eq:galphaupper}
|g_\alpha(f,t,x)| \leq  e^{-\alpha t}\| f\|_{L^\infty(\mathcal{X})} 
\end{align}
 is satisfied for $(t,x) \in (0, \infty) \times \mathcal{X}$.  Moreover, under Assumptions~\ref{assump:H} and~\ref{assump:V}, $(t,x) \mapsto g_\alpha(f, t,x)\in C^\infty((0, \infty) \times \mathcal{X})$ and $g_\alpha$ satisfies the equation 
\begin{align}
\label{eqn:geq}
(\mathcal{L}_\epsilon - \alpha - \partial_t) g_\alpha = 0 \quad \text{ on } \quad  (0, \infty) \times\mathcal{X}
\end{align}
in the classical sense, 
where 
\begin{align}
\mathcal{L}_\epsilon := \epsilon^{-1} L_\epsilon = -Z + \frac{1}{\epsilon}Z_0 + \sum_{j=1}^r Z_j^2
\end{align}
 is the time-changed generator.

\begin{proposition}[Apriori estimate]
\label{prop:apriorigreen}
Suppose that Assumptions~\ref{assump:H} and~\ref{assump:V} are satisfied, and let $f\in C^\infty_0(\mathcal{X})$.  Then for any $\alpha >\tfrac{1}{2} \| \text{\emph{div}}(Z) \|_{L^\infty(\mathcal{X})}$ we have 
\begin{align}
\label{eqn:apriori}
\int_{\mathcal{X}\times(0, \infty)} (\alpha - \tfrac{1}{2}\text{\emph{div}}(Z)) g_\alpha^2 e^{-\eta H} \, dx dt + \sum_{j=1}^r \int_{\mathcal{X}\times (0,\infty)} (Z_j g_\alpha)^2 e^{-\eta H} \, dx dt \leq  \int_{\mathcal{X}} f^2 e^{-\eta H} \, dx 
\end{align}
where $g_\alpha= g_\alpha(f,t, x)$ is as in~\eqref{eqn:gdef} and $\eta >0$ is as in Assumption~\ref{assump:V} (V4). 
\end{proposition}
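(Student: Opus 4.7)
The strategy is the standard weighted energy estimate: test the evolution equation $(\partial_t + \alpha - \mathcal{L}_\epsilon)g_\alpha = 0$ against $g_\alpha e^{-\eta H}$ and integrate over $\mathcal{X} \times (0,\infty)$. The time integral, after interchanging $\partial_t$ and applying the Fundamental Theorem of Calculus, yields
\[
\int_0^\infty \int_\mathcal{X} g_\alpha \, \partial_t g_\alpha \, e^{-\eta H}\, dx\, dt = \tfrac{1}{2}\Big[\int_\mathcal{X} g_\alpha^2 e^{-\eta H}\, dx\Big]_{t=0}^{t=\infty} = -\tfrac{1}{2}\int_\mathcal{X} f^2 \, e^{-\eta H}\, dx,
\]
since $g_\alpha(0,\cdot)=f$ and the pointwise decay $|g_\alpha(t,x)|\le e^{-\alpha t}\|f\|_{L^\infty}$ (from \eqref{eq:galphaupper}) together with $e^{-\eta H}\in L^1(\mathcal{X})$ (from (V4)) kills the $t=\infty$ contribution via dominated convergence.

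\textbf{Spatial integration by parts.} The aim is to show
\[
\int_\mathcal{X} g_\alpha \, \mathcal{L}_\epsilon g_\alpha \, e^{-\eta H}\, dx \le -\sum_{j=1}^r \int_\mathcal{X} (Z_j g_\alpha)^2 e^{-\eta H}\, dx + \tfrac{1}{2}\int_\mathcal{X} g_\alpha^2 \, \text{div}(Z)\, e^{-\eta H}\, dx.
\]
For any $X\in T_d(\mathcal{X})$ and compactly supported $u,v$, the weighted integration-by-parts identity reads
\[
\int v\, X u \, e^{-\eta H}\, dx = -\int u\, X v\, e^{-\eta H}\, dx - \int uv\, \text{div}(X)\, e^{-\eta H}\, dx + \eta \int uv\, (XH)\, e^{-\eta H}\, dx.
\]
Applied with $u=v=g_\alpha$: the $\frac{1}{\epsilon}Z_0$ term vanishes since $\text{div}(Z_0)=0$ (V2) and $Z_0H=0$ (V3), so $Z_0$ is antisymmetric in $L^2(e^{-\eta H}dx)$; the $-Z$ term gives $\tfrac{1}{2}\int g_\alpha^2 \, \text{div}(Z) e^{-\eta H} - \tfrac{\eta}{2}\int g_\alpha^2 (ZH)\,e^{-\eta H}$; and each $Z_j^2$ term gives $-\int (Z_j g_\alpha)^2 e^{-\eta H} + \tfrac{\eta}{2}\int Z_j(g_\alpha^2)(Z_jH)\,e^{-\eta H}$, where a second application of the IBP formula (to the remaining integral, using $\text{div}(Z_j)=0$) converts the correction into $-\tfrac{\eta}{2}\int g_\alpha^2 Z_j^2 H\, e^{-\eta H} + \tfrac{\eta^2}{2}\int g_\alpha^2 (Z_jH)^2\, e^{-\eta H}$. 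Summing, the error terms collect into
\[
\frac{\eta}{2}\int_\mathcal{X} g_\alpha^2 \Big[\eta \sum_{j=1}^r (Z_jH)^2 - ZH - \sum_{j=1}^r Z_j^2 H\Big] e^{-\eta H}\, dx,
\]
which is nonpositive by the first inequality in (V4). Combining with the time identity and absorbing $-\alpha \int g_\alpha^2 e^{-\eta H}$ from the $\alpha g_\alpha$ term gives the claimed bound (with the tighter constant $\tfrac{1}{2}$ on the right-hand side, hence the stated $\int f^2 e^{-\eta H}$).

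\textbf{Main technical obstacle.} The manipulations above presuppose that $g_\alpha$ and its derivatives decay sufficiently at $\partial \mathcal{X}$ and at spatial infinity within $\mathcal{X}$ to justify integration by parts without boundary terms. Since $f\in C^\infty_0(\mathcal{X})$ but $g_\alpha(t,\cdot)$ need not have compact support, the rigorous argument proceeds by introducing a smooth cutoff $\psi_R:= \chi(H(x)/R)$ with $\chi$ as in \eqref{eq:cutoff}, performing the computation above with $g_\alpha$ replaced by $g_\alpha \psi_R$, and passing to the limit $R\to\infty$. The commutator errors between $\psi_R$ and the vector fields are controlled by $e^{-\alpha t}\|f\|_{L^\infty}^2$ times $e^{-\eta H}\sum_j(|Z_jH|^2+|Z_j^2 H|+1)$, which is integrable by (V4); dominated convergence then removes the cutoff. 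A similar truncation in time (on $[0,T]$ with $T\to\infty$) handles the temporal boundary rigorously, using the geometric decay $|g_\alpha|\le e^{-\alpha t}\|f\|_{L^\infty}$. Once these limits are controlled, the energy identity yields the inequality exactly as sketched, with the strict positivity of $\alpha - \tfrac{1}{2}\text{div}(Z)$ (from $\alpha > \tfrac{1}{2}\|\text{div}(Z)\|_{L^\infty}$) ensuring the bound is useful.
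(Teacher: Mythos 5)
Your proposal is correct and follows essentially the same route as the paper: a weighted energy estimate with weight $e^{-\eta H}$, antisymmetry of $Z_0$ from (V2)--(V3), the observation that the collected correction terms equal $\tfrac{1}{2}\int g_\alpha^2\,(Z+\sum_j Z_j^2)e^{-\eta H}\,dx \le 0$ by the first inequality in (V4), and cutoffs in $H$ and in time removed via the integrability condition in (V4). The only cosmetic difference is that the paper builds the cutoffs $\varphi(H/R)e^{-\eta H}$ and $\phi_S$ into the test function from the start and tracks the resulting error term explicitly, whereas you perform the formal computation first and defer the truncation.
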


\begin{remark}
Proposition~\ref{prop:apriorigreen} is a both a global quantitative smoothing estimate for $g_\alpha$ (i.e. the $(Z_j g_\alpha)^2$ terms in~\eqref{eqn:apriori}) and a global quantitative apriori estimate on the $L^2$-norm of $g_\alpha$, with respect to the measure $e^{-\eta H} \, dx \, dt$.  Equivalently, we can view these bounds as a quantitative estimate on the $L^2$-norm of $\mathcal{Q}^\epsilon_t f$ and $Z_j \mathcal{Q}^\epsilon_t f$ with respect to the finite measure $e^{-2\alpha t} e^{-\eta H} \, dx \, dt$.  The weights $e^{-2\alpha t}$ and $e^{-\eta H}$ are useful in producing global bounds instead of local bounds (i.e. over bounded sets of time and space). While local bounds alone would be sufficient for our purposes, using a reference measure depending on $H$ is quite natural here. 
\end{remark}
\begin{proof}[Proof of Proposition~\ref{prop:apriorigreen}]
Let $\varphi \in C^\infty_0(\RR; [0,1])$ be such that 
\begin{align*}
\varphi(x) = \begin{cases}
1 & \text{ if } x \leq 1\\
0 & \text{ if } x >2
\end{cases}
\end{align*}
and $\varphi'(x) \le 0$, and define $\varphi_R: \mathcal{X}\rightarrow [0,1]$ by 
\begin{align}
\varphi_R(x) = \varphi(H(x) R^{-1}) e^{-\eta H(x)},
\end{align}
where we again recall that $\eta >0$ is as in Assumption~\ref{assump:V} (V4). 
Let $ \phi_S \in C^\infty([0,\infty); [0,1])$ be such that $ \phi_S(t) = 1 $ for $t \leq S $, $ \phi_S(t) = 0 $ for $t \geq S+1$, and $\|\phi'_S\|_{L^\infty([0,\infty))} \lesssim 1$ uniformly in $S$. Then, integrating by parts and using Assumption~\ref{assump:V} (V2) and (V3) produces
\begin{align*}
0=&\int_{\mathcal{X}\times (0,\infty)} g_\alpha(\mathcal{L}_\epsilon-\alpha-\partial_t)g_\alpha \phi_S\varphi_R \, dx dt 
\\
&=\int_{\mathcal{X}\times (0,\infty)} (-\alpha+\tfrac{1}{2}\text{div}(Z)) g_\alpha^2 \phi_S\varphi_R \, dx dt+\tfrac{1}{2}\int_{\mathcal{X}} f^2 \varphi_R \, dx \\
& \qquad+ \tfrac{1}{2}\sum_{j=1}^r \int_{\mathcal{X}\times (0,\infty)} g_\alpha^2 \phi_SZ_j^2 \varphi_R \, dx dt+\tfrac{1}{2} \int_{\mathcal{X}\times (0,\infty)} g_\alpha^2 \phi'_S\varphi_R \, dxdt\\&\qquad - \sum_{j=1}^r \int_{\mathcal{X}\times (0,\infty)} (Z_j g_\alpha)^2\phi_S \varphi_R \, dx dt+ \frac{1}{2}\int_{\mathcal{X}\times (0,\infty)} g_\alpha^2 \phi_SZ\varphi_R \, dx dt.
 \end{align*}
 Rearranging the above then implies 
 \begin{align*}
 &\int_{\mathcal{X}\times (0,\infty)} ( \alpha - \tfrac{1}{2}\text{div}(Z))g_\alpha^2 \phi_S\varphi_R \, dxdt + \sum_{j=1}^r \int_{\mathcal{X}\times (0,\infty)} (Z_j g_\alpha)^2 \phi_S\varphi_R \, dx dt \\
 &= \tfrac{1}{2}\int_{\mathcal{X}} f^2 \varphi_R \, dx+\tfrac{1}{2} \int_{\mathcal{X}\times (0,\infty)} g_\alpha^2 \phi'_S\varphi_R \, dxdt + \tfrac{1}{2}\int_{\mathcal{X}\times (0,\infty)} g_\alpha^2 \phi_SZ\varphi_R \, dx dt  \\
 &\qquad + \tfrac{1}{2}\sum_{j=1}^r \int_{\mathcal{X}\times (0,\infty)} g_\alpha^2 \phi_S Z_j^2 \varphi_R \, dxdt.\end{align*}
First, by \eqref{eq:galphaupper} and $e^{-\eta H} \in L^1(\mathcal{X})$, we have
\begin{align*}
 \bigg|\int_{\mathcal{X}\times (0,\infty)} g_\alpha^2 \phi'_S\varphi_R \, dxdt\bigg| \lesssim \|f\|_{L^\infty(\mathcal{X})}^2 \int_{\mathcal{X} \times (S, S+1)} e^{-2\alpha t} e^{-\eta H(x)} \, dx\rightarrow 0 
\end{align*}
as $S\rightarrow \infty$. Next, observe that the inequalities in Assumption~\ref{assump:V} (V4) imply that 
$$(Z + \textstyle{\sum}_{j=1}^r Z_j^2) e^{-\eta H} \le 0 \quad \text{and} \quad ZH \ge -d_*/2.$$  
Thus, by direct computation we have 
\begin{align*}
(Z+ \textstyle{\sum}_{j=1}^r Z_j^2) \varphi_R &= \varphi(H R^{-1}) (Z+ \textstyle{\sum}_{j=1}^r Z_j^2)(e^{-\eta H}) + \frac{\varphi'(H R^{-1})}{R} ZH e^{-\eta H} \\
&\qquad+\textstyle{ \sum_{j=1}^r} R^{-1} \varphi'(HR^{-1}) Z_j^2 H e^{-\eta H}\\
&\qquad +\textstyle \sum_{j=1}^r \big\{\frac{\varphi''(HR^{-1})}{R^2}-2\eta \frac{\varphi'(H R^{-1})}{R}\big\} (Z_j H)^2 e^{-\eta H}\\
& \leq \textstyle{ \sum_{j=1}^r} R^{-1} \varphi'(HR^{-1}) Z_j^2 H e^{-\eta H} + \frac{|\varphi'(HR^{-1})|}{R} \frac{d_*}{2} e^{-\eta H} \\
&\qquad +\textstyle \sum_{j=1}^r \big\{\frac{\varphi''(HR^{-1})}{R^2}-2\eta \frac{\varphi'(H R^{-1})}{R}\big\} (Z_j H)^2 e^{-\eta H} \\
&=: T_R\end{align*}
and 
\begin{align*}
|T_R|\lesssim \frac{1}{R} \sum_{j=1}^r( |Z_j^2 H| + (Z_j H)^2)e^{-\eta H} .
\end{align*}
The claimed bound now follows after letting $S, R\rightarrow \infty$ and employing the integrability part of Assumption~\ref{assump:V} (V4).
\end{proof}

Next, letting more generally $\alpha \in \RR$, we turn to the proof of a quantitative Moser iteration for $u\in C^\infty((0, \infty) \times \mathcal{X}); [0, \infty))$ satisfying 
\begin{align}
\label{eqn:Green}
(\mathcal{M}_\epsilon - \alpha - \partial_t) u \geq 0  \,\,\,\text{ on }\,\,\, (0, \infty ) \times \mathcal{X},
\end{align}
where $\mathcal{M}_\epsilon \in \{ \mathcal{L}_\epsilon, \mathcal{L}_\epsilon^{-} \}$, and $\mathcal{L}_\epsilon^{-}$ is the generator of the process $\{ x_{t/\epsilon}^{-, \epsilon} \}_{t\geq 0}$, i.e.,
\begin{align}
\label{eqn:gentr}
    \epsilon\mathcal{L}_\epsilon^-:=-Z_0+\epsilon Z+\epsilon\sum_{j=1}^rZ_j^2.
\end{align}
Proving the result for both $\mathcal{L}_\epsilon$ and $\mathcal{L}_\epsilon^{-}$ will be convenient when dealing with the formal adjoint $\mathcal{L}_\epsilon^*$ below.  

\begin{theorem}
\label{thm:moser2}
Suppose that Assumptions~\ref{assump:H}-\ref{assump:BR} hold, fix $1/2\le t_1 < \infty$, $t_1+1/2\leq t_2 \leq t_1+1$, and let $\alpha \in \RR, \, m= (t_2-t_1)/4$.  Then for every $u\in C^\infty((0, \infty) \times \mathcal{X})$ satisfying \eqref{eqn:Green} with $u\geq 0$ on $(t_1, t_2) \times H_{R}$, we have
\begin{align}
\label{eqn:moserest1}
\| u\|_{L^\infty((t_1+m, t_2-m)\times H_{R/2})}\lesssim_{R,\alpha} \| u\|_{L^2((t_1, t_2) \times H_{R})}.
\end{align}
\end{theorem}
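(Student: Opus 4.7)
The plan is a quantitative Moser iteration in the hypoelliptic parameter-dependent setting, with every constant tracked for uniformity in $\epsilon\in(0,1)$. I treat $\mathcal{M}_\epsilon=\mathcal{L}_\epsilon$; the $\mathcal{L}_\epsilon^-$ case follows by symmetry of the parabolic bracket-spanning hypothesis. Replacing $u$ by $u+\delta$ and sending $\delta\downarrow 0$ at the end, I work with smooth, strictly positive $u$. For $p\ge 2$, set $v:=u^{p/2}$ and test the differential inequality against $\chi^2 u^{p-1}$, where $\chi(t,x)=\psi(t)\varphi(H(x)/R')$ with $R'\in(R/2,R)$ and $\psi,\varphi\in C_c^\infty$. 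The decisive property is that Assumption~\ref{assump:V}(V3) gives $Z_0H\equiv 0$, so $Z_0\chi\equiv 0$ and the otherwise $\epsilon^{-1}$-singular transport contribution from $\frac{1}{\epsilon}Z_0$ disappears after integration by parts. The remaining $Z$-term is controlled by $\|\text{div}(Z)\|_{L^\infty}$, the $Z_j^2$ terms give the energy, and Cauchy-Schwarz together with $u^{p-2}(Z_ju)^2=(4/p^2)(Z_jv)^2$ yields an energy estimate of the form
\begin{equation*}
\sup_{\tau\in(t_1,t_2)}\int\chi^2 v^2(\tau,\cdot)\,dx+\sum_{j=1}^r\int\!\!\int\chi^2(Z_jv)^2\,dx\,dt\;\le\;C_1\|v\|_{L^2(\operatorname{supp}\chi)}^2,
\end{equation*}
with $C_1=C_1(p,\alpha,\|\chi\|_{C^2})$ polynomial in $p$ and independent of $\epsilon$.

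For the gain of integrability, I apply Theorem~\ref{thm:tdhor} to $w:=\tilde\chi v$ for a slightly smaller cutoff $\tilde\chi\le\chi$ of identical form, with $X_0:=-\epsilon Z+Z_0$ and $X_j:=Z_j$; Assumption~\ref{assump:V}(V5) verifies the $\epsilon$-uniform parabolic spanning hypothesis. This produces some $\epsilon$-independent $\sigma\in(0,1)$ with
\begin{equation*}
\int\|w(t)\|_{H^\sigma(\RR^d)}^2\,dt\;\lesssim\;\int\vertiii{w(t)}^2\,dt+\biggl(\sup_{\int\vertiii\phi^2 dt\le 1}\int\!\!\int\phi(\epsilon\partial_t+X_0)w\,dx\,dt\biggr)^2.
\end{equation*}
The $\vertiii{w}$ term is dominated by the Step 1 energy estimate. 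For the dual term, expand $(\epsilon\partial_t+X_0)w=\tilde\chi(\epsilon\partial_t+X_0)v+v(\epsilon\partial_t+X_0)\tilde\chi$: the second piece is pointwise $O(\epsilon)$ because $Z_0\tilde\chi=0$ eliminates the dangerous $\epsilon^{-1}$-free component of $X_0\tilde\chi$. For the first piece, the PDE-inequality $\epsilon\partial_t u\le X_0u+\epsilon\sum_jZ_j^2u-\epsilon\alpha u$ combined with the chain rule $(p/2)u^{p/2-1}X_0u=X_0v$ recasts the action of $\epsilon\partial_t+X_0$ on $v$ as a combination of $X_0v$, $v$, and $\tilde\chi u^{p/2-1}\epsilon Z_j^2u$; integration by parts in $Z_j$ and $Z_0$ (using $Z_0\tilde\chi=0$ and $\text{div}(Z_0)=0$) converts the latter into quantities bounded via the Step 1 energy estimate and the H\"ormander-dual normalization on $\phi$. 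Combining with the Sobolev embedding $H_x^\sigma\hookrightarrow L_x^{q_*}$ for some $q_*>2$ and interpolating against the $L^\infty_tL^2_x$ bound of Step 1 gives
\begin{equation*}
\|\tilde\chi v\|_{L^r((t_1,t_2)\times\RR^d)}\le C_2\|v\|_{L^2(\operatorname{supp}\chi)}
\end{equation*}
for some $r>2$ independent of $p$ and $\epsilon$, with $C_2$ polynomial in $p$.

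With $\theta:=r/2>1$, unpacking $v=u^{p/2}$ yields $\|u\|_{L^{p\theta}(Q')}\le C_2^{2/p}\|u\|_{L^p(Q)}$ on nested cylinders $Q'\subset Q$. Setting $p_k:=2\theta^k$ and letting $Q_k$ be a geometric nested sequence interpolating between $(t_1,t_2)\times H_R$ and $(t_1+m,t_2-m)\times H_{R/2}$, with cutoff transitions of width $\sim 2^{-k}$, the iteration gives $\|u\|_{L^\infty((t_1+m,t_2-m)\times H_{R/2})}\le\bigl(\prod_{k\ge 0}C_2(p_k)^{2/p_k}\bigr)\|u\|_{L^2((t_1,t_2)\times H_R)}$. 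Polynomial growth of $C_2(p_k)$ together with the geometric growth of $p_k$ ensures the infinite product converges to a constant depending only on $R,\alpha,d$ and the structural constants in Assumptions~\ref{assump:H}--\ref{assump:V}.

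The main obstacle is the dual-term estimate in the second step. Classical elliptic Moser iteration exploits Sobolev embedding of gradient-energy directly into $L^q$-gain, but here the quantitative H\"ormander theorem brings in the $(\epsilon\partial_t+X_0)$-dual norm, which entangles temporal evolution, drift, and diffusion through the PDE. Keeping the resulting estimate simultaneously polynomial in $p$ and uniform in $\epsilon$ relies crucially on the special cutoff structure $Z_0\chi=0$, the $\epsilon$-scaling built into $X_0=-\epsilon Z+Z_0$, and careful integration by parts to convert $\partial_tv$ into spatially-controlled quantities.
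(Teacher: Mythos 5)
Your Step 1 (Caccioppoli estimate via testing against $\chi^2 u^{p-1}$ with a Hamiltonian-adapted cutoff so that $Z_0\chi=0$) is sound and consistent with the spirit of the paper's argument. The iteration scheme in Step 3 is also essentially the paper's (the paper gains $L^2\to L^{2\beta}$ with $\beta\in(1,2)$ fixed by the H\"ormander exponent $s$, exactly as you describe with your $r>2$).

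The genuine gap is in Step 2, in the control of the dual term $\vertiii{(\epsilon\partial_t+X_0)(\tilde\chi v)}_{*}$ needed to apply Theorem~\ref{thm:tdhor}. The hypothesis \eqref{eqn:Green} is a one-sided differential \emph{inequality}: it yields only $(\epsilon\partial_t+X_0)u\leq \epsilon\sum_j Z_j^2u-\epsilon\alpha u$ (with $X_0=\epsilon Z-Z_0$), and there is no corresponding lower bound on $(\epsilon\partial_t+X_0)u$ in terms of controlled quantities. Since the dual norm in Theorem~\ref{thm:tdhor} is $\sup_\phi\left|\int\phi\,(\epsilon\partial_t+X_0)w\right|$ over a symmetric unit ball of test functions, a one-sided bound cannot control it; your claim that the PDE-inequality ``recasts'' $(\epsilon\partial_t+X_0)v$ as a combination of $X_0v$, $v$, and $Z_j^2$-terms silently upgrades an inequality to an identity. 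This is exactly the obstruction the paper circumvents, and it is why Assumption~\ref{assump:BR} appears in the hypotheses of the theorem yet is never used in your proposal. The paper's route: on a boundary-regular domain $\mathcal{O}_R\supset H_{R+1}$ it constructs, via Dynkin's formula for the time-extended process, the barrier $\tilde g(s,x)=-\EE_{(s,x)}\int_0^{\tilde\tau}e^{-\ell t}S_k(\tilde z_t)\,dt$, which solves the exact Poisson problem $(\mathcal{M}_\epsilon-\ell-\partial_t)\tilde g=S_k$ with zero boundary data and dominates the truncated subsolution, $v_k\leq\tilde g$. Because $\tilde g$ satisfies an \emph{equation} with a controlled right-hand side $S_k$, both its energy norm and the dual norm of $(\epsilon\partial_t+X_0)(\chi\tilde g)$ are two-sidedly bounded, Theorem~\ref{thm:tdhor} applies to $\chi\tilde g$, and the resulting $L^{2\beta}$ bound transfers back to $v_k$ by the pointwise comparison $0\le v_k\le\chi\tilde g$. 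Without this (or an equivalent device converting the supersolution inequality into an exact equation with controlled source), your Step 2 does not close.
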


In order to prove Theorem~\ref{thm:moser2}, it will be helpful to first establish two technical estimates, collected in the following lemma, which are used in its proof and the rest of this section.  \begin{lemma}
\label{lem:apriori2}
Suppose that Assumptions~\ref{assump:H}-\ref{assump:V} are satisfied and fix $R>0$ and $a,b\in \RR$ with $a<b$ independent of $\epsilon \in (0,1)$. 
\begin{itemize}
\item[(i)]  Let $\beta > 1$ and $w:(a,b)\times H_R^2\rightarrow (0, \infty)$ be measurable.  Then 
\begin{align*}
\int_a^b \int_{H_R^2} w^{2\beta-2} \, dx \, dy \, dt \leq 2 \max \Big\{ \frac{\beta}{\beta-1} \| w\|^{2\beta}_{L^{2\beta}((a,b) \times H_R^2)} , \beta |(a,b) \times H_R^2|\Big\}.  
\end{align*}    
\item[(ii)]  Suppose that $w\in C^\infty_0((a,b) \times H_R; [0, \infty))$ and $$(\mathcal{M}_\epsilon -\alpha - \partial_t)w\geq f$$
for some $f\in L^\infty((a,b) \times H_R)$ and $\alpha \in \RR $ independent of $\epsilon \in (0,1)$.  Then 
\begin{align*}
\sum_{j=1}^r  \| Z_j w\|^2_{L^2((a,b) \times H_R)} \lesssim_{R, a,b} \| w\|^2_{L^2((a,b) \times H_R)} + \| f\|^2_{L^2((a,b)\times H_R)}. 
\end{align*} 
\end{itemize}
\end{lemma}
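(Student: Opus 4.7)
Part (i) is an elementary pointwise inequality. The plan is to partition the integration domain $(a,b)\times H_R^2$ into $\{w\le 1\}$ and $\{w>1\}$ and estimate $w^{2\beta-2}$ directly on each. On $\{w\le 1\}$ one has $w^{2\beta-2}\le 1$, contributing at most $|(a,b)\times H_R^2|$; on $\{w>1\}$, since $\beta>1$, one has $w^{2\beta-2}\le w^{2\beta}$, contributing at most $\|w\|^{2\beta}_{L^{2\beta}((a,b)\times H_R^2)}$. Adding the two contributions and invoking $\tfrac{\beta}{\beta-1}\ge 1$, $\beta\ge 1$, and $x+y\le 2\max\{x,y\}$ yields the stated bound.

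Part (ii) is a classical energy (Caccioppoli-type) estimate adapted to the two possible operators $\mathcal{M}_\epsilon\in\{\mathcal{L}_\epsilon,\mathcal{L}_\epsilon^{-}\}$. The plan is to test the pointwise inequality $(\mathcal{M}_\epsilon-\alpha-\partial_t)w\ge f$ against the nonnegative function $w$ itself and integrate over $(a,b)\times H_R$. Because $w\in C_0^\infty((a,b)\times H_R)$, the term $\iint w\,\partial_t w\,dx\,dt=\tfrac{1}{2}\iint \partial_t(w^2)\,dx\,dt$ vanishes after integration by parts in $t$, and no spatial boundary contributions arise. The structural identities to exploit are that for any divergence-free vector field $Y\in T_d(\mathcal{X})$ and any $u\in C_0^\infty(\mathcal{X})$ one has $\int u\,(Y^2 u)\,dx=-\|Yu\|^2_{L^2}$ and $\int u\,(Yu)\,dx=0$.

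Applying these identities together with Assumption~\ref{assump:V} (V2), which furnishes $\text{div}(Z_j)=0$ for $j=0,1,\ldots,r$, the $Z_0$-term in $\mathcal{M}_\epsilon$ (crucially including its $\epsilon^{-1}$ prefactor) drops entirely, each $Z_j^2$-term with $j\ge 1$ contributes a favorable $-\|Z_j w\|^2_{L^2}$, and the residual $\pm Z$-term produces $\pm\tfrac{1}{2}\iint w^2\,\text{div}(Z)$, which is controlled in absolute value by $\tfrac{1}{2}\|\text{div}(Z)\|_{L^\infty(\overline{H_R})}\|w\|^2_{L^2}$ (finite by (V1)--(V2) and the compactness of $\overline{H_R}\subset\mathcal{X}$). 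Rearranging and applying Young's inequality to the source term in the form $\bigl|\iint wf\bigr|\le\tfrac{1}{2}\|w\|^2_{L^2}+\tfrac{1}{2}\|f\|^2_{L^2}$ gives the stated bound, with the implicit constant depending on $R,a,b,\alpha$ but not on $\epsilon\in(0,1)$.

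I do not foresee any substantive obstacle. The only point requiring real care is the handling of the $\epsilon^{-1}$-singular term: if $Z_0$ were not divergence-free, testing against $w$ would leave an uncontrolled $\epsilon^{-1}\iint w\,Z_0 w$, and a uniform-in-$\epsilon$ bound would be impossible. It is precisely the antisymmetry $\int u\,Z_0 u\,dx=0$, forced by Assumption~\ref{assump:V} (V2), that annihilates this term and makes the two operators $\mathcal{L}_\epsilon$ and $\mathcal{L}_\epsilon^{-}$ interchangeable for the purposes of the estimate (they differ only in the harmless sign of the $\tfrac{1}{2}\iint w^2\,\text{div}(Z)$ contribution, which disappears upon taking absolute values).
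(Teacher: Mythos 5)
Your proof is correct and, for part (ii), follows essentially the same energy argument as the paper (test against $w$, use the divergence-free structure from (V2) to kill the $\epsilon^{-1}Z_0$ term and produce the $-\sum_j\|Z_jw\|^2$ terms, then absorb the bounded zero-order contributions). For part (i) you split the domain into $\{w\le 1\}$ and $\{w>1\}$ rather than using H\"older and Young as the paper does, but this is an equally valid elementary route to the same bound.
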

\begin{proof}
For part (i), use H\"{o}lder's inequality and Young's inequality to see that   
\begin{align*}
\int_a^b \int_{H_R^2} w^{2\beta-2} \, dx \, dy \, dt & \leq \bigg(\int_a^b \int_{H_R^2} w^{2\beta} \, dx \, dy \, dt \bigg)^{(\beta-1)/\beta} |(a,b) \times H_R^2|^{1/\beta}\\
& \leq  \beta | (a,b) \times H_R^2| + \frac{\beta}{\beta-1} \| w\|_{L^{2\beta}((a,b) \times H_R^2)}^{2\beta}\\
& \leq 2 \max \Big\{ \frac{\beta}{\beta-1} \| w\|^{2\beta}_{L^{2\beta}((a,b) \times H_R^2)} , \beta |(a,b) \times H_R^{2}|\Big\}.  
\end{align*}

For part (ii), multiply $(\mathcal{M}_\epsilon - \alpha -\partial_t) w \geq f$ by $w$ and integrate over $(a,b) \times H_R$ to see that 
\begin{align*}
\int_{(a,b) \times H_R} f w \, dx \, dt&\leq \int_{(a,b) \times H_R} w(\mathcal{M}_\epsilon - \alpha -\partial_t) w\, dx \, dt\\
&= - \sum_{j=1}^r \int_{(a,b) \times H_R} |Z_j w|^2 \, dx \, dt + \int_{(a,b) \times H_R} (-\alpha \pm \text{div}(Z)) w^2 \, dx \, dt,
\end{align*}
where the $\pm$ depends on whether $\mathcal{M}_\epsilon = \mathcal{L}_\epsilon$ or $\mathcal{M}_\epsilon = \mathcal{L}_{\epsilon}^{-}$.  Rearranging the above inequality and using the fact that $\alpha, \text{div}(Z)$ are bounded uniformly in $\epsilon$ finishes the proof. 

\end{proof}

Below in the proof of Theorem~\ref{thm:moser2}, we fix 
\begin{align}
\{z_t^\epsilon\}_{t\geq 0} \in \{ \{x_{t/\epsilon}^{\epsilon}\}_{t\geq 0}, \{x_{t/\epsilon}^{-,\epsilon}\}_{t\geq 0}\}
\end{align}  
so that $\{z_t^\epsilon\}_{t\geq 0}$ corresponds to the choice of operator $\mathcal{M}_\epsilon$ as in~\eqref{eqn:Green}.  

\begin{proof}[Proof of Theorem~\ref{thm:moser2}]
Let $\gamma >0$ be a constant independent of $\epsilon \in (0,1)$ and consider $v=\frac{u}{\gamma} + 1$.  We will prove that there exists a constant $C>0$ depending only on $R>0$ and $|t_2-t_1|$ such that 
 \begin{align}
 \| v\|_{L^\infty((t_1+m, t_2-m)\times H_{R/2})}\leq C(\| v\|_{L^2((t_1, t_2)\times H_R )} + 1)
 \end{align}
  Multiplying the above inequality through by $\gamma$ and then taking $\gamma \downarrow 0$ implies the claimed result.  
  
  Let  
\begin{align}
w_k= v^{\beta^k}
\end{align}
 for some $\beta \in(1,2)$ to be determined.  Routine calculations show that, on $(0, \infty)\times \mathcal{X}$,  
\begin{align*}
(\mathcal{M}_\epsilon - \alpha\beta^k-\partial_t)w_k \geq -\alpha   \beta^{k} v^{\beta^k -1} .  
\end{align*}
For $k \ge 1$, let $R_k=\tfrac{R}{2}(1+2^{-k+1})$ and $s_k= m(1-2^{-k+1})$.  Let $I_k= (t_1+s_k, t_2-s_k)$ and suppose $\varphi_k \in C^\infty_0(\RR; [0,1])$, $\psi_k \in C^\infty_0(I_k; [0,1])$ are cutoff functions satisfying
\begin{align*}
\varphi_k(x) = \begin{cases}
1 &  x  \leq R_{k+1} \\
0 &  x\geq R_k
\end{cases}
\end{align*}  
with $|\varphi'_k|+ |\varphi''_k| \lesssim_{R} 2^{2k},$ and 
\begin{align*}
\psi_k(t) = \begin{cases}
1 &  t\in I_{k+1}  \\
0 &  t\in I_k^c
\end{cases}
\end{align*}  
with $|\psi'_k|+ |\psi''_k|\lesssim 2^{2k}$.  Abusing notation slightly, set 
\begin{align*}
\chi_k(t,x) = \varphi_k(H(x)) \psi_k(t).
\end{align*}
Define $v_k = w_k \chi_k$ and note, for any $\ell>0$,
\begin{align*}
(\mathcal{M}_\epsilon - \ell-\partial_t)v_k &= [(\mathcal{M}_\epsilon - \ell-\partial_t)w_k ]\chi_k + w_k( \mathcal{M}_\epsilon-\partial_t )\chi_k +2 \textstyle{\sum_{j=1}^r} Z_j w_k Z_j \chi_k\\
&\geq(\alpha \beta^k-\ell)v_k- \alpha  \beta^k v^{\beta^k -1} \chi_k+ w_k( \mathcal{M}_\epsilon-\partial_t )\chi_k +2 \textstyle{\sum_{j=1}^r} Z_j w_k Z_j \chi_k=:   S_k.
\end{align*}

We now apply Assumption~\ref{assump:BR} to find a bounded open set $\mathcal{O}_{R}$ with continuous boundary $\partial \mathcal{O}_{R}$ such that 
\begin{align*}
H_{R+1}\subset \mathcal{O}_{R} \subset \overline{\mathcal{O}_{R}} \subset \mathcal{X} 
\end{align*}
and such that $\mathcal{O}_R$ is boundary regular for $\{z_t^\epsilon \}_{t\geq 0}$.  For $(s, x) \in \RR \times \mathcal{X}$, define the time-extended process $\tilde{z}_t(s,x)=(s-t, z_t^\epsilon(x))$, where $z_t^\epsilon(x)$ is as before with $z_0(x)=x$. Observe that this process has generator $$\widetilde{\mathcal{M}}_\epsilon:=\mathcal{M}_\epsilon -\partial_t $$ 
and that the parabolic domain $(t_1/2, 2t_2)\times \mathcal{O}_R$ is boundary regular for $\tilde{z}_t(s,x)$ on the parts of the boundary $[t_1/2,2t_2]\times \partial \mathcal{O}_R$ and $\{t_1/2 \} \times \mathcal{O}_R$, but not on $\{2 t_2 \} \times \mathcal{O}_R$ (because the motion of the time process flows backwards). Below, we will drop the use of the initial condition $(s, x)$ explicitly in $\tilde{z}_t(s,x)$ and write $\tilde{z}_t$.  Define \begin{align}
\tilde{\tau}=  \inf\{ t > 0 \, : \tilde{z}_t\, \notin [t_1/2,2t_2]\times\overline{\mathcal{O}_R}\}. 
\end{align}  
and consider 
\begin{align*}
\tilde{g}(s,x) =- \EE_{(s,x)} \int_{0}^{\tilde{\tau}} e^{-\ell t} S_k(\tilde{z}_t) \, dt.  
\end{align*}

\emph{Claim}.   $v_k \leq \tilde{g}$ on $(t_1,t_2)\times \mathcal{O}_R$.  

\emph{Proof of Claim}.  
By Dynkin's formula, for $(s,x)\in (t_1,t_2)\times \mathcal{O}_R$ and any $t\geq 0$, we have
\begin{align*}
\EE_{(s,x)} [ e^{-\ell{t\wedge \tilde{\tau}}} v_k(\tilde{z}_{t\wedge \tilde{\tau}})] &= v_k(s,x) +\EE_{(s,x)} \int_0^{t\wedge \tilde{\tau}}  e^{-\ell u} (\tilde{\mathcal{M}}_\epsilon-\ell) v_k(\tilde{z}_u) \, du. 
\end{align*}  
However, $v_k(t,x) \equiv 0$ for $(t, x) \in \partial ((t_1/2, 2t_2) \times \mathcal{O}_R)$.  Thus taking $t\rightarrow \infty$ using the fact that $\tilde{\tau}$ is bounded a.s. produces 
\begin{align*}
0& = v_k(s,x) + \EE_{(s,x)} \int_0^{\tilde{\tau}} e^{-\ell u} (\tilde{\mathcal{M}}_\epsilon-\ell) v_k(\tilde{z}_u) \, du \geq v_k(s,x) + \EE_x \int_0^{\tilde{\tau}}  e^{-\ell u } S_k(\tilde{z}_u) \, du
\end{align*}
so that 
the claim follows.

Next, we note that~\cite[Theorem 5.4 and Theorem 6.8]{FH_23} together with the boundary regularity properties of $\tilde{z}_t$ imply that $\tilde{g} \in C^\infty((t_1/2,2t_2)\times\mathcal{O}_R)$, $\tilde{g}\in C([t_1/2,2t_2)\times\overline{\mathcal{O}_R})$, $\tilde{g}|_{t=t_1/2}=0$, $\tilde{g}|_{\partial \mathcal{O}_R}=0$, and 
\begin{align}\label{eq:tildegPDE}
(\mathcal{M}_\epsilon - \ell-\partial_t) \tilde{g} =  S_k
\end{align}
on $(t_1/2, 2t_2) \times \mathcal{O}_R$ in the classical sense.  Fix $t\in (t_1/2,2t_2)$, set $J_t=(t_1/2, t)$ and observe that 
by pairing $(\mathcal{M}_\epsilon - \ell-\partial_t )\tilde{g} $ with $\tilde{g}$ we obtain 
\begin{align*}
\int_{J_t} \int_{\mathcal{O}_R} S_k \tilde{g} \, dx \, ds&=\int_{J_t} \int_{\mathcal{O}_R} \tilde{g}(\mathcal{M}_\epsilon -\ell - \partial_t) \tilde{g} \, dx \,ds\\
&= - \int_{\mathcal{O}_R} \frac{\tilde{g}(t,x)^2}{2}\, dx + \int_{J_t} \int_{\mathcal{O}_R}\Big(-\ell \pm \frac{\text{div}(Z)}{2}\Big) \tilde{g}(s,x)^2 \, dx \, ds \\
&\qquad - \sum_{j=1}^r \int_{J_t} \int_{\mathcal{O}_R} (Z_j \tilde{g}(s,x))^2 \, dx \, ds,
\end{align*}
where the $\pm$ above is $+$ when $\mathcal{M}_\epsilon = \mathcal{L}_\epsilon$ and $-$ when $\mathcal{M}_\epsilon = \mathcal{L}^-_{\epsilon}$.  Picking $\ell > \tfrac{1}{2}\|\text{div}(Z)\|_{L^\infty(\mathcal{X})}$, we arrive at the bound
 \begin{align}
 &\sup_{t\in (t_1/2, 2t_2)} \int_{\mathcal{O}_R} \tilde{g}(t,x)^2 \, dx  + \int_{t_1/2}^{2 t_2} \int_{\mathcal{O}_R} \tilde{g}(s,x)^2 \, \, dx \, ds + \sum_{j=1}^r \int_{t_1/2}^{2t_2} \int_{\mathcal{O}_R} (Z_j \tilde{g}(s,x))^2 \, dx \, ds \nonumber \\
 &\qquad\leq C \| S_k \|_{L^2((t_1/2, 2t_2)\times \mathcal{O}_R)}^2  \lesssim_{R} \alpha^2 \beta^{2k}2^{4k}  \max\{ \|w_k\|_{L^2(I_k \times H_{R_k})}^2, 1 \}, \label{eq:tildegSkbound}
 \end{align}
 where we applied Lemma~\ref{lem:apriori2} in the final inequality above. Now fix a smooth cutoff function $\chi \in C^\infty_0((3t_1/4, 3 t_2/2) \times H_{R+1})$ that depends only on the Hamiltonian in space and is $\equiv 1$ in a neighborhood of $(t_1, t_2)\times H_R $. Let $Z_{0,\epsilon} = \epsilon Z - Z_0$ if $\mathcal{M}_\epsilon = \mathcal{L}_\epsilon$ and $Z_{0,\epsilon} = -\epsilon Z + Z_0$ if $\mathcal{M}_\epsilon = \mathcal{L}_\epsilon^-$. From \eqref{eq:tildegPDE}, for any $\phi \in C_0^\infty((t_1/2,2t_2)\times \mathcal{O}_R)$ we have
\begin{align*}
	\int_{t_1/2}^{2t_2} \int_{\mathcal{O}_R} \phi (\epsilon \partial_t + Z_{0,\epsilon})(\chi \tilde{g}) \, dx \, dt & = \epsilon \sum_{j=1}^r \int_{t_1/2}^{2t_2} \int_{\mathcal{O}_R} \phi \chi Z_j^2 \tilde{g} \, dx \, dt\\ 
	& \quad + \int_{t_1/2}^{2t_2} \int_{\mathcal{O}_R} \phi \tilde{g} (\epsilon \partial_t + Z_{0,\epsilon})\chi \, dx \, dt. 
\end{align*}
Integrating by parts and using \eqref{eq:tildegSkbound} then gives
\begin{align} \label{eq:tildegdual}
&\sup_{\substack{\phi \in C^\infty_0((t_1/2,2t_2) \times \mathcal{O}_R)\\\int_{t_1/2}^{2t_2} \vertiii{\phi}_{\mathcal{O}_R}^2 \, dt \leq 1}} \left|\int_{t_1/2}^{2t_2} \int_{\mathcal{O}_R} \phi (\epsilon \partial_t + Z_{0,\epsilon})(\chi \tilde{g}) \, dx \, dt\right|^2\\
\nonumber &\qquad \lesssim_{R, |t_2-t_1|} \alpha^2 \beta^{2k} 2^{4k}\max\{ \|w_k\|_{L^2(I_k \times H_{R_k})}^2, 1 \}.
\end{align}
Noting that \eqref{eq:tildegSkbound} holds with $\tilde{g}$ replaced by $\chi \tilde{g}$, it follows from Sobolev interpolation, Assumption~\ref{assump:V} (V5), \eqref{eq:tildegdual},  and Theorem~\ref{thm:tdhor} that for some $s > 0$ independent of $\epsilon \in (0,1)$ we have 
\begin{align*}
\| \chi \tilde{g}\|_{L^{2\beta}((t_1/2, 2t_2) \times \mathcal{O}_R) } &\lesssim \| \chi \tilde{g} \|_{L^\infty_t L^2_x} + \| \chi \tilde{g}\|_{L^2_t H_x^s}\lesssim_R 2^{k} \max\{ \|w_k\|_{L^2(I_k \times H_{R_k})}, 1 \},\end{align*}
where $\beta \in (1,2)$ is now chosen depending on the value of $s > 0$. Using the fact that $v_k\leq \chi \tilde{g}$ on $(t_1, t_2) \times H_R$, we obtain 
\begin{align}
\| w_k \|_{L^{2\beta}(I_{k+1} \times H_{R_{k+1}})} \leq C^k \max\{ \|w_k\|_{L^2(I_k \times H_{R_k})}, 1 \},
\end{align}
which implies
\begin{align}
\| v\|_{L^{2\beta^{k+1}}(I_{k+1} \times H_{R_{k+1}}) } \leq C^{k \beta^{-k}} \max\{ \|v \|_{L^{2\beta^k}(I_k\times H_{R_k})}, 1 \}.
\end{align}
Iterating the estimate above finishes the proof. 

\end{proof}

We now combine the apriori estimate (Proposition~\ref{prop:apriorigreen}) with Theorem~\ref{thm:moser2} to conclude Lemma~\ref{lem:upper1}.

\begin{proof}[Proof of Lemma~\ref{lem:upper1}]
Fix $t_0 \ge 1$ and $R > 0$. Let $I_0 = [t_0-1/2,t_0+1/2]$, $I_1 = [t_0-1/4,t_0+1/4]$, and $I_2 = [t_0 - 1/8, t_0+1/8]$. Let $f\in C_0^\infty(\mathcal{X}; [0, \infty))$ satisfy $f\geq 0$ and fix $\alpha > \tfrac{1}{2}\| \text{div}(Z)\|_{L^\infty(\mathcal{X})}$.   
Observe that for $t>0$ and $x\in\mathcal{X}$, we have
\begin{align*}
    (\mathcal{L}_\epsilon - \alpha -\partial_t)(e^{-\alpha t} \mathcal{Q}_t^\epsilon  f)=0.
\end{align*}
By Assumption~\ref{assump:V} and H\"{o}rmander's theorem~\cite{Hor_67}, $(t, x) \mapsto e^{-\alpha t} \mathcal{Q}_t^\epsilon f(x) \in C^\infty((0, \infty) \times \mathcal{X})$.  Now, by first applying Theorem~\ref{thm:moser2} and then applying Proposition~\ref{prop:apriorigreen}, it follows that 
\begin{align*}
    \|e^{-\alpha t}\mathcal{Q}_t^\epsilon f\|_{L^\infty(I_1\times H_{2R})}&\lesssim_{R}     \|e^{-\alpha t} \mathcal{Q}_t^\epsilon f\|_{L^2(I_0\times H_{4R})}\\
    &\lesssim_{R} \|f\|_{L^2(\mathcal{X},e^{-\eta H}dx)}.
\end{align*}
Using the definition of $\mathcal{Q}^\epsilon_t f$, we obtain
\begin{align}
\label{eqn:7.1b1}
    \sup_{(t,x)\in I_1 \times H_{2R}}\int e^{-\alpha t} q_t^\epsilon(x,y) f(y)dy\lesssim_R \|f\|_{L^2(\mathcal{X},e^{-\eta H}dx)},
\end{align}
and hence by duality we have 
\begin{align}
\label{eqn:L^2comp}
\sup_{x \in H_{2R}}\|e^{-\alpha \cdot} q_{\cdot}^\epsilon(x,\cdot)\|_{L^2(I_1\times H_{2R})} \lesssim_R 1.
\end{align}

Next, note that $ e^{-\alpha t} q_t^\epsilon(x,y)$, which is $C^\infty$ on $(0, \infty) \times \mathcal{X}$ for fixed $x$ by Assumption~\ref{assump:V}, satisfies
\begin{align*}
    (\mathcal{L}^*_{\epsilon,y}- \alpha -\partial_t)(e^{-\alpha t} q_t^\epsilon(x,y))=(\mathcal{L}^-_{\epsilon,y}- \alpha+\text{div}(Z)-\partial_t)(e^{-\alpha t} q_t^\epsilon(x,y))=0.
\end{align*}
Letting $\ell =  \| \text{div}(Z) \|_{L^\infty(\mathcal{X})}$, the above then implies  
\begin{align*}
   (\mathcal{L}^-_{\epsilon,y}- \alpha+ \ell -\partial_t)(e^{-\alpha t} q_t^\epsilon(x,y))\geq 0. 
\end{align*}
Therefore, first using Theorem~\ref{thm:moser2} again and then applying \eqref{eqn:L^2comp}, we conclude that
\begin{align*}
    \sup_{(t,x,y) \in I_2\times H_{R}\times H_{R}} (e^{-\alpha t}q_t^\epsilon(x,y))&\lesssim_{R}  \sup_{x\in H_R} \| e^{-\alpha \cdot}q_\cdot^\epsilon(x,\cdot)\|_{L^2(I_1 \times H_{2R})}\\
    &\lesssim_{R} 1.
\end{align*}
Since $t_0 \ge 1$ was arbitrary and the implicit constants above are all independent of $t_0$, we have shown that
\begin{align} \label{eq:7.1extended}
\sup_{(t,x,y)\in [7/8, \infty) \times H_R \times H_R} (e^{-\alpha t} q_t^\epsilon(x,y) ) \lesssim_R 1. 
\end{align}
This finishes the proof. 
\end{proof}

Now, given Lemma~\ref{lem:upper1} as well as Theorem~\ref{thm:tdhor}, we can conclude Lemma~\ref{lem:upper2}.
\begin{proof}[Proof of Lemma~\ref{lem:upper2}]
First observe that on $(0, \infty) \times \mathcal{X}^2$ we have in the classical sense
\begin{align}
\label{eqn:q_tequ}
( L_{\epsilon, x}+ L_{\epsilon, y}^* - 2\epsilon \partial_t) q_t^\epsilon(x,y) = 0.
\end{align} 
Note that we can write  
\begin{align*}
( L_{\epsilon, x}+ L_{\epsilon, y}^* -2 \epsilon \partial_t) &=   -\epsilon Z_x + Z_{0, x} + \epsilon Z_y - Z_{0, y}+ \text{div}(Z_y) +\sum_{j=1}^r Z_{j,x}^2 + Z_{j, y}^2  -2 \epsilon \partial_t \\
&=: Z_{0, x,y, \epsilon} + \text{div}(Z_y)+ \sum_{j=1}^r Z_{j,x}^2 + Z_{j, y}^2  - 2 \epsilon \partial_t.  
\end{align*}
Also, it can be checked that the list $\{ Z_{0, x,y, \epsilon}; Z_{1, x}; Z_{1,y}; \ldots; Z_{r, x}, Z_{r, y} \}$ satisfies the uniform parabolic H\"{o}rmander condition on $H_R \times H_R$ for any $R>0$ with respect to $\epsilon \in (0,1)$.  This is because derivatives in $x$ and $y$ commute.  

Let $\chi_R$ be as in~\eqref{eq:cutoff} and for $a \ge 1$ let $I_a = [a-1/16,a+1/16]$. To complete the proof, it is sufficient to show that there is some $s > 0$ independent of $\epsilon \in (0,1)$ such that for every $a \ge 1$ we have 
\begin{equation}\label{eq:lemma7.2goal}
	\int_{I_a} \|\chi_R q_t^\epsilon\|_{H^s}^2 dt \lesssim_{a,R} 1.
\end{equation}
Define $\mathring{I}_a = [a-1/8,a+1/8]$ and let $\psi \in C^\infty_0(\RR; [0,1])$ satisfy $\psi\equiv 1$ on $I_a$ and $\psi \equiv 0$ on $\RR \setminus \mathring{I}_a$. Applying Theorem~\ref{thm:tdhor} to $u=\psi \chi_R q_t^\epsilon$, we find that there exists an $s>0$ independent of $\epsilon \in (0,1)$ such that  
\begin{align*}
&\int_{\mathring{I}_a} \| u(t) \|^2_{H^s(\RR^d \times \RR^d)} \, dt\\
& \lesssim_R  \int_{\mathring{I}_a} \vertiii{u(t)}_{H_{2R}\times H_{2R}}^2 \, d t + \bigg(\sup_{\substack{\phi \in C^\infty_0(\mathring{I}_a \times H_{2R}^2)\\\int_{\mathring{I}_a} \vertiii{\phi}^2 \, dt \leq 1}} \int_{\mathring{I}_a \times \RR^d \times \RR^d} \phi (-2\epsilon \partial_t+ Z_{0, x,y, \epsilon})u \, dt \, dx\bigg)^2,
\end{align*}
where 
\begin{align*}
\vertiii{u(t)}^2_{H_{2R} \times H_{2R}}= \| u(t) \|^2_{L^2(H_{2R}^2)}+ \sum_{j=1}^r    \|Z_{j, x} u(t) \|^2_{L^2( H_{2R}^2) } +\sum_{j=1}^r \|Z_{j, y} u(t) \|^2_{L^2(H_{2R}^2)}. 
\end{align*}
Noting that $I_a \subseteq [7/8, \infty)$, it then follows from \eqref{eqn:q_tequ} and \eqref{eq:7.1extended} that
\begin{align*}
&\int_{\mathring{I}_a}  \|u\|^2_{H^s(\RR^d \times \RR^d)} \, dt\\
& \lesssim_{R} \| q_t^\epsilon \|_{L^2(\mathring{I}_a\times H_{2R}^2)}^2 + \sum_{j=1}^r  \|Z_{j, x} u \|^2_{L^2(\mathring{I}_a \times H_{2R}^2) }+ \sum_{j=1}^r\|Z_{j, y} u \|^2_{L^2(\mathring{I}_a \times H_{2R}^2)}  \\
& \lesssim_{a,R} 1+ \sum_{j=1}^r  \|Z_{j, x} u \|^2_{L^2(\mathring{I}_a \times H_{2R}^2) } +\sum_{j=1}^r \|Z_{j, y} u\|^2_{L^2(\mathring{I}_a \times H_{2R}^2)} .
\end{align*}
To estimate the remaining  terms on the right hand side above, multiplying~\eqref{eqn:q_tequ} by $\psi^2\chi_R^2 q_t^\epsilon$, integrating over $\mathring{I}_a \times H_{2R}^2$, and integrating by parts produces the inequality
\begin{align*}
\sum_{j=1}^r   \| Z_{j, x}u \|^2_{L^2(\mathring{I}_a \times H_{2R}^2) } + \sum_{j=1}^r \|Z_{j, y} u \|^2_{L^2(\mathring{I}_a \times H_{2R}^2)} \lesssim_R \| q_t^\epsilon \|^2_{L^2(\mathring{I}_a \times H_{2R}^2)} \lesssim_{a,R} 1. 
\end{align*}
We have thus shown 
$$\int_{I_a} \|\chi_R q_t^\epsilon\|^2_{H^s(\RR^d \times \RR^d)} dt \le \int_{\mathring{I}_a} \|u\|^2_{H^s(\RR^d \times \RR^d)}dt \lesssim_{a,R} 1,$$
which gives \eqref{eq:lemma7.2goal} and hence completes the proof.
\end{proof}

\section{Uniform lower bounds}
\label{sec:lowerb}
In light of the mantra `quantitative upper bounds become quantitative lower bounds', we next turn to the proof of Theorem~\ref{thm:lowerbtaint}.   As explained briefly in Section~\ref{sec:outline}, the idea behind this mantra is relatively simple.  To add more detail to that explanation, suppose for the moment that $u$ is a strictly positive and smooth function satisfying 
\begin{align}
(\mathcal{M}_\epsilon  - \partial_t) u = 0 \,\,\, \text{ on } \, \, \, (0, \infty) \times \mathcal{X}
\end{align}
in the classical sense, where $\mathcal{M}_\epsilon \in \{ \mathcal{L}_\epsilon, \mathcal{L}_\epsilon^{-}\}$ and $\mathcal{L}_{\epsilon}^{-}$ is as in~\eqref{eqn:gentr}.  Then, a standard computation shows that $u^{-\delta}$ for $\delta >0$ small satisfies a similar relation, namely 
\begin{align}
(\mathcal{M}_\epsilon - \partial_t) (u^{-\delta}) \geq 0 \,\,\, \text{ on } \,\,\, (0, \infty)\times \mathcal{X}. 
\end{align}
Applying the quantitative Moser theorem (Theorem~\ref{thm:moser2}), we can control the $L^\infty$-norm of $u^{-\delta}$ uniformly by its $L^2$-norm on a bounded set.  However, what remains to control is the $L^2$-norm of $u^{-\delta}$ uniformly, and this is the central difficulty that arises from this idea.  Note also that we assumed $u$ is strictly positive, which is another detail that would need to be addressed, but one can nevertheless follow similar argumentation to that in the proof of Theorem~\ref{thm:moser2} to get around this issue.

For us, applying this approach directly on the time-changed density $$u=q_t^\epsilon(x,y)=p_{t/\epsilon}^\epsilon(x,y)$$ does not seem to yield the desired result due to a lack of quantitative information about its time regularity when $\epsilon \approx 0$.  However, we do indeed have the needed regularity when we average out the time component.  Then, we can obtain quantitative lower bounds on the averaged density from quantitative upper bounds.     

\begin{remark}
Below, instead of adopting the approach of Moser~\cite{Mos_61} outlined above via $u^{-\delta}$, we follow the approach of De Giorgi~\cite{De_57}, using functions like $w=(1-Cu)_+$ and attempting to choose the constant $C>0$ large enough so that the $L^2$-norm of $w$ is small.  Note that such an upper bound on $w$ would indeed produce the correct positive lower bound on $u$ using the quantitative Moser iteration (Theorem~\ref{thm:moser2}). 
\end{remark}

\subsection{The time-averaged density}
To produce a uniform lower bound on compact sets for the time-averaged density, it seems critical in the arguments to average in a relatively precise manner.  That is, fix $t_0 \geq 1$ independent of $\epsilon \in (0,1)$ and using Assumption~\ref{assump:V} (V3) set 
\begin{align}
\alpha :=\inf_{x\in \mathcal{X}} \text{div}(Z)(x)>\tfrac{1}{2}\| \text{div}(Z) \|_{L^\infty(\mathcal{X})}.  
\end{align} 
Introduce the \emph{time-averaged density} $h^\epsilon$ given by
\begin{align}
\label{def:tadens}
    h^\epsilon=h^\epsilon(x,y)= \alpha \int_{t_0}^\infty  e^{-\alpha (t-t_0) } q_t^\epsilon(x,y) dt.
\end{align}
Observe that under Assumptions~\ref{assump:H}-\ref{assump:BR}, for any $R>0$, $(x,y) \in H_R \times H_R$, and $\alpha '$ satisfying $\frac{1}{2}\| \text{div}(Z) \|_{L^\infty(\mathcal{X})}<\alpha' < \alpha$:
\begin{align}
\label{eqn:alpha23}
 h^\epsilon(x, y) =\alpha  \int_{t_0}^{\infty}  e^{-\alpha( t-t_0)} q_t^\epsilon(x,y) \, dt&\lesssim_{t_0}  \| e^{-\alpha' t} q_t^\epsilon(x,y) \|_{L^\infty([t_0, \infty) \times H_R^2)} \int_{t_0}^\infty e^{-(\alpha-\alpha') t} \, dt \\
 \nonumber&\lesssim_{R, t_0} 1,
 \end{align}
 where we used Lemma~\ref{lem:upper1} on the last line above.  In particular, for any $R>0$, we have the uniform upper bound
 \begin{align}
 \label{eqn:unifh}
 \|h^\epsilon \|_{L^\infty( H_R^2)} \lesssim_{R, t_0} 1. 
 \end{align}

   The goal of this section is to turn the upper bound~\eqref{eqn:unifh} into a lower bound.  That is, we will show:
\begin{theorem}
\label{thm:lowerbta}
Suppose that Assumptions~\ref{assump:H}-\ref{assump:BR} are satisfied.  Then for every $R>0$, there exists $d_R>0$ such that 
\begin{align}
\inf_{(x,y) \in H_R\times H_R} h^\epsilon (x,y) \geq d_R
\end{align}
for all $\epsilon \in (0,1)$. 
\end{theorem}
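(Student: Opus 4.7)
The strategy is to exhibit $h^\epsilon$ as a nonnegative supersolution of an $\epsilon$-uniformly hypoelliptic \emph{elliptic} equation in each of its variables, and to run a De Giorgi iteration in the spirit of~\cite{BL_21} that converts the quantitative upper bound of Lemma~\ref{lem:upper1} together with a quantitative lower bound in measure into a pointwise positive lower bound on compact sets. The key virtue of the time-averaging in the definition of $h^\epsilon$ is precisely that it eliminates the $\epsilon^{-1}$-scale temporal fluctuations that obstruct a direct parabolic argument on $q^\epsilon_t$. Integrating by parts in $t$ against the weight $\alpha e^{-\alpha(t-t_0)}$ and using the forward and backward Kolmogorov identities for $q^\epsilon_t$ yields the supersolution identities
\begin{equation*}
(\alpha - \mathcal{L}_{\epsilon,x})\, h^\epsilon = \alpha q^\epsilon_{t_0} \geq 0 \quad \text{and} \quad (\alpha - \text{div}(Z)(y) - \mathcal{L}^-_{\epsilon,y})\, h^\epsilon = \alpha q^\epsilon_{t_0} \geq 0,
\end{equation*}
where the $y$-identity uses $\mathcal{L}^*_\epsilon = \mathcal{L}^-_\epsilon + \text{div}(Z)$. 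Integrating the $H^s$-bound of Lemma~\ref{lem:upper2} in $t$ against the same weight, combined with Lemma~\ref{lem:upper1}, provides the uniform $\epsilon$-independent regularity $\|\chi_R h^\epsilon\|_{H^s(\RR^d \times \RR^d)} + \|h^\epsilon\|_{L^\infty(H_R^2)} \lesssim_R 1$.

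Next, I establish a lower bound in measure by invoking the weak Lyapunov inequality $\mathcal{L}_\epsilon(H+1) \leq d_*$ from Remark~\ref{rem:subinv}. For fixed $T > t_0$, Markov's inequality gives $\int_{H_{R'}} q^\epsilon_t(x,y)\,dy \geq 1/2$ for all $x \in H_R$ and $t \in [t_0,T]$ once $R'=R'(R,T)$ is large enough; splitting the defining integral of $h^\epsilon$ at $T$ and using the exponential tail then produces $\int_{H_{R'}} h^\epsilon(x,y) \,dy \geq c_0 > 0$ uniformly in $\epsilon$, and combined with the upper bound a redistribution argument (Lemma~\ref{lem:noconcentration}) yields $\delta_0 > 0$ and a set $E_R \subseteq H_R \times H_{R'}$ with $|E_R|\geq \delta_0$ and $h^\epsilon \geq \delta_0$ on $E_R$. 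With this in hand, I run the De Giorgi iteration on the truncations $w_k = (\ell_k - h^\epsilon)_+$ with $\ell_k = \ell(1+2^{-k})$ and $\ell > 0$ small. Testing the supersolution identities against $w_k$ with smooth spatial cutoffs and integrating by parts gives a Caccioppoli-type estimate $\sum_{j=1}^r\int ((Z_{j,x} w_k)^2 + (Z_{j,y} w_k)^2)\,dx\,dy \lesssim \ell^2 |A_k|$ on $A_k = \{h^\epsilon < \ell_k\}\cap H_R\times H_{R'}$; crucially, the singular drifts $\epsilon^{-1} Z_0$ drop out because $\text{div}(Z_0)=0$ by Assumption~\ref{assump:V}(V2), and the $-\epsilon Z$ contribution produces only a bounded zero-order term. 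Invoking Theorem~\ref{thm:Hssmooth3} in each variable under Assumption~\ref{assump:V}(V5) upgrades the energy estimate to an $H^s$-bound, whence Sobolev embedding gives $L^{2+\kappa}$-control for some $\kappa>0$ independent of $\epsilon$. The standard De Giorgi nonlinear recursion $|A_{k+1}|\lesssim 2^{Ck}|A_k|^{1+\kappa/2}$ then forces $|A_\infty|=0$, i.e.\ $h^\epsilon \geq \ell$ on $H_R \times H_{R'} \supseteq H_R \times H_R$. The initial smallness $|A_0|\ll 1$ needed to start the recursion is obtained by a preliminary weak-Harnack-type iteration that uses the measure bound together with the supersolution property to propagate the pointwise bound $h^\epsilon \geq \delta_0$ on $E_R$ to $h^\epsilon \geq c\ell$ on a subset of relative measure arbitrarily close to $1$, for $\ell$ chosen small relative to $\delta_0$.

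The main obstacle will be executing the De Giorgi iteration quantitatively in the hypoelliptic setting with constants uniform in $\epsilon \in (0,1)$. The most delicate points are: rigorously justifying the weak supersolution inequality for the Lipschitz (but not smooth) truncations $w_k$ in the presence of the singular $\epsilon^{-1} Z_0$ drift, which is handled by exploiting $\text{div}(Z_0) = 0$ so that the apparently singular term integrates to zero against $w_k$; producing the initial smallness of $|A_0|$, which the bare measure lower bound does not supply on its own and which requires the auxiliary ``push-down'' iteration alluded to above, analogous to the elliptic weak Harnack inequality; and ensuring that both the $x$-equation and its time-reversed $y$-counterpart can be exploited simultaneously, with the zero-order shift $\text{div}(Z)(y)$ and the bounded $\epsilon Z$ term absorbed uniformly into the recursion so that the final constants are independent of $\epsilon$.
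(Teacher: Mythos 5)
Your overall architecture is the right one and matches the paper's in outline: the supersolution identities $(\alpha-\mathcal{L}_{\epsilon,x})h^\epsilon=\alpha q^\epsilon_{t_0}\geq 0$ and $(\alpha-\mathcal{L}^*_{\epsilon,y})h^\epsilon=\alpha q^\epsilon_{t_0}\geq 0$ are exactly what the paper uses, the Lyapunov-based measure lower bound is essentially Lemma~\ref{lem:posmeas} (though the paper is careful to obtain it for \emph{every} $x\in H_R$ separately, which is what lets the iteration run in the $y$-variable uniformly over $x$ and yields the infimum over all of $H_R\times H_R$), and the final $L^2\to L^\infty$ step is supplied by Theorem~\ref{thm:moser2}. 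The genuine gap is in the step you yourself flag as delicate: producing the initial smallness of $|A_0|=|\{h^\epsilon<\ell\}|$ by ``a preliminary weak-Harnack-type iteration\ldots analogous to the elliptic weak Harnack inequality.'' That analogy does not go through here. The elliptic argument rests on the De Giorgi isoperimetric lemma, which quantitatively prevents a $W^{1,1}$ function from jumping between levels without paying measure in between, and it requires control of the \emph{full} gradient. In the present setting the Caccioppoli estimate controls only $Z_1,\ldots,Z_r$ derivatives, and the remaining directions are recovered only through a fractional $H^s$ bound with $s$ possibly tiny and only after invoking the quantitative H\"ormander machinery; no quantitative isoperimetric inequality is available at that regularity.

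The paper's substitute is the compactness-based Intermediate Value Lemma (Lemma~\ref{IVL}): arguing by contradiction, one extracts a limit along $\epsilon_n\to 0$, upgrades weak-$*$ convergence to convergence in measure via the uniform $H^s$ bound of Theorem~\ref{thm:Hssmooth3}, shows the limit is a characteristic function annihilated by $Z_1,\ldots,Z_r$ and by $Z_0$, and then uses hypoellipticity of the \emph{limiting} operator $Z_0+\sum_j Z_j^2$ to reach a contradiction. Two ingredients of this argument are absent from your sketch and cannot be improvised: (i) the truncations must be smoothed at an $\epsilon$-dependent scale (the paper's $\phi_\epsilon$, with $\|\phi_\epsilon''\|_\infty\lesssim\epsilon^{-1/4}$) so that $w_k$ satisfies a classical, \emph{two-sided} bound $0\leq\mathcal{L}^-_{\epsilon,y}w_k\lesssim\epsilon^{-1/2}(1+\sum_j(Z_{j,y}h^\epsilon)^2)$ — the upper bound is what makes $\epsilon\,\mathcal{L}^-_{\epsilon,y}w_k\to 0$ and allows passage to the limit equation, and your plan to work with the bare Lipschitz truncations $(\ell_k-h^\epsilon)_+$ loses this; (ii) since the resulting constant $\beta$ is produced by contradiction rather than by an explicit estimate, one cannot feed it into a single-level argument — the paper instead exploits that the intermediate sets $\{0<w_k<1-\theta\}$ are pairwise disjoint across levels $k$, so that after at most $K\approx|H_S|/\beta$ levels some $w_{k_*}$ must have small $L^2$ norm (Lemma~\ref{lem:smallup}). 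Without these two devices, the recursion $|A_{k+1}|\lesssim 2^{Ck}|A_k|^{1+\kappa/2}$ has no admissible starting point, and the proof does not close.
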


As an immediate corollary of Theorem~\ref{thm:lowerbta}, we obtain Theorem~\ref{thm:lowerbtaint}.

\begin{proof}[Proof of Theorem~\ref{thm:lowerbtaint}]
Fix $R>0$, $t_0=1$ and $\alpha = \inf_{x\in \mathcal{X}} \text{div}(Z)(x)$.  Then, Theorem~\ref{thm:lowerbta} implies that there exists $d_R>0$ such that for all $(x,y) \in H_R\times H_R$, $T> 1$ and $\epsilon \in (0,1)$, we have 
\begin{align*}
d_R \leq h^\epsilon(x,y) &= \alpha e^{\alpha} \int_{1}^T e^{-\alpha t}  q_t^\epsilon(x,y) \, dt + \alpha e^{\alpha } \int_T^\infty  e^{-\alpha t} q_t^\epsilon(x,y) \, dt \\
& \leq \alpha e^{\alpha } \int_{1}^T e^{-\alpha t}  q_t^\epsilon(x,y) \, dt \\
&\qquad + \alpha e^{\alpha }\| e^{-\alpha' t} q_t^\epsilon(x,y) \|_{L^\infty([1, \infty) \times H_R^2) } \int_{T}^\infty e^{-(\alpha-\alpha') t} \,dt ,
\end{align*} 
where $\alpha' \in (\tfrac{1}{2}\| \text{div}(Z)\|_{L^\infty(\mathcal{X})}, \alpha)$.  
By Lemma~\ref{lem:upper1}, we can pick $T>1$ independent of $\epsilon \in (0,1)$ such that 
\begin{align*}
\alpha e^{\alpha} \| e^{-\alpha' t} q_t^\epsilon(x,y) \|_{L^\infty([1, \infty) \times H_R^2) } \int_{T}^\infty e^{-(\alpha-\alpha') t} \,dt \leq \frac{d_R}{2}, 
\end{align*}
and thus
\begin{align*}
\frac{d_R}{2}\leq \alpha  \int_{1}^T  q_t^\epsilon(x,y) \, dt.  
\end{align*}
This finishes the proof.  

\end{proof}

The proof of Theorem~\ref{thm:lowerbta} will be carried out in the rest of this section after showing a series of lemmata.

\begin{remark}
One can interpret the time-averaged density $h^\epsilon(x,y)$ stochastically. Indeed, $h^\epsilon(x,y)$ is the one-step transition density of a Markov kernel $\mathcal{Q}^{\epsilon, \text{av}}$ defined by 
\begin{align}
\mathcal{Q}^{\epsilon, \text{av}}f(x) = \EE[\mathcal{Q}_\sigma^\epsilon f(x)], \,\,\, f\in \mathcal{B}(\mathcal{X}),
\end{align}
where we recall that $\mathcal{Q}_t^{\epsilon} = \mathcal{P}_{t/\epsilon}^\epsilon$ is the time-changed kernel and $\sigma$ is a random variable distributed on $[0, \infty)$ according to the shifted exponential distribution 
\begin{align}
\PP[\sigma \in (a,b)] =\alpha  \int_a^b e^{-\alpha( t-t_0)}\, dt, \,\, (a,b) \subset (0, \infty).
\end{align}
Ultimately, if $\sigma_1, \sigma_2, \ldots $ are i.i.d. random variables on $(\Omega, \mathcal{F}, \PP)$ with $\sigma_i \sim \sigma$, the corresponding $n$-step transition for $\mathcal{Q}^{\epsilon, \text{av}}$ is given by 
\begin{align}
\mathcal{Q}^{\epsilon, \text{av}}_n f(x) = \EE[\mathcal{Q}_{\sigma_1+\cdots + \sigma_n}^\epsilon f(x)], \, \, \, f\in \mathcal{B}(\mathcal{X}).
\end{align}
Thus one can think of this kernel as sampling the original dynamics~\eqref{eqn:SDEmain} at the time $\sum_{i=1}^n\sigma_i$, and then averaging as above.  
\end{remark}

The first lemma shows that upper bounds become lower bounds in measure, with some control over the original dynamics~\eqref{eqn:SDEmain} through the Lyapunov structure guaranteed by Assumption~\ref{assump:V}.

\begin{lemma}
\label{lem:posmeas}
Suppose that Assumptions~\ref{assump:H}-\ref{assump:BR} are satisfied.  Fix $R>0$. There exist $S>R$, $c_{R,S}>0$ and $\delta_{R,S} >0$ independent of $\epsilon \in (0,1)$ such that  
\begin{align}
\inf_{x\in H_R}| \{ y \in H_{S} \, : \, h^\epsilon (x,y) \geq \delta_{R,S} \}| \geq c_{R,S}
\end{align}  
for all $\epsilon \in (0,1)$. 
\end{lemma}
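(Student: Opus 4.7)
The plan is to combine the Lyapunov structure from Assumption~\ref{assump:V}~(V4) with the uniform upper bound~\eqref{eqn:unifh} via a pigeonhole argument. To begin, note that $y \mapsto h^\epsilon(x,y)$ is a probability density on $\mathcal{X}$ for each $x$: non-explosion of $\{x_t^\epsilon\}_{t\ge 0}$ gives $\int_\mathcal{X} q_t^\epsilon(x,y)\,dy = 1$ for all $t > 0$, so Fubini yields $\int_\mathcal{X} h^\epsilon(x,y)\,dy = 1$. Equivalently, for any Borel $A \subseteq \mathcal{X}$,
\[
\int_A h^\epsilon(x,y)\,dy \;=\; \int_{t_0}^\infty \alpha e^{-\alpha(t-t_0)}\, \PP_x[x_{t/\epsilon}^\epsilon \in A]\,dt.
\]

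Next, I would use the Lyapunov bound $L_\epsilon(H+1) \le d_*\epsilon$ from Remark~\ref{rem:subinv} to show that most of this mass concentrates in a large enough sublevel set $H_S$. Dynkin's formula applied to $V = H+1$ gives $\EE_x[H(x_t^\epsilon)] \le H(x) + d_*\epsilon t$, so after the time change
\[
\EE_x[H(x_{t/\epsilon}^\epsilon)] \le R + d_* t \qquad \text{for every } x \in H_R.
\]
Markov's inequality then yields $\PP_x[x_{t/\epsilon}^\epsilon \notin H_S] \le (R + d_* t)/S$, and integrating against $\alpha e^{-\alpha(t-t_0)}\,dt$ produces
\[
\int_{\mathcal{X} \setminus H_S} h^\epsilon(x,y)\,dy \;\le\; \frac{R + d_*(t_0 + \alpha^{-1})}{S}
\]
for every $x \in H_R$ and every $\epsilon \in (0,1)$. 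Choosing $S = S(R) > 2\bigl(R + d_*(t_0 + \alpha^{-1})\bigr)$ therefore ensures $\int_{H_S} h^\epsilon(x,y)\,dy \ge 1/2$ for all such $x$, uniformly in $\epsilon$.

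Finally, I combine this mass lower bound with the uniform $L^\infty$ estimate $\|h^\epsilon\|_{L^\infty(H_S\times H_S)} \le M_S$ coming from~\eqref{eqn:unifh} (applicable because $H_R \subseteq H_S$) through Lemma~\ref{lem:noconcentration} with $D = H_S$, $f = h^\epsilon(x,\cdot)|_{H_S}$, $c = 1/2$, $C = M_S$, and $K = 2$. This yields the lemma with $\delta_{R,S} = 1/(4|H_S|)$ and $c_{R,S} = 1/\bigl[4(M_S - 1/(4|H_S|))\bigr]$, both independent of $\epsilon$. There is no genuine obstacle here: the argument is a clean combination of already established ingredients. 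The one point worth emphasizing is that the $\epsilon$-scaling built into $L_\epsilon V \le d_*\epsilon$ is precisely what makes the Lyapunov estimate survive the rescaling $t \mapsto t/\epsilon$ uniformly in $\epsilon \in (0,1)$.
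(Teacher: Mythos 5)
Your proposal is correct and follows essentially the same route as the paper: both use the Lyapunov bound $L_\epsilon(H+1)\le d_*\epsilon$ (equivalently $\mathcal{L}_\epsilon(H+1)\le d_*$ after the time change) to trap at least half the mass of $y\mapsto h^\epsilon(x,y)$ inside a sublevel set $H_S$ with $S$ independent of $\epsilon$, and then combine this with the uniform $L^\infty$ bound \eqref{eqn:unifh} to extract a set of positive measure where $h^\epsilon\ge\delta_{R,S}$. The only cosmetic difference is that you apply Markov's inequality to $\PP_x[x_{t/\epsilon}^\epsilon\notin H_S]$ before integrating in $t$ and then invoke Lemma~\ref{lem:noconcentration}, whereas the paper integrates $V$ against $h^\epsilon$ first and carries out the final pigeonhole step by hand.
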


\begin{proof}
Fix $x\in H_R$.  Below, we write $h^\epsilon(x,y)$ more succinctly as $h(x,y)$.  Recall from Section~\ref{sec:mainresults} that Assumption~\ref{assump:V} implies that
$$
	\mathcal{L}_\epsilon V \le d_*
$$
for $V = H+1$. The above estimate translates to the following estimate on the time-changed semigroup $\mathcal{Q}_t^\epsilon$:
\begin{align}
	\mathcal{Q}_t^\epsilon V(x)  \leq V(x) + d_* t , \,\,\, t \geq 0, \,\,\epsilon \in (0,1).
\end{align}
Thus,
\begin{align} \label{eq:hV}
	\int_\mathcal{X}h(x,y) V (y)dy =\alpha \int_{t_0}^\infty  e^{-\alpha (t-t_0)} \mathcal{Q}_t^\epsilon V (x) dt  \leq C_R <\infty
\end{align}
for some constant $C_{R}>0$ independent of $\epsilon \in (0,1)$. Since $\int_{\mathcal{X}} h(x,y) dy = 1$ by construction and $V=H+1\geq 1+S$ on $H_S$, it follows from \eqref{eq:hV} that for any $S > R$ we have
\begin{align*}
	1 = \int_{\mathcal{X}} h(x,y) dy &= \int_{H_{S}} h(x,y) dy  + \int_{H_{S}^c} h(x,y) dy \\ 
	& \le \int_{H_{S}} h(x,y) dy+ \frac{1}{1+S} \int_{H_{S}^c}  h(x,y) V(y) dy \\ 
	& \le \int_{H_{S}} h(x,y) dy + \frac{C_R}{1+S}.
\end{align*}
Choosing $S = (2C_R)^{-1}$ then yields 
\begin{align*} 
	 \frac{1}{2} \le \int_{H_{S}} h(x,y)dy & = \int_{H_{S}\cap \{y \,: \, h(x,y)\geq \delta\}}h(x,y)dy+\int_{H_{S}\cap \{y \,:\,h(x,y)< \delta\}}h(x,y)dy \\ 
	 & \le \|h\|_{L^\infty(H_R \times H_{S})} |H_{S} \cap\{y \,: \, h(x,y)\geq \delta\}| + \delta |H_{S}|. 
\end{align*}
Thus for some $\delta_{R,S}>0$ small enough we have 
\begin{align*}
    \frac{1}{4}\leq \|h\|_{L^\infty(H_R\times H_{S})}|H_{S}\cap \{y\,: \,h(x,y)\geq \delta_{R,S}\}|.
\end{align*}
By way of~\eqref{eqn:unifh}, this in turn implies the claimed result. 
\end{proof}

Next, we construct a version of De Giorgi's iteration functions~\cite{De_57} as in~\cite{BL_21}, which are effectively smoothed out versions of
\begin{align}
(1-C h^\epsilon)_+
\end{align}
where $C>0$ is a `big' constant. Such functions will ultimately satisfy a sufficiently small uniform upper bound, which then translates to a uniform lower bound on $h^\epsilon$.

To this end, for $\theta \in(0,\frac{1}{2})$ and $k\in \mathbf{N}$, define
\begin{align}
\label{eqn:wk}
    \widetilde{w}_k(x,y)=1-\left(\frac{4}{\theta}\right)^k \frac{h^\epsilon(x,y)}{\delta_{R,S}} \qquad \text{ and } \qquad w_k(x,y) = \phi_\epsilon(\widetilde{w}_k(x,y)),
\end{align}
where $\delta_{R,S} >0$ is as in the conclusion of Lemma~\ref{lem:posmeas} and we assume that the cutoff function $\phi_\epsilon : \RR\rightarrow \RR$ satisfies the following conditions:
\begin{itemize}
\item $\phi_\epsilon \in C^\infty(\RR)$ and $\| \phi''_\epsilon\|_{L^\infty(\RR)} \lesssim \epsilon^{-1/4}$;
\item $\phi_\epsilon'' \geq 0$;
\item $\phi_\epsilon(x) = x$ for $x\geq \epsilon^{1/4}$;
\item $\phi_\epsilon(x)=0$ if $x\leq -\epsilon^{1/4}$;
\item $\phi_\epsilon$ is nondecreasing, $\| \phi'_\epsilon \|_{L^\infty(\RR)}\lesssim 1$ and $\phi_\epsilon >0$ for $x>- \epsilon^{1/4}$. 
\end{itemize}

After a short exercise using the properties of $\phi_\epsilon$, it follows that the conditions below are met for $k,\ell \in \N$, $\epsilon \in (0,1/16)$, $c_{R,S}>0$, $\delta_{R,S} >0$, $x\in H_R$ and $S\gg R$ as in the conclusion of Lemma~\ref{lem:posmeas}:
\begin{itemize}
\item $| \{ y\in H_S: w_k(x,y) =0 \}| \geq c_{R,S}>0$;
\item $\{y: w_{k+1}(x,y) >0 \} \subset \{ y: w_{k}(x, y) \geq 1-\theta\}$;
\item $\{y: 0<w_k(x, y) <1-\theta\} \cap \{y: 0< w_\ell(x, y) < 1-\theta\} = \emptyset$ if $k\neq \ell$. 
\end{itemize}

Moreover, we also have:
\begin{lemma}
\label{lem:L-}
Suppose that Assumptions~\ref{assump:H}-\ref{assump:BR} are satisfied. Let $\theta_* \in (0,1/2)$, $k_*\in \mathbf{N}$, and $S>R$ be as in the conclusion of Lemma~\ref{lem:posmeas}.  Then, there exists $\epsilon_*(k_*, \theta_*, S)>0$ such that, whenever $\epsilon \in (0, \epsilon_*)$ and $k=1,2,\ldots, k_*$, $w_k$ as in~\eqref{eqn:wk} satisfies 
\begin{align}
\label{ineq_on_L-}
    0 \leq \mathcal{L}^-_{\epsilon,y}w_k \lesssim_{S} \epsilon^{-1/2} \Big(1+ \sum_{j=1}^r(Z_{j,y} h^\epsilon (x,y))^2 \Big)
\end{align}
on $H_{2S}\times H_{2S}$. 
\end{lemma}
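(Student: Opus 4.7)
The plan is to reduce everything to a clean differential identity for $\mathcal{L}^-_{\epsilon,y} h^\epsilon$ and then apply the chain rule for second-order operators to pass it through the cutoff $\phi_\epsilon$. The sign of $\mathcal{L}^-_{\epsilon,y} h^\epsilon$ will give the nonnegativity $\mathcal{L}^-_{\epsilon,y} w_k \geq 0$, while the explicit $\epsilon$-dependent bounds on $\phi_\epsilon'$ and $\phi_\epsilon''$ will give the upper bound at the expense of a factor $\epsilon^{-1/2}$.

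First, I compute $\mathcal{L}^-_{\epsilon,y} h^\epsilon$. Using Assumption~\ref{assump:V} (V2), i.e. $\mathrm{div}(Z_0)=\mathrm{div}(Z_j)=0$, a direct calculation of the formal $L^2$ adjoint gives $L_\epsilon^{*} = \epsilon\mathcal{L}^-_\epsilon + \epsilon\,\mathrm{div}(Z)$, and hence the forward Kolmogorov equation for $p_t^\epsilon$ translates, after the time rescaling $t \mapsto t/\epsilon$, into
\begin{align*}
\partial_t q_t^\epsilon(x,y) = \mathcal{L}^-_{\epsilon,y} q_t^\epsilon(x,y) + \mathrm{div}(Z(y))\, q_t^\epsilon(x,y).
\end{align*}
Substituting this into the definition of $h^\epsilon$ and integrating by parts in $t$, where the boundary term at $t=\infty$ vanishes thanks to the exponential weight and the bound $q_t^\epsilon \lesssim_R e^{\alpha' t}$ from Lemma~\ref{lem:upper1} (for any $\alpha' \in (\tfrac{1}{2}\|\mathrm{div}(Z)\|_{L^\infty},\alpha)$), I get
\begin{align*}
\mathcal{L}^-_{\epsilon,y} h^\epsilon(x,y) = -\alpha q_{t_0}^\epsilon(x,y) + (\alpha - \mathrm{div}(Z(y)))\, h^\epsilon(x,y).
\end{align*}
Since $\alpha = \inf_\mathcal{X} \mathrm{div}(Z)$, both terms on the right are $\leq 0$, so $\mathcal{L}^-_{\epsilon,y} h^\epsilon \leq 0$ pointwise, and consequently $\mathcal{L}^-_{\epsilon,y}\widetilde w_k = -(4/\theta)^k \delta_{R,S}^{-1}\,\mathcal{L}^-_{\epsilon,y} h^\epsilon \geq 0$.

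Next, I apply the chain rule for $\mathcal{L}^-_\epsilon = \epsilon^{-1}(-Z_0 + \epsilon Z) + \sum_{j=1}^r Z_j^2$ acting on a composition: writing $w_k = \phi_\epsilon(\widetilde w_k)$,
\begin{align*}
\mathcal{L}^-_{\epsilon,y} w_k = \phi_\epsilon'(\widetilde w_k)\,\mathcal{L}^-_{\epsilon,y}\widetilde w_k + \phi_\epsilon''(\widetilde w_k)\sum_{j=1}^r (Z_{j,y}\widetilde w_k)^2.
\end{align*}
Using $\phi_\epsilon',\phi_\epsilon'' \geq 0$ together with $\mathcal{L}^-_{\epsilon,y}\widetilde w_k \geq 0$ established above gives the lower bound $\mathcal{L}^-_{\epsilon,y} w_k \geq 0$. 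For the upper bound, I bound the two terms separately. Using the uniform bounds $\|q_{t_0}^\epsilon\|_{L^\infty(H_{2S}\times H_{2S})}\lesssim_S 1$ from Lemma~\ref{lem:upper1} and $\|h^\epsilon\|_{L^\infty(H_{2S}\times H_{2S})}\lesssim_S 1$ from \eqref{eqn:unifh}, and that $(4/\theta_*)^{k_*}/\delta_{R,S}$ is a fixed constant for $k\leq k_*$, I obtain $|\mathcal{L}^-_{\epsilon,y}\widetilde w_k| \lesssim_S 1$, and since $\|\phi_\epsilon'\|_{L^\infty} \lesssim 1$, the first term is $\lesssim_S 1$. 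For the second term, using $Z_{j,y}\widetilde w_k = -(4/\theta)^k\delta_{R,S}^{-1} Z_{j,y}h^\epsilon$ along with $\|\phi_\epsilon''\|_{L^\infty}\lesssim \epsilon^{-1/4}$ yields
\begin{align*}
\phi_\epsilon''(\widetilde w_k)\sum_{j=1}^r (Z_{j,y}\widetilde w_k)^2 \lesssim_S \epsilon^{-1/4}\sum_{j=1}^r (Z_{j,y}h^\epsilon)^2.
\end{align*}
Combining the two estimates gives $\mathcal{L}^-_{\epsilon,y} w_k \lesssim_S 1 + \epsilon^{-1/4}\sum_j (Z_{j,y}h^\epsilon)^2$, which is dominated by $\epsilon^{-1/2}(1 + \sum_j (Z_{j,y}h^\epsilon)^2)$ as soon as $\epsilon$ is smaller than an $\epsilon_*(k_*,\theta_*,S)$ chosen to absorb the $k$- and $\theta$-dependent constants.

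The only real subtlety is the identity in Step 1: getting the correct form of $\mathcal{L}^-_{\epsilon,y} h^\epsilon$, in particular the sign, depends on using the forward (rather than backward) equation in $y$, the cancellations coming from $\mathrm{div}(Z_0)=\mathrm{div}(Z_j)=0$, and the precise choice $\alpha = \inf \mathrm{div}(Z)$ from Assumption~\ref{assump:V} (V2), which is exactly what makes the remainder $\alpha - \mathrm{div}(Z(y))$ nonpositive. Once that identity is in hand, everything else is bookkeeping of the prescribed bounds on $\phi_\epsilon$.
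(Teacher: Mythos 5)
Your proof is correct and follows essentially the same route as the paper: the same time-integration-by-parts identity $\mathcal{L}^-_{\epsilon,y}h^\epsilon=-\alpha q_{t_0}^\epsilon+(\alpha-\mathrm{div}(Z))h^\epsilon$ (the paper phrases it via $\mathcal{L}^*_{\epsilon,y}=\mathcal{L}^-_{\epsilon,y}+\mathrm{div}(Z)$, which is only a cosmetic difference), the same chain-rule decomposition into $\phi'_\epsilon$ and $\phi''_\epsilon$ terms, and the same absorption of the $(4/\theta_*)^{2k_*}\delta_{R,S}^{-2}$ factor by choosing $\epsilon_*$ with $\epsilon_*^{1/4}\leq(\theta_*/4)^{2k_*}\delta_{R,S}^{2}$ to land on $\epsilon^{-1/2}$.
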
  

\begin{proof}
First observe that, for $x\in \mathcal{X}$ fixed, $ h^\epsilon(x, \cdot)$ satisfies, in the sense of distributions on $\mathcal{X}$,  
\begin{align} \label{eq:Lstarh}
 \mathcal{L}_{\epsilon, y}^*  h^\epsilon (x,y)=  - \alpha q_{t_0}^\epsilon(x,y)  +  \alpha h^\epsilon(x,y)
  &\leq \alpha  h^\epsilon (x,y)
\end{align}
on $\mathcal{X}$.  Since $\mathcal{L}_{\epsilon, y}^*$ is hypoelliptic by Assumption~\ref{assump:V}, $h^\epsilon(x,\cdot) \in C^\infty(\mathcal{X})$ and the relations above are satisfied in the classical sense on $\mathcal{X}$ by~\cite{Hor_67}.  After noting that $\mathcal{L}_{\epsilon,y}^*= \mathcal{L}_{\epsilon, y}^{-} + \text{div}(Z)$, by Assumption~\ref{assump:V} (V3) and the choice of $\alpha = \inf_{x\in \mathcal{X}} \text{div}(Z)(x)$, we find that for $(x,y) \in H_{2S} \times H_{2S}$ we have
\begin{align*}
\mathcal{L}^-_{\epsilon,y} w_k (x,y)&=\phi'_\epsilon(\tilde{w}_k)\frac{\left(\frac{4}{\theta}\right)^k}{\delta_{R,S}}\left( (\alpha - \mathcal{L}_{\epsilon, y}^*)h^\epsilon(x,y)+ (\text{div}(Z)- \alpha) h^\epsilon(x,y)\right)\\
   &\,\,+\phi''_\epsilon(\tilde{w}_k)\frac{\left(\frac{4}{\theta}\right)^{2k}}{\delta_{R,S}^2}\sum_{j=1}^r(Z_{j,y} h^\epsilon (x,y))^2 \ge 0.
\end{align*}
On the other hand, starting from the equality above, we also have
\begin{align*}
   \mathcal{L}^-_{\epsilon,y} w_k(x,y) &= \phi'_\epsilon(\tilde{w}_k)\frac{\left(\frac{4}{\theta}\right)^k}{\delta_{R,S}}\left( \alpha q_{t_0}^\epsilon(x,y)+ (\text{div}(Z)- \alpha) h^\epsilon(x,y)\right) \\ 
    & \qquad + \phi''_\epsilon(\tilde{w}_k)\frac{\left(\frac{4}{\theta}\right)^{2k}}{\delta_{R,S}^2}\sum_{j=1}^r(Z_{j,y} h^\epsilon (x,y))^2\\
        & \lesssim_S \phi'_\epsilon(\tilde{w}_k) \left(\frac{4}{\theta}\right)^k +\phi''_\epsilon(\tilde{w}_k)\frac{\left(\frac{4}{\theta}\right)^{2k}}{\delta_{R,S}^2}\sum_{j=1}^r(Z_{j,y} h^\epsilon(x,y))^2,
            \end{align*}
where we used \eqref{eqn:unifh} and Lemma~\ref{lem:upper1} to bound the first term in the last inequality. Choosing $\epsilon_*^{1/4}\leq (\theta_*/4)^{2k_*}\delta_{R,S}^2$ results in the claimed bound.  
  \end{proof}

The next result eliminates the possibility that the class of functions $w$ satisfying the inequality~\eqref{ineq_on_L-} includes step functions in the small-$\epsilon$ limit.

\begin{lemma}[Intermediate Value Lemma]
\label{IVL}
Suppose that Assumptions~\ref{assump:H}-\ref{assump:BR} are satisfied.  Let $S\geq R>0$ and $\lambda_1, \lambda_2 >0$ be independent of $\epsilon \in (0,1)$. Then, there exist $\epsilon_0>0, \beta >0$ and $\theta \in (0,1/2)$ independent of $\epsilon \in (0,1)$ such that if $\epsilon \leq \epsilon_0$ and $w\in C^\infty(H_{2S}\times H_{2S})$ with $0\leq w \leq 1$ satisfies
\begin{align*}
   0 \leq \mathcal{L}^-_{\epsilon,y} w \le C_S\epsilon^{-1/2}\left(1 + \textstyle{ \sum_{j=1}^r}(Z_{j,y}h^\epsilon(x,y))^2\right)
\end{align*}
on $H_{2S}\times H_{2S}$, then the inequalities 
\begin{align}
\label{eqn:lemmcond}
\inf_{x\in H_R} | \{ y \in H_S\, : \, w(x,y)= 0\} | \geq \lambda_1 \qquad \text{ and } \qquad \inf_{x\in H_R} | \{y\in H_S \, : \,  w(x,y) \geq 1-\theta\} | \geq \lambda_2
\end{align}
together imply
\begin{align}
\inf_{x\in H_R} | \{ y \in H_S\, : \, 0< w<1-\theta\}  | \geq \beta. 
\end{align}
\end{lemma}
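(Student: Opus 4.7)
The proof adapts De Giorgi's classical intermediate value argument to the uniformly hypoelliptic setting. The strategy is to first derive a uniform-in-$\epsilon$ $L^2$-gradient bound for $w$ along the diffusion directions $Z_1,\ldots,Z_r$, and then use a sub-Riemannian isoperimetric-type inequality to conclude that the transition region $\{0<w<1-\theta\}$ cannot be arbitrarily small.

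\textbf{Step 1 (Energy estimate).} Let $\chi\in C^\infty_0(H_{2S})$ be a cutoff depending only on the Hamiltonian $H$ and equal to one on $H_S$. Since $Z_0 H \equiv 0$ by Assumption~\ref{assump:V}(V3), we have $Z_0\chi \equiv 0$. Pair the lower bound $\mathcal{L}^-_{\epsilon,y} w \ge 0$ with the non-negative test function $w\chi^2$ and integrate by parts, using $\mathrm{div}(Z_j)=0$ (V2) to eliminate divergence contributions. The $\epsilon^{-1}$-scaled drift term drops out because $\chi$ depends only on $H$, and a Cauchy-Schwarz absorption on the cross terms $\int w Z_j(\chi^2)\, Z_j w$ yields
\begin{equation*}
\sum_{j=1}^r \int_{H_{2S}} \chi^2 (Z_{j,y} w)^2\, dy \le C_S
\end{equation*}
uniformly in $\epsilon\in(0,1)$. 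Only the lower bound $\mathcal{L}^-_\epsilon w\ge 0$ is needed here.

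\textbf{Step 2 (Sub-Riemannian isoperimetric argument).} Fix $x\in H_R$ and set $A = \{y\in H_S: w(x,y)=0\}$, $B = \{y\in H_S: w(x,y)\ge 1-\theta\}$, $N = \{y\in H_S: 0<w(x,y)<1-\theta\}$, so that $|A|\ge \lambda_1$ and $|B|\ge \lambda_2$ by hypothesis. The goal is to obtain a quantitative lower bound on $|N|$. Following De Giorgi's argument but replacing straight line segments with control curves tangent to $\{Z_0, Z_1, \ldots, Z_r\}$, which span the tangent space uniformly by (V5), each such path from a point of $A$ to a point of $B$ must traverse $N$ and contribute at least $(1-\theta)$ to the integrated value of $\big(\sum_j (Z_{j,y}w)^2\big)^{1/2}$ along the path. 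Integrating over a family of such paths and applying Fubini and Cauchy-Schwarz yields, schematically,
\begin{equation*}
\lambda_1\lambda_2 (1-\theta)^2 \le C_S\, |N|^{1/2} \Big(\int_N\sum_j (Z_{j,y} w)^2\, dy\Big)^{1/2} + E(\epsilon),
\end{equation*}
where the error term $E(\epsilon)$ accounts for portions of paths along $Z_0$-trajectories (which do not contribute to the sub-elliptic gradient). The error is controlled using the quantitative upper bound on $\mathcal{L}^-_\epsilon w$ together with the uniform integrability $\int \sum_j (Z_{j,y}h^\epsilon)^2\, dy \lesssim_S 1$, which follows by duality from the apriori estimate (Proposition~\ref{prop:apriorigreen}); this error is $O(\sqrt{\epsilon})$. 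Combining with Step 1 and choosing $\theta \in (0,1/2)$ small and then $\epsilon_0 > 0$ small yields $|N| \ge \beta$ with $\beta > 0$ depending only on $\lambda_1, \lambda_2, R, S$.

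\textbf{Main obstacle.} The principal technical challenge is the sub-Riemannian isoperimetric inequality in Step 2 with constants uniform in $\epsilon$. In the elliptic setting, De Giorgi's argument uses straight-line path integration and the full gradient; here one must use curves generated by the hypoelliptic vector fields. The difficulty is that $Z_1,\ldots,Z_r$ typically do not span on their own, so spanning is achieved only through brackets involving $Z_0$, which enters $\mathcal{L}^-_\epsilon$ with the singular factor $\epsilon^{-1}$. Handling the bracket-chain path integrals without accumulating $\epsilon$-dependent losses requires careful use of cutoffs depending only on $H$ (preserved by the $Z_0$-flow), the uniform H\"ormander-type smoothing from Theorem~\ref{thm:Hssmooth3}, and the apriori smoothing on $h^\epsilon$ from Proposition~\ref{prop:apriorigreen}.
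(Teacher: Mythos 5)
Your Step 1 is correct and coincides with the energy estimate the paper uses (pairing the inequality $\mathcal{L}^-_{\epsilon,y}w\ge 0$ with $w\chi^2$ for a cutoff depending only on $H$, so that $Z_0\chi=0$ kills the $\epsilon^{-1}$-scaled drift contribution). The gap is in Step 2. The central inequality
\begin{align*}
\lambda_1\lambda_2(1-\theta)^2 \le C_S\,|N|^{1/2}\Big(\int_N\textstyle\sum_j (Z_{j,y}w)^2\, dy\Big)^{1/2}+E(\epsilon)
\end{align*}
is asserted, not proved, and the obstruction you flag as the ``main obstacle'' is not resolved by the tools you cite. Since $Z_1,\ldots,Z_r$ do not span the tangent space, every control path joining a generic point of $A$ to a generic point of $B$ must contain segments flowing along $Z_0$ (or along bracket directions), and on those segments the variation of $w$ is simply not controlled by $\sum_j(Z_{j,y}w)^2$. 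Your claim that the resulting error is $O(\sqrt{\epsilon})$ would require pointwise (or at least $L^1$-along-paths) control of $Z_0 w$; but the hypothesis only controls $\mathcal{L}^-_{\epsilon,y}w$, and from $\epsilon\mathcal{L}^-_\epsilon=-Z_0+\epsilon Z+\epsilon\sum_j Z_j^2$ one gets $Z_0w=\epsilon Zw+\epsilon\sum_jZ_j^2w-\epsilon\mathcal{L}^-_{\epsilon}w$, in which the second-order term $\epsilon\sum_jZ_j^2w$ is not controlled by anything in your hypotheses. What this identity does give is smallness of $Z_0w$ only in a \emph{dual (negative-order) norm}, which is too weak to feed into a path-integration argument. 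A sub-Riemannian isoperimetric inequality uniform in $\epsilon$ for this degenerate, drift-dominated structure is not available off the shelf and is essentially equivalent in difficulty to the lemma itself.

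The paper avoids this entirely by a compactness/contradiction argument: negate the conclusion along sequences $\epsilon_n\to0$, $x_n\in H_R$, $\theta_n=1/n$, $\beta_n=1/n$; extract a weak-$*$ limit $w$ in $L^\infty$; use your Step 1 together with the dual-norm smallness of $Z_{0,\epsilon_n}w_n$ and the quantitative H\"{o}rmander estimate (Theorem~\ref{thm:Hssmooth3} with $\delta=1$) to obtain uniform $H^s$ bounds, upgrading to strong $L^p$ convergence and convergence in measure. The limit is then a non-constant characteristic function on $H_S$ satisfying $Z_jw=0$ for all $j\ge1$ (via the straightening theorem and the fact that one-dimensional $H^1$ functions have no jumps) and $Z_{0,0}w=0$ distributionally, hence solves a hypoelliptic equation and must be smooth --- contradicting non-constancy. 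Note that this route yields $\epsilon_0$, $\theta$, $\beta$ that are $\epsilon$-independent but non-explicit, which is all the lemma claims; your proposed route, if it could be completed, would give explicit constants, but as written it does not close.
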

\begin{proof}
Suppose to the contrary that the lemma is not true. Then, there exist sequences $ \{\epsilon_n\} \subset (0,1)$ with $\epsilon_n\rightarrow 0$, $\{ x_n \} \subset H_R$ and $\{w_n\} \subseteq C^\infty(H_S^2; [0,1])$ satisfying the following conditions:
\begin{itemize}
\item[(i)] $| \{y \in H_S \, : \, w_n(x_n, y) =0 \} | \geq \lambda_1$ and $| \{y\in H_S: w_n(x_n, y) \geq 1-1/n \} | \geq \lambda_2$;
\item[(ii)] $0 \leq \epsilon_n \mathcal{L}^-_{\epsilon_n} w_n(x_n,y) \leq C_S \epsilon_n^{1/2}+ C_S \epsilon_n^{1/2}\sum_{j=1}^r(Z_{j,y} h^{\epsilon_n}( x_n,y))^2$ on $ H_{2S}$;
\item[(iii)] $| \{y \in H_S  \, : \, 0 < w_n (x_n,y)< 1-1/n \}  | \leq 1/n$.
\end{itemize}
By the Banach-Alaoglu theorem, there exists $w\in L^\infty(H_{2S})$ and a subsequence (not relabeled) such that $w(x_n,\cdot) \rightharpoonup^* w$ in $L^\infty(H_{2S})$.  

Let $\chi \in C^\infty_0( H_{2S})$ depend only on the Hamiltonian $H$ and satisfy $\chi \equiv 1$ in a neighborhood of $H_S$. It follows from the lower bound in (ii) and computations similar to those in the proof of Proposition~\ref{prop:apriorigreen} that
\begin{equation} \label{eq:Zjwn}
 \sum_{j=1}^r \| Z_{j,y}(\chi w_n) \|_{L^2(H_{2S})}^2 \lesssim_S \int_{H_{ 2S}} |w_n|^2 \, dy  \lesssim_{S} 1,
\end{equation}
where the constant in the above is independent of $n$. Similarly, using that $h$ satisfies \eqref{eq:Lstarh}, standard estimates show that
\begin{equation} \label{eq:Zjh}
	 \sum_{j=1}^r \|Z_{j,y} h^\epsilon(x,\cdot)\|_{L^2(H_{2S})}^2 \lesssim_S \|q_{t_0}(x,\cdot)\|_{L^2(H_{4S})}^2 + \|h^\epsilon(x,\cdot)\|_{L^2(H_{4S})}^2 \lesssim_S 1 
\end{equation}
for any $x \in H_R$, where we used \eqref{eqn:unifh} in the last inequality. If $\psi\in C_0^\infty(H_{2S})$ and $Z_{0, \epsilon} = \epsilon Z-Z_0 $, then (ii), \eqref{eq:Zjwn}, and \eqref{eq:Zjh} together imply
\begin{align*}
\bigg|\int Z_{0, \epsilon_n} (\chi w_n) \psi \, dy \bigg| &\leq \bigg| \int (Z_{0, \epsilon_n} \chi)w_n \psi \, dy\bigg|+ \bigg|\int \chi \psi Z_{0, \epsilon_n}w_n \, dy \bigg| \\
&\leq \bigg| \int (Z_{0, \epsilon_n} \chi)w_n \psi \, dy\bigg|+ \bigg|\int \chi \psi\epsilon_n\mathcal{L}_{\epsilon_n}^- w_n \, dy \bigg| + \bigg|\int  \chi \psi\epsilon_n\textstyle{\sum_{j=1}^r} Z_j^2 w_n \, dy   \bigg| \\
& \lesssim_S \| \psi\|_{L^2(H_{2S}) }+\epsilon_n^{1/2}  \| \psi\|_{L^\infty(H_{2S})}\sum_{j=1}^r\| Z_{j,y}h^\epsilon \|^2_{L^2(H_{2S})}\\
&\qquad  + \epsilon_n\sum_{j=1}^r( \|Z_j (\chi w_n) \|_{L^2(H_{2S})} +1)(\| Z_j \psi\|_{L^2(H_{2S})} + \|\psi\|_{L^\infty(H_{2S})})\\
& \lesssim \| \psi\|_{L^2(H_{2S})} + \epsilon_n \sum_j \| Z_j \psi \|_{L^2(H_{2S})} + \epsilon_n^{1/2}\| \psi\|_{L^\infty(H_{2S})}.  
  \end{align*}
 From the previous bound, \eqref{eq:Zjwn} and $\|\chi w_n\|_{L^\infty(H_{2S})} \le 1$, we may apply Theorem~\ref{thm:Hssmooth3} with $\delta = 1$ to obtain $s\in (0,1)$ independent of $n$ such that
  \begin{align*}
  	\sup_{n\geq 1} \| \chi w_n \|_{H^s(\RR^d)} \lesssim 1. 
  \end{align*}
Thus, we can upgrade the convergence (by passing to another subsequence) to $L^p(H_{2S})$ for some $p>2$.   In particular, $w_n \rightarrow w$ in measure.  It thus follows that 
\begin{itemize}
\item $0 \leq w \leq 1$;
\item $| \{ w=0 \} \cap H_{ S} | \geq \lambda_1$;
\item $|\{ w=1\} \cap H_{ S} |\geq \lambda_2$;
\item $| \{ 0<w<1 \} \cap H_{ S} |=0.$
\end{itemize} 
In other words, $w$ is a non-constant characteristic function on $H_S$.

\emph{Claim}. For every $1\le j \le r$,  $Z_j w = 0$ in $L^2(H_S)$.  

\emph{Proof of Claim}.  Fix some $1 \le j \le r$. From \eqref{eq:Zjwn} and the Banach-Alaoglu theorem, $Z_j w \in L^2(H_{S})$. It is sufficient to show that each point in $H_S$ has a neighborhood on which $Z_j w = 0$ almost everywhere. Fix $y_0 \in H_S$, and for $\delta > 0$ and $n \in \N$ let $C^n_\delta \subseteq \RR^n$ denote the open cube of side length $2\delta$ centered at the origin. By the straightening theorem for vector fields, there exist a neighborhood $V$ of $y_0$ and diffeomorphism $\Psi: V \to C^d_\delta$ for some $\delta > 0$ such that 
$$ D \Psi(y) Z_j(y) = e_1$$
for every $y \in V$. Let $\widetilde{w} = w \circ \Psi^{-1}$ and observe that $\partial_{y_1}\widetilde{w} =(Z_j w) \circ \Psi^{-1}$. As $|D \Psi^{-1}|$ is bounded, it follows that $\partial_{y_1}\widetilde{w} \in L^2(C^d_\delta)$ and hence by Fubini's theorem we have that $y_1 \mapsto \partial_{y_1} \widetilde{w}(y_1,y_2,\ldots, y_d) \in L^2(-\delta, \delta)$ for a.e. $(y_2,\ldots, y_d) \in C_\delta^{d-1}$. Since $H^1$ functions in one dimension cannot have jump discontinuities and $y_1 \mapsto \widetilde{w}(y_1,y_2,\ldots, y_d)$ is a characteristic function for fixed $(y_2,\ldots, y_d) \in C_\delta^{d-1}$, we conclude that $\partial_{y_1} \widetilde{w}(y_1,y_2,\ldots, y_d) = 0$ in $L^2(-\delta,\delta)$ for a.e. $(y_2,\ldots, y_d) \in C_\delta^{d-1}$. Therefore, using Fubini's theorem again, we obtain $\partial_{y_1} \widetilde{w} = 0$ in $L^2(C_\delta^d)$. Thus, $Z_j w = 0$ in $L^2(V)$ and the claim follows.

It is straightforward to see that $Z_{0,0}w = 0$ in the sense of distributions, and hence from the claim we have that $w$ is a distributional solution to the PDE
\begin{equation} \label{eq:wPDE}
Z_{0,0}w + \sum_{j=1}^r Z_j^2 w = 0
\end{equation}
on $H_S$. It can be shown that $\{Z_{0,\epsilon}, Z_1,\ldots, Z_r\}$ satisfying the uniform H\"{o}rmander condition with respect to $\epsilon \in (0,1)$ implies that $Z_{0,0} + \sum_{j=1}^r Z_j^2$ is hypoelliptic. Thus, by Assumption~\ref{assump:V} (V5) and \eqref{eq:wPDE} we have that $w$ is smooth on $H_S$. This contradicts the fact $w$ is a non-constant characteristic function on $H_S$.
 \end{proof}
 
 Given the previous result, we can now state and prove the main upper bound on the $w_k$'s that in turn produces a uniform lower bound on $h^\epsilon$.

 \begin{lemma}
 \label{lem:smallup}
Suppose that Assumptions~\ref{assump:H}-\ref{assump:BR} hold.  Let $S\gg R$ be as in the statement of Lemma~\ref{lem:posmeas} and fix $\kappa >0$.  Then, there exist $\theta_* \in (0,1/2)$, $\epsilon_* \in (0,1/16)$ and $K\in \mathbf{N}$, all only depending on $\kappa$ and $S$, such that whenever $\epsilon \in (0, \epsilon_*)$ there exists $k_* \in \mathbf{N}$ with $k_* \leq K$ for which 
\begin{align}
\sup_{x\in H_R} \int_{H_{S}} |w_{k_*}(x,y)|^2 \,  dy \leq \kappa. 
\end{align}
\end{lemma}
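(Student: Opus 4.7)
The plan is to run a De Giorgi--type iteration built on the Intermediate Value Lemma (Lemma~\ref{IVL}), the uniform-in-$x$ lower bound from Lemma~\ref{lem:posmeas}, and the differential inequality from Lemma~\ref{lem:L-}, together with the three algebraic properties of the $w_k$ listed immediately after \eqref{eqn:wk}. The key observation that upgrades the argument from a pointwise-in-$x$ statement to the uniform conclusion claimed in the lemma is the monotonicity
\[
w_{k+1}(x,y)\le w_k(x,y) \qquad \text{for every } (x,y),
\]
which holds because $\widetilde{w}_k = 1 - (4/\theta_*)^k h^\epsilon/\delta_{R,S}$ is decreasing in $k$ (since $4/\theta_* > 1$ and $h^\epsilon \ge 0$) and $\phi_\epsilon$ is nondecreasing. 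Since $0\le w_k\le 1$, this gives that $\Phi_k(x) := \int_{H_S} w_k^2(x,y)\,dy$ is nonincreasing in $k$ at every fixed $x$, so it suffices to produce a single $K$ for which $\Phi_K(x) \le \kappa$ holds uniformly over $x \in H_R$.

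For the choice of constants, I first apply Lemma~\ref{IVL} with $\lambda_1 = c_{R,S}$ (from Lemma~\ref{lem:posmeas}) and $\lambda_2 = \kappa$ to produce $\theta_* \in (0,1/2)$, $\beta > 0$, and $\epsilon_0 > 0$, all independent of $\epsilon$. I then fix $K := \lceil |H_S|/\beta \rceil + 2$, which depends only on $\kappa$ and $S$, and set $\epsilon_* := \min\{\epsilon_0, \epsilon_*(K,\theta_*,S)\}$, where the second quantity is furnished by Lemma~\ref{lem:L-} so that the PDE hypothesis of Lemma~\ref{IVL} holds simultaneously for every $w_k$ with $1 \le k \le K$. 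The claim is that $k_* := K$ works.

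To verify this, I argue by contradiction at a single point $x_0 \in H_R$: suppose $\Phi_K(x_0) > \kappa$. By the monotonicity noted above, $\Phi_k(x_0) > \kappa$ for every $1 \le k \le K$, and then $w_k \le 1$ gives $|\{y\in H_S : w_k(x_0,y) > 0\}| > \kappa$. Combined with the inclusion $\{w_{k+1}>0\}\subset\{w_k\ge 1-\theta_*\}$, this yields
\[
|\{y\in H_S : w_j(x_0,y) \ge 1-\theta_*\}| > \kappa \qquad \text{for } j = 1,\dots,K-1.
\]
Applying Lemma~\ref{IVL} pointwise at $x_0$ (its $\lambda_1$ hypothesis comes from Lemma~\ref{lem:posmeas} and its PDE hypothesis from Lemma~\ref{lem:L-}) gives $|\{y\in H_S : 0 < w_j(x_0,y) < 1-\theta_*\}| \ge \beta$ for $j = 1,\dots,K-1$, and the listed disjointness of these sets as $j$ varies forces $(K-1)\beta \le |H_S|$, contradicting the choice of $K$. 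The main difficulty is not analytic but logical: manufacturing a $k_*$ that is uniform in $x$, rather than one that depends on $x$. The monotonicity of the sequence $\{w_k\}$ is exactly what removes this obstacle and legitimizes the uniform choice $k_* = K$; absent it, one would obtain only a pointwise bound $\Phi_{k(x)}(x) \le \kappa$ with $k(x) \le K$, which is strictly weaker than the stated conclusion.
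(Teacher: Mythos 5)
Your proof is correct and follows essentially the same route as the paper: apply the Intermediate Value Lemma with $\lambda_1 = c_{R,S}$ and $\lambda_2 = \kappa$ to fix $\theta_*$, $\beta$, $\epsilon_0$; choose $K$ of order $|H_S|/\beta$; invoke Lemma~\ref{lem:L-} to secure the differential inequality for all $k\le K$; and derive the contradiction from the pairwise disjointness of the sets $\{0<w_k<1-\theta_*\}$. Your explicit use of the monotonicity $w_{k+1}\le w_k$ is a small but genuine improvement in rigor: the paper's proof passes from the negation (``for each $k\le K$ there is some $x_k$ with $\int_{H_S}w_k(x_k,y)^2\,dy>\kappa$'') to the existence of a \emph{single} $x'$ working for all $k\le K$ without comment, and this quantifier exchange is exactly what your observation (take $x'=x_K$, so that $\int_{H_S}w_k(x_K,y)^2\,dy\ge\int_{H_S}w_K(x_K,y)^2\,dy>\kappa$ for all $k\le K$) legitimizes.
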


\begin{proof}
Pick $\epsilon_0 >0$, $\theta_* \in (0,1/2)$ and $\beta >0$ such that the conclusions of Lemma~\ref{IVL} are satisfied with $\lambda_1 = c_{R,S}$ and $\lambda_2=\kappa$.  Let $K$ be the smallest natural number exceeding $1+2|H_{S}|/\beta$.  By  Lemma~\ref{lem:L-}, pick $\bar{\epsilon}(K, \theta_*, R)<1/16$ so that~\eqref{ineq_on_L-} is met when $\epsilon \in (0, \bar{\epsilon})$ and $k\leq K$.  Let $\epsilon_* = \epsilon_0 \wedge \bar{\epsilon}$.  We have left to show that for every $\epsilon \in (0, \epsilon_*)$ there exists $k_* \leq K$ so that 
\begin{align*}
\sup_{x\in H_R} \int_{H_{S}} | w_{k_*}(x,y)|^2 \leq \kappa. 
\end{align*}
If this does not hold, then there exists $\epsilon' \in (0, \epsilon_*)$ and $x' \in H_R$ such that 
\begin{align*}
\int_{H_{S}} |w_{k}^{\epsilon'}(x', y)|^2 \, dy >\kappa
\end{align*} 
for all $k\leq K$.  For simplicity, we adopt the minor abuse of notation and set $w_{k}(y)= w_{k}^{\epsilon'}(x', y)$.  Recalling that $0\leq w_{k} \leq 1$, for $k\leq K-1$ we have that 
\begin{align*}
\kappa < \int_{H_{S}} |w_{k+1} |^2 \, dy &\leq \int_{H_{ S} \cap \{ w_{k+1} >0\}} 1 \, dy\\
&\leq | \{ y \in H_S\, : \, w_{k}(y) \geq 1-\theta_*\} |
\end{align*} 
as $\{y \, : \, w_{k+1}(y) >0 \} \subset \{y \, : \, w_{k}(y) \geq 1-\theta_*\}$.  Since
\begin{align}
| \{ y \in H_S \, : \, w_{k}(y) =0 \}| \geq c_{R,S},
\end{align} 
applying Lemma~\ref{IVL}, for every $k\leq K-1$ we have 
\begin{align*}
| \{ 0< w_{k} < 1-\theta_*\}\cap H_{ S} \}| \geq \beta >0.  
\end{align*}
This is a contradiction to the choice of $K$ since $$\{ 0 < w_k < 1-\theta_*\}\cap \{ 0 < w_{j}< 1-\theta_*\}  = \emptyset$$ for $j\neq k$, $j,k \in \{1,2,\ldots, K-1\}$ and hence
 $|H_{S} | \geq (K-1)\beta\geq 2 |H_{ S}|$.  
\end{proof}
 
Given the previous result, we can now conclude Theorem~\ref{thm:lowerbta}.
\begin{proof}[Proof of Theorem~\ref{thm:lowerbta}]
Let $R>0$ and $S\gg R$ be as in the statement of Lemma~\ref{lem:posmeas}.  For any $x\in H_R$ fixed, we recall that $y \mapsto w_k(x,y)$ satisfies $\mathcal{L}_{\epsilon, y}^{-}w_k \geq 0$.  Thus, by Theorem~\ref{thm:moser2}, there exists a constant $C>0$ independent of $k$ and $\epsilon \in (0,1)$ such that 
\begin{align*}
\| w_k \|_{L^\infty(H_R \times H_S)}^2 \leq C \sup_{x\in H_R} \int_{H_S} w_k(x,y)^2 \, dy. 
\end{align*}
Pick $\kappa >0$ such that $C\kappa =1/4$.  Applying Lemma~\ref{lem:smallup}, there exist $\theta_* \in (0,1/2)$, $\epsilon_* \in (0,1/16)$, and $K\in \N$ depending only on $\kappa$ and $S$ such that for all $\epsilon \in (0, \epsilon_*)$ there exists $1\leq k_* \leq K$ such that 
\begin{align*}
\|w_{k_*} \|_{L^\infty(H_R \times H_S)}\leq \frac{1}{2},\end{align*}
which implies that
\begin{align*}
\frac{\delta_{R,S}}{2}\Big(\frac{\theta_*}{4} \Big)^{K} \leq  \inf_{(x,y) \in H_R \times H_S} h^\epsilon(x,y)
\end{align*}
holds uniformly over $\epsilon \in (0,\epsilon_*)$.  This gives the result.  
\end{proof}

\bibliographystyle{plain}
\bibliography{QE}

\end{document}